
\documentclass{amsart}



\usepackage{amssymb}
\usepackage{upgreek} 
\usepackage{hyperref}	\hypersetup{colorlinks, linkcolor=blue, linktoc=all} 
\usepackage{subfiles}
\usepackage{cancel} 






\newtheorem{thm}{Theorem}[section]
\newtheorem{prop}[thm]{Proposition}
\newtheorem{lem}[thm]{Lemma}
\newtheorem{cor}[thm]{Corollary}



\theoremstyle{definition}
\newtheorem{definition}[thm]{Definition}




\theoremstyle{remark}

\newtheorem{remark}[thm]{Remark}


\numberwithin{equation}{section}



\newcommand{\tl}[1]{\tilde{#1}}
\newcommand{\ol}[1]{\overline{#1}}
\newcommand{\ul}[1]{\underline{#1}}
\newcommand{\clZ}{\mathcal{Z}}
\newcommand{\clU}{\mathcal{U}}
\newcommand{\cl}[1]{\mathcal{#1}}




\title[Curv. Blow-up in Doubly-warped Products Evolving by Ricci Flow]{Curvature Blow-up in Doubly-warped Product Metrics Evolving by Ricci Flow}
\author{Maxwell Stolarski}
\address{Department of Mathematics, The University of Texas at Austin}
\email{mstolarski@math.utexas.edu}
\urladdr{www.ma.utexas.edu/users/mstolarski}

\begin{document}

\maketitle

\begin{abstract}
	For any manifold $N^p$ admitting an Einstein metric with positive Einstein constant, 
	we study the behavior of the Ricci flow on high-dimensional products $M =  N^p \times S^{q+1}$
	with doubly-warped product metrics.
	In particular, we provide a rigorous construction of local, type II, conical singularity formation on such spaces.
	It is shown that for any $k > 1$ there exists a solution with curvature blow-up rate $\| Rm \|_{\infty} (t) \gtrsim (T-t)^{-k}$ with singularity modeled on a Ricci-flat cone at parabolic scales.
\end{abstract}

\tableofcontents

\section{Introduction} \label{Intro}

A collection of Riemannian metrics $\{ g(t) \}_{t \in [0, T)}$ on a closed, smooth manifold $M$ evolves by Ricci flow if
	$$\partial_t g = - 2 Rc		\qquad \text{for all } t \in (0, T)$$
Solutions often form singularities in finite time and 
Hamilton ~\cite{H95} showed that the Riemann curvature tensor blows up at a finite-time singularity, that is
	$$\limsup_{t \nearrow T} \sup_{x \in M} | Rm |_{g(t)} ( x, t) = + \infty$$
In fact,  
	$$\limsup_{t \nearrow T} ( T - t) \sup_{x \in M} | Rm  |_{g(t)} ( x, t) > 0 $$
as the parabolic scaling invariance of the Ricci flow suggests.

Finite-time singularities of the Ricci flow are classified as
	\begin{equation*} \begin{aligned}
		\text{\textit{Type I} if }  & \limsup_{t \nearrow T} ( T - t) \sup_{x \in M} | Rm  |_{g(t)} ( x, t)  < + \infty \text{, or }\\
		\text{\textit{Type II} if } &  \limsup_{t \nearrow T} ( T - t) \sup_{x \in M} | Rm  |_{g(t)}( x, t)  = + \infty 
	\end{aligned} \end{equation*}
Due in part to results of Enders-M{\"u}ller-Topping ~\cite{EMT11}, type I singularities are better understood than type II singularities in many ways.
The first construction of type II singularities for the Ricci flow appeared in ~\cite{DP06}.
Later, Gu and Zhu ~\cite{GZ08} constructed type II singularities for the Ricci flow by considering rotationally symmetric metrics on $S^{n+1}$.
Angenent, Isenberg, and Knopf ~\cite{AIK15} then provided an alternate construction that in particular allowed for curvature blow-up rates of
	$$\sup_{x \in M} | Rm|_{g(t)}( x, t) \sim (T-t)^{-2 + \frac{2}{k} }$$		
for any $k \in \mathbb{N}$ with $k \ge 3$.
Examples of type II singularities for the Ricci flow on $\mathbb{R}^n$ with blow-up rates of $(T-t)^{-\lambda}$ for any $\lambda \ge 2$ appeared in in ~\cite{W14}. 
The first examples of type II singularities for the K{\"a}hler-Ricci flow recently appeared in ~\cite{LTZ18}.

Here, we construct Ricci flow solutions on certain product manifolds that form type II singularities with curvature blow-up rates given by arbitrarily large powers of $(T-t)$.
The main theorem is the following:
\begin{thm} \label{mainThmAbridged}
	Let $N^p$ be a closed $p$-dimensional manifold that admits an Einstein metric $g_N$ with positive Einstein constant.
	Let $q \ge 10$ and $k > 1$.
	Then there exists a smooth solution $\{ g_k(t) \}_{t \in [0, T)}$ to the Ricci flow on $M = N^p \times S^{q+1}$ that forms a local type II singularity at time $T < \infty$ such that
		$$0 < \limsup_{t \nearrow T} (T-t)^{k} \sup_{x \in M} | Rm|_{g_k(t)} (x,t) \le \infty$$
	and, for some $x \in  S^{q+1}$,
		$$\left( N^p \times S^{q+1}, \frac{1}{T-t} g_k( t ) , x \right) \xrightarrow[t \nearrow T]{ C^2_{loc} \left( N^p \times S^{q+1} \setminus( N^p \times \{ x \}) \right) } \big( C( N^p \times S^q), g_{RFC}, x_* \big)$$
	where $g_{RFC}$ is the Ricci-flat cone metric
		$$g_{RFC} = dr^2 + \frac{p-1}{p+q-1} r^2 g_{N} + \frac{q-1}{p+q-1} r^2 g_{\mathbb{S}^q}$$
	on the cone $C(N^p \times S^q)$ with vertex $x_*$ (see section \ref{ricciFlatConeMetric} for additional details)
	and $g_N$ is normalized such that $Ric_{g_N} = (p-1) g_N$.
\end{thm}
\noindent More precise asymptotics on the singularity formation will be obtained in the course of the proof
that, in particular, imply convergence of the parabolically rescaled \textit{flows}, as opposed to simply the convergence of \textit{time-slices}.
When $N$ is homogeneous with respect to $G$, the metrics $g_k(t)$ are cohomogeneity one with respect to the $G \times SO(q+1)$ action on $N^p \times S^{q+1}$.
The proof of theorem \ref{mainThmAbridged} thereby provides insight into the dynamics of the Ricci flow of metrics of low cohomogeneity and on multiply-warped product metrics.
Indeed, these rigorous examples of conical singularity formation for the Ricci flow (cf. ~\cite{M14, IKS16, Appleton19}) qualitatively differ from the examples of Ricci flow singularities modeled on generalized cylinders $\mathbb{R}^p \times S^q$ (see e.g. ~\cite{S00, AK04, AIK15, Carson17}).

Theorem \ref{mainThmAbridged} has some precedence in mean curvature flow.
In ~\cite{V94}, Vel{\'a}zquez examined the mean curvature flow starting from smooth, non-compact, $O(n) \times O(n)$-invariant hypersurfaces in $\mathbb{R}^{2n}$.
When $n \ge 4$, he shows that there exist such mean curvature flow solutions $\{ \Sigma_t \}_{t \in [0, T)}$ that form a finite-time singularity at the origin whose parabolic dilations $\{ ( T- t)^{-1/2} \Sigma_{ t} \}$ about the origin converge to the Simons cone in a precise sense.
Moreover, for any $l \ge 2$, there are such solutions with blow-up rates for the second fundamental form given by
	$$\sup | A| \sim ( T-t)^{-\frac{ 1}{2} - \sigma_l }	$$
	$$ \text{ where } \quad \sigma_l = \frac{\frac{\alpha - 1}{2} + l}{1 + | \alpha| } \quad \text{ and } \quad \alpha = - \frac{2n-3}{2} + \frac{1}{2} \sqrt{ (2n - 1)^2 - 8(2n-2)}$$
After rescaling by the blow-up rate of the second fundamental form, the rescaled hypersurfaces $\{ (T- t)^{ -\frac{1}{2} - \sigma_l } \Sigma_{t} \}$ converge locally uniformly to a minimal hypersurface that is asymptotic to the Simons cone.
These solutions with $l = 2$ were further analyzed in ~\cite{GS18} where it is shown that the mean curvature blows up but at a rate strictly less than that of the second fundamental form.


While the Riemann curvature tensor and the Ricci tensor ~\cite{S05} blow up at a finite-time singularity of the Ricci flow, it is unknown if the scalar curvature necessarily blows up at a finite-time singularity.
This problem of scalar curvature blow-up is related to the possible blow-up rates of the Riemann curvature tensor.
Indeed, B. Wang ~\cite{W12} showed that 
	$$\limsup_{t \nearrow T} (T-t) \sqrt{ \sup_{x \in M} | Rm | } \sqrt{ \sup_{x \in M } R } > 0$$
which in particular implies that the scalar curvature can remain bounded only if $ |Rm|$ blows up at a rate of at least $(T- t)^{-2}$.

Theorem \ref{mainThmAbridged} gives the existence of Ricci flow solutions with blow-up rates greater than $(T-t)^{-2}$ and singularities modeled on a Ricci-flat cone at parabolic scales.
Moreover, a formal matched asymptotic argument indicates that the solutions converge to the Ricci-flat B{\" o}hm metric on $\mathbb{R}^{q+1} \times N^p$ (see section \ref{bohmMetric}) at the curvature blow-up scale.
These asymptotics suggest that the solutions in theorem \ref{mainThmAbridged} may include examples of finite-time singularities of the Ricci flow with bounded scalar curvature.
We do not obtain the scalar curvature blow-up behavior in this current paper but believe it merits further investigation.
To this end, we include a formal and numerical argument that suggests the scalar curvature satisfies a type I bound, namely
	$$\limsup_{t \nearrow T} (T-t) \sup_{x \in S^p \times S^{q+1} } |R|(x,t) < \infty$$
Additionally, because a Ricci-flat cone may be considered as a shrinking Ricci soliton, we note that the singularity formation exhibited in theorem \ref{mainThmAbridged} is consistent with the convergence results for Ricci flows with uniform scalar curvature bounds proved in ~\cite{Bamler18}.

The outline of the current work is as follows:
Section \ref{Setup} establishes some preliminary results for the Ricci flow of doubly-warped product metrics and collects notation for the coordinate systems used throughout the paper.
In section \ref{BoxDefn}, we set up the topological argument used to construct the solutions in theorem \ref{mainThmAbridged}.
The topological argument follows the general strategy employed in ~\cite{V94, HV}.
In our case, however, we must consider a system of partial differential equations (\ref{psRF}), rather than a scalar equation, and so the use of maximum principles is more delicate.
The remainder of the proof relies on technical estimates contained in sections \ref{PointwiseEst}, \ref{CoeffEst}, \ref{ShortTimeEsts}, and \ref{LongTimeEsts}, as well as an argument in section \ref{NoInnBlowup} that ensures the Ricci flow solutions remain smooth up to time $T$.
Section \ref{ScalarCurv} provides the formal argument for the scalar curvature behavior at the singularity time.
Finally, appendix \ref{AnalyticFacts} collects several facts about the special functions and weighted $L^2$-spaces used throughout the paper.
Appendix \ref{AppendixOfConstants} lists the parameters and constants used in the topological argument for the readers' convenience
and is ordered such that each constant depends only on those above it in the list.

\addtocontents{toc}{\protect\setcounter{tocdepth}{0}} 
\subsection*{Acknowledgements}
The author would like to thank Dan Knopf for his mentorship, guidance, and initial suggestion of the problem.
Without his supervision, the completion of this project would not have been possible.
The author was partially supported by NSF RTG grant DMS-1148490.
\addtocontents{toc}{\protect\setcounter{tocdepth}{2}}

\section{Setup and Preliminaries} \label{Setup}

\subsection{Coordinate Systems for Doubly-Warped Product Metrics} \label{coordSystems}
\subsubsection{Non-Geometric Coordinates} \label{nongeomtricCoords}
Throughout, we will consider \textit{doubly-warped product} metrics
on $(S^{q+1} \times N^p, g(x,t))$ of the form
		$$g(x,t) = \chi(x,t)^2 dx^2 + \phi(x,t)^2 g_{N} + \psi(x,t)^2 g_{S^q} \qquad (x \in (-1,1))$$
where 
 $g_{N}$ is an Einstein metric on $N^p$ with positive Einstein constant normalized such that $Ric_{g_N} = (p-1) g_N$
 and $g_{S^q}$ is the round metric on the sphere $S^q$ of radius one.
It is straightforward to see from O'Neill's formulas for curvature that the Ricci flow preserves the doubly-warped product structure (see e.g. Chapter 9, Section J of ~\cite{Besse87}). 
In fact, the solution $\chi, \phi, \psi$ is independent of the choice of $N$, or equivalently
	\begin{equation*} \begin{aligned}
		&g(x,t) = \chi(x,t)^2 dx^2 + \phi(x,t)^2 g_{N} + \psi(x,t)^2 g_{S^q}  \\
		& \qquad \text{ is a Ricci flow solution on $S^{q+1} \times N^p$} \\
		\iff& \tl{g}(x,t) = \chi(x,t)^2 dx^2 + \phi(x,t)^2 g_{S^p} + \psi(x,t)^2 g_{S^q}  \\
		& \qquad \text{ is a Ricci flow solution on $S^{q+1} \times S^p$} 
	\end{aligned} \end{equation*}
Therefore, we will assume without loss of generality that $N = S^p$ with $p \ge 2$ for the remainder of the paper.

Note that smoothness of the metric implies that, for all $x \in (-1,1)$, $\chi, \phi, \psi$ are positive without loss of generality. 
These metrics are cohomogeneity one with respect to the natural $SO(p+1) \times SO(q+1)$ action.
Often, it will be additionally assumed that these metrics are reflection symmetric, that is
	$$\chi(-x, t) = \chi(x,t) \qquad \phi(-x, t) = \phi(x,t) \qquad \psi(-x, t) = \psi(x,t)$$
Because the Ricci flow preserves isometries, this reflection symmetry will also be preserved under the flow.

The $x$ coordinate can be reparametrized by arc length by defining $s(x,t)$ such that $ds = \chi dx$.
In terms of the $s$ coordinate, the metric takes the form
	$$g(s,t) = ds^2 + \phi(s,t)^2 g_{S^p} + \psi(s,t)^2 g_{S^q}.$$

Note that smoothness of the metric implies that for all $ t$
	\begin{equation*} \begin{aligned}
		\phi( s(-1,t), t) \ge 0		&\qquad&	\phi_s (s(-1,t), t) \equiv 0	\\
		\psi( s(-1,t), t) \equiv 0 	&\qquad&	\psi_s (s(-1,t),t) \equiv 1	\\
	\end{aligned} \end{equation*}

The $x$ and $s$ derivatives are related by
	\begin{equation*} \begin{aligned}
		\frac{\partial}{\partial s} &= \frac{1}{\chi} \frac{\partial}{\partial x}\\
		\frac{\partial^2}{\partial s^2} &= \frac{1}{\chi^2} \frac{\partial^2}{\partial x^2} - \frac{\chi_x}{\chi^3} \frac{\partial}{\partial x}\\
		\left. \frac{\partial}{\partial t} \right|_s &= \left. \frac{\partial}{\partial t} \right|_x - \left( \int p \frac{\phi_{ss}}{\phi} + q \frac{\psi_{ss}}{\psi} ds \right)\frac{\partial}{\partial s}\\
	\end{aligned} \end{equation*}
Here $\partial_t |_x, \partial_t|_s$ denote the partial derivatives with respect to $t$ that fix $x$ and $s$ respectively.
$\partial_t|_x$ and $\partial_s$ have a nontrival commutator
	$$\left[ \left. \frac{\partial}{\partial t} \right|_x , \frac{\partial}{\partial s} \right] = -\left( p \frac{\phi_{ss}}{\phi} + q \frac{\psi_{ss}}{\psi} \right) \frac{\partial}{\partial s}$$
	
In terms of $s$, all sectional curvatures of $(S^p \times S^{q+1}, g)$ are convex linear combinations of 
	\begin{equation} \label{sectionalCurvatures}
		- \frac{ \phi_{ss}}{\phi} , - \frac{ \psi_{ss}}{\psi} , \frac{1- \phi_s^2}{\phi^2}, \frac{ 1 - \psi_s^2}{\psi^2}, - \frac{ \phi_s \psi_s}{\phi \psi}
	\end{equation}
and the Ricci tensor of a doubly-warped product metric is given by
\begin{equation*} \begin{aligned}
	Ric =& - \left( p \frac{\phi_{ss}}{\phi} + q \frac{\psi_{ss}}{\psi} \right) ds^2 \\
	&+ \left( -\frac{\phi_{ss}}{\phi} + (p-1) \frac{1- \phi_s^2}{\phi^2} - q \frac{\phi_s \psi_s}{\phi \psi} \right) \phi^2 g_{S^p} \\
	&+ \left( -\frac{\psi_{ss}}{\psi} + (q-1) \frac{1- \psi_s^2}{\psi^2} - p \frac{\phi_s \psi_s}{\phi \psi} \right) \psi^2 g_{S^q} \\
\end{aligned} \end{equation*}
(see e.g. Chapter 3, Section 2.4 of ~\cite{Petersen06} for reference).
It now follows that the Ricci flow $\partial_t g = -2Rc$ of a doubly-warped product metric is equivalent to the following PDE system
\begin{equation} \label{RF} \begin{aligned}
	\left. \frac{\partial}{\partial t} \right|_x \chi &= p \frac{\phi_{xx}}{\chi \phi} - p \frac{\chi_x \phi_x}{\chi^2 \phi} +  q \frac{\psi_{xx}}{\chi \psi} - q \frac{\chi_x \psi_x}{\chi^2 \psi} = \chi \left( p \frac{\phi_{ss}}{\phi} + q \frac{\psi_{ss}}{\psi} \right)\\ 
	\left. \frac{\partial}{\partial t} \right|_x\phi &= \phi_{ss} - (p-1) \frac{1- \phi_s^2}{\phi} + q \frac{\phi_s \psi_s}{ \psi} \\
	\left. \frac{\partial}{\partial t} \right|_x \psi &= \psi_{ss}- (q-1) \frac{1- \psi_s^2}{\psi} + p \frac{\phi_s \psi_s}{\phi} \\
\end{aligned} \end{equation}
Due to the form of the equation for $\chi(x,t)$, it suffices to estimate only $\phi, \psi$ and their $s$-derivatives to guarantee regularity of the metric $g$.

\subsubsection{Singly-Warped Product on General Base} \label{singlyWarpedCoords}
By letting $g_B = ds^2 + \psi^2 g_{S^q}$ denote a rotationally symmetric metric on $S^{q+1}$, a doubly-warped product metric $g$ on $S^{q+1} \times S^p$ can be regarded as a singly-warped product metric, namely
	$$g = g_B + \phi^2 g_{S^p}$$
In this setup, the Ricci flow for $g$ is equivalent to the system
	\begin{equation} \label{swRF} \left\{ \begin{aligned} 
		\partial_t g_B &= - 2 Ric_B + 2p \frac{ \nabla^2 \phi}{\phi} \\
		\partial_t \phi &= \Delta \phi - (p-1) \frac{ 1 - | \nabla \phi |^2}{\phi} \\
	\end{aligned} \right. \end{equation}
where here the covariant derivatives are with respect to the metric $g_B$ on $S^{q+1}$.

An application of the maximum principle for $\phi$ immediately yields the following:
\begin{prop} \label{cylinderComparePhi}
	Assume $g = g_B + \phi^2 g_{S^p}$ is a smooth Ricci flow on $S^{q+1} \times S^p$ for $t \in [0, T)$.
	Then
		$$\sqrt{ \min_{x \in S^{q+1}} \phi(x, 0)^2 - 2(p-1) t } \le \phi \le \sqrt{ \max_{x \in S^{q+1}} \phi(x, 0 )^2 - 2(p-1) t}$$ 
	In particular,
		$$T \le \frac{1}{2(p-1)}  \max_{x \in S^{q+1}} \phi(x, 0 )^2$$
\end{prop}

An application of the maximum principle also implies the following gradient bound which was also observed in ~\cite{Carson17}.
\begin{prop} \label{gradBoundPhi}
	Assume $g = g_B + \phi^2 g_{S^p}$ is a smooth Ricci flow on $S^{q+1} \times S^p$ for $t \in [0, T)$.
	Then
	$$\max_{x \in S^{q+1} } | \nabla \phi|^2(x, t) \le \max\left( 1, \max_{x \in S^{q+1}} | \nabla \phi|^2(x, 0) \right)$$
\end{prop}
\begin{proof}
	The evolution equation for $| \nabla \phi|^2$ can be computed from the system (\ref{swRF}) as follows
	\begin{equation*} \begin{aligned}
		\partial_t | \nabla \phi|^2 = & \partial_t \left( g_B^{ij} \partial_i \phi \partial_j \phi \right) \\
		=&2 g_B^{ij} ( \partial_i \partial_t \phi) (\partial_j \phi) + \partial_t g_B^{ij} \partial_i \phi \partial_j \phi \\
		=& 2 \left\langle \nabla \phi, \nabla \left( \Delta \phi - (p-1) \frac{1- | \nabla \phi|^2}{\phi} \right) \right\rangle \\
		&+ 2 Rc_B(\nabla \phi, \nabla \phi) -2p \frac{ \nabla^i \nabla^j \phi}{\phi} \nabla_i \phi \nabla_j \phi\\
		=& \Delta | \nabla \phi|^2 - 2 | \nabla^2 \phi|^2 -2(p-1)\left\langle \nabla \phi, - \frac{1}{\phi^2} \nabla \phi - \nabla\left( \frac{ | \nabla \phi |^2}{\phi} \right) \right\rangle	 \\
		& -2p \frac{ \nabla^i \nabla^j \phi}{\phi} \nabla_i \phi \nabla_j \phi\\
	\end{aligned} \end{equation*}
	where the last equality follows from the Bochner formula.
	Using the equality
		$$-2p \frac{ \nabla^i \nabla^j \phi}{\phi} \nabla_i \phi \nabla_j \phi = -p \left\langle \nabla \phi , \nabla \left( \frac{ | \nabla \phi|^2}{\phi} \right) \right\rangle - p \frac{| \nabla \phi|^4}{\phi^2}$$
	it then follows that
	\begin{equation*} \begin{aligned}
		\partial_t | \nabla \phi|^2  
		=& \Delta | \nabla \phi|^2 - 2 | \nabla^2 \phi |^2 + 2(p-1) \frac{ | \nabla \phi |^2}{\phi^2} \\
		&+ (p-2) \left \langle \nabla \phi , \nabla \left( \frac{ | \nabla \phi|^2}{\phi } \right) \right \rangle - p \frac{ | \nabla \phi |^4}{\phi^2} \\
		=& \Delta | \nabla \phi|^2 - 2 | \nabla^2 \phi |^2 + (p-2) \left\langle \frac{ \nabla \phi}{\phi}, \nabla \left( | \nabla \phi|^2 \right) \right \rangle \\
		&+ 2(p-1) \frac{ | \nabla \phi|^2}{\phi^2} \left( 1 - | \nabla \phi|^2 \right) \\
		\le& \Delta | \nabla \phi|^2 + (p-2) \left\langle \frac{ \nabla \phi}{\phi}, \nabla \left( | \nabla \phi|^2 \right) \right \rangle 
		+ 2(p-1) \frac{ | \nabla \phi|^2}{\phi^2} \left( 1 - | \nabla \phi|^2 \right) \\
	\end{aligned} \end{equation*}
	The upper bound for $| \nabla \phi |^2$ then follows from the maximum principle.
\end{proof}

\subsubsection{Singly-Warped Product on $(-1,1) \times S^p$}
If instead we let $g_B = \chi^2 dx^2 + \phi^2 g_{S^p}$ denote an incomplete, rotationally symmetric metric on $(-1, 1) \times S^p$, 
then a doubly-warped product metric $g$ on $S^{q+1} \times S^p$ can similarly be regarded as a singly-warped product metric over $(-1, 1) \times S^p$, namely
	$$g = g_B + \psi^2 g_{S^p}$$
In this setup, if $g$ evolves by the Ricci flow then $g_B$ and $\psi$ satisfy
	\begin{equation} \label{swRF'} \left\{ \begin{aligned} 
		\partial_t g_B &= - 2 Ric_B + 2q \frac{ \nabla^2 \psi}{\psi} \\
		\partial_t \psi &= \Delta \psi - (q-1) \frac{ 1 - | \nabla \psi |^2}{\psi} \\
	\end{aligned} \right. \end{equation}
Again, the covariant derivatives here are with respect to the metric $g_B$.
Moreover, smoothness of the metric $g$ implies the following boundary behavior for $\psi$
\begin{equation*} \begin{aligned}
	\psi(x,t) =& 0 \qquad &\text{for all } x \in \{ \pm 1 \} \times S^p\\
	| \nabla \psi |^2(x,t) =& 1 		&\text{for all } x \in \{ \pm 1 \} \times S^p
\end{aligned} \end{equation*}

Again, an application of the maximum principle yields the following propositions:
\begin{prop} \label{cylinderComparePsi}
	Assume $g = g_B + \psi^2 g_{S^p}$ is a smooth Ricci flow on $S^{q+1} \times S^p$ for $t \in [0, T)$.
	Then
		$$\psi \le \sqrt{ \max_{x \in (-1,1) \times S^p} \psi(x, 0 )^2 - 2(q-1) t}$$
\end{prop}

\begin{prop} \label{gradBoundPsi}
	Assume $g = g_B + \psi^2 g_{S^p}$ is a smooth Ricci flow on $S^{q+1} \times S^p$ for $t \in [0, T)$.
	Then
	$$\max_{x \in (-1,1) \times S^p} | \nabla \psi|^2(x, t) \le \max\left( 1, \max_{x \in (-1,1) \times S^p} | \nabla \psi|^2(x, 0) \right)$$
\end{prop}

\subsubsection{Sideways Coordinates} \label{sidewaysCoords}
Aside from the above estimates, it is difficult to estimate the dynamics of Ricci flow solutions near the Ricci-flat cone in the above coordinate systems.
For this reason, we introduce another coordinate system in which most of the remaining analysis will be carried out.

Namely, in a connected domain where $\psi_s$ is bounded away from zero, the $S^q$ radius $\psi$ can itself be used as a coordinate and the metric can be written as
	$$g = \frac{1}{z} d \psi^2 + e^{2u} g_{S^p} + \psi^2 g_{S^q}$$
	$$z = \psi_s^2 \qquad z = z(\psi, t) \qquad u = u(\psi, t)$$
Following (\ref{sectionalCurvatures}), all sectional curvatures are convex linear combinations of 
\begin{equation*} \begin{aligned}
		-\frac{z_\psi}{2 \psi} =& -\frac{\psi_{ss}}{\psi} \qquad
		& -z u_{\psi \psi} - z u_\psi^2 - u_\psi \frac{z_\psi}{2} =& -\frac{ \phi_{ss}}{\phi}  \\
		\frac{1-z}{\psi^2} =& \frac{1 - \psi_s^2}{\psi^2}
		& e^{-2u} - zu_\psi^2 =& \frac{1- \phi_s^2}{\phi^2} 
	\end{aligned} \end{equation*}	
\begin{gather*}
	-\frac{z}{\psi} u_\psi =	-\frac{\psi_s \phi_s}{\psi \phi} 
\end{gather*}
In these coordinates, the Ricci flow for $g$ yields the following PDE system for $z$ and $u$ (cf. ~\cite{AIK15})
	\begin{equation} \label{sRF} \begin{aligned}
		z_t =& \mathcal{F}_\psi[z, u_\psi] \doteqdot z z_{\psi \psi} + z_\psi \left( \frac{q-1}{\psi} - \frac{z}{\psi} \right) - \frac{1}{2} z_\psi^2 + \frac{2(q-1)}{\psi^2} z(1-z) - 2pz^2 u_\psi^2 \\
		u_t =& \mathcal{E}_\psi[z, u] \doteqdot z u_{\psi \psi} + u_\psi \frac{z + q-1}{\psi} - (p-1) e^{-2u}
	\end{aligned} \end{equation}	
Here, $\partial_t = \partial_t|_\psi$, and so the $\partial_\psi$ and $\partial_t$ operators commute.
In particular, differentiating the $u$ equation with respect to $\psi$ yields the following \textit{linear} equation for $u_\psi$
\begin{equation*} \begin{aligned}
	\partial_t u_\psi =& \mathcal{L}_\psi[z,u]u_\psi \\
	\doteqdot& z u_{\psi \psi \psi} + u_{\psi \psi} \left[ z_\psi + \frac{z}{\psi} + \frac{q-1}{\psi} \right] + u_\psi \left[ \frac{z_\psi}{\psi} - \frac{z}{\psi^2} - \frac{q-1}{\psi^2} + 2 (p-1) e^{-2u} \right]
\end{aligned} \end{equation*} 

\subsubsection{Parabolically Rescaled Sideways Coordinates} \label{rescaledSidewaysCoords}
In order to investigate singularity formation at a chosen time $T$, it will be relevant to consider the parabolically rescaled sideways coordinates defined by
	$$\gamma = e^{\tau/2} \psi  \qquad \tau = -\log(T-t)$$
	$$Z(\gamma, \tau) = \psi_s^2(s,t) \qquad U(\gamma, \tau)  = \log \left( e^{\tau/2} \phi(s,t) \right)$$
	In these coordinates, the Ricci flow system (\ref{swRF}) is equivalent to
		\begin{equation} \label{psRF} \begin{aligned}
			\partial_\tau|_\gamma Z + \frac{\gamma}{2} Z_\gamma &= \mathcal{F}_\gamma[Z, U_\gamma] = \mathcal{F}^l_\gamma [Z] + \mathcal{F}^q_\gamma[Z] - 2pZ^2 U_\gamma^2 \\ \\
 			\partial_\tau |_\gamma U  + \frac{\gamma}{2} U_\gamma - \frac{1}{2} &= \mathcal{E}_\gamma[ Z, U] = Z U_{\gamma \gamma} +  \frac{Z+q-1}{\gamma} U_\gamma - (p-1) e^{-2U} \\
		\end{aligned} \end{equation}
	where
		\begin{equation*}
			\mathcal{F}^l_\gamma[Z] \doteqdot \frac{q-1}{\gamma} Z_\gamma + 2 \frac{q-1}{\gamma^2} Z	\qquad \text{and} \qquad
			\mathcal{F}^q_\gamma[Z] \doteqdot Z Z_{\gamma \gamma} - \frac{1}{2} Z_\gamma^2 - \frac{1}{\gamma} Z Z_\gamma - 2 \frac{q-1}{\gamma^2} Z^2
		\end{equation*}
	\\
	Observe that $\mathcal{F}^q_\gamma [Z] = \mathcal{Q}_\gamma [ Z, Z]$, where $\mathcal{Q}_\gamma$ is the symmetric bilinear operator defined by
		$$\mathcal{Q}_\gamma [ Z, W] \doteqdot  \frac{1}{2} Z W_{\gamma \gamma} + \frac{1}{2} W Z_{\gamma \gamma} - \frac{1}{2} Z_\gamma W_\gamma - \frac{1}{2 \gamma} Z W_\gamma - \frac{1}{2 \gamma} W Z_\gamma - \frac{2(q-1)}{\gamma^2} Z W$$
We also note that
	$$\partial_\tau U_\gamma + \frac{1}{2} U_\gamma + \frac{\gamma}{2} U_{\gamma \gamma} = \mathcal{L}_\gamma[Z, U] U_\gamma$$
		
\subsection{Special Solutions} \label{specialSolutions}
Before continuing, we collect some known examples of doubly-warped product Ricci flow solutions.
The following solutions are all self-similar though they may not be smooth. 

\subsubsection{Products of Shrinking Solitons}
For $p, q \ge 2$, there are several doubly-warped shrinking Ricci solitons given by products of shrinking Ricci solitons
\begin{itemize}
	\item Einstein products of spheres: $S^{q+1} \times S^p$, $S^{p+1} \times S^q$
	\item generalized cylinders: $\mathbb{R} \times S^p \times S^q$, $\mathbb{R}^{q+1} \times S^p$, $\mathbb{R}^{p+1} \times S^q$
\end{itemize}
Note that ~\cite{B98} contains examples of Einstein metrics on products of spheres in low dimensions that are doubly-warped products but not homogeneous.

\subsubsection{Ricci Flat Cone} \label{ricciFlatConeMetric}
On the cone $C(S^p \times S^q)$ over $S^p \times S^q$, there is a Ricci flat metric given by
	$$g = dx^2 + A^2 x^2 g_{S^p} + B^2 x^2 g_{S^q} \qquad x \in [0, \infty)$$
	$$\text{where } A \doteqdot \sqrt{ \frac{p-1}{p+q-1} } \quad \text{ and } \quad B \doteqdot \sqrt{ \frac{q-1}{p+q-1} }$$
This metric can be regarded as a non-smooth stationary solution to the Ricci flow.
In the sideways coordinates and parabolically rescaled sideways coordinates, this metric is given by
\begin{equation*} \begin{aligned}
	z_{RFC}(\psi) =& B^2  \qquad	&	u_{RFC}(\psi) =& \log \left( \frac{A}{B} \psi \right) \\
	Z_{RFC}(\gamma) =& B^2 		&	U_{RFC}(\gamma) =& \log \left( \frac{A}{B} \gamma \right) \\
\end{aligned} \end{equation*}

\subsubsection{Sine Cone} \label{sineCone}
There is a family of singular, reflection symmetric, Einstein metrics given by
	$$g_{SC, \lambda} = dx^2 +  \lambda^2 \sin^2 (x/ \lambda) A^2 g_{S^p} + \lambda^2 \sin^2 (x/\lambda) B^2 g_{S^q} 	\qquad	x \in [0, \lambda \pi]$$	
	$$\text{with } Rc[ g_{SC, \lambda} ] = \frac{ p+q}{\lambda^2} g_{SC, \lambda}$$
As such, 
	$$g_{SC, \lambda(t)} \qquad \text{ with } \qquad \lambda(t) = \sqrt{ \lambda_0^2 - 2(p+q)t}$$
can be regarded as a singular Ricci flow solution for $t \in \left(-\infty, \frac{ \lambda_0^2}{2(p+q)} \right)$.

In the sideways coordinates and parabolically rescaled sideways coordinates, this metric is given by
\begin{equation*} \begin{aligned}
	z_{SC, \lambda}(\psi) =& B^2 - \frac{ \psi^2}{\lambda^2} \qquad	&	u_{SC, \lambda}(\psi) =& \log \left( \frac{A}{B} \psi \right) \\
	Z_{SC, \lambda}(\gamma, \tau) =& B^2 - \frac{ \gamma^2 }{e^{\tau} \lambda^2}		&	U_{SC, \lambda}(\gamma) =& \log \left( \frac{A}{B} \gamma \right) \\
\end{aligned} \end{equation*}
In particular, the corresponding singular Ricci flow solution is given by
\begin{equation*} \begin{aligned}
	z_{SC, \lambda}(\psi, t) =& B^2 - \frac{ \psi^2}{\lambda_0^2 - 2(p+q)t} \qquad	&	u_{SC, \lambda}(\psi) =& \log \left( \frac{A}{B} \psi \right) \\
	Z_{SC, \lambda}(\gamma) =& B^2 - \frac{ \gamma^2 }{2(p+q)}		&	U_{SC, \lambda}(\gamma) =& \log \left( \frac{A}{B} \gamma \right) \\
\end{aligned} \end{equation*}
Here, the domain of $(z,u)(\psi, t)$ is 
$$\left\{ (\psi, t) \in [0, \infty) \times \left(-\infty,  \frac{ \lambda_0^2}{2(p+q)} \right) : \psi \le B \sqrt{ \lambda_0^2 - 2(p+q)t} \right\}$$
 and $\tau = - \log \left(  \frac{ \lambda_0^2}{2(p+q)} - t \right)$ is implicitly used to define $Z, U$ locally.

\subsubsection{B{\"o}hm's Ricci Flat $\mathbb{R}^{q+1} \times S^p$} \label{bohmMetric}
In ~\cite{B99}, B{\"o}hm shows that, for any $p,q \ge 2$, there exists a smooth, complete, doubly-warped, Ricci-flat metric $g_{BGK}$ on $\mathbb{R}^{q+1} \times S^p$ which is unique up to rescaling.
The construction of this metric was also carried out by Gastel-Kronz ~\cite{GK04}.
The warping functions $\phi_{BGK}(s), \psi_{BGK}(s)$ used to define the metric $g_{BGK}$ are not known explicitly but are given as solutions to a system of ordinary differential equations.
Nonetheless, it is known that this metric is asymptotically conical with asymptotic tangent cone given by the Ricci-flat cone $( C(S^p \times S^q), g_{RFC} )$.

\subsection{Sturmian Theorem}
Informally, the ``Sturmian theorem," sometimes known as the ``first Sturmian theorem," states that the the number of zeros of a solution to a linear parabolic PDE in one space variable is nonincreasing in time.
In this subsection, we show that the Sturmian theorem holds for the space derivatives of the warping functions of a doubly-warped product metric.
The reader is advised to consult ~\cite{Angenent88, GH05} for additional background on the (first) Sturmian theorem.

\begin{definition}
	Let $f(x,t)$ be a function defined on $(-1,1) \times [0,T']$.
	For any $t \in [0,T']$, the \textit{number of sign changes of $f$ at time $t$} is defined by
	\begin{equation*}
		\mathfrak{Z}(f, t) \doteqdot \sup \left\{ k \in \mathbb{N} : 
		\begin{array}{c}
			\text{there exists } -1 < x_0 < x_1 < ... < x_k <1 \\
			\text{ such that } f(x_{i-1},t) \cdot f(x_{i},t) <0  \text{ for all } 1 \le  i \le k
		\end{array} \right\}
	\end{equation*}
\end{definition}

\begin{prop} \label{SturmianThm}
	Let $g = \chi(x,t)^2 dx^2 + \phi(x,t)^2 g_{S^p} + \psi(x,t)^2 g_{S^q}$ be a smooth doubly-warped Ricci flow solution on $S^{q+1} \times S^p$ defined for $t \in [0, T']$.
	$\mathfrak{Z}(\psi_s, T')$, the number of sign changes of $\psi_s( x, T')$, is no larger than $\mathfrak{Z}(\psi_s, 0)$, the number of sign changes of $\psi_s(x,0)$. 
	
	
\end{prop}
\begin{proof}
	$g$ is a smooth Ricci flow solution so say 
		$$\sup_{(x,t) \in S^{q+1} \times S^p \times [0,T']} |Rm|(x,t) \le K$$
	Moreover, $\phi$ is uniformly bounded below, since, at a spacetime minimum of $\phi$,
		$$\frac{1}{\phi^2} = \frac{1 -  \phi_s^2}{\phi^2} \le K$$
	Smoothness of the metric also implies $\chi(\cdot , 0 ) >c > 0$ for some $c$.
	Because $\chi$ satisfies
		$$\partial_t |_x \chi = \chi \left( p \frac{\phi_{ss}}{\phi} + q \frac{\psi_{ss}}{\psi} \right) \qquad \text{and} \qquad \left| p \frac{\phi_{ss} }{\phi} + q \frac{\psi_{ss}}{\psi} \right| \lesssim_{p,q} K \footnote{
		Throughout $``A \lesssim B"$ means that there exists a constant $C$ such that $A \le C B$.
		$``A \lesssim_{a,b} B"$ indicates that the constant $C = C(a,b)$ depends on $a,b$.
		$``A \sim B"$ means that $A \lesssim B \lesssim A$.
	}	$$
	it follows that $\chi$ is uniformly bounded below by a constant $c' > 0$.

	It can be computed from the system (\ref{RF}) and the commutator formula for $\left[ \left. \frac{\partial}{\partial t} \right|_x , \frac{\partial}{\partial s} \right]$ that
	$$\partial_t |_x \psi_s  = \psi_{sss} + \left( p \frac{\phi_s}{\phi}  \right)  \psi_{ss} +  \left( -p \frac{\phi_s^2}{\phi^2} + (q-1) \frac{1-\psi_s^2}{\psi^2}  + (q-2) \frac{ \psi_{ss}}{\psi} \right) \psi_s$$
	Considering $\psi_s$ as a function of $(x,t)$, $\psi_s$ satisfies an equation of the form
	\begin{equation*} \begin{aligned}
		\partial_t|_x \psi_s 
		=& \frac{1}{\chi^2} \partial_{xx} \psi_s + \frac{1}{\chi} \left(  p \frac{\phi_s}{\phi}  - \frac{ \chi_x}{\chi^2} \right)  \partial_x \psi_s \\
		&+ \left( -p \frac{\phi_s^2}{\phi^2} + (q-1) \frac{1-\psi_s^2}{\psi^2}  + (q-2) \frac{ \psi_{ss}}{\psi} \right) \psi_s
	\end{aligned} \end{equation*}
	The uniform bounds on $\phi, \phi_s, \chi$, and curvature therefore imply that $\psi_s(x,t)$ satisfies a linear parabolic equation for which the Sturmian theorem (theorem 2.1 of ~\cite{GH05}) applies.
		
\end{proof}

\begin{cor} \label{monotonePsi}
	Let $g = \chi(x,t)^2 dx^2 + \phi(x,t)^2 g_{S^p} + \psi(x,t)^2 g_{S^q}$ be a smooth doubly-warped Ricci flow solution on $S^{q+1} \times S^p$ defined for $t \in [0, T)$ which is additionally reflection symmetric.
	Then $\mathfrak{Z}(\psi_s, t) \ge 1$ for all $t \in [0,T)$.
	If, additionally, $\mathfrak{Z}(\psi_s, 0) = 1$, then, for any $t \in (0,T)$, $\psi(s, t)$ is increasing in $s$ for all $s$ in $\big( s(-1,t), s(0,t) \big)$ and decreasing in $s$ for all $s$ in $\big( s(0,t), s(-1,t) \big)$.
\end{cor}
\begin{proof}
	The first statement follows from observing that, for a reflection symmetric metric, $\psi_s$ is an odd function of $x$.

	To prove the second statement, observe from the proof of proposition \ref{SturmianThm} that $\psi_s(x,t)$ satisfies a linear evolution equation with smooth coefficients that are, in particular, bounded on any time interval $[0, T']$ with $T' < T$.
	In particular, a \textit{strong} maximum principle holds.
	Note that smoothness of the metric $g$ and reflection symmetry forces $\psi_s(-1, t) = - \psi_s(1, t) = 1$ and $\psi_s(0, t) = 0$ for all $t \in [0, T)$.
	The additional assumption of $\mathfrak{Z}(\psi_s, 0) = 1$ also implies that
	\begin{equation*}
		\psi_s(x, 0) \left\{ \begin{array}{ccl}
			\ge 0, & \quad & \text{if } x \in [-1, 0] \\
			\le 0, & & \text{if } x \in [0, 1] \\
		\end{array} \right.
	\end{equation*}
	Therefore, the strong maximum principle implies that for any $t \in (0,T)$
		\begin{equation*}
		\psi_s(x, t) \left\{ \begin{array}{ccl}
			> 0, & \quad & \text{if } x \in [-1, 0) \\
			< 0, & & \text{if } x \in (0, 1] \\
		\end{array} \right.
	\end{equation*}
	The second statement of the corollary then follows immediately.
\end{proof}

\subsection{Initial Simplifying Assumptions} \label{initSimplifyingAssumptions}
To simplify the analysis that follows, we shall make the following assumptions that will apply for the rest of the paper.
\begin{enumerate}
	\item The metrics are smooth (prior to the singularity time) doubly-warped product metrics on $S^{q+1} \times S^p$ which are additionally reflection symmetric, and
	\item $\psi_s(s, 0)$ is positive for $s \in \big( s(-1, 0), s(0,0) \big)$ and negative for $s \in \big( s(0,0), s(1,0) \big)$.
	\item $| \nabla \phi |$ and $| \nabla \psi |$ are bounded above by $1$ at $t = 0$
\end{enumerate}
In light of the second assumption and corollary \ref{monotonePsi}, the sideways coordinates $(z,u)(\psi,t)$ are valid coordinates outside the submanifold of reflection symmetry $\{ s = s(0,t) \} \subset S^{q+1} \times S^p$ which we refer to as \textit{the equator}.
Moreover, $z  = \psi_s^2$ is positive everywhere but this submanifold.

In terms of the parabolically rescaled sideways coordinates, define
\begin{equation*} \begin{aligned}
	\tl{Z}(\gamma, \tau) \doteqdot& Z(\gamma, \tau) - Z_{RFC} (\gamma) = Z(\gamma, \tau) - B^2\\
	\tl{U}(\gamma, \tau) \doteqdot& U(\gamma, \tau) - U_{RFC} (\gamma) = U(\gamma, \tau) - \log \left( \frac{A}{B} \gamma \right)
\end{aligned} \end{equation*}
Recall that $A$ and $B$ denote the constants associated to the Ricci flat cone metric
	$$A = \sqrt{ \frac{p-1}{p+q-1} } \quad \text{ and } \quad B = \sqrt{ \frac{q-1}{p+q-1} }$$
Because $(Z,U)(\gamma, \tau)$ is a solution of \ref{psRF}, $(\tl{Z}, \tl{U})(\gamma, \tau)$ solve
	\begin{equation} \label{ppsRF} 
	\begin{aligned}
		\partial_\tau|_\gamma \tl{Z} = & B^2 \mathcal{D}_Z \tl{Z} + B^2 \mathcal{N} \tl{U} + Err_Z[ \tl{Z}, \tl{U} ]\\
		\partial_\tau|_\gamma \tl{U} = & B^2 \mathcal{D}_U \tl{U} + Err_U [ \tl{Z}, \tl{U}] \\
	\end{aligned} \end{equation}
	where
	\begin{equation*} \begin{aligned}
		\mathcal{D}_Z \doteqdot& \partial_{\gamma \gamma} + \left( \frac{n-2}{\gamma} - \frac{\gamma}{2B^2} \right) \partial_\gamma - \frac{2(n-1)}{\gamma^2} \\
		\mathcal{D}_U \doteqdot& \partial_{\gamma \gamma} + \left( \frac{n}{\gamma} - \frac{\gamma}{2B^2} \right) \partial_\gamma + \frac{2(n-1)}{\gamma^2} \\
		\mathcal{N} \doteqdot& - \frac{4p B^2}{\gamma} \partial_\gamma \\
		Err_Z  [ \tl{Z}, \tl{U}] (\gamma, \tau) \doteqdot& \tl{Z} \tl{Z}_{\gamma \gamma} - \frac{1}{2} \tl{Z}_\gamma^2 - \frac{ 1}{\gamma} \tl{Z} \tl{Z}_\gamma - 2 \frac{q-1}{\gamma^2 } \tl{Z}^2 	\\
		& -2p \left\{ B^4 \tl{U}_\gamma^2 + \frac{ 4B^2}{\gamma } \tl{Z} \tl{U}_\gamma + 2 B^2 \tl{Z} \tl{U}_\gamma^2 + \frac{1}{\gamma^2} \tl{Z}^2 + \frac{2}{\gamma} \tl{Z}^2 \tl{U}_\gamma + \tl{Z}^2 \tl{U}^2_\gamma \right\}	\\
		Err_U  [ \tl{Z}, \tl{U}] (\gamma, \tau) \doteqdot& \tl{Z} \tl{U}_{\gamma \gamma} + \frac{ \tl{Z} }{\gamma} \tl{U}_\gamma + \frac{n-1}{\gamma^2} \left( 1 - e^{- 2 \tl{U} } - 2 \tl{U} \right) \\
	\end{aligned} \end{equation*}

For $M, \beta > 0$ to be determined, let $\grave{\chi}(\gamma, \tau)$ denote a bump function such that
\begin{equation*} \begin{aligned}
	\grave{\chi}(\gamma, \tau) \equiv& 1		&\text{for } \gamma \le M e^{\beta \tau} \\
	0 \le \grave{\chi}(\gamma, \tau) \le& 1	&\text{for } M e^{\beta \tau} \le \gamma \le M e^{\beta \tau}+1\\
	\grave{\chi}(\gamma, \tau) \equiv& 0		&\text{for } \gamma \ge M e^{\beta \tau} +1 \\
	| \grave{\chi}_\gamma| + | \grave{\chi}_{\gamma \gamma} | \lesssim& 1		&\text{for } M e^{\beta \tau} < \gamma < M e^{\beta \tau} + 1 \\
	| \grave{\chi}_\tau | \lesssim& M \beta e^{\beta \tau}		\qquad &\text{for } M e^{\beta \tau} < \gamma < M e^{\beta \tau} + 1 \\
\end{aligned} \end{equation*}
Note that such a function can be shown to exist by taking $\grave{\chi}(\gamma, \tau) = f( \gamma - Me^{\beta \tau} )$ for a suitably chosen bump function $f : \mathbb{R} \to \mathbb{R}$.
We localize the above differential equation (\ref{ppsRF}) by considering
	$$\grave{Z} \doteqdot \grave{\chi} \cdot \tl{Z} \qquad		\grave{U} \doteqdot \grave{\chi} \cdot \tl{U}$$
These evolve by
\begin{equation*} \begin{aligned}
	\grave{Z}_\tau =& B^2 \mathcal{D}_Z \grave{Z}  + \grave{\chi} B^2 \mathcal{N} \tl{U} + \grave{\chi}  Err_Z (\tl{Z}, \tl{U} ) + B^2 [ \grave{\chi}, \mathcal{D}_Z ] \tl{Z} + \grave{\chi}_\tau \tl{Z} \\
	\grave{U}_\tau =& B^2 \mathcal{D}_U \grave{U}  + \grave{\chi}  Err_U (\tl{Z}, \tl{U} ) + B^2 [ \grave{\chi}, \mathcal{D}_U ] \tl{U} + \grave{\chi}_\tau \tl{U} \\
\end{aligned} \end{equation*}
where the commutators are given by
\begin{equation} \label{commutators} \begin{aligned}[]
	[\grave{\chi}, \mathcal{D}_Z ] \tl{Z} =& \grave{\chi}_\gamma \left\{ -2 \tl{Z}_\gamma - \tl{Z} \left( \frac{n-2}{\gamma} - \frac{\gamma}{2B^2} \right) \right\} - \grave{\chi}_{\gamma \gamma} \tl{Z} \\
	[\grave{\chi}, \mathcal{D}_U ] \tl{U} =& \grave{\chi}_\gamma \left\{ -2 \tl{U}_\gamma - \tl{U} \left( \frac{n}{\gamma} - \frac{\gamma}{2B^2} \right) \right\} - \grave{\chi}_{\gamma \gamma} \tl{U} \\
\end{aligned} \end{equation}
Note that the commutators and $\grave{\chi}_\tau$ are supported in the region $$\{ M e^{\beta \tau} < \gamma < M e^{\beta \tau} + 1 \}$$

\section{The Initial Data and the Topological Argument} \label{BoxDefn}

In this section, we outline the choice of initial data and the degree argument used to prove the existence of Ricci flow solutions that form conical singularities in theorem \ref{mainThmAbridged}.
The general approach and topological degree argument mirrors that used in ~\cite{HV, M04Dec} for a semilinear heat equation, ~\cite{V94} for mean curvature flow, and ~\cite{BS17} for harmonic map heat flow.
Additionally, the results in ~\cite{HV, M04Dec} have been generalized to the non-radial setting in ~\cite{Collot17}
through the use of more robust energy methods in the continuity of ~\cite{Collot18, MRR14}.
Unlike the previous cases, however, we must consider a \textit{system} of differential equations which contributes added difficulties to the analysis.
This fact also motivates our use of the degree argument rather than the Wa{\.z}ewski retraction principle ~\cite{Wazewski47}
used for other existence theorems in geometric flows ~\cite{AV97, AIK15},
though the two approaches are generally similar.

Before delving into details, we provide a broad sketch of the argument used to prove theorem \ref{mainThmAbridged}.
The linearization of (\ref{ppsRF})
at the Ricci-flat cone is given by
\begin{equation*}
	\partial_\tau \left( \begin{array}{c}
		\tl{Z} \\
		\tl{U} \\
	\end{array} \right)
	= B^2
	\left( \begin{array}{cc}
	\mathcal{D}_Z & \cl{N} \\
	0 & \cl{D}_U \\
	\end{array} \right)
	\left( \begin{array}{c}
		\tl{Z} \\
		\tl{U} \\
	\end{array} \right)
\end{equation*}
The linearization has separable solutions given by $ e^{B^2 \lambda_k \tau} ( \tl{Z}_{\lambda_k}, \tl{U}_{\lambda_k} )(\gamma)$
where 
	\begin{equation*} 
		\lambda_k \in \sigma( \cl{D}_U) \setminus \sigma( \cl{D}_Z), \quad
		\lambda_k \tl{U}_{\lambda_k} = \cl{D}_U \tl{U}_{\lambda_k}, \quad 
		\text{ and }\quad  \tl{Z}_{\lambda_k} = ( \lambda_k - \cl{D}_Z)^{-1} \cl{N} \tl{U}_{\lambda_k} 
	\end{equation*}
(see appendices \ref{LinearizedOps} and \ref{Eigenfunctions} for additional details).	
If $\lambda_k < 0$, then this separable solution limits to the Ricci-flat cone as $\tau \nearrow \infty$.
For any $k \in \mathbb{N}$ with $\lambda_k < 0$, we establish the existence of solutions $(\tl{Z}, \tl{U})(\gamma, \tau)$ to the full equation (\ref{ppsRF})
that are quantitatively close to $e^{B^2 \lambda_k \tau} ( \tl{Z}_{\lambda_k}, \tl{U}_{\lambda_k} )(\gamma)$
on parabolics scales $\gamma \sim 1$.

The existence of these solutions is accomplished via a topological degree argument.
For fixed $k \in \mathbb{N}$ with $\lambda_k < 0$,
we consider a collection of initial data $( \tl{Z}_{\ul{p}, \ul{q}}, \tl{U}_{\ul{p}, \ul{q}} )( \gamma, \tau_0)$ parametrized by a closed ball $\ol{B} \ni (\ul{p}, \ul{q})$ in $\mathbb{R}^{k+K+1}$,
and the corresponding solutions $( \tl{Z}_{\ul{p}, \ul{q}}, \tl{U}_{\ul{p}, \ul{q}})$ to (\ref{ppsRF}).
For $\tau_1 > \tau_0$, we define a function
	\begin{gather*} 
		P_{\tau_0, \tau_1} : \mathcal{W}_{\tau_0, \tau_1} \to \mathbb{R}^{k+K+1} \\
		\text{on }\cl{W}_{\tau_0, \tau_1} = 
		\left\{  (\ul{p}, \ul{q}) \in \ol{B} \quad  \left|  
		\begin{array}{c}
			( \tl{Z}_{\ul{p}, \ul{q}}, \tl{U}_{\ul{p}, \ul{q}})(\gamma, \tau) 
			\text{ stays ``quantitatively close" to } \\
			e^{B^2 \lambda_k \tau} ( \tl{Z}_{\lambda_k}, \tl{U}_{\lambda_k} )(\gamma) 
			\text{ for all } \tau \in [\tau_0, \tau_1] 
		\end{array} \right.
		\right \}
	 \end{gather*}
Informally, $P_{\tau_0, \tau_1}$ gives the amount of ``slower" eigenmodes that 
$\tl{U}_{\ul{p}, \ul{q}} - e^{B^2 \lambda_k \tau} \tl{U}_{\lambda_k}$ and 
$\tl{Z}_{\ul{p}, \ul{q}} - e^{B^2 \lambda_k \tau} \tl{Z}_{\lambda_k}$ contain.
In sections \ref{PointwiseEst} - \ref{LongTimeEsts},
we develop a priori estimates for $( \tl{Z}_{\ul{p}, \ul{q}} , \tl{U}_{\ul{p}, \ul{q}} )$ on the inner, parabolic, and outer regions.
A notable difference with previous related works is the collection of estimates for the inner region in subsection \ref{InnerRegionPointwiseEsts}
that require a more delicate application of barrier arguments.
From these a priori estimates, it is shown that the degree of $P_{\tau_0, \tau_1}$ with respect to $0 \in \mathbb{R}^{k+K+1}$ is $1$ for all $\tau_1 > \tau_0$.
The existence of the sought after solution $( \tl{Z}_{\ul{p}, \ul{q}}, \tl{U}_{\ul{p}, \ul{q}})(\gamma, \tau)$ with $( \ul{p}, \ul{q} ) \in \cl{W}_{\tau_0, \infty}$ soon follows.

\subsection{Construction of Initial Data}

We begin by formalizing what is meant by ``quantitatively close" to the separable solution on parabolic scales.		
Our conditions in the parabolic region amount to saying that the profile functions $\tl{U}, \tl{Z}$ are weighted $C^2$-close to separable solutions $e^{B^2 \lambda_k \tau} \tl{U}_{\lambda_k}(\gamma), e^{B^2 \lambda_k \tau} \tl{Z}_{\lambda_k}(\gamma)$ of the flow linearized at the Ricci-flat cone.
The reader is advised to consult appendices \ref{LinearizedOps} and \ref{Eigenfunctions} for background material on the functions $\tl{Z}_{\lambda_k}(\gamma), \tl{U}_{\lambda_k}(\gamma)$.
\begin{definition}	
	$\mathcal{B}^{\ul{\eta}}_{\tau_0, \tau_1} = \mathcal{B}\left[ \ul{\eta} = \left( \eta_0^U, \eta_1^U, \eta_2^U, \eta_0^Z, \eta_1^Z, \eta_2^Z \right), \tau_0, \tau_1, \Upsilon_{U,Z}, M ,\alpha, \beta , \lambda_k \right]$ consists of the functions $( \tl{Z}, \tl{U} ) (\gamma , \tau)$ that satisfy the following bounds for all $\tau \in [\tau_0, \tau_1]$:

	\begin{equation*} \begin{aligned}
		| \tl{U}(\gamma, \tau) - e^{ B^2 \lambda \tau} \tl{U}_\lambda(\gamma)| \le& \eta_0^U \left( \gamma^{-2k - 2B^2\lambda} + \gamma^{- 2B^2\lambda} \right) e^{B^2 \lambda \tau}
		&& \Upsilon_U e^{-\alpha \tau} < \gamma < M e^{\beta \tau}\\
		| \tl{U}_\gamma - e^{ B^2 \lambda \tau} \tl{U}'_\lambda(\gamma)| \le& \eta_1^U \left( \gamma^{-2k - 2B^2\lambda-1} + \gamma^{-2B^2\lambda} \right) e^{B^2 \lambda \tau}
		&& \Upsilon_U e^{-\alpha \tau} < \gamma < M e^{\beta \tau}\\
		| \tl{U}_{\gamma \gamma} - e^{ B^2 \lambda \tau} \tl{U}''_\lambda(\gamma) | \le& \eta_2^U \left( \gamma^{-2k -2B^2 \lambda-2} + \gamma^{-2B^2\lambda} \right) e^{B^2 \lambda \tau}
		&& \Upsilon_U e^{-\alpha \tau} < \gamma < M e^{\beta \tau}\\
		| \tl{Z} - e^{B^2 \lambda \tau} \tl{Z}_\lambda(\gamma) | \le & \eta^Z_0 \left( \gamma^{-2k - 2B^2\lambda} + \gamma^{- 2B^2\lambda } \right) e^{B^2 \lambda \tau}
		&&\Upsilon_Z e^{-\alpha \tau} < \gamma < M e^{\beta \tau}\\
		| \tl{Z}_\gamma - e^{B^2 \lambda \tau} \tl{Z}'_\lambda(\gamma)| \le & \eta^Z_1 \left( \gamma^{-2k - 2B^2\lambda-1} + \gamma^{- 2B^2\lambda} \right) e^{B^2 \lambda \tau} 
		&& \Upsilon_Z e^{-\alpha \tau} < \gamma < M e^{\beta \tau}\\
		| \tl{Z}_{\gamma \gamma}- e^{B^2 \lambda \tau} \tl{Z}''_\lambda(\gamma) | \le & \eta^Z_2 \left( \gamma^{-2k -2B^2 \lambda-2} + \gamma^{- 2B^2\lambda } \right) e^{B^2 \lambda \tau}
		&& \Upsilon_Z e^{-\alpha \tau} < \gamma < M e^{\beta \tau}\\
	\end{aligned} \end{equation*}
\end{definition}
	

\begin{definition}
	(Outer Region Conditions)
	$\mathcal{O}[ \ul{D} = \left( D_0, D_1, D_2 \right), \Gamma(\tau) , \tau_0, \tau_1]$ consists of the functions $(\tl{Z}, \tl{U} )(\gamma, \tau)$ that satisfy the following bounds for all $\tau \in [\tau_0, \tau_1]$ and $\Gamma(\tau) \le \gamma$:
	\begin{gather*}
		0 \le \tl{U}, \qquad
		\tl{U}_\gamma \le \frac{ D_0}{\gamma},   \qquad
		| Rm | \le D_2 e^{\tau}, 			\text{ and }	\\
		z^-(\gamma e^{- \tau/2} , T - e^{- \tau} ; D_0+1, D_1) \le Z(\gamma, \tau) \le 1
	\end{gather*}
	where $z^-(\psi, t ; C, K_0)$ is defined as in lemma \ref{outerSineConeBarrierZ}.
\end{definition}

The following conditions for the profile functions in the inner region are motivated by an analysis of Ricci-flat B{\"o}hm metric on $\mathbb{R}^{q+1} \times S^p$ (see section \ref{bohmMetric}).
\begin{definition} \label{innerRegConds}
	(Inner Region Conditions)\\
	$\mathcal{I}[\ul{D} = \left( D_0^U, D_0^Z, D_1, D_2, D_3 \right),a,b,c, \kappa, \epsilon,\Upsilon, \Upsilon_{U}, \Upsilon_Z, \tau_0, \tau_1]$
	consists of the functions $(\tl{Z}, \tl{U})(\gamma, \tau)$ that satisfy the following bounds for all $\tau \in [\tau_0, \tau_1]$:
	
	(Inner Region Barriers I)
	\begin{equation*} \begin{aligned}
		0 \le \tl{U} & \qquad 
		&& \text{for all } 0 \le \gamma \le \Upsilon_U e^{-\alpha \tau}\\ 
		-\frac{1}{\gamma} \le \tl{U}_\gamma \le 0 &
		&& \text{for all } 0 \le \gamma \le \Upsilon_U e^{-\alpha \tau}\\ 
		\\
		0 \le \tl{U}_{\gamma \gamma}  + \frac{\tl{U}_\gamma}{\gamma}  &
		&& \text{for all } 0 \le \gamma \le \Upsilon_U e^{-\alpha \tau}\\ 
		\\
		0 \le \tl{Z}(\gamma, \tau) \le 1-B^2&
		&& \text{for all } 0 \le \gamma \le \Upsilon_Z e^{-\alpha \tau}\\ 
		\\
	\end{aligned} \end{equation*}
	
	(Weighted $C^2$ Bounds)
	\begin{equation*} \begin{aligned}
		| \gamma \tl{U}_\gamma | + | \gamma^2 \tl{U}_{\gamma \gamma} | \le D_0^U 	
		\qquad & \text{for all } 0 < \gamma \le \Upsilon_U e^{-\alpha \tau} \\
		| \tl{Z} | + | \gamma \tl{Z}_\gamma | + | \gamma^2 \tl{Z}_{\gamma \gamma} | \le D_0^Z 	
		\qquad & \text{for all } 0 < \gamma \le \Upsilon_Z e^{-\alpha \tau} \\
	\end{aligned} \end{equation*}
	
	(Inner Region Barriers II)
	\begin{equation*} \begin{aligned}
		\tl{U} &\le D_1 \Upsilon^{a- 2k -2B^2 \lambda_k} (\gamma e^{\alpha \tau})^{-a} 
		&&  \text{for all } 0 < \gamma \le \Upsilon_U e^{-\alpha \tau} \\
		\tl{Z} &\le D_2 \Upsilon^{\frac{b}{a}(a-2k-2B^2 \lambda_k) + \epsilon} (\gamma e^{\alpha \tau})^{-b}  
		&&  \text{for all } 0 < \gamma \le \Upsilon_Z e^{-\alpha \tau}\\
		\tl{U} &\le D_3 \Upsilon^{c} (\gamma e^{\alpha \tau})^{-2k-2B^2 \lambda_k - \kappa} 
		&&  \text{for all } 0 < \gamma \le \Upsilon_U e^{-\alpha \tau}\\
	\end{aligned} \end{equation*}
\end{definition}

\begin{definition}
	Define $\mathcal{G}$ to be the set of smooth Riemannian metrics $g$ on $S^{q+1} \times S^p$ such that 
	\begin{enumerate}
		\item $g$ is a doubly-warped product metric,
			$$g = ds^2 + \phi^2 g_{S^p} + \psi^2 g_{S^q}		\qquad (s = s(x))$$
		\item $g$ is reflection symmetric across the equator $s = s(0)$, and
		\item the warping function $\psi$ in monotone in the region $s(-1) < s < s(0)$ and so 
		the sideways coordinates $(z, u)(\psi)$ are valid coordinates away from the equator $s = s(0)$.
	\end{enumerate}
\end{definition}

We are now ready to define our choice of initial data.

\begin{definition} \label{assignmentOfInitData}
	For constants $k, a,b,c, \kappa, \epsilon, \epsilon_0, \ul{D}^{\mathcal{I}}= \left(D^{\mathcal{I}}\right)_{0,1,2,3}^{U, Z}, \ul{D}^{\mathcal{O}} = \left( {D}^{\mathcal{O}} \right)_{0,1,2},$ $ \ul{\Upsilon}, \Upsilon_U, M, \ol{\beta} , \tau_0$,
	define a collection of Riemannian metrics $g_{\ul{p}, \ul{q}} \in \mathcal{G}$ parametrized by 
	$$(\ul{p}, \ul{q}) \in B_{\epsilon_0 e^{B^2 \lambda_k \tau_0}} (0) \subset \mathbb{R}^k \times \mathbb{R}^{K+1}$$
	 with the following properties:
	\begin{enumerate}
		\item (Perturbation of Cone in Parabolic Region)
			$$\tl{U}_{\ul{p}, \ul{q}}(\gamma, \tau_0) = \sum_{j=0}^{k-1} p_j \tl{U}_{\lambda_j} + e^{B^2 \lambda_k \tau_0} \tl{U}_{\lambda_k}		\qquad \text{for all } \ul{\Upsilon} e^{-\alpha \tau_0} < \gamma < M e^{\ol{\beta} \tau_0} $$
			$$\tl{Z}_{\ul{p}, \ul{q}}(\gamma, \tau_0) = \sum_{j=0}^{K} q_j \tl{Z}_{h_j} + e^{B^2 \lambda_k \tau_0} \tl{Z}_{\lambda_k}		\qquad \text{for all } \ul{\Upsilon} e^{-\alpha \tau_0} < \gamma < M e^{\ol{\beta} \tau_0}$$
			where $K = K(k)$ is the index of the eigenvalue such that $h_{K+1} < \lambda_k < h_K$.
			
		\item (Inner Region)
				$$\left( \tl{Z}_{\ul{p}, \ul{q}}, \tl{U}_{\ul{p}, \ul{q}}\right) (\gamma, \tau_0) \in \mathcal{I}[ \ul{D}^{\mathcal{I}}, a,b,c, \kappa, \epsilon,\Upsilon_U, \ul{\Upsilon}, \ul{\Upsilon}, \tau_0, \tau_0] $$

		\item (Outer Region)
			$$\left( \tl{Z}_{\ul{p}, \ul{q}}, \tl{U}_{\ul{p}, \ul{q}}\right) (\gamma, \tau_0) \in \mathcal{O}[ \ul{D}^{\mathcal{O}}, M e^{\ol{\beta} \tau_0}, \tau_0, \tau_0]$$
	
	\end{enumerate} 
	where 
	$$( \tl{Z}_{\ul{p}, \ul{q}}, \tl{U}_{\ul{p}, \ul{q}}) = ( {Z}_{\ul{p}, \ul{q}}, {U}_{\ul{p}, \ul{q}}) - (Z_{RFC}, U_{RFC}) $$ 
	denote the warping functions of the metric $g_{\ul{p}, \ul{q}}$ with respect to the parabolically rescaled sideways coordinates (see sections \ref{rescaledSidewaysCoords} and \ref{initSimplifyingAssumptions} for reference). 
\end{definition}

\begin{lem}
	Let $k$ be an even integer for which $\lambda_k < 0$, 
	let $0  < \ol{\beta} < \frac{1}{2}$, and
	let $a,b,c, \kappa, \epsilon$ be as in remark \ref{constantsToUse}.
	There exist $\ul{D} = ( \ul{D}^{\mathcal{I}}, \ul{D}^{\mathcal{O}}) $ depending on $p,q,k$ such that
	an assignment of initial metrics $g_{\ul{p}, \ul{q}}(\tau_0) \in \mathcal{G}$ as in the above definition \ref{assignmentOfInitData} exists if
	$\ul{\Upsilon} \gg 1$ is sufficiently large depending on $p,q,k$,
	$\Upsilon_U \gg 1$ is sufficiently large depending on $p,q,k, a, \ul{\Upsilon}$, 
	and $\tau_0 \gg 1$ is sufficiently large depending on $p,q,k, \ul{\Upsilon} , M , \ol{\beta}, \ul{D}$.
\end{lem}
\begin{proof}
	It suffices to show that the interior of the set of metrics satisfying the conditions in definition \ref{assignmentOfInitData} is nonempty.
	Begin by letting 
	\begin{equation*} \begin{aligned}
		\tl{U}(\gamma) = \tl{U}_{\ul{0}, \ul{0}}(\gamma, \tau_0) &=  e^{B^2 \lambda_k \tau_0} \tl{U}_{\lambda_k}\\
		\text{and } \tl{Z}(\gamma) = \tl{Z}_{\ul{0}, \ul{0}}(\gamma, \tau_0) &=  e^{B^2 \lambda_k \tau_0} \tl{Z}_{\lambda_k} \\
		\text{for all } &\ul{\Upsilon} e^{- \alpha \tau_0} \le \gamma \le M e^{\ol{\beta} \tau_0}
	\end{aligned} \end{equation*}
	We wish to show that, for suitably chosen parameters, we can extend these across the inner \& outer region in such a way that they satisfy the desired inequalities and define smooth metrics $g \in \mathcal{G}$.
	
	(Inner Region Extension):
	Let $f : \mathbb{R} \to \mathbb{R}$ be a $C^\infty$ function such that
	\begin{equation*} \begin{aligned}
		 &f(\gamma) \ge 0  
		 && \text{for all } \gamma \in \left [ 0 , \ul{\Upsilon} e^{- \alpha \tau_0} \right] \\
		 &0 = f'(0) = f'''(0) = ... = f^{(\text{odd})}(0) \\
		& f( \gamma ) = \partial_{\gamma \gamma} \tl{U} + \frac{1}{\gamma} \partial_\gamma \tl{U}
		&& \text{for all } \gamma \in \left[ \ul{\Upsilon} e^{ - \alpha \tau_0}, M e^{ \ol{\beta} \tau_0} \right] \\
		& -1 + \int_0^{\ul{\Upsilon} e^{- \alpha_k \tau_0}} \ol{\gamma} f ( \ol{\gamma} ) d \ol{\gamma} 
		= \ul{\Upsilon} e^{- \alpha_k \tau_0} \tl{U}_\gamma (  \ul{\Upsilon} e^{- \alpha_k \tau_0} ) \\
	\end{aligned} \end{equation*}
	By \ref{eigfuncUAsymps}, such functions exist if
	$\ul{\Upsilon} \gg 1$ is sufficiently large depending on $p,q,k$ and
	$\tau_0 \gg 1$ is sufficiently large depending on $p,q,k, \ul{\Upsilon}$.
	Define the extension of $\tl{U}$ to the inner region by letting $\tl{U}$ be the unique solution to the initial value problem
	\begin{equation*} \begin{aligned}
		\partial_{\gamma \gamma} u + \frac{1}{\gamma} u_\gamma &= f(\gamma)	
		&& \text{for all } \gamma \in \left( 0 , \ul{\Upsilon} e^{- \alpha_k \tau_0} \right)\\
		(u, u_\gamma)  &= (\tl{U}, \tl{U}_\gamma)
		&& \text{ at } \gamma = \ul{\Upsilon} e^{- \alpha \tau_0}
	\end{aligned} \end{equation*}
	Note that $\tl{U} : \left[ 0 , M e^{\ol{\beta} \tau_0} \right] \to \mathbb{R}$ is smooth and its odd order derivatives vanish at $\gamma = 0$. 
	
	\noindent We now confirm that this extension satisfies ``Inner Region Barriers I":
	\begin{enumerate}
		\item 
		$\partial_{\gamma \gamma} \tl{U} + \frac{1}{\gamma} \tl{U}_\gamma = f(\gamma) \ge 0$ for all 
		$\gamma \in \left[ 0, \ul{\Upsilon} e^{- \alpha_k \tau_0} \right]$.
		
		\item 
		For $\gamma \in \left[ 0, \ul{\Upsilon} e^{- \alpha_k \tau_0} \right]$, 
		$$\gamma \tl{U}_\gamma = -1 + \int_0^\gamma \ol{\gamma} f ( \ol{\gamma} ) d \ol{\gamma} \ge -1$$
		Hence, the bound
			$$\tl{U}_\gamma \ge - \frac{1}{\gamma}$$
		is automatic. Additionally, for $\gamma \le \ul{\Upsilon} e^{- \alpha \tau_0}$
		\begin{equation*} \begin{aligned}
			& \quad \int_0^\gamma \ol{\gamma} f(\ol{\gamma} ) d \ol{\gamma} \\
			& \le \int_0^{ \ul{\Upsilon} e^{- \alpha \tau_0} } \ol{\gamma } f( \ol{\gamma} ) d \ol{\gamma} \\
			&= 1 + \ul{\Upsilon} e^{- \alpha \tau_0} \tl{U}_\gamma (  \ul{\Upsilon} e^{ - \alpha \tau_0} ) \\
			& \le 1 + C_{p,q,k} ( - 2k - 2B^2 \lambda_k ) \ul{\Upsilon}^{-2k -2B^2 \lambda} \\
		\end{aligned} \end{equation*}
		if $\tau_0 \gg 1$ is sufficiently large depending on $p,q,k, \ul{\Upsilon}$.
		For $k$ even, we can take $C_{p,q,k}$ to be positive by \ref{eigfuncUAsymps}.
		Hence,
			$$C_{p,q,k} ( - 2k - 2B^2 \lambda_k ) \ul{\Upsilon}^{-2k -2B^2 \lambda} < 0$$
		and so $\gamma \tl{U}_\gamma \le 0$.
		
		\item 
		$\tl{U}_\gamma \le 0$ and $\tl{U}(\ul{\Upsilon} e^{- \alpha \tau_0}) \ge 0$ implies that $\tl{U} \ge 0$ for all $\gamma \le \ul{\Upsilon} e^{- \alpha \tau_0}$.
	\end{enumerate}
	
	\noindent Next, we confirm the ``Weighted $C^2$ Bounds":
	\begin{enumerate}
		\item 
		$| \gamma \tl{U}_\gamma| \le 1$ follows automatically from ``Inner Region Barriers I."
		
		\item 
		For the second derivative estimate, first note that 
		\begin{gather*}
			| \gamma^2 \tl{U}_{\gamma \gamma} | 
			\le | \gamma \tl{U}_\gamma | + | \gamma^2 f | 
			\le 1 + | \gamma^2 f| 
			\le 1 + (\ul{\Upsilon} e^{- \alpha \tau_0})^2 \sup_{\gamma \in \left[ 0, \ul{\Upsilon} e^{-\alpha \tau_0} \right] } | f( \gamma) |
		\end{gather*}
		Also, observe that
			$$1 + C_{p,q,k}' \ul{\Upsilon}^{-2k -2B^2\lambda} = \int_0^{\ul{\Upsilon} e^{- \alpha \tau_0} } \ol{\gamma} f(\ol{\gamma}) d \ol{\gamma} \le \frac{1}{2} \ul{\Upsilon}^2 e^{- 2\alpha \tau_0} \sup | f |$$
		and that
			$$f( \ul{\Upsilon} e^{- \alpha \tau_0} ) \sim_{p,q,k} \ul{\Upsilon}^{-2k-2B^2 \lambda-2} e^{2 \alpha \tau_0}$$
		if $\tau_0 \gg 1$ is sufficiently large depending on $p,q,k, \ul{\Upsilon}$.
		It follows that $f$ may be chosen so that
			$$(\ul{\Upsilon} e^{- \alpha \tau_0})^2 \sup_{\gamma \in \left[ 0, \ul{\Upsilon} e^{-\alpha \tau_0} \right] } | f( \gamma) | \lesssim_{p,q,k} 1$$
		if $\tau_0 \gg 1$ is sufficiently large depending on $p,q,k, \ul{\Upsilon}$.
	\end{enumerate}
	
	Finally, we check the ``Inner Region Barriers II" condition:\\
	Let $V(\gamma) \doteqdot \Upsilon_U^{a-2k-2B^2 \lambda} ( \gamma e^{\alpha \tau_0} )^{-a}$.
	By \ref{eigfuncUAsymps}, there exists $C = C(p,q,k) > 0$ such that 
		$$\tl{U}( \ul{\Upsilon} e^{- \alpha_k \tau_0} ) \le C \ul{\Upsilon}^{-2k -2B^2 \lambda_k}$$
	if $\tau_0 \gg 1$ is sufficiently large depending on $p,q,k, \ul{\Upsilon}$.
	Taking $D_1 = C(p,q,k)$, it follows that
		$$D_1 V \left( \ul{\Upsilon} e^{- \alpha_k \tau_0} \right) = D_1 \Upsilon_U^{a - 2k -2B^2 \lambda_k} \ul{\Upsilon}^{-a} \ge D_1 \ul{\Upsilon}^{-2k -2B^2 \lambda} \ge \tl{U}\left( \ul{\Upsilon} e^{- \alpha_k \tau_0} \right)$$
	if $0 < 2k + 2B^2 \lambda_k < a$ and $\ul{\Upsilon} \le \Upsilon_U$.
	
	Now, it suffices to show that 
		$$\gamma D_1V_\gamma < \gamma \tl{U}_\gamma = -1 + \int_0^\gamma \ol{\gamma} f d \ol{\gamma} \in [-1, 0] \qquad   \text{for all }\gamma \in \left( 0 , \ul{\Upsilon} e^{- \alpha \tau_0} \right]$$
	We have
			$\gamma D_1 V_\gamma = -a D_1 \Upsilon_U^{a-2k-2B^2 \lambda} ( \gamma e^{\alpha \tau_0} )^{-a}$
	which is negative and decreasing in $\gamma$.
	At $\gamma = \ul{\Upsilon} e^{- \alpha \tau_0}$, 
			$$|\gamma D_1 V_\gamma| = a D_1 \ul{\Upsilon}^{-a} \Upsilon_U^{a-2k -2B^2 \lambda} > 1$$
	if $2k + 2B^2 \lambda_k < a$ and
	$\Upsilon_U \gg 1$ is sufficiently large depending on $p,q,k, a, \ul{\Upsilon}$.
	It follows that $\gamma D_1V_\gamma < \gamma \tl{U}_\gamma$
	and thus
	$\tl{U}(\gamma) < D_1 V$ for all $\gamma \in \left[ 0, \ul{\Upsilon} e^{-\alpha_k \tau_0} \right]$.
	
	A similar argument can be used to show that there exists
	$D_3$ depending on $p,q,k$ such that
		$$\tl{U} < D_3 \Upsilon_U^c ( \gamma e^{\alpha_k \tau_0} )^{-2k-2B^2 \lambda_k -c}	\qquad \text{for all } \gamma \le \ul{\Upsilon} e^{- \alpha_k \tau_0}$$
	if $0 < \kappa < c$,
	$\ul{\Upsilon} \gg 1$ is sufficiently large depending on $p,q,k, c, \kappa$,
	$\Upsilon_U \gg 1$ is sufficiently large depending on $p,q,k, c, \kappa, \ul{\Upsilon}$,
	and $\tau_0 \gg 1$ is sufficiently large depending on $p,q,k, \ul{\Upsilon}$.
	
	A similar argument may be applied to define the extension of $\tl{Z}$ to the region $\gamma \in \left[ 0, \ul{\Upsilon} e^{- \alpha_k \tau_0} \right]$.\\
	
	(Outer Region Extension): 
	For the extension to the outer region, take $D_0 = 1$ 
	and $D_1$ 
	small enough depending on $p,q,k$ 
	such that
		$$\frac{D_1}{1 - D_1 ( q + (1+D_0)^2 p) T } > 0$$
	In particular, $\frac{D_1}{1 - D_1 ( q + 4 p) t } > 0$ for all $t < T$.
	
	Let $\psi_*$ be defined implicitly as
		$$z^- ( \psi_* , t; D_0+1, D_1 ) = 0$$
	Begin by extending $z(\psi, t) = Z(\gamma, \tau)$ in such a way so that
		$$0 \le z^- ( \psi, t; D_0+1, D_1) < z(\psi, t) \le 1		\qquad \text{for all } \psi \in \left[ M e^{\ol{\beta} \tau_0 - \tau_0/2} , \psi_* \right]$$
	Note that $M e^{\ol{\beta} \tau_0 - \tau_0/2} < \psi_*$ if $\tau_0 \gg 1$ is sufficiently large depending on $p,q,k, M ,\ol{\beta}, D_0, D_1$.

	Next, recover $\psi(s, \tau_0)$ from $z(\psi, \tau_0)$ via the formula
		$$\psi(s) = \int_0^s \sqrt{ z( \psi(\sigma) ) } d \sigma$$
	Note that $\psi$ is a strictly increasing function of $s$ and that this equality is valid so long as $\psi \le \psi_*$.
	Let $s_*$ be defined such that $\psi(s_*) = \psi_*$.
	Proceed to extend $\psi(s)$ to $ s \in [ s_*, s_* + 1]$ in such a way so that
		$$0 < \psi_s \le 1 	\qquad \text{for all } s \in [ s_* , s_* + 1] $$
		$$0 = \psi_s(s_{*}+1) = \psi_{sss} (s_*+1) =... = \partial_s^{(\text{odd})} \psi ( s_* + 1) $$
		$$\psi_{ss}(s) \text{ is uniformly bounded for } s \in [ s_*, s_*+1]$$

	Now, let $s_0$ denote the unique value of $s$ such that
		$$\psi(s_0) = M e^{\ol{\beta} \tau_0 - \tau_0/2}$$
	Extend $\phi(s)$ by letting $\phi$ solve
		$$\frac{ \phi_s(s)}{\phi(s) } = f(s) \frac{ \psi_s(s)}{\psi(s)} 	\qquad \text{for all } s \in [ s_0, s_{*}+1]$$
	for some smooth function $f: \mathbb{R} \to \mathbb{R}$ (distinct from the inner region function $f$ above)
	satisfying 
	\begin{equation*} \begin{aligned}
	&1 \le f(s) \le 2 && \text{for all } s \in [s_0, s_*+1] \\
	 & \frac{ f(s) }{ \psi(s)} =  [\partial_\psi u]|_{\psi(s)}  && \text{for all } s \in [ 0, s_0 ] \\
	 &0 = f'(s_* + 1) = f'''(s_* + 1) = ... = f^{(\text{odd})}(s_* + 1), && \text{and} \\
	 &f'(s) \text{ is uniformly bounded for } s \in [s_0, s_* + 1] 
	\end{aligned} \end{equation*}
	It follows that
	\begin{equation*} \begin{aligned}
		&\partial_s \log(\phi) = \frac{ \phi_s}{\phi} \ge \frac{ \psi_s}{\psi} \ge \partial_s \log(\psi) \\
		\implies &\log( \phi(s) ) \ge \log( \psi(s) ) + \log \left( \frac{ \phi(s_0) }{\psi(s_0)} \right) 	\qquad \text{for all } s \ge s_0\\ 
	\end{aligned} \end{equation*}
	The parabolic region data imply that 
		$$ \log \left( \frac{ \phi(s_0) }{\psi(s_0)} \right) \ge \log \left(  \frac{A}{B} \right)$$
	if $k$ is even and $\tau_0 \gg 1$ is sufficiently large.
	Hence,
		$$\phi(s) \ge \frac{A}{B} \psi(s) 		\qquad \text{for all } s \ge s_0$$
	Additionally, $1 \le f(s) \le 2$ implies that $\partial_\gamma U \le \frac{2}{\gamma}$ for $\gamma \ge M e^{\ol{\beta} \tau_0}$. 
	
	The uniform bounds on $f_s$ and $\psi_{ss}$ together with \ref{eigfuncUAsymps} and \ref{ZlambdaAsymps} imply that 
	for some constant $D_2 = D_2(p,q,k)$ 
	the estimate $|Rm| \le D_2 e^{\tau_0}$ holds for all $\gamma \ge M e^{\ol{\beta} \tau_0}$ 
	when $\tau_0 \gg 1$ is sufficiently large depending on $p,q,k, M , \ol{\beta}$ (see also \ref{outerCurvatureBounds}).
	
	By construction, 
		$$\partial_s^{(\text{odd})} \phi (0) = 0 \qquad \partial_s^{(\text{even})} \psi(0) = 0$$
		$$\partial_s^{(\text{odd})} \phi (s_*+1) = \partial_s^{(\text{odd})} \psi(s_*+1) = 0$$
	Hence, the metric defined by the warping functions $\phi, \psi$ (evenly reflected about $s_{*}+1$) defines a smooth metric on $S^{q+1} \times S^p$.
\end{proof}

Let
	$$\grave{Z}_{\ul{p}, \ul{q}} = \grave{\chi} \tl{Z}_{\ul{p}, \ul{q} } \qquad 		\grave{U}_{\ul{p} , \ul{q} } = \grave{\chi} \tl{U}_{\ul{p}, \ul{q} }$$
denote the localized versions of the perturbations.

\begin{definition}
	For $\tl{Z}_{\ul{p}, \ul{q}} , \tl{U}_{\ul{p}, \ul{q}}$ as in definition \ref{assignmentOfInitData} and $\tau_1 > \tau_0$,
	let
	$$\mathcal{W}_{\tau_0, \tau_1} \subset B_{\epsilon_0 e^{B^2 \lambda_k \tau_0}} \subset \mathbb{R}^k \times \mathbb{R}^K$$
	denote the set of $(\ul{p}, \ul{q})$ such that $\tl{Z}_{\ul{p}, \ul{q}} , \tl{U}_{\ul{p}, \ul{q}} \in \mathcal{B}_{\tau_0, \tau_1}^{\ul{\eta}}$.
	Define 
	$$P_{\tau_0, \tau_1} : \mathcal{W}_{\tau_0, \tau_1} \to \mathbb{R}^{k} \times \mathbb{R}^K$$
	\begin{equation*}
	P_{\tau_0, \tau_1}( \ul{p}, \ul{q} ) \doteqdot 
	\left( \begin{array}{ccc}
		\langle \grave{U}_{\ul{p}, \ul{q}}(\tau_1), \tl{U}_{\lambda_0} \rangle_{L^2_{n,\frac{1}{2B^2} }}& ...&  \langle \grave{U}_{\ul{p}, \ul{q}}(\tau_1), \tl{U}_{\lambda_{k-1}} \rangle_{L^2_{n,\frac{1}{2B^2}}} \\
		\\
	\langle \grave{Z}_{\ul{p}, \ul{q}}(\tau_1) - e^{B^2 \lambda_k \tau_1} \tl{Z}_{\lambda_k}, \tl{Z}_{h_0} \rangle_{L^2_{n-2,\frac{1}{2B^2}}} &...&  \langle \grave{Z}_{\ul{p}, \ul{q}}(\tau_1) - e^{B^2 \lambda_k \tau_1} \tl{Z}_{\lambda_k}, \tl{Z}_{h_K} \rangle_{L^2_{n-2,\frac{1}{2B^2}}}
	\end{array} \right)
	\end{equation*}
\end{definition}

\begin{definition}
	\begin{equation*} \begin{aligned}
	\check{\grave{Z}}(\gamma, \tau_0) 
	=& e^{-B^2 \lambda_k \tau_0} \left[ \grave{Z}(\gamma, \tau_0) - \sum_{j=0}^K q_j \tl{Z}_{h_j}(\gamma) \right] \\
	\check{\grave{U}}(\gamma, \tau_0) 
	=& e^{-B^2 \lambda_k \tau_0 }\left[ \grave{U}(\gamma, \tau_0) - \sum_{j=0}^{k-1} p_j \tl{U}_{\lambda_j}(\gamma) \right] \\ 
	\end{aligned} \end{equation*} 
	
	Observe that the choice of initial data implies
	\begin{equation*} \begin{aligned}
		\check{\grave{Z}}(\gamma, \tau_0) =& \tl{Z}_{\lambda_k}(\gamma)   \qquad &\ul{\Upsilon} e^{- \alpha \tau_0} < \gamma < M e^{\beta \tau_0}\\
		\check{\grave{U}}(\gamma, \tau_0) =& \tl{U}_{\lambda_k}(\gamma) 		&\ul{\Upsilon} e^{- \alpha \tau_0} < \gamma < M e^{\beta \tau_0}
	\end{aligned} \end{equation*}
\end{definition}

\subsection{The Degree Argument}

\begin{lem}  \label{zerosInTheInterior}
	Let $\ul{\Upsilon}, \Upsilon_U, \Upsilon_Z, \tau_0 \gg 1$ be sufficiently large.
	If $(\ul{p}, \ul{q}) \in \ol{\mathcal{W}}_{\tau_0, \tau_1} \cap P_{\tau_0, \tau_1}^{-1} (0)$ for some $\tau_1 > \tau_0$,
	then
	$( \tl{Z}_{\ul{p}, \ul{q}}, \tl{U}_{\ul{p}, \ul{q}} ) \in \mathcal{B}_{\tau_0, \tau_1}^{ \frac{1}{2} \ul{\eta} }$.
\end{lem}

\noindent The proof of this lemma relies on a collection of estimates that we postpone to the later sections of the paper.
Assuming these estimates, we provide the proof of lemma \ref{zerosInTheInterior}.
\begin{proof}
	Pick $\Gamma \gg 1$ sufficiently large depending on $p,q,k, \ul{\eta}$ such that 
	\begin{equation*} \begin{aligned}
		\tl{Z}^\pm \doteqdot \left( C^Z_{p,q,k} \pm \frac{1}{C_{p,q,k}'^Z} \eta_0^Z \right) e^{B^2 \lambda \tau} \gamma^{-2B^2 \lambda} + D_{\pm}^Z e^{B^2 \lambda \tau} \gamma^{-2B^2 \lambda -1} \\
		\tl{U}^\pm \doteqdot \left( C_{p,q,k}^U \pm \frac{1}{C_{p,q,k}'^U} \eta_0^U \right) e^{B^2 \lambda \tau} \gamma^{-2B^2 \lambda} + D_{\pm}^U e^{B^2 \lambda \tau} \gamma^{-2B^2 \lambda -1} \\
	\end{aligned} \end{equation*}
	are barriers for $\tl{Z}, \tl{U}$ respectively on the domain $\frac{1}{2} \Gamma \le \gamma \le 2 M e^{\beta \tau}$
	satisfying 
		$$\tl{Z}^-(\gamma, \tau) \le \tl{Z}^+(\gamma, \tau) \quad \text{ and } \quad \tl{U}^-( \gamma, \tau) \le \tl{U}^+(\gamma, \tau)	\qquad \text{for all } \frac{1}{2} \Gamma \le \gamma \le 2M e^{\beta \tau}$$
	and where the constants $C_{p,q,k}'^{Z,U}$ are chosen such that the interior estimates \ref{parabOuterInteriorEstU+}, \ref{parabOuterInteriorEstZ+} yield
	\begin{gather*}
		\tl{Z}^- \le \tl{Z} \le \tl{Z}^+ \quad \text{ and } \quad
		\tl{U}^- \le \tl{U} \le \tl{U}^+ 
		\qquad \text{for all } \frac{1}{2} \Gamma \le \gamma \le 2 M e^{\beta \tau}\\
		\text{imply}  \quad 
		( \tl{Z}, \tl{U} ) \in \mathcal{B} \left[ \frac{1}{2} \ul{\eta} , \tau_0 , \tau_1, \Gamma, M , \alpha, \beta, \lambda_k \right]
	\end{gather*}
	 
	 Next, 
	 if $\ul{\Upsilon}, \Upsilon_U, \Upsilon_Z, \tau_0 \gg 1$ are sufficiently large then the estimates in sections \ref{ShortTimeEsts}, \ref{LongTimeEsts} imply that
	 	$$( \tl{Z}, \tl{U} ) \in \mathcal{B} \left[ \left( \frac{1}{C_{p,q,k, \Gamma}''} \eta_0^U, \eta_{1,2}^U, \eta_{0,1,2}^Z \right), \frac{1}{C_\Gamma} \Upsilon_U, \frac{1}{C_\Gamma} \Upsilon_Z , 4 \Gamma, \alpha, 0, \lambda_k \right]$$ 
	 The interior estimate \ref{parabInteriorEstU} then imply that
	 	$$( \tl{Z}, \tl{U} ) \in \mathcal{B} \left[ \left( \frac{1}{C_{p,q,k, \Gamma}'''} \eta_{0,1,2}^U, \eta_{0,1,2}^Z \right), \frac{1}{C_\Gamma'} \Upsilon_U, \frac{1}{C_\Gamma} \Upsilon_Z , 2 \Gamma, \alpha, 0, \lambda_k \right]$$
	Here, the constant $C_{p,q,k,\Gamma}'''$ is such that the above estimate implies
		$$( \tl{Z}, \tl{U} ) \in \mathcal{B} \left[ \left( \frac{1}{C_{p,q,k, \Gamma}'''} \eta_{0,1,2}^U, \frac{1}{C_{p,q,k, \Gamma}''''} \eta_0^Z, \eta_{1,2}^Z \right), \frac{1}{C_\Gamma'} \Upsilon_U, \frac{1}{C_\Gamma} \Upsilon_Z , 2 \Gamma, \alpha, 0, \lambda_k \right]$$
	if $\ul{\Upsilon}, \Upsilon_U, \Upsilon_Z, \tau_0 \gg 1$ are sufficiently large.
	Moreover, the constant $C_{p,q,k, \Gamma}''''$ is such that
	the interior estimates \ref{parabInteriorEstZ} then yield that
		$$( \tl{Z}, \tl{U} ) \in \mathcal{B} \left[ \left( \frac{1}{C_{p,q,k, \Gamma}'''} \eta_{0,1,2}^U, \frac{1}{C_{p,q,k, \Gamma}'} \eta_{0,1,2}^Z \right), \frac{1}{C_\Gamma'} \Upsilon_U, \frac{1}{C_\Gamma'} \Upsilon_Z ,  \Gamma, \alpha, 0, \lambda_k \right]$$
	 
	 In particular, 
	 	$$( \tl{Z}, \tl{U} ) \in \mathcal{B} \left[ \left( \frac{1}{C_{p,q,k, \Gamma}'} \eta_0^{U, Z}, \eta_{1,2}^{U,Z} \right), \frac{1}{C_\Gamma'} \Upsilon_U, \frac{1}{C_\Gamma'} \Upsilon_Z , \frac{1}{2} \Gamma, \alpha, 0, \lambda_k \right]$$	
	so the barriers $\tl{Z}^{\pm}, \tl{U}^\pm$ imply that that these estimates hold in fact for
	$\frac{1}{C_\Gamma'} \Upsilon_{U, Z} e^{- \alpha \tau} \le \gamma \le 2 M e^{\beta \tau}$, that is
		$$( \tl{Z}, \tl{U} ) \in \mathcal{B} \left[ \left( \frac{1}{C_{p,q,k, \Gamma}'} \eta_0^{U, Z}, \eta_{1,2}^{U,Z} \right), \frac{1}{C_\Gamma'} \Upsilon_U, \frac{1}{C_\Gamma'} \Upsilon_Z , 2M, \alpha, \beta, \lambda_k \right]$$
	Interior estimates \ref{parabInteriorEstU}, \ref{parabInteriorEstZ}, \ref{parabOuterInteriorEstU+}, \ref{parabOuterInteriorEstZ+} then yield that
		$$(\tl{Z}, \tl{U}) \in \mathcal{B}\left[ \frac{1}{2} \ul{\eta}, \Upsilon_U, \Upsilon_Z, M, \alpha, \beta, \lambda_k \right]$$
\end{proof}

\begin{lem} \label{degree1}
	Let $\ul{\Upsilon}, \Upsilon_U, \Upsilon_Z, \tau_0 \gg 1$ be sufficiently large.
	If $\tau_1 > \tau_0$ and $\mathcal{W}_{\tau_0, \tau_1} \ne \emptyset$, 
	then the degree of $P_{\tau_0, \tau_1}$ in $\mathcal{W}_{\tau_0, \tau_1}$ with respect to $0 \in \mathbb{R}^k \times \mathbb{R}^K$ is 1.
\end{lem}
\begin{proof}
	Begin by noting that $\mathcal{W}_{\tau_0, \tau_0} = B_{\epsilon_0 e^{B^2 \lambda_k \tau_0}}$ and
	\begin{equation*} \begin{aligned}
		&P_{\tau_0, \tau_0} ( \ul{p}, \ul{q} )  = ( \ul{p}, \ul{q} ) \\
		&+ 
		e^{B^2 \lambda_k \tau_0} 
		\left( \begin{array}{ccc}
		\langle \check{\grave{U}}_{\ul{p}, \ul{q}}(\tau_1), \tl{U}_{\lambda_0} \rangle_{L^2_{n,\frac{1}{2B^2} }}& ...&  \langle \check{\grave{U}}_{\ul{p}, \ul{q}}(\tau_1), \tl{U}_{\lambda_{k-1}} \rangle_{L^2_{n,\frac{1}{2B^2}}} \\
		\\
	\langle \check{\grave{Z}}_{\ul{p}, \ul{q}}(\tau_1) - e^{B^2 \lambda_k \tau_1} \tl{Z}_{\lambda_k}, \tl{Z}_{h_0} \rangle_{L^2_{n-2,\frac{1}{2B^2}}} &...&  \langle \check{\grave{Z}}_{\ul{p}, \ul{q}}(\tau_1) - e^{B^2 \lambda_k \tau_1} \tl{Z}_{\lambda_k}, \tl{Z}_{h_K} \rangle_{L^2_{n-2,\frac{1}{2B^2}}}
	\end{array} \right)
	\end{aligned} \end{equation*}
	by construction.
	Moreover, if $\tau_0 \gg 1$ is sufficiently large, then 
	\begin{equation*} \begin{aligned}
	\left \| \left( \begin{array}{ccc}
		\langle \grave{U}_{\ul{p}, \ul{q}}(\tau_1), \tl{U}_{\lambda_0} \rangle_{L^2_{n,\frac{1}{2B^2} }}& ...&  \langle \grave{U}_{\ul{p}, \ul{q}}(\tau_1), \tl{U}_{\lambda_{k-1}} \rangle_{L^2_{n,\frac{1}{2B^2}}} \\
		\\
	\langle \grave{Z}_{\ul{p}, \ul{q}}(\tau_1) - e^{B^2 \lambda_k \tau_1} \tl{Z}_{\lambda_k}, \tl{Z}_{h_0} \rangle_{L^2_{n-2,\frac{1}{2B^2}}} &...&  \langle \grave{Z}_{\ul{p}, \ul{q}}(\tau_1) - e^{B^2 \lambda_k \tau_1} \tl{Z}_{\lambda_k}, \tl{Z}_{h_K} \rangle_{L^2_{n-2,\frac{1}{2B^2}}}
	\end{array} \right) \right\| \le \frac{\epsilon_0}{2}
	\end{aligned} \end{equation*}
	(see for example lemmas \ref{L^2EstInitU} and \ref{L^2EstInitZ}).
	Hence, the straight-line homotopy from $P_{\tau_0, \tau_0}$ to the identity map never vanishes on the boundary of $B_{\epsilon_0 e^{B^2 \lambda_k \tau_0} }$.
	It then follows from the homotopy invariance of degree that the degree of $P_{\tau_0,\tau_0}$ with respect to 0 is 1.
	
	By the previous lemma \ref{zerosInTheInterior},
	$$\partial \mathcal{W}_{\tau_0, \tau_1} \cap P^{-1}_{\tau_0, \tau_1}(0) = \emptyset 	\qquad \text{for all } \tau_1 > \tau_0$$
	Homotopy invariance of degree then implies that the degree is 1 for all $\tau_1 > \tau_0$.
\end{proof}

\begin{lem} \label{WNonempty}
	For $\ul{\Upsilon}, \Upsilon_U, \Upsilon_Z, \tau_0 \gg 1$ sufficiently large,
	$\mathcal{W}_{\tau_0, \tau_1}$ is nonempty for all $\tau_1 > \tau_0$.
\end{lem}
\begin{proof}
	Suppose otherwise and consider $\tau_*$ defined by
		$$\tau_* = \inf \{ \tau > \tau_0 : \mathcal{W}_{\tau_0, \tau} = \emptyset \}$$
	Take a sequence $\tau_n \nearrow \tau_*$.
	By the previous lemma \ref{degree1},
	there exists a sequence $(\ul{p}, \ul{q} )_n \in \ol{\mathcal{W}}_{\tau_0, \tau_n} \cap P_{\tau_0, \tau_n}^{-1}(0)$.
	Without loss of generality, say $\lim_{n \to \infty} (\ul{p}, \ul{q})_n = (\ul{p}, \ul{q} )_*$.
	It follows from the continuity of $P$ that $(\ul{p}, \ul{q})_* \in \ol{\mathcal{W}}_{\tau_0, \tau_*} \cap P_{\tau_0, \tau_*}^{-1}(0)$.
	Lemma \ref{zerosInTheInterior} then implies
		$$(\tl{Z}_{\ul{p}, \ul{q}} , \tl{U}_{\ul{p}, \ul{q}} )  \in \mathcal{B}^{ \frac{1}{2} \ul{\eta} }_{\tau_0, \tau_*}$$
	and continuity then yields that, for some $\delta > 0$,
		$$(\tl{Z}_{\ul{p}, \ul{q}} , \tl{U}_{\ul{p}, \ul{q}} )  \in \mathcal{B}^{ \ul{\eta} }_{\tau_0, \tau_* + \delta}$$
	However, this result contradicts the choice of $\tau_*$.
\end{proof}

\begin{lem} \label{desiredZUSolution}
	If $\ul{\Upsilon}, \Upsilon_U, \Upsilon_Z, \tau_0 \gg 1$ are sufficiently large,
	then there exists $(\ul{p}, \ul{q})$ such that 
		$$( \tl{Z}_{\ul{p}, \ul{q}}, \tl{U}_{\ul{p}, \ul{q}} )(\gamma, \tau) \in \mathcal{B}_{\tau_0, \infty}^{\ul{\eta}} \doteqdot  \bigcap_{\tau_1 > \tau_0} \mathcal{B}_{\tau_0, \tau_1}^{\ul{\eta}}$$
\end{lem}
\begin{proof}
	Assume $\ul{\Upsilon}, \Upsilon_U, \Upsilon_Z, \tau_0 \gg 1$ are sufficiently large such that lemmas \ref{zerosInTheInterior} - \ref{WNonempty} hold.
	Let $\{ \tau_n \}_{n \in \mathbb{N}}$ be an increasing sequence that limits to $+\infty$. 
	By lemma \ref{WNonempty}, there exists $(\ul{p}_n, \ul{q}_n) \in \mathcal{W}_{\tau_0, \tau_n}$ for all $n$.
	Since this sequence $(\ul{p}_n, \ul{q}_n)$ is bounded, there exists a convergent subsequence that limits to say $(\ul{p}_\infty, \ul{q}_\infty)$. It follows that 
		$$( \tl{Z}_{\ul{p}_\infty, \ul{q}_\infty}, \tl{U}_{\ul{p}_\infty, \ul{q}_\infty} )(\gamma, \tau) \in \mathcal{B}_{\tau_0, \infty}^{\ul{\eta}} $$
\end{proof}

\begin{proof} (of theorem \ref{mainThmAbridged})
	The profile functions 
		$$(\tl{Z}, \tl{U}) = ( Z_{\ul{p}, \ul{q}}, U_{\ul{p}, \ul{q}})(\gamma, \tau) = (Z_{RFC}, U_{RFC}) + ( \tl{Z}_{\ul{p}, \ul{q}}, \tl{U}_{\ul{p}, \ul{q}})$$ 
	provided by lemma \ref{desiredZUSolution} yield the desired Ricci flow solution.
	Indeed, 
		$$( \tl{Z}_{\ul{p}, \ul{q}}, \tl{U}_{\ul{p}, \ul{q}} )(\gamma, \tau) \in \mathcal{B}_{\tau_0, \infty}^{\ul{\eta}} $$
	implies the $C^2_{loc}$ convergence to the Ricci-flat cone metric when $x \in S^{q+1}$ is assumed to be the point where $\psi = 0$.
	
	It only remains to establish a lower bound on the curvature blow-up rate.
	By theorem \ref{ptwiseEstsZU}, we may assume that $(\tl{Z}, \tl{U}) \in \mathcal{P}[ \ul{D}, l , \kappa, \ul{\eta}, \Upsilon_U, \Upsilon_Z, M , \alpha_k , \beta, \lambda_k ]$
	after perhaps taking the parameters $\ul{\Upsilon}, \Upsilon_U, \Upsilon_Z, \tau_0$ larger.
	In particular, $U(\gamma, \tau)$ is an increasing function of $\gamma$ for $\gamma \in \left[ 0, \Upsilon_U e^{-\alpha_k \tau} \right]$.
	We proceed to estimate the sectional curvature $\frac{1- \phi_s^2}{\phi^2}$ at a point where $x = -1$
	\begin{equation*} \begin{aligned}
		\frac{1- \phi_s^2}{\phi^2} 
		&=e^\tau e^{-2U(0, \tau)}\\
		&\ge e^\tau e^{-2 U( \Upsilon_U e^{-\alpha_k \tau}, \tau)} \\
		&= e^\tau e^{-2U_{RFC}( \Upsilon_U e^{-\alpha_k \tau} ) } e^{- 2 \tl{U}( \Upsilon_U e^{-\alpha_k \tau}, \tau)} \\
		&= e^\tau \frac{B^2}{A^2 \Upsilon_U^2 e^{- 2 \alpha_k \tau} } e^{- 2 \tl{U}( \Upsilon_U e^{-\alpha_k \tau}, \tau)} \\
		&\gtrsim_{p,q,k, \Upsilon_U} e^{2 \alpha_k \tau + \tau} 
		&& \big( ( \tl{Z}, \tl{U} ) \in \mathcal{B}^{\ul{\eta}}_{\tau_0, \infty} \big) \\
		&= \frac{1}{( T-t)^{2 \alpha_k + 1} } \\
	\end{aligned} \end{equation*}
	Finally, by observing that
		$$ \alpha_k = \frac{ -2B^2 \lambda_k }{ n - 1 - \sqrt{( n-9)(n-1)}} = \frac{ 2k - \frac{n-1}{2} + \frac{1}{2} \sqrt{ (n-9)(n-1)} }{ n - 1 - \sqrt{( n-9)(n-1)}}$$
	grows linearly in $k$, it follows that we obtain blow-up rates given by arbitrarily large powers of $\frac{1}{(T-t)}$. 
\end{proof}

\noindent The remainder of the paper provides a series of estimates used to justify the proof of lemma \ref{zerosInTheInterior}.
The estimates also provide additional information on the behavior of the Ricci flow solutions corresponding to the profile functions $( \tl{Z}, \tl{U}) (\gamma, \tau)$.

\section{Pointwise Estimates} \label{PointwiseEst}

In this section, we collect pointwise estimates that will be used to streamline the proofs in other sections.
The main results of this section include theorem \ref{ptwiseEstsZU} and proposition \ref{ptwiseEstsErr}.
The remainder of the section will justify the lemmas invoked in the proofs of these estimates.

\begin{definition}
	The set
		$$\mathcal{P}[\ul{D}, l,\kappa, \ul{\eta}, \tau_0, \tau_1, \Upsilon_{U,Z}, M ,\alpha, \beta , \lambda_k]$$
	consists of 
$( \tl{Z}, \tl{U})(\gamma, \tau) \in \mathcal{B}[ \ul{\eta}, \tau_0, \tau_1, \Upsilon_{U,Z}, M ,\alpha, \beta , \lambda_k ]$ 	that also satisfy the following bounds:
\begin{equation*} \begin{aligned}
	0 &\le \tl{U}		&&  \text{for all }  0 < \gamma \le \Upsilon_U e^{- \alpha \tau} \\
	 | \tl{U}_\gamma | &\le \frac{1}{\gamma} 	&& \text{for all }  0 < \gamma \le \Upsilon_U e^{- \alpha \tau} \\
	\end{aligned} \end{equation*}
	
\begin{equation*} \begin{aligned}
	| \tl{U} |  
	&\le D_0^U \min \left\{  | \log \gamma | ,  \Upsilon_U^l (\gamma e^{- \alpha_k \tau} )^{-2k-2B^2 \lambda - \kappa} \right\}	
	&& \text{for all }  0 < \gamma \le e^{- \alpha_k \tau} \\
\end{aligned} \end{equation*}
\begin{equation*} \begin{aligned}
	| \gamma \tl{U}_\gamma  | + | \gamma^2 \tl{U}_{\gamma \gamma} | 
	&\le D_0^{U_\gamma} 
	&&\text{for all }  0 < \gamma \le e^{- \alpha_k \tau} \\
	|\tl{Z} | + | \gamma \tl{Z}_\gamma | + | \gamma^2 \tl{Z}_{\gamma \gamma} | 
	&\le D_0^Z 
	&&\text{for all }  0 < \gamma \le e^{- \alpha_k \tau} \\
\end{aligned} \end{equation*}

\begin{equation*} \begin{aligned}
	| \tl{U} | + | \gamma \tl{U}_\gamma  | + | \gamma^2 \tl{U}_{\gamma \gamma} | 
	&\le D_1^U \Upsilon_U^l \left( \gamma e^{\alpha_k \tau} \right)^{-2k-2B^2 \lambda_k} 
	&& \text{for all }  e^{- \alpha_k \tau} \le \gamma \le \Upsilon_U e^{- \alpha_k \tau} \\
	| \tl{Z} | + | \gamma \tl{Z}_{\gamma} | + | \gamma^2 \tl{Z}_{\gamma \gamma} | 
	&\le D_1^Z \Upsilon_U^l \left( \gamma e^{\alpha_k \tau} \right)^{-2k - 2B^2 \lambda_k } 
	&& \text{for all } e^{- \alpha_k \tau} \le \gamma \le \Upsilon_Z e^{- \alpha_k \tau} \\
\end{aligned} \end{equation*}

\begin{equation*} \begin{aligned}
	| \tl{U} | + |  \tl{U}_\gamma  | + | \tl{U}_{\gamma \gamma} | 
	&\le D_2^U \gamma^{-2B^2 \lambda_k} e^{B^2 \lambda_k \tau} 
	&& \text{for all } M e^{\beta \tau} \le \gamma \le M e^{\beta \tau} + 1\\
	| \tl{Z} | + |  \tl{Z}_{\gamma} | + |  \tl{Z}_{\gamma \gamma} | 
	&\le D_2^Z \gamma^{-2B^2 \lambda_k } e^{B^2 \lambda_k \tau} 
	&& \text{for all } M e^{\beta \tau} \le \gamma \le M e^{\beta \tau} + 1 \\
\end{aligned} \end{equation*}
\end{definition}


\begin{remark}
	For estimates in later sections, it will matter that $$0 < l < \frac{1}{2} (2k + 2B^2 \lambda_k)$$
	However, its exact value is not important.
\end{remark}

\begin{thm} \label{ptwiseEstsZU}
	There exist constants $a,b,c,\kappa , \epsilon, l, \beta, \ol{\beta}$ such that the following holds:
	If $(\tl{Z}, \tl{U})$ is in
	\begin{equation*} \begin{aligned} 
		& \cl{I} [ \ul{D}^{\cl{I}}, a,b,c,\kappa, \epsilon, \Upsilon_U, \ul{\Upsilon}, \ul{\Upsilon} , \tau_0, \tau_0 ] \\
		\cap & \cl{B} [ \ul{\eta}, \tau_0,\tau_0, \ul{\Upsilon}, \ul{\Upsilon} , M , \alpha_k , \ol{\beta} , \lambda_k ] \\
		\cap & \cl{O} [ \ul{D}^{\cl{O}}, M e^{\ol{\beta} \tau_0}, \tau_0, \tau_0 ] \\
		\cap &  \cl{B} [ \ul{\eta} , \tau_0, \tau_1, \Upsilon_U, \Upsilon_Z, M , \alpha_k , \beta, \lambda_k ]
	\end{aligned} \end{equation*}	
	then 
		$$( \tl{Z}, \tl{U} ) \in \cl{P} [ \hat{\ul{D}} , l , \kappa, \ul{\eta}, \Upsilon_{U, Z}, M , \alpha_k , \beta , \lambda_k ]$$
		$$\text{where } \hat{\ul{D}} = \hat{\ul{D}}(p,q,k, \ul{D}^{\cl{I}, \cl{O}})$$
	if 
	$1 \ll \ul{\Upsilon} \le \Upsilon_U \le \Upsilon_Z$ are sufficiently large depending on $p,q,k, a,b, c , \kappa, \epsilon, l$,
	and $\tau_0 \gg 1$ is sufficiently large depending on $p,q,k, \Upsilon_U, \Upsilon_Z, \ul{\Upsilon}, a, b, c, \kappa, \epsilon, l, M, \beta, \ol{\beta}$. 
\end{thm}
\begin{proof}
	Since $|\ul{\eta}|$ is small, we will assume throughout the proof that it is bounded above by a uniform constant.
	We also write
	\begin{equation*} \begin{aligned}
		\cl{I} \left[ \ul{D}, a, b, c, \kappa, \epsilon, \Upsilon, \Upsilon_U, \Upsilon_Z, \tau_0, \tau_1 \right]
		\doteqdot & \quad
		\cl{I}_{(\text{Barrier I})} [ \Upsilon_U, \Upsilon_Z, \tau_0, \tau_1 ] \\
		& \cap \cl{I}_{(\text{weighted $C^2$ bounds})} [ D_0^{U, Z}, \Upsilon_U, \Upsilon_Z, \tau_0, \tau_1 ] \\
		& \cap \cl{I}_{(\text{Barrier II})} [ D_{1,2,3}, a,b,c,\kappa, \epsilon, \Upsilon, \Upsilon_U, \Upsilon_Z, \tau_0, \tau_1 ]
	\end{aligned} \end{equation*}
	where
		$\cl{I}_{(\text{Barrier I})} [ \Upsilon_U, \Upsilon_Z, \tau_0, \tau_1 ]$
	consists of the $(\tl{Z}, \tl{U})$ that satisfy the ``Inner Region Barriers I" bounds in definition \ref{innerRegConds},
		$\cl{I}_{(\text{weighted $C^2$ bounds})} [ D_0^{U, Z}, \Upsilon_U, \Upsilon_Z, \tau_0, \tau_1 ] $
	consists of the $(\tl{Z}, \tl{U})$ that satisfy the ``Weighted $C^2$ Bounds" conditions in definition \ref{innerRegConds}, and
		$\cl{I}_{(\text{Barrier II})} [ D_{1,2,3}, a,b,c,\kappa, \epsilon, \Upsilon, \Upsilon_U, \Upsilon_Z, \tau_0, \tau_1 ]$
	consists of the $(\tl{Z}, \tl{U})$ that satisfy the ``Inner Region Barriers II" bounds in definition \ref{innerRegConds}.
	
	The general idea of the rest of the proof will be to use that $( \tl{Z}, \tl{U}) \in \cl{B}$ and the initial conditions to show that various barriers apply.
	Equipped with such estimates, the fact that $(\tl{Z}, \tl{U}) \in \cl{B}$, and the asymptotics of the functions $\tl{Z}_{\lambda_k},  \tl{U}_{\lambda_k}$ for large and small $\gamma$, we then invoke interior estimates to get bounds on the derivatives of $\tl{Z}, \tl{U}$ in the various regions.
	
	To begin, note that the condition 
		$$( \tl{Z}, \tl{U}) \in \cl{B}[ \ul{\eta} , \tau_0, \tau_1, \Upsilon_U, \Upsilon_Z, M , \alpha_k , \beta, \lambda_k ]$$
	is automatic by assumption.
	
	If
	\begin{equation*} \begin{aligned}
		(\tl{Z}, \tl{U}) \in \cl{B} [ \ul{\eta} , \tau_0, \tau_1, \Upsilon_U, \Upsilon_Z, M , \alpha_k , \beta, \lambda_k ] 
	\end{aligned} \end{equation*}
	and $\tau_0 \gg 1$ is sufficiently large depending on $\ul{\Upsilon}, \Upsilon_U, \Upsilon_Z, p,q, k$,
	then the hypotheses of lemmas \ref{innerConeBarrierU}-\ref{innerSecondDerivBarrierU} are satisfied with 
	$\Upsilon = 4 \Upsilon_Z \ge 4 \Upsilon_U$ throughout, 
	$c = \frac{A}{B}$ in lemma \ref{innerConeBarrierU}, and 
	$c = B^2$ in lemma \ref{innerConeBarrierZ}.
	Hence, 
	\begin{gather*}
		(\tl{Z}, \tl{U}) \in \cl{I}_{(\text{Barrier I})} [ 4\Upsilon_Z, 4\Upsilon_Z, \tau_0, \tau_1] 
		\subset \cl{I}_{(\text{Barrier I})} [ \Upsilon_U, \Upsilon_Z, \tau_0, \tau_1]
	\end{gather*}
	which in particular implies that
	\begin{equation*} \begin{aligned}
		0 \le \tl{U}		& \quad \text{for all } \quad 0 < \gamma \le \Upsilon_U e^{- \alpha \tau} \\
		| \tl{U}_\gamma | \le \frac{1}{\gamma} 	& \quad \text{for all } \quad 0 < \gamma \le \Upsilon_U e^{- \alpha \tau} \\	
	\end{aligned} \end{equation*}
	Moreover, integrating the $\tl{U}_\gamma$ bound on $\gamma \in [0, \Upsilon_U e^{-\alpha \tau} ]$ and using
		$$( \tl{Z}, \tl{U}) \in \cl{B} [ \ul{\eta} , \tau_0, \tau_1, \Upsilon_U, \Upsilon_Z, M , \alpha_k , \beta, \lambda_k ] $$
	yields the logarithmic bound for $\tl{U}$ on $\gamma \in ( 0 , e^{-\alpha \tau} ] \subset ( 0 , \Upsilon_U e^{-\alpha \tau} ]$.
	
	Now, for $\tau_0 \gg 1$ sufficiently large, 
		$$(\tl{Z}, \tl{U}) \in  \cl{I}_{(\text{Barrier I})} [ 4\Upsilon_Z, 4\Upsilon_Z, \tau_0, \tau_1] \cap \cl{B} [ \ul{\eta} , \tau_0, \tau_1, \Upsilon_U, \Upsilon_Z, M , \alpha_k , \beta, \lambda_k ]$$
	and
		$$(\tl{Z}, \tl{U}) \in \cl{I} [ \ul{D}^{\cl{I}}, a,b,c,\kappa, \epsilon, \Upsilon_U, \ul{\Upsilon}, \ul{\Upsilon} , \tau_0, \tau_0 ]$$
	imply that the interior estimates of subsubsection \ref{innerIntEsts} apply.
	Hence,
		$$(\tl{Z}, \tl{U}) \in \cl{I}_{\text{(weighted $C^2$ bounds)}} [ \hat{\ul{D}}^{\cl{I}} , \Upsilon_Z, \Upsilon_Z, \tau_0, \tau_1]$$
	$$\text{where } \hat{\ul{D}}^{\cl{I}} = \hat{\ul{D}}^{\cl{I}}( p,q,k, \ul{D}^{\cl{I}} )$$ 
	
	We now show the $C^0$-estimate for the region $\gamma \in \left[ e^{-\alpha \tau}, \Upsilon_{U,Z} e^{-\alpha \tau} \right]$.
	Begin by noting that lemma \ref{innerParabExtendZBounds} simplifies the situation to proving the $C^0$ bound on the region $e^{-\alpha \tau} \le \gamma \le \Upsilon_U e^{-\alpha \tau}$ for both $\tl{U}$ and $\tl{Z}$.
	This $C^0$ estimate then follows from lemmas \ref{innerFarBarrierUWeak} - \ref{innerFarBarrierU} with $\Upsilon = \Upsilon_U \gg 1$ sufficiently large and constants $a,b,c,\kappa, \epsilon$ specified in remark \ref{constantsToUse}.
	Indeed, 
		$$(\tl{Z}, \tl{U}) \in \cl{I}[ \ul{D}^{\cl{I}}, a,b,c,\kappa, \epsilon, \Upsilon_U, \ul{\Upsilon}, \ul{\Upsilon} , \tau_0, \tau_0 ] \\ \cap \cl{B}[ \ul{\eta} , \tau_0, \tau_1, \Upsilon_U, \Upsilon_Z, M , \alpha_k , \beta, \lambda_k ]$$
	implies that the hypotheses of lemma \ref{innerFarBarrierUWeak} hold with $\Upsilon = \Upsilon_U$
	and $C_0 \gtrsim \Upsilon_U^{a - 2k-2B^2 \lambda_k}$.
	This bound then implies that the hypotheses of lemma \ref{innerFarBarrierZ} and so we obtain the desired $C^0$-bound on $\tl{Z}$ for $e^{-\alpha \tau} \le \gamma \le \Upsilon_U e^{- \alpha \tau}$, i.e.
		$$| \tl{Z} | \le D_1^Z \Upsilon_U^l ( \gamma e^{-\alpha \tau} )^{-2k -2B^2 \lambda}$$
	This bound then implies that the hypotheses of lemma \ref{innerFarBarrierU} hold and so we obtain the corresponding $C^0$ bound on $\tl{U}$ for $\gamma \in \left[ e^{-\alpha \tau}, \Upsilon_U e^{-\alpha \tau} \right]$.
	The $C^2$ estimate then follows from \ref{innerInteriorEstU+}
	and \ref{innerInteriorEstZ+} 
	using additionally the fact that 
	$$(\tl{Z}, \tl{U}) \in \cl{B}[ \ul{\eta} , \tau_0, \tau_1, \Upsilon_U, \Upsilon_Z, M , \alpha_k , \beta, \lambda_k ]$$
	and the asymptotics of the eigenfunctions for small $\gamma$ 
	contained in propositions \ref{eigfuncUAsymps} and \ref{ZlambdaAsymps}.
	Note that the choice of constants specified by remark \ref{constantsToUse} ensures that
		$$0 <  l < \frac{1}{2} (2k + 2B^2 \lambda_k)$$
	
	For the outer region estimates, first
	observe that the choice of initial data 
	and 
		$$(\tl{Z}, \tl{U}) \in \cl{B} [ \ul{\eta} , \tau_0, \tau_1, \Upsilon_U, \Upsilon_Z, M , \alpha_k , \beta, \lambda_k ] $$
	imply that, if $\tau_0 \gg 1$ is sufficiently large, then the hypotheses of lemmas \ref{outerCurvatureBounds}-\ref{outerSineConeBarrierZ} apply
	with $c=\frac{A}{B}$ in lemma \ref{outerConeBarrierU}, $C = 1 + D_0^\cl{O}$ in lemmas \ref{outerConeBarrierGradU} and \ref{outerSineConeBarrierZ}, and $K_0 = D_1^\cl{O}$ in lemma \ref{outerSineConeBarrierZ}.

	In particular, for any $\epsilon' > 0$, $\tl{Z} > \epsilon'$ on the outer-parabolic interface if $\tau_0 \gg 1$ is sufficiently large. 
	This fact and the fact that	
		$$(\tl{Z}, \tl{U}) \in \cl{B} [ \ul{\eta} , \tau_0, \tau_1, \Upsilon_U, \Upsilon_Z, M , \alpha_k , \beta, \lambda_k ] $$
	then imply that the hypotheses of lemmas \ref{parabOuterBarriersU}-\ref{parabOuterInteriorEstZ} hold (with constants $C$ depending only on $p,q,k, \ul{D}^{\cl{O}}$) if $\tau_0 \gg 1$ is sufficiently large.
	Thus, if
		$$\beta < \frac{1}{2} \left( \frac{ 1}{ 1 + \frac{1}{-2B^2 \lambda_k } } \right) \le \ol{\beta} < \frac{1}{2}$$
	 and $\tau_0 \gg 1$ is sufficiently large, then 
	 \begin{equation*} \begin{aligned}
	 | \tl{U} | + |  \tl{U}_\gamma  | + | \tl{U}_{\gamma \gamma} | \le \hat{D}_2^U \gamma^{-2B^2 \lambda_k} e^{B^2 \lambda_k \tau} & \quad \text{for all } \quad M e^{\beta \tau} \le \gamma \le M e^{\beta \tau} + 1\\
	| \tl{Z} | + |  \tl{Z}_{\gamma} | + |  \tl{Z}_{\gamma \gamma} | \le \hat{D}_2^Z \gamma^{-2B^2 \lambda_k } e^{B^2 \lambda_k \tau} & \quad \text{for all } \quad M e^{\beta \tau} \le \gamma \le M e^{\beta \tau} + 1 \\
\end{aligned} \end{equation*}
	for $\hat{D}_2^{U,Z}$ depending on $p,q,k, \ul{D}^{\cl{O}}$.
\end{proof}

\begin{prop} \label{ptwiseEstsErr}
	If $(\tl{Z}, \tl{U}) \in \mathcal{P}[ \ul{D}, l, \kappa, \ul{\eta}, \tau_0, \tau_1, \Upsilon_{U,Z}, M, \alpha, \beta, \lambda_k]$ then there are the following pointwise estimates on the error terms
	\begin{equation*} \begin{aligned}
		| Err_U | 
		\lesssim_{p,q} &  \left( D_0^{U_\gamma} D_0^Z + \Upsilon_U^l \right) ( \gamma e^{\alpha \tau} )^{-2k-2B^2 \lambda - \kappa}	\gamma^{-2}
		&& \text{for  }  0 < \gamma \le e^{- \alpha \tau} \\
		| Err_U | 
		\lesssim_{p,q} & \left( D_1^Z D_1^U  + (D_1^U)^2 \right) \Upsilon_U^{2l}  \left( \gamma e^{\alpha_k \tau} \right)^{-4k-4B^2 \lambda_k} \gamma^{-2} 
		&& \text{for  }  e^{-\alpha_k \tau} < \gamma \le \Upsilon_U e^{- \alpha \tau} \\
		| Err_U| 
		\lesssim_{p,q,k} & \left[ D_1^Z  \Upsilon_U^l + 1  \right] 
		\cdot \gamma^{ -4k - 4B^2 \lambda_k  - 2} e^{2B^2 \lambda_k \tau } 
		&& \text{for } \Upsilon_U e^{- \alpha \tau} \le \gamma \le \Upsilon_Z e^{- \alpha \tau} \\
		| Err_U | 
		\lesssim_{p,q,k} & e^{2B^2 \lambda_k \tau} \left( \gamma^{ -4k - 4B^2 \lambda_k -2} + \gamma^{ -4B^2 \lambda_k } \right) 
		&& \text{for } \Upsilon_Z e^{- \alpha \tau} \le \gamma \le M e^{\beta \tau} \\
		| \grave{\chi} Err_U | \lesssim_{p,q} & \left[ D_2^Z D_2^U + \left(D_2^U \right)^2 \right] \gamma^{-4B^2 \lambda_k} e^{2B^2 \lambda_k \tau} 
		&& \text{for }  M e^{\beta \tau} \le \gamma \le M e^{\beta \tau}  + 1 \\
	\end{aligned} \end{equation*}
	
	\begin{equation*} \begin{aligned}
		| Err_Z | \lesssim_{p,q} & \left(D_0^Z D_0^{U_\gamma} \right)^2 \gamma^{-2} 
		&& \text{for } 0 \le \gamma \le e^{- \alpha_k \tau} \\
		| Err_Z | \lesssim_{p,q} & \left[ (D_1^Z)^2 + \left( D_1^{U_\gamma} \right)^2 \right] \Upsilon_U^{2l} \gamma^{-4k - 4B^2 \lambda_k  - 2} e^{ 2B^2 \lambda_k \tau }
		&& \text{for }  e^{- \alpha \tau} \le \gamma \le \Upsilon_U e^{- \alpha_k \tau} \\
		| Err_Z | \lesssim_{p,q, k} & \left[ (D_1^Z)^2\Upsilon_U^{2l} + 1 \right]  \gamma^{-4k - 4B^2 \lambda_k  - 2} e^{ 2B^2 \lambda_k \tau }
		&& \text{for }  \Upsilon_U e^{- \alpha \tau} \le \gamma \le \Upsilon_Z e^{- \alpha_k \tau} \\
		| Err_Z |  
		\lesssim_{p,q,k} & e^{2B^2 \lambda_k \tau} \left( \gamma^{ -4k - 4B^2 \lambda_k -2} + \gamma^{ -4B^2 \lambda_k } \right) 
		&& \text{for }  \Upsilon_Z e^{- \alpha \tau} \le \gamma \le M e^{\beta \tau} \\
		| \grave{\chi} Err_Z | \lesssim_{p,q} & \left[ (D_2^Z)^2 + (D_2^U)^2 \right] \gamma^{-4B^2 \lambda_k} e^{2B^2 \lambda_k \tau} 
		&& \text{for }  M e^{\beta \tau} \le \gamma \le M e^{\beta \tau}  + 1 \\
	\end{aligned} \end{equation*}
	
	\begin{equation*} \begin{aligned}
		\left| B^2 \cl{N} \left( \tl{U}(\gamma, \tau) - e^{B^2 \lambda_k \tau} \tl{U}_{\lambda_k}(\gamma) \right) \right|
		\lesssim_{p,q,k} & D_0^{U_\gamma} \gamma^{-2k - 2B^2 \lambda_k -2} e^{B^2 \lambda_k \tau} \\
		& \text{for }  0 < \gamma \le e^{-\alpha \tau} \\
		\\
		\left| B^2 \cl{N} \left( \tl{U}(\gamma, \tau) - e^{B^2 \lambda_k \tau} \tl{U}_{\lambda_k}(\gamma) \right) \right|
		\lesssim_{p,q,k} & D_1^U \Upsilon_U^l \gamma^{-2k - 2B^2 \lambda_k -2} e^{B^2 \lambda_k \tau} \\
		& \text{for } e^{- \alpha \tau}  < \gamma \le \Upsilon_U e^{-\alpha \tau} \\
		\\
		\left| B^2 \cl{N} \left( \tl{U}(\gamma, \tau) - e^{B^2 \lambda_k \tau} \tl{U}_{\lambda_k}(\gamma) \right) \right|
		\lesssim_{p,q} & \eta_1^U e^{B^2 \lambda_k \tau} \left( \gamma^{-2k - 2B^2 \lambda_k -2} + \gamma^{-2B^2 \lambda_k -1} \right) \\
		& \text{for } \Upsilon_U e^{- \alpha \tau}  < \gamma \le M e^{\beta \tau} \\
		\\
		\left| \grave{\chi} B^2 \mathcal{N}  \tl{U}(\gamma, \tau) - B^2 \mathcal{N} e^{B^2 \lambda_k \tau} \tl{U}_{\lambda_k}(\gamma)  \right|
		\lesssim_{p,q,k} &( D_2^U + 1) e^{B^2 \lambda_k \tau} \gamma^{-2B^2 \lambda_k-1}\\
		& \text{for } M e^{\beta \tau}  \le  \gamma \le M e^{\beta \tau} +1 \\
	\end{aligned} \end{equation*} 
	
	\begin{equation*} \begin{aligned}
		\left| B^2 [ \grave{\chi} , \mathcal{D}_U ] \tl{U} + \grave{\chi}_\tau \tl{U} \right| \lesssim_{p,q,k}& D_2^U  e^{B^2 \lambda_k \tau} \gamma^{-2B^2 \lambda_k +1}  \qquad
		&& \text{for } M e^{\beta \tau} \le \gamma \le M e^{\beta \tau} +1 \\
		\left| B^2 [ \grave{\chi} , \mathcal{D}_Z ] \tl{Z} + \grave{\chi}_\tau \tl{Z} \right| \lesssim_{p,q,k}& D_2^Z e^{B^2 \lambda_k \tau} \gamma^{-2B^2 \lambda_k +1} 
		&& \text{for } M e^{\beta \tau} \le \gamma \le M e^{\beta \tau} +1 \\
	\end{aligned} \end{equation*}
\end{prop}
\begin{proof}
	Nearly all these estimates follow immediately from the assumed pointwise estimates on $\tl{U}, \tl{Z}$ and their derivatives.
	We make a few remarks in the cases where some of the estimates are not immediately clear.

	($| Err_U |$ bound for $0 < \gamma \le  e^{-\alpha \tau}$)\\
	To estimate the $1 - e^{-2\tl{U}} - 2\tl{U}$ term, simply note that $0 \le \tl{U}$ implies
		$$\left| 1 - e^{- 2 \tl{U} }  - 2 \tl{U} \right| \le \left| 1 - e^{-2 \tl{U} } \right| + 2 \left| \tl{U} \right| \le 1 + 2 \left| \tl{U} \right|$$
	The estimate then follows from the pointwise estimate on $\tl{U}$ for $0 < \gamma \le e^{-\alpha \tau} $.

	($| Err_U |$ bound for $e^{-\alpha \tau}  \le \gamma \le \Upsilon_U e^{-\alpha \tau}$)\\
	There is some subtlety in estimating the contribution of the factor
		$$1 - e^{-2 \tl{U}} - 2 \tl{U}$$
	First, note that 
		$$0 \le \tl{U} \le D_1^U \Upsilon_U^l ( \gamma e^{- \alpha \tau} )^{-2k -2B^2 \lambda_k}		\qquad \text{for all } e^{-\alpha \tau} \le \gamma \le \Upsilon_U e^{- \alpha \tau}$$
		$$ \implies 0 \le \tl{U} \le D_1^U \Upsilon_U^l		 	\qquad \text{for all } e^{-\alpha \tau} \le \gamma \le \Upsilon_U e^{- \alpha \tau}$$
	Taylor's inequality for the function $x \mapsto 1 - e^{-2x} - 2 x$ then implies that
		$$\left| 1 - e^{-2 \tl{U}} - 2 \tl{U} \right| \le \frac{  \tl{U}^2 }{2} \sup_{x \in [0, D_1^U \Upsilon_U^l]} 4 e^{-2x} \le 2 \tl{U}^2 $$
	for all $e^{-\alpha \tau} \le \gamma \le \Upsilon_U e^{- \alpha \tau}$.
	Hence,
		$$\frac{n-1}{\gamma^2} \left| 1 - e^{-2 \tl{U}} - 2 \tl{U} \right| \lesssim_{p,q} (D_1^U)^2 \Upsilon_U^{2l} \gamma^{-4k - 4B^2 \lambda_k -2} e^{2B^2 \lambda_k \tau}$$
	for all $e^{-\alpha \tau} \le \gamma \le \Upsilon_U e^{- \alpha \tau}$.
\end{proof}

\subsection{Inner Region Pointwise Estimates} \label{InnerRegionPointwiseEsts}
\subsubsection{Barriers}
Throughout this subsection we let $\Omega$ denote the spacetime region
	$$\Omega = \Omega(\Upsilon, \tau_0, \tau_1) = \left\{ (\gamma, \tau) \in (0, \infty) \times (\tau_0, \tau_1) : \gamma < \Upsilon e^{-\alpha_k \tau} \right\}$$
its parabolic boundary
	$$\partial_P \Omega = \left(  [0, \Upsilon e^{ - \alpha_k \tau_0} ] \times \{ \tau_0 \}  \right) \cup \{ (\Upsilon e^{-\alpha_k \tau}, \tau) : \tau \in [\tau_0, \tau_1) \} \cup \left( \{ 0 \} \times [\tau_0, \tau_1) \right)$$
and its pseudo-parabolic boundary
	$$\partial_{PP} \Omega = \left(  [0, \Upsilon e^{ - \alpha_k \tau_0} ] \times \{ \tau_0 \}  \right) \cup \{ (\Upsilon e^{-\alpha_k \tau}, \tau) : \tau \in [\tau_0, \tau_1) \}$$

For the proofs in this subsection, it is assumed that
\begin{enumerate}
	\item $U, Z \in C^\infty \left( \ol{\Omega} \right)$, and
	\item $0 < Z( \gamma, \tau) \le 1$ for all $(\gamma, \tau) \in \ol{\Omega}$.
\end{enumerate}
The first assumption will be justified in section \ref{NoInnBlowup}.
The second assumption ensures that the differential equations involved in the proofs of the following barriers are parabolic equations for which comparison principles apply.
A quantitative lower bound on $\inf_\Omega Z$ will be obtained in proposition \ref{innerConeBarrierZ}.
	
\begin{lem} \label{innerConeBarrierU}
	Let $c \ge \frac{A}{B}$.
	If $U \ge \log( c \gamma)$ for all $(\gamma, \tau) \in \partial_{PP} \Omega$,
	then $U \ge \log( c \gamma)$ for all	$ (\gamma, \tau) \in \Omega$.
\end{lem}
\begin{proof}
	Since $U(0, \tau) > -\infty$ for all $\tau \in [\tau_0, \tau_1)$,
		$$U \ge \log( c \gamma) \quad \text{for all } (\gamma, \tau) \in \partial_{PP} \Omega \quad \implies \quad U \ge \log( c \gamma) \quad \text{for all } (\gamma, \tau) \in \partial_{P} \Omega$$
	By the comparison principle, it now suffices to show that for any $c \ge \frac{A}{B}$
		$$U_- = \log( c \gamma)$$
	is a subsolution in $\Omega$, i.e. that
		$$\partial_\tau U_- + \frac{\gamma}{2} \partial_\gamma U_- - \frac{1}{2} \le \mathcal{E}_\gamma [ Z, U_-]		\qquad \forall (\gamma, \tau) \in \Omega$$	
	Note that here $Z$ is simply regarded as a smooth coefficient valued in $(0,1]$ rather than as a solution to a differential equation depending on $U_-$.
	It is straightforward to check that
	if $c \ge \frac{A}{B} = \sqrt{ \frac{p-1}{q-1} }$ then for all $(\gamma, \tau) \in \Omega$
		$$\mathcal{E}_\gamma [ Z, U_-]- \frac{\gamma}{2} \partial_\gamma U_- + \frac{1}{2} = \frac{q-1}{\gamma^2} - \frac{p-1}{c^2 \gamma^2} \ge 0 = \partial_\tau U_-$$
		
\end{proof}

\begin{lem} \label{innerConeBarrierGradU}
	If $U \ge \log \left( \frac{A}{B} \gamma \right)$ for all $(\gamma, \tau) \in \Omega$ and $U_\gamma \le \frac{1}{\gamma}$ on $\partial_{PP} \Omega$, then $U_\gamma \le \frac{1}{\gamma}$ for all $(\gamma, \tau) \in \Omega$.
\end{lem}
\begin{proof}
	Smoothness implies that $U_\gamma(0, \tau) = 0$ for all $\tau \in [\tau_0, \tau_1)$.
	Hence, $U_\gamma \le \frac{1}{\gamma}$ for all $ (\gamma, \tau) \in \partial_{P} \Omega$.
	The remainder of the proof follows from the comparison principle once we confirm that $\frac{1}{\gamma}$ is a supersolution for the parabolic equation satisfied by $U_\gamma$.
	Indeed, if $U \ge \left( \frac{A}{B} \gamma \right)$ then
	\begin{equation*} \begin{aligned}
		& \mathcal{L}_\gamma[Z, U] \left( \frac{1}{\gamma} \right) 
		= - \frac{2(q-1)}{\gamma^3} + \frac{2(p-1)}{\gamma} e^{-2U} 
		\le 0 = \partial_\tau \left( \frac{1}{\gamma} \right) + \frac{1}{2} \left( \frac{1}{\gamma} \right)   + \frac{\gamma}{2} \partial_\gamma \left( \frac{1}{\gamma} \right) 
	\end{aligned} \end{equation*}
	
\end{proof}

\begin{lem} \label{innerConeBarrierZ}
	Let $0 < c \le B^2$.
	If $U_\gamma \le \frac{1}{\gamma}$ in $\Omega$ and $Z \ge c$ on $\partial_{PP} \Omega$ then $Z \ge c$ in $\Omega$.
\end{lem}
\begin{proof}
	Smoothness implies that $Z(0, \tau) = 1$ for all $\tau \in [\tau_0, \tau_1)$.
	Hence, $Z \ge c$ on $\partial_{PP} \Omega$ implies that $Z \ge c$ on $\partial_{P} \Omega$.
	It now suffices to check that $Z_- = c$ is a subsolution in $\Omega$ when $0<c \le B^2$ and $U_\gamma \le \frac{1}{\gamma}$.
	Indeed,
		$$\mathcal{F}_\gamma[ Z_-, U_\gamma] = \frac{2(q-1)}{\gamma^2} c (1-c) - 2p c^2 U_\gamma^2 \ge \frac{1}{\gamma^2} \big( 2(q-1)c(1-c) - 2pc^2 \big) $$
		$$\ge 0 = \partial_\tau Z_- + \frac{\gamma}{2} \partial_\gamma Z_-$$
\end{proof}

\begin{lem} \label{innerGradUPositive}
	If $U_\gamma \ge 0$ on $\Omega_{PP}$, then $U_\gamma \ge 0$ in $\Omega$.
\end{lem}
\begin{proof}
	Smoothness implies that $U_\gamma(0, \tau) = 0$ for all $\tau \in [\tau_0, \tau_1)$.
	The proposition then follows immediately from the comparison principle after noting that $U_\gamma$ satisfies a \textit{linear} parabolic equation (with coefficients depending on $Z, Z_\gamma,$ and $U$).
\end{proof}

\begin{lem} \label{innerSecondDerivBarrierU}
	If $0 \le U_\gamma \le \frac{1}{\gamma}$ in $\Omega$,
	$U \ge \log \left( \frac{A}{B} \gamma \right)$ in $\Omega$,
	and $U_{\gamma \gamma} + \frac{U_\gamma}{\gamma} \ge 0$ on $\partial_{PP} \Omega$,
	then $U_{\gamma \gamma} + \frac{U_\gamma}{\gamma} \ge 0$ in $\Omega$.
\end{lem}
\begin{proof}
	First note that $U_\gamma \ge 0$ in $\Omega$ and $U_\gamma(0, \tau) = 0$ imply that $U_{\gamma \gamma}(0, \tau) \ge 0$.
	 Additionally, smoothness and L'Hopital's rule imply that
		$$\lim_{\gamma \searrow 0 } U_{\gamma \gamma} + \frac{U_\gamma}{\gamma} = 2 U_{\gamma \gamma}(0, \tau) \ge 0 $$
	Hence,
		$$U_{\gamma \gamma} + \frac{U_\gamma}{\gamma} \ge 0 \text{ on } \partial_{PP} \Omega \implies U_{\gamma \gamma} + \frac{U_\gamma}{\gamma} \ge 0 \text{ on } \partial_{P} \Omega $$
	
	The result follows from a comparison principle once we check that $U_{\gamma \gamma} + \frac{U_\gamma}{\gamma}$ is a supersolution to a linear parabolic equation.
	Write the evolution equation for $V = U_\gamma$ as
		$$\partial_\tau V = a(Z, \gamma) V_{\gamma \gamma} + b(Z, \gamma) V_\gamma + c(Z, U, \gamma) V$$
	where
		$$a(Z) = Z, \qquad  \qquad b(Z, \gamma) = Z_\gamma + Z/\gamma + (q-1)/\gamma - \gamma/2$$
		$$c(Z, \gamma, U) = Z_\gamma / \gamma - Z / \gamma^2 - (q-1)/\gamma^2 + 2(p-1) e^{-2U} - \frac{1}{2}$$
	Observe that
		$$\partial_\gamma (V_\gamma /\gamma) = V_\gamma / \gamma - V/ \gamma^2 \qquad \qquad \partial_{\gamma \gamma} (V / \gamma) = V_{\gamma \gamma} / \gamma - 2 V_\gamma / \gamma^2 + 2 V / \gamma^3$$
	and so it follows that
	\begin{equation*} \begin{aligned}
		&\partial_\tau (V_\gamma + V/ \gamma) \\
		=& a V_{\gamma \gamma \gamma} + V_{\gamma \gamma } [ b + \partial_\gamma a ] + V_\gamma [ c + \partial_\gamma b] + V [\partial_\gamma c ] + a V_{\gamma \gamma} / \gamma + b V_\gamma / \gamma + c V/ \gamma \\
		=& a \partial_{\gamma \gamma} (V_\gamma + V/ \gamma ) + b \partial_\psi ( V_\gamma + V/ \gamma) + c(V_\gamma + V/ \gamma) \\
		&+ \partial_\gamma ( a) V_{\gamma \gamma} + \frac{2a}{\gamma} \partial_\gamma (V/ \gamma) + V_\gamma \partial_\gamma b + \frac{b}{\gamma} V/ \gamma + V \partial_\gamma (c) \\
	\end{aligned} \end{equation*}
	The nonlinear terms can be rewritten as follows
	\begin{equation*} \begin{aligned}
		& \partial_\gamma ( a) V_{\gamma \gamma} + \frac{2a}{\gamma} \partial_\gamma (V / \gamma) + V_\gamma \partial_\gamma b + \frac{b}{\gamma} V/ \gamma + V \partial_\gamma (c) \\
		=& Z_\gamma \partial_\gamma \left( V_\gamma + V/\gamma \right) - Z_\gamma \partial_\gamma ( V/\gamma) 
		+ \frac{2Z}{\gamma } \partial_\gamma ( V/\gamma) \\
		& + V_\gamma \left( Z_{\gamma \gamma} + \frac{Z_\gamma}{\gamma} - Z/\gamma^2 -(q-1)/\gamma^2 - 1/2 \right) \\
		& + \frac{V}{\gamma} \left( Z_\gamma/\gamma + Z/ \gamma^2 + (q-1)/\gamma^2 - 1/2 \right) \\
		& + V \left( Z_{\gamma \gamma}/\gamma - 2Z_\gamma / \gamma^2 + 2 Z/\gamma^3 + 2(q-1)/\gamma^3 -4(p-1) e^{-2U} V \right) \\
		= & Z_\gamma \partial_\gamma \left( V_\gamma + V/\gamma \right)  + Z_{\gamma \gamma} ( V_\gamma + V/\gamma) + (Z/\gamma^2) (V_\gamma + V/\gamma)\\
		& - \frac{1}{2} ( V_\gamma + V/\gamma) - \frac{q-1}{\gamma^2} (V_\gamma + V/\gamma) 
		 +4 \frac{q-1}{\gamma^2} \frac{V}{\gamma} - 4(p-1) e^{-2U} V^2 \\
	\end{aligned} \end{equation*}
	So in fact the only nonlinear terms are
		$$4 \frac{q-1}{\gamma^2} \frac{V}{\gamma} - 4(p-1) e^{-2U} V^2 = 4 \frac{q-1}{\gamma^2} \frac{U_\gamma}{\gamma} - 4(p-1) e^{-2U} U_\gamma^2 $$
	This quantity is nonnegative if $0 \le U_\gamma \le \frac{1}{\gamma}$ and $U \ge \log \left( \frac{A}{B} \gamma \right)$, and so the result follows from the comparison principle.
\end{proof}

\subsubsection{Interior Estimates} \label{innerIntEsts}

In the next few propositions, it will easier to work in terms of the following variables adapted to the inner region
	$$\xi = \gamma e^{\alpha_k \tau} 	\qquad r = \frac{1}{2 \alpha} e^{2 \alpha \tau}$$
	$$\mathcal{Z}(\xi, r) = Z(\gamma, \tau) \qquad	 \mathcal{U}(\xi, r) = U(\gamma, \tau) + \alpha \tau$$
	$$\tl{\cl{Z}} = \cl{Z} - B^2 \qquad 		\tl{\cl{U}} = \cl{U} - \log \left( \frac{A}{B} \xi \right)$$
If $Z,U$ satisfy the rescaled Ricci flow equations (\ref{psRF}) then $\mathcal{Z}, \mathcal{U}$ satisfy
\begin{equation} \label{isRF} \begin{aligned}
	 \mathcal{Z}_r + \frac{ \alpha + \frac{1}{2} }{2 \alpha r} \xi \mathcal{Z}_\xi  =& \mathcal{F}_\xi [ \mathcal{Z}, \mathcal{U}_\xi ] \\
	 \mathcal{U}_r + \frac{ \alpha + \frac{1}{2} }{2 \alpha r} ( \xi \mathcal{U}_\xi - 1 )  =& \mathcal{E}_\xi [ \mathcal{Z}, \mathcal{U} ]
\end{aligned} \end{equation}
Note $\Omega = \left\{ (\xi, \tau) \in (0, \Upsilon) \times (\tau_0, \tau_1) \right\} =  \left\{ (\xi, r) \in (0, \Upsilon) \times (\frac{1}{2 \alpha} e^{2 \alpha \tau_0}, \frac{1}{2 \alpha} e^{2 \alpha \tau_1}) \right\}$.

\begin{lem} \label{innerInteriorEstZ}
	For any $\Upsilon > 0$ and $\tau_0 \gg 1$ sufficiently large depending on $\Upsilon$, the following holds:
	If $\tau_1 > \tau_0$ and $Z,U$ satisfy 
		$$Z_\tau = \mathcal{F}_\gamma [Z, U] - \frac{\gamma}{2} Z_\gamma$$
		$$B^2 \le Z \le 1, \quad 0 \le U_\gamma \le \frac{1}{\gamma} \qquad \text{for all } (\gamma, \tau) \in \Omega( \Upsilon, \tau_0, \tau_1)$$
	and the initial data satisfies
		$$|\gamma^2 Z_{\gamma \gamma} | + |\gamma Z_{\gamma} | + | Z| \le C \qquad \text{for all } 0 \le \gamma \le \Upsilon e^{- \alpha \tau_0}, \tau = \tau_0$$
	then there exists a constant $C'$ (depending only on $p,q,k$, and $C$) such that
		$$| \gamma^2 Z_{\gamma \gamma} | + | \gamma Z_\gamma | \le C' \qquad  \text{for all } (\gamma, \tau) \in \Omega\left( \frac{1}{2} \Upsilon, \tau_0, \tau_1 \right)$$
\end{lem}
\begin{proof}
	In terms of the variables
		$$\xi = \gamma e^{\alpha \tau} \qquad r \doteqdot \frac{1}{2 \alpha} e^{2 \alpha \tau} $$ 
	it follows that	
		$$\clZ_r = \mathcal{F}_{\xi} [ \clZ, \clU] - \frac{ \alpha + \frac{1}{2} }{2 \alpha r} \xi \clZ_\xi$$
	We will obtain bounds on $\clZ$ through a rescaling argument.
	Indeed, fix $0 \le \xi_0 \le \frac{1}{\sqrt{2}} \Upsilon$ and $ r_0 > r(\tau_0)$, and define
		$$M(x,s) \doteqdot \clZ \big( \xi_0 ( 1+ x), r_0 + s \xi_0^2 \big)$$ 
	It follows that
	\begin{gather*}
		M_s(x,s) = M M_{xx}(x,s) + M_x \left( \frac{q-1-M}{1+x} - \frac{1}{2} M_x - \frac{ \alpha + \frac{1}{2}}{2 \alpha \left( \frac{r_0}{\xi_0^2} + s \right)} (1+x) \right) \\
		+ \left[ \frac{2(q-1)}{(1+x)^2} ( 1 - M) - 2p \clU_\xi^2 \xi_0^2 M \right]	M
	\end{gather*}
	If $\tau_0$ is taken sufficiently large so that $ \frac{ r(\tau_0)}{\Upsilon^2} > 1$, then $\frac{r_0}{\xi_0^2} > 2$.
	From this estimate and the assumed scale-invariant pointwise bounds on $Z$ and $U_\gamma$, it follows that, up to a semilinearity $-\frac{1}{2} M_x^2$, $M$ satisfies a linear parabolic equation with bounded coefficients on $ |x| \le \frac{1}{2}, \max(-1, s_*) \le s \le 0$, where $s_*(r_0, \xi_0, r(\tau_0))$ is defined by $r_0 + s_* \xi_0^2 = r(\tau_0)$.
	Moreover, the bounds on the coefficients depend only on $p,q$, and $\alpha$.
	Hence, interior estimates for such equations (see e.g. Chapter V.3 of ~\cite{LSU88}) imply that there exists a constant $C''$ depending only on $p,q,\alpha$, and $C$ such that
		$$| \xi_0 \clZ_\xi( \xi_0, r_0 ) | = | M_x(0,0)| \le C''$$
	Since $\xi_0 \in [0 , \frac{1}{\sqrt{2}} \Upsilon ]$ was arbitrary, it follows that
	\begin{equation*} \begin{aligned}
		| \xi \clZ_\xi | \le & C'' \qquad& \text{for all } 0 \le \xi \le \frac{1}{\sqrt{2}} \Upsilon,  \tau_0 \le \tau \le \tau_1\\
		\text{or equivalently } \quad | \gamma Z_\gamma | \le& C'' \qquad& \text{for all } 0 \le \gamma \le \frac{1}{\sqrt{2}} \Upsilon e^{- \alpha \tau}, \tau_0 \le \tau \le \tau_1
	\end{aligned} \end{equation*}
	
	Now, equipped with this bound on the derivative, $M$ can be regarded as solving a \textit{linear} parabolic equation with bounded coefficients.
	As before, interior estimates then yield the corresponding bound on $Z_{\gamma \gamma}$ in the region $\Omega(\frac{1}{2} \Upsilon, \tau_0, \tau_1)$.
\end{proof}

\begin{remark}
	In the above proof, it is important that the semilinearity is of the form $-\frac{1}{2} M_x^2$ for interior estimates to apply.
	Indeed, a result of Angenent-Fila ~\cite{AF96} shows that, in general, interior gradient estimates fail to hold for equations with semilinearities containing larger powers of the gradient.
\end{remark}

\begin{lem} \label{innerInteriorEstU}
	For any $\Upsilon >0$ and $\tau_0 \gg 1$ sufficiently large depending on $\Upsilon$, the following holds:
	if $\tau_1 > \tau_0$ and $Z, U$ satisfy 
		$$U_\tau + \frac{1}{2} ( \gamma U_\gamma - 1) = \mathcal{E}_{\gamma} [ Z, U]$$
		$$\log(\frac{A}{B} \gamma) \le U, \quad 0 \le U_\gamma \le \frac{1}{\gamma} \qquad  B^2 \le Z \le 1 \qquad | \gamma Z_\gamma| \le C $$
		$$\text{for all } (\gamma, \tau) \in \Omega(\Upsilon, \tau_0, \tau_1)$$
	and the initial data satisfies
		$$| \gamma^2 U_{\gamma \gamma} | \le C \qquad \text{for all } 0 \le \gamma \le \Upsilon e^{- \alpha \tau_0}, \tau = \tau_0$$
	then there exists a constant $C'$ (depending only on $p,q,k$, and $C$) such that 
		$$ | \gamma^2 U_{\gamma \gamma} | \le C' \qquad \text{for all } (\gamma, \tau) \in \Omega \left( \frac{1}{2} \Upsilon, \tau_0, \tau_1 \right)$$
\end{lem}
\begin{proof}
	Recall that
		$$\clU_r + \frac{ \alpha + \frac{1}{2}}{2 \alpha r} ( \xi \clU_\xi  - 1 ) = \mathcal{E}_\xi [ \clZ, \clU ]$$
	Differentiating with respect to $\xi$, one obtains the evolution equation for $\clU_\xi$
		$$\partial_r \clU_\xi +  \frac{\alpha + \frac{1}{2}}{2 \alpha r} ( \clU_\xi + \xi \clU_{\xi \xi} ) = \mathcal{L}_\xi [\clZ, \clU] \clU_\xi$$
	The remainder of the proof follows by similar logic as in the proof of lemma \ref{innerInteriorEstZ} by considering 
		$$M(x,s) \doteqdot \xi_0 \clU_\xi \big( \xi_0(1+x), r_0 + \xi_0^2 s \big)$$
	for fixed $0 \le \xi_0 \le \frac{1}{2} \Upsilon$ and $ r_0 > r(\tau_0)$.
\end{proof}

\subsubsection{Barriers for the Inner-Parabolic Interface}

Unfortunately, the barriers contained in the previous sections are insufficient to control certain contributions from the error terms, and it will be necessary to include additional barriers that control the solutions in the inner-parabolic interface.
This subsection carries out the construction of those barriers.
We begin with an interior estimate that will allow for improved barriers in the inner-parabolic interface.

\begin{lem} \label{innerInteriorEstU+}
	Let $0 < a < b \le D \Upsilon_0$, $C_0 > 0$, and $\kappa \in \mathbb{R}$.
	Assume that
		$$B^2 \le \clZ \le 1 \quad \text{and} \quad \log(\frac{A}{B} \xi) \le \clU \qquad \text{ for $\xi \in [a,b]$ and $\tau \ge \tau_0$}$$
	and $\tau_0$ is sufficiently large so that
		$$\frac{1}{2 \alpha} e^{2 \alpha \tau_0} \ge \frac{8}{9} D^2 \Upsilon_0^2$$
	If
		$$|\tl{\clU}| \le C \xi^\kappa \qquad \text{for all } \xi \in [a,b], \quad \tau \ge \tau_0$$
	and there are initial condition bounds
		$$| \xi^2 \tl{\clU}_{\xi \xi} | + | \xi \tl{\clU}_\xi | + |\tl{\clU}| \le C \xi^\kappa \qquad \text{for all } \xi \in [a,b], \quad \tau = \tau_0$$
	then there exists a constant $C'=C'(p,q,k)$ such that 
		$$|\xi^2 \tl{\clU}_{\xi \xi }| + |\xi \tl{\clU}_\xi | \le C' 2^{|\kappa|} C  \xi^{\kappa} \qquad \text{for all } \xi \in \left[2a, \frac{2}{3}b \right] \quad \tau \ge \tau_0$$
\end{lem}
\begin{proof}
	We set
		$$r = \frac{1}{2 \alpha} e^{2 \alpha \tau}$$
	and write the evolution equation for $\tl{\clU}$ in the $\xi, r$ coordinates as
		$$\tl{\clU}_r = \clZ  \tl{\clU}_{\xi \xi} + \frac{\clZ + q-1}{\xi}  \tl{\clU}_\xi + \frac{q-1}{\xi^2} \left( 1 - e^{-2\tl{\clU}} \right) - \frac{\alpha + \frac{1}{2} }{2 \alpha r} \xi \tl{\clU}_\xi$$
		
	As usual, the interior estimate will be proved with a rescaling argument.
	Indeed, fix $\xi_0 \in [2a, \frac{2}{3} b]$ and $r_0 > r(\tau_0)$.
	Define
		$$W(x,s) \doteqdot \tl{\clU}( \xi_0 (1+x), r_0 + s \xi_0^2 )$$
	It follows that $W$ satisifies
		$$W_s(x,s) = \clZ(\xi, r) W_{xx}(x,s) + \frac{q-1+\clZ}{(1+x)} W_x + \frac{q-1}{(1+x)^2}(1 - e^{-2 W}) - \frac{ \alpha + \frac{1}{2} }{2 \alpha ( \frac{r_0}{\xi_0^2} + s )} (1+x) W_x$$
	We observe that
		$$\log \left( \frac{A}{B} \xi \right) \le \clU \implies 0 \le \tl{\clU}  \implies 0 \le 1 - e^{-2W} \le 1$$
		$$\text{In fact, concavity implies } 0 \le 1 - e^{-2W} \le 2W$$
	By assumption, $\tau_0$ is sufficiently large so that
		$$\frac{r_0}{\xi_0^2} \ge \frac{r(\tau_0)}{ \frac{4}{9} b^2} \ge \frac{r(\tau_0)}{ \frac{4}{9} D^2 \Upsilon_0^2} \ge 2$$
	Let $s_*< 0$ be defined by $r_0+s_* \xi_0^2 = r(\tau_0)$.
	For $(x,s) \in [-1/2, 1/2] \times [s_*,0]$, $W(x,s)$ satisfies an inhomogeneous linear parabolic equation with bounded coefficients and bounded inhomogeneous term $\frac{q-1}{(1+x)^2}(1 - e^{-2 W})$.
	Note also that the bounds on the coefficients depend only on $p,q,k$.
	Interior estimates for inhomogeneous linear parabolic equations with bounded coefficients (see e.g. IV.10 of ~\cite{LSU88}), then imply that there exists a $C'' = C''(p,q,k)$ such that
	\begin{equation*} \begin{aligned}
		& |\xi_0^2 \tl{\clU}_{\xi \xi}(\xi_0, r_0)| + | \xi_0 \tl{\clU}_\xi(\xi_0, r_0)|  \\
		=& |W_{xx}(0,0)|+ | W_x(0,0)|\\
		\le& C'' \left( \sup_{|x|\le 1/2, s\in[s_*,0]} \left| \frac{q-1}{(1+x)^2}(1 - e^{-2 W}) \right| + \sup_{|x|\le1/2, s\in[s_*,0]} |W(x,s)| \right)\\
		&+ C'' \sup_{|x|\le 1/2, s=s_*} \left( |W_{xx}| + |W_x| + |W| \right)\\
		\le& C''(8(q-1) + 1 ) \sup_{|x|\le 1/2, s\in[s_*,0]} |W| + C'' \sup_{|x|\le 1/2, s=s_*} \left( |W_{xx}| + |W_x| + |W| \right)\\
		=& C''(8(q-1) + 1 ) \sup_{\xi \in [\xi_0/2, (3/2) \xi_0], r(\tau) \ge r(\tau_0)} | \tl{\clU}(\xi, r) |\\
		&+ C'' \left( \sup_{ \xi \in [\xi_0/2, (3/2) \xi_0], r(\tau) = r(\tau_0)} |\tl{\clU}| +  | \xi_0 \tl{\clU}_\xi  | +  | \xi_0^2 \tl{\clU}_{\xi \xi} | \right)\\
		\le& C''(8(q-1) + 1 ) \sup_{\xi \in [\xi_0/2, (3/2) \xi_0], r(\tau) \ge r(\tau_0)} | \tl{\clU}(\xi, r) |\\
		&+ 4C'' \left( \sup_{ \xi \in [\xi_0/2, (3/2) \xi_0], r(\tau) = r(\tau_0)} |\tl{\clU}| +  | \xi \tl{\clU}_\xi  | +  | \xi^2 \tl{\clU}_{\xi \xi} | \right)\\
		\le& C'' (8(q-1)+1+4) \sup_{\xi \in [\xi_0/2, (3/2) \xi_0]} C \xi^\kappa \\
		\le & C''(8q-3)  2^{|\kappa|} C \xi_0^\kappa\\
	\end{aligned} \end{equation*}
	Setting $C' = C''(8q-3)$ completes the proof of the proposition.
\end{proof}

\begin{lem} \label{innerInteriorEstZ+}
	Let $0 < a < b \le D \Upsilon_0$, $C_0 > 0$ and $\kappa \in \mathbb{R}$.
	Assume that
		$$B^2 \le \cl{Z} \le 1 \quad \text{ and } \quad  | \clU_\xi | \le \frac{1}{\xi} \quad \text{ for } \xi \in [a,b] \text{ and } \tau \ge \tau_0$$
	and $\tau_0 \gg 1$ is sufficiently large so that
		$$\frac{1}{2 \alpha} e^{ 2 \alpha \tau_0} \ge \frac{8}{9} D^2 \Upsilon_0^2$$
	If 
	\begin{equation*} \begin{aligned}
		| \tl{ \cl{U} }_\xi | &\le C \xi^{\kappa -1}  && \text{ for all } \xi \in [a,b], \tau \ge \tau_0 \\
		| \tl{ \cl{Z} } | &\le  C \xi^{\kappa} 	&& \text{ for all } \xi \in [a,b], \tau \ge \tau_0 \\
	\end{aligned} \end{equation*}
	and there are initial condition bounds
		$$| \xi^2 \tl{ \clZ}_{\xi \xi} | + | \xi \tl{\clZ}_{\xi} | + | \tl{ \clZ} | \le C \xi^\kappa 	\qquad \text{ for all } \xi \in [a,b], \quad \tau = \tau_0$$
	then there exists a constant $C' = C'(p,q,k)$ such that
		$$| \xi^2 \tl{ \clZ}_{\xi \xi} | + | \xi \tl{\clZ}_{\xi} | \le C' C 4^{|\kappa|} \xi^\kappa 	\qquad \text{ for all } \xi \in \left[ 4 a, \frac{4}{9} b \right], \quad \tau \ge \tau_0$$
\end{lem}
\begin{proof}
	The proof follows similarly as in the above proof of lemma \ref{innerInteriorEstU+}.
	Begin by setting
		$$r = \frac{1}{2 \alpha} e^{ 2 \alpha \tau}$$
	and writing the evolution equation for $\tl{ \clZ}$ in $\xi, r$ coordinates
		$$ \tl{ \clZ}_r = \cl{F}^l_\xi [ \tl{ \clZ} ] + 2 \cl{Q}_\xi [ \tl{ \clZ}, B^2 ] + \cl{F}^q [ \tl{ \clZ} ] - 2p ( B^2 + \tl{ \clZ} )^2 \cl{U}_\xi^2 + 2p B^4 \frac{1}{\xi^2} - \frac{ \alpha + \frac{1}{2} }{2 \alpha r} \xi \tl{ \clZ} $$
	or equivalently
	\begin{equation*} \begin{aligned}
		\tl{ \cl{Z} }_r
		& = B^2 \tl{\clZ}_{\xi \xi} + \left( \frac{q-1-B^2}{\xi} \right) \tl{\clZ}_\xi - \frac{2(q-1)}{\xi^2}  \tl{\clZ} 
		-4pB^2 \frac{1}{\xi^2} \tl{\clZ} - 4p B^4 \frac{1}{\xi}\tl{\clU}_\xi \\
		& + \mathcal{F}^q_\xi [ \tl{\clZ} ] -2p (\tl{\clZ})^2 \clU_\xi^2 
		  -2p B^4 \tl{\clU}_\xi^2 - 4p B^2 \left( \clU_\xi^2 - \frac{1}{\xi^2} \right) \tl{\clZ}
		 - \frac{ \alpha + \frac{1}{2} }{2 \alpha r}  \xi \tl{\clZ}_\xi \\
		 &= \clZ \tl{ \clZ}_{\xi \xi} + \tl{ \clZ}_\xi \left( \frac{ q-1- \clZ}{\xi } \right) - \frac{1}{2} \tl{ \clZ}_\xi^2
		+ \frac{2(q-1)}{\xi^2} \tl{ \clZ} ( 1 - B^2 - \clZ ) \\
		&- 2p \left( \clZ \clU_\xi + B^2 \frac{1}{\xi} \right) \left( B^2 \tl{ \clU}_\xi + \cl{U}_\xi \tl{\clZ} \right)
		- \frac{ \alpha + \frac{1}{2} }{2 \alpha r}  \xi \tl{\clZ}_\xi \\
	\end{aligned} \end{equation*}

	Fix $\xi_0 \in \left[ 2 a , \frac{2}{3} b \right]$ and $r_0 > r(\tau_0)$.
	Define
		$$W(x,s) \doteqdot \tl{ \clZ} ( \xi_0 (1+x), r_0 + s \xi_0^2 )$$
	It follows that $W$ satisfies
	\begin{equation*} \begin{aligned}
		W_s 
		&= \clZ W_{xx} + W_x \frac{q-1- \clZ}{1+x} - \frac{1}{2} W_x^2 + \frac{2(q-1)}{(1+x)^2} ( 1 - B^2 - \clZ) W\\
		&- 2p \left( \xi_0 \clZ \clU_\xi + \frac{B^2}{1+x} \right) \left( B^2 \xi_0 \tl{ \clU}_\xi + \clU_\xi \xi_0 \tl{ \clZ} \right) \\
		& - \frac{ \alpha + \frac{1}{2} }{2 \alpha (\frac{r_0}{\xi_0^2} + s )}  ( 1+x)W_x \\
	\end{aligned} \end{equation*}
	
	For $x \in \left[ - \frac{1}{2}, \frac{1}{2} \right]$, the assumed bounds apply and so we have
	\begin{equation*} \begin{aligned}
		\left| \xi_0 \clU_\xi \right| & \le \frac{1}{1+x} \\
		\left| B^2 \xi_0 \tl{ \clU}_\xi \right| & \le B^2 \xi_0 C \xi^{\kappa - 1} = B^2 ( 1+x)^{\kappa - 1} C \xi_0^\kappa
	\end{aligned} \end{equation*}
	Hence, $W$ satisfies an inhomogeneous semilinear equation with bounded coefficients and bounded inhomogeneous term.
	Interior estimates then imply that
	\begin{equation*} \begin{aligned}
		| \xi_0 \tl{ \clZ}_\xi( \xi_0, r_0) | = | W_x (0,0) | \lesssim_{p,q,k} C 2^{| \kappa | } \xi_0^{\kappa}
	\end{aligned} \end{equation*}
	Equipped with this estimate, we can then regard $W$ as solving an inhomogeneous \textit{linear} equation with bounded coefficients and bounded inhomogeneous term on the region 
	$ \xi \in \left[ 4 a, \frac{4}{9} b \right]$.
	By the dependence of the bounds on $p,q,k$, interior estimates then imply that
	\begin{equation*} \begin{aligned}
		| \xi_0^2 \tl{ \clZ }_{\xi \xi } (\xi_0, r_0) | + | \xi_0 \tl{ \clZ}_\xi( \xi_0, r_0) | 
		= | W_{xx} (0,0) | + | W_x(0,0)| 
		\le C'_{p,q,k} C 4^{| \kappa |}  \xi_0^{\kappa}
	\end{aligned} \end{equation*}

\end{proof}

\begin{lem} \label{innerParabExtendZBounds}
	Assume that there exists $0 < \eta < -C$ such that
		$$(C- \eta) \xi^{-2k-2B^2 \lambda -1} \le \tl{\clU}_\xi \le (C+ \eta) \xi^{-2k -2B^2 \lambda -1} \qquad \text{for all } (\xi , \tau) \in [\Upsilon_U, \Upsilon_Z] \times [\tau_0, \tau_1]$$
	and $D_{\pm} > 0$ are positive constants such that
		$$D_- < 4p B^2 ( C + \eta) \left[ (2k+2B^2 \lambda)^2 -(n-3)(2k + 2B^2 \lambda) - 2(n-1)  \right]^{-1} $$
		$$< 4p B^2 ( C - \eta) \left[ (2k+2B^2 \lambda)^2 -(n-3)(2k + 2B^2 \lambda) - 2(n-1)  \right]^{-1} < D_+$$
	For any $\Upsilon_U \gg 1$ sufficiently large (depending on $p,q,k,C, D_{\pm}$), $\Upsilon_Z > \Upsilon_U$, and $\tau_0 \gg 1$ sufficiently large (depending on $p,q,k,C, D, \Upsilon_U, \Upsilon_Z$),
	if 
		$$D_{-} \xi^{-2k-2B^2 \lambda} \le \tl{\clZ} \le D_+ \xi^{-2k-2B^2 \lambda} 		\qquad \text{for all } (\xi, \tau) \in \partial_P \left( [\Upsilon_U, \Upsilon_Z] \times [\tau_0, \tau_1] \right) $$
	then
		$$D_{-} \xi^{-2k-2B^2 \lambda} \le \tl{\clZ} \le D_+ \xi^{-2k-2B^2 \lambda} 		\qquad \text{for all } (\xi, \tau) \in  [\Upsilon_U, \Upsilon_Z] \times [\tau_0, \tau_1]$$	
\end{lem}
\begin{proof}
	By the comparison principle, it suffices to check that
		$$\tl{\clZ}^{\pm} \doteqdot D_{\pm} \xi^{-2k -2B^2 \lambda}$$
	are sub/super-solutions on $[\Upsilon_U, \Upsilon_Z] \times [\tau_0, \tau_1]$.
	We estimate
	\begin{equation*} \begin{aligned}
		& \mathcal{F}^l_\xi [ \tl{\clZ}^- ] + 2 \mathcal{Q}_\xi [ \tl{\clZ}^-, B^2 ] + \mathcal{F}^q_\xi [ \tl{\clZ}^- ] - 2p (B^2 + \tl{\clZ}^-)^2 \clU_\xi^2 + 2p B^4 \frac{1}{\xi^2} \\
		&- e^{-2 \alpha \tau} \left( \alpha + \frac{1}{2} \right) \xi \tl{\clZ}^-_\xi \\
		= & B^2 \tl{\clZ}^-_{\xi \xi} + \left( \frac{q-1-B^2}{\xi} \right) \tl{\clZ}^-_\xi - \frac{2(q-1)}{\xi^2}  \tl{\clZ}^- 
		-4pB^2 \frac{1}{\xi^2} \tl{\clZ}^- - 4p B^4 \frac{1}{\xi}\tl{\clU}_\xi \\
		& + \mathcal{F}^q_\xi [ \tl{\clZ}^- ] -2p (\tl{\clZ}^-)^2 \clU_\xi^2 
		  -2p B^4 \tl{\clU}_\xi^2 - 4p B^2 \left( \clU_\xi^2 - \frac{1}{\xi^2} \right) \tl{\clZ}^- \\
		 &- e^{-2 \alpha \tau} \left( \alpha + \frac{1}{2} \right) \xi \tl{\clZ}^-_\xi \\
		 \ge & B^2  \left[ (2k+2B^2 \lambda)^2 -(n-3)(2k + 2B^2 \lambda) - 2(n-1)  \right] D_- \xi^{-2k -2B^2 \lambda - 2} \\
		 &-4pB^4 (C+ \eta) \xi^{-2k -2B^2 \lambda -2}
		 + \mathcal{F}^q_\xi [ \tl{\clZ}^- ] -2p (\tl{\clZ}^-)^2 \clU_\xi^2 \\
		 &-2p B^4 \tl{\clU}_\xi^2 - 4p B^2 \left( \clU_\xi^2 - \frac{1}{\xi^2} \right) \tl{\clZ}^-
		 - e^{-2 \alpha \tau} \left( \alpha + \frac{1}{2} \right) \xi \tl{\clZ}^-_\xi \\
		 \ge & B^2  \left[ (2k+2B^2 \lambda)^2 -(n-3)(2k + 2B^2 \lambda) - 2(n-1) - 4p \right] D_- \xi^{-2k -2B^2 \lambda - 2} \\
		 &-4pB^4 (C+ \eta) \xi^{-2k -2B^2 \lambda -2}
		  + M \Upsilon_U^{-2k-2B^2 \lambda} ( (|C| + |\eta|)^2 + D_-^2 ) \xi^{-2k -2B^2 \lambda - 2} \\
		 &+ \Upsilon_Z^2 e^{-2 \alpha \tau_0} \left( \alpha + \frac{1}{2} \right) (2k + 2B^2 \lambda ) D_- \xi^{-2k - 2B^2 \lambda - 2}
\end{aligned} \end{equation*}
	where $M = M(p,q,k)$ is a constant that depends only on $p,q,k$.
	
	By the assumption on $D_-$, it follows that
		$$B^2  \left[ (2k+2B^2 \lambda)^2 -(n-3)(2k + 2B^2 \lambda) - 2(n-1) - 4p \right] D_-  -4pB^4 (C+ \eta)  > 0$$
	It then follows that $\Upsilon_U$ can be chosen sufficiently large (depending on $p,q,k,C, D$) and $\tau_0$ can be chosen sufficiently large (depending on $p,q,k,C, D, \Upsilon_U, \Upsilon_Z$) so that the above differential equation is positive for all $(\xi, \tau) \in [\Upsilon_U, \Upsilon_Z] \times [\tau_0, \tau_1]$.
	That is, $\tl{\clZ}^-$ is a subsolution on $[\Upsilon_U, \Upsilon_Z] \times [\tau_0, \tau_1]$.
	
	The argument that $\tl{\clZ}^+$ is a supersolution is analogous.
\end{proof}

These interior estimates allow for an improved upper barrier construction in the inner-parabolic interface.
We will proceed to work in terms of the inner region coordinates

	$$\xi = \gamma e^{\alpha_k \tau} 	\qquad r = \frac{1}{2 \alpha} e^{2 \alpha \tau}$$
	$$\mathcal{Z}(\xi, r) = Z(\gamma, \tau) \qquad	 \mathcal{U}(\xi, r) = U(\gamma, \tau) + \alpha \tau$$
	$$\tl{\cl{Z}} = \cl{Z} - B^2 \qquad 		\tl{\cl{U}} = \cl{U} - \log \left( \frac{A}{B} \xi \right)$$
If $Z,U$ satisfy the rescaled Ricci flow equations (\ref{psRF}) then $\mathcal{Z}, \mathcal{U}$ satisfy
\begin{equation} \label{isRF} \begin{aligned}
	 \mathcal{Z}_r + \frac{ \alpha + \frac{1}{2} }{2 \alpha r} \xi \mathcal{Z}_\xi  =& \mathcal{F}_\xi [ \mathcal{Z}, \mathcal{U}_\xi ] \\
	 \mathcal{U}_r + \frac{ \alpha + \frac{1}{2} }{2 \alpha r} ( \xi \mathcal{U}_\xi - 1 )  =& \mathcal{E}_\xi [ \mathcal{Z}, \mathcal{U} ]
\end{aligned} \end{equation}
Let $\Omega$ denote 
	$$\Omega = \left\{ (\xi, \tau) \in (0, \Upsilon) \times (\tau_0, \tau_1) \right\} =  \left\{ (\xi, r) \in (0, \Upsilon) \times \left(\frac{1}{2 \alpha} e^{2 \alpha \tau_0}, \frac{1}{2 \alpha} e^{2 \alpha \tau_1} \right) \right\}$$
and $\partial_P \Omega$ its parabolic boundary.
Note that these variables and sets are identical to those at the beginning of subsubsection \ref{innerIntEsts}.

We shall also assume throughout this subsection that for all $(\xi, \tau) \in \Omega$
	$$B^2 \le \clZ \le 1 \qquad 0 \le \clU_\xi \le \frac{1}{\xi} \qquad \log \left( \frac{A}{B} \xi \right) \le \clU$$
or equivalently
	$$0 \le \tl{\clZ} \le 1- B^2 \qquad - \frac{1}{\xi} \le \tl{\clU}_\xi \le 0 \qquad 0 \le \tl{\clU}$$

\begin{lem} \label{innerFarBarrierUWeak}
	Assume $q \ge 10$.
	For any $C_0, \Upsilon > 0$ and 
		$$a \in \left( \frac{q-1}{2} - \frac{1}{2} \sqrt{(q-9)(q-1)} , \frac{q-1}{2} + \frac{1}{2} \sqrt{(q-9)(q-1)} \right)$$
	and all $\tau_0 \gg 1$ sufficiently large (depending $q, \Upsilon, \alpha_k,$ and $a$), the following holds:
	If
		$$\tl{\clU} \le C_0 \xi^{-a}		\qquad \text{for all } (\xi, \tau) \in \partial_P \Omega$$
	then
		$$\tl{\clU} \le C_0 \xi^{-a}		\qquad \text{for all } (\xi, \tau) \in \Omega$$
\end{lem}
\begin{proof}
	It suffices to check that $\tl{\clU}^+ \doteqdot C_0 \xi^{-a}$ is a supersolution on $\Omega$, i.e.
		$$\clZ \left( \tl{\clU}^+_{\xi \xi} + \frac{ \tl{\clU}^+_\xi}{\xi} \right) + \frac{q-1}{\xi} \tl{\clU}^+_\xi + \frac{q-1}{\xi} \left( 1 - e^{-2 \tl{\clU}^+} \right)	- e^{-2\alpha \tau} \left(\alpha + \frac{1}{2} \right) \xi \tl{\clU}^+_\xi \le 0$$
	for all $(\xi, \tau) \in \Omega$.
	Note in particular that $a > 0$ and in fact even $a > 2k +2B^2 \lambda_k = \frac{n-1}{2} - \frac{1}{2} \sqrt{(n-9)(n-1)}$ since
		$$x \mapsto \frac{x-1}{2} - \frac{1}{2} \sqrt{(x-9)(x-1)} \text{ is decreasing for } x \ge 9$$

	Using concavity of $x \mapsto 1-e^{-2x}$ we estimate
	\begin{equation*} \begin{aligned}
		&\clZ \left( \tl{\clU}^+_{\xi \xi} + \frac{ \tl{\clU}^+_\xi}{\xi} \right) + \frac{q-1}{\xi} \tl{\clU}^+_\xi + \frac{q-1}{\xi} \left( 1 - e^{-2 \tl{\clU}^+} \right)	- e^{-2\alpha \tau} \left(\alpha + \frac{1}{2} \right) \xi \tl{\clU}^+_\xi \\
		\le & \clZ \left( \tl{\clU}^+_{\xi \xi} + \frac{ \tl{\clU}^+_\xi}{\xi} \right) + \frac{q-1}{\xi} \tl{\clU}^+_\xi + \frac{q-1}{\xi} \left( 2 \tl{\clU}^+\right)	- e^{-2\alpha \tau} \left(\alpha + \frac{1}{2} \right) \xi \tl{\clU}^+_\xi \\
		= & \left( \clZ a^2 -(q-1)a + 2(q-1) \right) C_0 \xi^{-a-2}- e^{-2\alpha \tau} \left(\alpha + \frac{1}{2} \right) \xi \tl{\clU}^+_\xi \\
		\le & \left(  a^2 -(q-1)a + 2(q-1) \right) C_0 \xi^{-a-2}
		+a \xi^2 e^{-2\alpha \tau} \left(\alpha + \frac{1}{2} \right) C_0 \xi^{-a-2} \\
		\le & \left(  a^2 -(q-1)a + 2(q-1) \right) C_0 \xi^{-a-2}
		+a \Upsilon^2 e^{-2\alpha \tau_0} \left(\alpha + \frac{1}{2} \right) C_0 \xi^{-a-2} \\
	\end{aligned} \end{equation*}
	Observe that $a \in \left( \frac{q-1}{2} - \frac{1}{2} \sqrt{(q-9)(q-1)} , \frac{q-1}{2} + \frac{1}{2} \sqrt{(q-9)(q-1)} \right) $
	implies that $a^2 -(q-1)a+2(q-1) < 0$.
	Thus, if $\tau_0 \gg 1$ is chosen sufficiently large (depending only on $q, a, \Upsilon, \alpha$) so that
		$$a \Upsilon^2 e^{-2\alpha \tau_0} \left(\alpha + \frac{1}{2} \right)  < \left| a^2 -(q-1)a + 2(q-1) \right|$$
	it follows that $\tl{\clU}^+$ is a supersolution on  $\Omega$.
\end{proof}

\begin{lem} \label{innerFarBarrierZ}
	Assume $q \ge 10$.
	If $C_0, D_0, a, b, \epsilon > 0$ are positive constants such that 
		$$0 < b < a \in \left( \frac{q-1}{2} - \frac{1}{2} \sqrt{(q-9)(q-1)} , \frac{q-1}{2} + \frac{1}{2} \sqrt{(q-9)(q-1)} \right)$$
		$$b < 2 \sqrt{q} + 2 \quad (< n-1)$$
		$$0 < \epsilon < \frac{1}{a} (2k + 2B^2 \lambda_k)$$
	 then for any $\Upsilon \gg 1$ sufficiently large (depending on $p,q,C_0, D_0,M,a,b,\epsilon$) there exists $\tau_0 \gg 1$ sufficiently large (depending on $p,q,C_0, D_0,M,a,b,\epsilon, \Upsilon$) such that the following holds:

	If 
		$$\tl{\clU} \le C_0 \Upsilon^{a-2k-2B^2 \lambda} \xi^{-a} \qquad \text{for all } (\xi, \tau) \in \left[0 , \frac{3}{2}\Upsilon \right] \times[\tau_0, \tau_1]$$
	and
	$$\tl{\clZ}(\xi, \tau) \le D_0 \Upsilon^{\frac{b}{a}(a-2k-2B^2 \lambda) + \epsilon} \xi^{-b} \qquad \text{for all } (\xi, \tau) \in \partial_{P}\Omega$$
	then
		$$\tl{\clZ}(\xi, \tau) \le D_0 \Upsilon^{\frac{b}{a}(a-2k-2B^2 \lambda) + \epsilon} \xi^{-b} \qquad \text{for all } (\xi, \tau) \in \Omega$$
\end{lem}
\begin{proof}
	Denote
		$$C_0' = C_0 \Upsilon^{a - 2k -2B^2 \lambda} 	\qquad D_0' = D_0 \Upsilon^{\frac{b}{a}(a-2k-2B^2 \lambda) + \epsilon}$$
	Recall that $0 \le \tl{\clZ} \le 1-B^2$.
	Therefore, it suffices to confirm that $\tl{\clZ}^+ \doteqdot D_0' \xi^{-b}$ is a supersolution on the region $[\Xi, \Upsilon] \times [\tau_0, \infty)$ where $\Xi$ is defined by
		$$D_0' \Xi^{-b} = 1-B^2$$
	Note in particular that $\Upsilon^{\frac{b}{a}(a-2k-2B^2 \lambda) + \epsilon} \sim D_0' \sim \Xi^b$ as $\Upsilon \nearrow \infty$.
	
	If $M = M(p,q,k, a)$ denotes the constant from the interior estimate proposition \ref{innerInteriorEstU+}, then we have that
		$$| \tl{\clU}_\xi | \le M C_0' \xi^{-a -2}$$
	We now proceed to estimate the equation for $\tl{\clZ}^+$.
	\begin{equation*} \begin{aligned}
		& \mathcal{F}^l_\xi [ \tl{\clZ}^+ ] + 2 \mathcal{Q}_\xi [ \tl{\clZ}^+, B^2 ] + \mathcal{F}^q_\xi [ \tl{\clZ}^+ ] - 2p (B^2 + \tl{\clZ}^+)^2 \clU_\xi^2 + 2p B^4 \frac{1}{\xi^2} \\
		&- e^{-2 \alpha \tau} \left( \alpha + \frac{1}{2} \right) \xi \tl{\clZ}^+_\xi \\
		= & B^2 \tl{\clZ}^+_{\xi \xi} + \left( \frac{q-1-B^2}{\xi} \right) \tl{\clZ}^+_\xi - \frac{2(q-1)}{\xi^2}  \tl{\clZ}^+  \\
		& + \mathcal{F}^q_\xi [ \tl{\clZ}^+ ] -2p (\tl{\clZ}^+)^2 \clU_\xi^2 
		  - 2p B^4 \left( \clU_\xi^2 - \frac{1}{\xi^2} \right) -4p B^2  \clU_\xi^2  \tl{\clZ}^+
		 - e^{-2 \alpha \tau} \left( \alpha + \frac{1}{2} \right) \xi \tl{\clZ}^+_\xi \\
		 \le &  B^2 \tl{\clZ}^+_{\xi \xi} + \left( \frac{q-1-B^2}{\xi} \right) \tl{\clZ}^+_\xi - \frac{2(q-1)}{\xi^2}  \tl{\clZ}^+  \\
		& + \mathcal{F}^q_\xi [ \tl{\clZ}^+ ] 
		+ 4pB^4 \xi^{-1} | \tl{\clU}_\xi |
		 - e^{-2 \alpha \tau} \left( \alpha + \frac{1}{2} \right) \xi \tl{\clZ}^+_\xi \\
		 \le & B^2 \left[ b^2 -(n-3)b-2(n-1) \right] D_0' \xi^{-b-2} \\
		 &+ \left[ \frac{1}{2} b^2 + 2b -2(q-1) \right] D_0'^2 \xi^{-2b-2}
		 + 4 p B^4 M C_0' \xi^{-a-2}\\
		 &+ b \Upsilon^{2} e^{-2\alpha \tau_0} \left( \alpha + \frac{1}{2} \right) D_0' \xi^{-b -2} \\
	\end{aligned} \end{equation*}
	Observe that $0 < b < 2 \sqrt{q} + 2$ implies that 
		$$ \left[ b^2 -(n-3)b-2(n-1) \right],  \left[ \frac{1}{2} b^2 + 2b -2(q-1) \right]  < 0$$
	
	Additionally, using $0 < b < a$ and $\xi \in [\Xi, \Upsilon]$
	\begin{equation*} \begin{aligned} 
		4p B^4 M C_0' \xi^{-a-2} = & \left( 4p B^4 M \frac{C_0'}{D_0'} \xi^{b-a} \right) D_0' \xi^{-b-2} \\
		\le &  \left( 4p B^4 M \frac{C_0'}{D_0'} \Xi^{b-a} \right) D_0' \xi^{-b-2}\\
		\le &  \left( \frac{4p B^4 M}{1-B^2} C_0' \Xi^{-a} \right) D_0' \xi^{-b-2} \\
		= & \left( \frac{4pB^4 M}{1-B^2} (1-B^2)^{a/b} C_0 \Upsilon^{a - 2k -2B^2 \lambda} D_0'^{-a/b} \right)D_0' \xi^{-b-2} \\
		=& \left( \frac{4pB^4 M}{1-B^2} (1-B^2)^{a/b} C_0 D_0 \Upsilon^{-\frac{a}{b} \epsilon} \right) D_0' \xi^{-b-2}
	\end{aligned} \end{equation*}
	Thus, if $\Upsilon$ is sufficiently large depending on $p,q,C_0, D_0,M,a,b,\epsilon$ then
		$$\left| 4 p B^4 M C_0' \xi^{-a-2} \right| \le \frac{1}{3} \left| B^2 \left[ b^2 -(n-3)b-2(n-1) \right] D_0' \xi^{-b-2} \right| \qquad \forall (\xi, \tau) \in \Omega$$
	It then follows that $\tl{\clZ}^+$ is a supersolution on $\Omega$ for $\tau_0$ sufficiently large depending on $p,q,C_0, D_0,M,a,b,\epsilon, \Upsilon$.
\end{proof}

\begin{remark}
	The assumption on $\epsilon > 0$ above is only to ensure that $\Xi \le \Upsilon$ for $\Upsilon \gg 1$.
	In other words, to ensure that the new upper bound isn't simply a consequence of the $\tl{\clZ} \le 1-B^2$ bound.
	In fact, $\Upsilon^\epsilon$ can be replaced with any $f(\Upsilon)$ such that
		$$1 \ll f(\Upsilon) \ll \Upsilon^{\frac{1}{a} (2k + 2B^2 \lambda)} \qquad \text{as } \Upsilon \nearrow \infty$$	
\end{remark}

\begin{lem} \label{innerFarBarrierU}
	Assume $q \ge 10$.
	If $D_0, C_1, \ul{C}, a, b,c,\kappa, \epsilon$ are positive constants such that 
		$$0 < b < a \in \left( \frac{q-1}{2} - \frac{1}{2} \sqrt{(q-9)(q-1)} , \frac{q-1}{2} + \frac{1}{2} \sqrt{(q-9)(q-1)} \right)$$
		$$0 < \epsilon < \frac{1}{a} (2k + 2B^2 \lambda_k)$$
		$$0 < \kappa < \sqrt{(n-9)(n-1)}$$
		$$ c \in ( \kappa, \kappa + 2k + 2B^2 \lambda)$$
		$$1 + \frac{\epsilon}{b}< \frac{1}{a} (2k + 2B^2 \lambda) + \frac{c}{2k + 2B^2 \lambda + \kappa}$$
	 then for any $\Upsilon \gg 1$ sufficiently large there exists $\tau_0 \gg 1$ sufficiently large such that the following holds:
	 
	 If
	 	$$\tl{\clZ}(\xi, \tau) \le D_0 \Upsilon^{\frac{b}{a}(a-2k-2B^2 \lambda) + \epsilon} \xi^{-b} \qquad \text{for all } (\xi, \tau) \in \Omega$$
		$$\tl{\clU}(\Upsilon, \tau) \le \ul{C} \Upsilon^{-2k-2B^2 \lambda} \qquad \text{for all } \tau \in[ \tau_0 , \tau_1]$$
	and
		$$\tl{\clU}(\xi, \tau) \le  C_1 \Upsilon^{c} \xi^{-2k -2B^2 \lambda - \kappa} \qquad \text{for all } (\xi, \tau) \in \partial_{P} \Omega$$
	then
		$$\tl{\clU}(\xi, \tau) \le C_1 \Upsilon^{c} \xi^{-2k -2B^2 \lambda - \kappa} \qquad \text{for all } (\xi, \tau) \in  \Omega$$
\end{lem}
\begin{proof}
Denote
		$$C_1' = C_1 \Upsilon^{c} \qquad D_0' = D_0 \Upsilon^{\frac{b}{a}(a-2k-2B^2 \lambda) + \epsilon} $$
	By the bounds
		$$- \frac{1}{\xi} \le \tl{\clU}_\xi \le 0 \qquad \tl{\clU}(\Upsilon , \tau) \le \ul{C} \Upsilon^{-2k-2B^2 \lambda}$$
	it follows that
		$$\tl{\clU} \le - \log( \xi) + C_\Upsilon \qquad C_\Upsilon \doteqdot \ul{C} \Upsilon^{-2k-2B^2 \lambda} + \log(\Upsilon)$$
	for all $(\xi, \tau) \in \Omega$.
		
	It suffices to check that
		$$\tl{\clU}^+ \doteqdot  C_1'\xi^{-2k -2B^2 \lambda - \kappa}$$
	is a supersolution for $\xi \in [\Xi, \Upsilon]$ where $\Xi > 0$ is defined to be the smallest solution of the equation
		$$C_1'\Xi^{-2k -2B^2 \lambda - \kappa} = - \log(\Xi) + C_\Upsilon$$
	Using concavity of $x \mapsto 1-e^{-2x}$ we estimate
	\begin{equation*} \begin{aligned}
		&\clZ \left( \tl{\clU}^+_{\xi \xi} + \frac{ \tl{\clU}^+_\xi}{\xi} \right) + \frac{q-1}{\xi} \tl{\clU}^+_\xi + \frac{q-1}{\xi} \left( 1 - e^{-2 \tl{\clU}^+} \right)	- e^{-2\alpha \tau} \left(\alpha + \frac{1}{2} \right) \xi \tl{\clU}^+_\xi \\
		\le & B^2 \left( \tl{\clU}^+_{\xi \xi} + \frac{ \tl{\clU}^+_\xi}{\xi} \right) + \frac{q-1}{\xi} \tl{\clU}^+_\xi + \frac{q-1}{\xi} \left( 2 \tl{\clU}^+\right) + \tl{\clZ} \left( \tl{\clU}^+_{\xi \xi} + \frac{ \tl{\clU}^+_\xi}{\xi} \right) 	- e^{-2\alpha \tau} \left(\alpha + \frac{1}{2} \right) \xi \tl{\clU}^+_\xi \\
		\le & B^2 \left[ (2k + 2B^2 \lambda + \kappa)^2-(n-1)(2k + 2B^2 \lambda + \kappa) + 2(n-1) \right] C_1' \xi^{-2k -2B^2 \lambda - \kappa -2}  \\
		& + D_0' \xi^{-b} ( -2k -2B^2 \lambda -\kappa)^2 C_1' \xi^{-2k-2B^2 \lambda - \kappa - 2}
		- e^{-2\alpha \tau} \left(\alpha + \frac{1}{2} \right) \xi \tl{\clU}^+_\xi \\
	\end{aligned} \end{equation*}
	Observe that for $\kappa \in \left( 0, \sqrt{(n-9)(n-1)} \right)$ the first coefficient is negative.
	We now estimate the coefficient of the second term
	\begin{equation*} \begin{aligned}
		D_0' \xi^{-b} & \le D_0 \Upsilon^{\frac{b}{a}(a-2k-2B^2 \lambda) + \epsilon} \Xi^{-b}	\\
		= & D_0 \left( \Upsilon^{1 - \frac{1}{a}(2k + 2B^2 \lambda) + \epsilon'} \Xi^{-1} \right)^b 	&&	(\text{where }  \epsilon/b \doteqdot\epsilon')\\ 
	\end{aligned} \end{equation*}
	To estimate this term, we claim that for any $\eta < \frac{c}{2k+2B^2 \lambda + \kappa} < 1$
		$$\Upsilon^{\eta} \lesssim \Xi \lesssim \Upsilon^{\frac{c}{2k+2B^2 \lambda + \kappa}}	\qquad \text{as } \Upsilon \nearrow + \infty$$
	Let $f(\xi)$ denote $$f(\xi) = \log(\xi) + C_1' \xi^{-2k - 2B^2 \lambda - \kappa} = \log(\xi) + C_1 \Upsilon^{c} \xi^{-2k - 2B^2 \lambda - \kappa} $$
	$f$ achieves its minimum at the unique critical point $\xi_*$ satisfying
		$$\xi_*^{2k+2B^2 \lambda + \kappa} = (2k + 2B^2 \lambda + \kappa) C_1' = (2k + 2B^2 \lambda + \kappa) C_1 \Upsilon^c$$
	Note that
		$$f(\xi_*) \sim \frac{c}{2k + 2B^2 \lambda + \kappa} \log \Upsilon \ll \log(\Upsilon) + \ul{C} \Upsilon^{-2k - 2B^2 \lambda}		\qquad \text{for } \Upsilon \gg 1$$
	It follows that $[\Xi, \Upsilon] \ne \emptyset$ and $\Xi \le \xi_*$ for $\Upsilon \gg 1$.
	Thus, we have the upper bound
		$$\Xi \lesssim \Upsilon^{\frac{c}{2k+2B^2 \lambda + \kappa}}		\qquad \text{for } \Upsilon \gg 1$$
	For the lower bound, it suffices to show that for any $\eta < \frac{c}{\kappa + 2k +2B^2 \lambda} < 1$
		$$f(\Upsilon^\eta) \ge \log(\Upsilon) + \ul{C} \Upsilon^{-2k -2B^2 \lambda} \qquad \text{if } \Upsilon \gg 1$$
	Indeed,
	\begin{equation*} \begin{aligned}
		f(\Upsilon^\eta) = &C_1 \Upsilon^{c - \eta( 2k + 2B^2 \lambda + \kappa) } + \eta \log( \Upsilon)\\
		\sim & \Upsilon^{c - \eta( 2k + 2B^2 \lambda + \kappa)} 		&& (\Upsilon \gg 1) \\
		\gtrsim & \log(\Upsilon) + \ul{C} \Upsilon^{-2k -2B^2 \lambda} 	&& (\Upsilon \gg 1)\\
	\end{aligned} \end{equation*}	
	This completes the proof of the claim that
		$$\Upsilon^\eta \lesssim \Xi \lesssim \Upsilon^{\frac{c}{2k + 2B^2 \lambda + \kappa} }$$
		
	With this estimate, it follows that for any $\eta < \frac{ c}{2k + 2B^2 \lambda + \kappa}$
	\begin{equation*} \begin{aligned}
		\Upsilon^{1 - \frac{1}{a}(2k + 2B^2 \lambda) + \epsilon'} \Xi^{-1} \lesssim & \Upsilon^{1 - \frac{1}{a}(2k + 2B^2 \lambda) + \epsilon' - \eta}
	\end{aligned} \end{equation*}
	
	If $1 + \frac{\epsilon}{b} < \frac{1}{a}(2k +2B^2 \lambda) + \frac{c}{2k + 2B^2 \lambda + \kappa}$, then $ \eta \in \left( 0, \frac{ c}{2k + 2B^2 \lambda + \kappa} \right) $ can be chosen such that
		$$\Upsilon^{1 - \frac{1}{a}(2k + 2B^2 \lambda) + \epsilon' - \eta} \ll 1 \qquad \text{as } \Upsilon \nearrow \infty$$
	
	Therefore, the term $D_0' (2k + 2B^2 \lambda + \kappa)^2 \Xi^{-b}$ can be made arbitrarily small by taking $\Upsilon$ sufficiently large.
	It follows that for any $\Upsilon \gg 1$ sufficiently large depending on $p,q,k, D_0, C_1, \ul{C}, a, b,c,\kappa, \epsilon$ there exists $\tau_0 \gg 1$ sufficiently large depending on $p,q,k, D_0, C_1, \ul{C}, a, b,c,\kappa, \epsilon, \Upsilon$ so that $C_1' \xi^{-2k -2B^2 \lambda - \kappa}$ is a supersolution on $\Omega$.
\end{proof}

\begin{remark} \label{constantsToUse}
	To see that there exists a choice of parameters satisfying the assumptions of the previous two lemmas, consider
		$$a =\frac{q-1}{2} - \frac{1}{2} \sqrt{(q-9)(q-1)} \qquad   c = \kappa = \frac{1}{2} (2k + 2B^2 \lambda) \qquad b = 2k + 2B^2 \lambda$$
	With these parameters
		$$\frac{1}{a} (2k + 2B^2 \lambda ) + \frac{c}{2k + 2B^2 \lambda + \kappa} = \frac{2k + 2B^2 \lambda }{a } + \frac{1}{3} = C_{p,q} > 1$$
	By continuity, one can then make $a$ slightly larger, $\kappa$ and $c$ slightly smaller, and choose $\epsilon$ sufficiently small so that the assumptions of the previous two lemmas hold.
\end{remark}

\subsection{Interior Estimates for the Parabolic Region}

\begin{lem} \label{parabInteriorEstU}
	(Interior estimate for $\tl{U}$ in the Parabolic Region)
	For any $\Gamma , C_0> 0$ and $0 < \eta, \eta' < 1$, there exists $\Upsilon, \tau_0 \gg 1$ sufficiently large (depending on $p,q,k, \Gamma, C_0, \eta, \eta'$) and $C_\Gamma \propto e^{-\alpha \Gamma^2} \ll 1 $ such that
	if $\tau_1 > \tau_0$ and 
	\begin{equation*} \begin{aligned}
		| \tl{Z} | \le& C_0 e^{B^2 \lambda \tau} ( \gamma^{-2k -2B^2 \lambda} + \gamma^{-2B^2\lambda}) \\
		 & \qquad \text{ for all } \gamma \in \left( C_\Gamma \Upsilon e^{- \alpha \tau} , \frac{3}{2} \Gamma \right) , \tau \in [\tau_0, \tau_1] \\
		| \tl{U} - e^{B^2 \lambda \tau} \tl{U}_{\lambda_k} | \le& \eta e^{B^2 \lambda \tau} ( \gamma^{-2k - 2B^2 \lambda} + \gamma^{-2B^2 \lambda}) \\
		&\qquad \text{ for all } \gamma \in \left( C_\Gamma \Upsilon e^{- \alpha \tau} , \frac{3}{2} \Gamma \right) , \tau \in [\tau_0, \tau_1] \\
	\end{aligned} \end{equation*}
	and
	\begin{equation*} \begin{aligned}
		& | \tl{U} - e^{B^2 \lambda \tau} \tl{U}_{\lambda_k} | + | \gamma( \tl{U}_\gamma - e^{B^2 \lambda \tau} \tl{U}_{\lambda_k}' )| + | \gamma^2 (\tl{U}_{\gamma \gamma} - e^{B^2 \lambda \tau} \tl{U}_{\lambda_k}'' ) | \\
		\le & \eta' e^{B^2 \lambda \tau_0} ( \gamma^{-2k -2B^2 \lambda} + \gamma^{-2B^2 \lambda}) \\
		& \qquad  \text{for all } \gamma \in \left( C_\Gamma \Upsilon e^{- \alpha \tau} , \frac{3}{2} \Gamma \right) , \tau =\tau_0 \\
	\end{aligned} \end{equation*}
	then
	\begin{equation*} \begin{aligned}
		| \tl{U}_\gamma - e^{B^2 \lambda \tau} \tl{U}_{\lambda_k}' | \lesssim_{p,q,k,\Gamma}& 
		(\eta + \eta') e^{B^2 \lambda \tau} ( \gamma^{-2k-2B^2 \lambda-1} + \gamma^{-2B^2 \lambda}) \\
		&\qquad \text{ for all } \gamma \in \left( \Upsilon e^{- \alpha \tau} , \Gamma \right) , \tau \in [\tau_0, \tau_1] \\ 
		| \tl{U}_{\gamma \gamma} - e^{B^2 \lambda \tau} \tl{U}_{\lambda_k}'' | \lesssim_{p,q,k, \Gamma}  & 
		(\eta + \eta') e^{B^2 \lambda \tau} ( \gamma^{-2k-2B^2 \lambda-2} + \gamma^{-2B^2 \lambda}) \\
		& \qquad \text{ for all } \gamma \in \left( \Upsilon e^{- \alpha \tau} , \Gamma \right) , \tau \in [\tau_0, \tau_1] \\ 
	\end{aligned} \end{equation*}
\end{lem}
\begin{proof}
	Begin by defining
		$$W(\gamma, \tau) \doteqdot \tl{U}(\gamma, \tau) - e^{B^2 \lambda \tau} \tl{U}_{\lambda_k}(\gamma)$$
	Then $W$ satisfies	
	\begin{equation*} \begin{aligned} 
		\partial_\tau|_\gamma W =& B^2 \cl{D}_U W + \tl{Z}\left( W_{\gamma \gamma} + \frac{1}{\gamma} W_\gamma \right) \\
		&+  \tl{Z} e^{B^2 \lambda_k \tau} \left( \tl{U}_{\lambda_k} '' + \frac{1}{\gamma} \tl{U}_{\lambda_k}' \right) + \frac{n-1}{\gamma^2} \left( 1 - e^{-2\tl{U}} - 2 \tl{U} \right) 
	\end{aligned} \end{equation*}
	
	Fix $\tau_* > \tau_0$ and $\gamma_* \in \left[ \Upsilon e^{- \alpha_k \tau_*} , \Gamma \right] $
 	and define
		$$M(x, s) \doteqdot W( \gamma_* ( 1+ x) , \tau_* + s \gamma_*^2 )$$
	It follows that $M(x,s)$ satisfies
	\begin{equation*} \begin{aligned}
		M_s =& 
		(B^2 + \tl{Z} ) M_{xx} + \left( \frac{ n B^2 + \tl{Z} }{(1+x)} - \frac{ \gamma_*^2 (1+x) }{2} \right) M_x + \frac{2(n-1) B^2}{(1+x)^2} M \\
		&+ \gamma_*^2 \left \{ \tl{Z} e^{B^2 \lambda_k \tau} \left( \tl{U}_{\lambda_k} '' + \frac{1}{\gamma} \tl{U}_{\lambda_k}' \right) + \frac{n-1}{\gamma^2} \left( 1 - e^{-2\tl{U}} - 2 \tl{U} \right)  \right\}
	\end{aligned} \end{equation*}
	
	Observe that for $x \in \left( - \frac{1}{2} , \frac{1}{2} \right)$ and $s \in [-1, 0]$, we have
		$$\gamma_* ( 1 + x ) \le \frac{3}{2} \gamma_* \le \frac{3}{2} \Gamma$$
	and
		$$\gamma_* (1+x) \ge \frac{1}{2} \gamma_* \ge \frac{1}{2} \Upsilon e^{- \alpha \tau_*} = \frac{1}{2} e^{\alpha s \gamma_*^2} \Upsilon e^{ - \alpha (\tau_* + s \gamma_*^2) } \ge \frac{1}{2} e^{- \alpha \Gamma^2} \Upsilon e^{ - \alpha (\tau_* + s \gamma_*^2) } $$
	Hence, with $C_\Gamma = \frac{1}{2} e^{- \alpha \Gamma^2}$,
	we are in a domain where the assumed bounds apply.
	It then follows that
		$$| \tl{Z} | \le C_0 C_\Gamma^{-2k -2B^2 \lambda_k} \Upsilon^{-2k -2B^2 \lambda} + C_0 e^{B^2 \lambda \tau_0 } \Gamma^{-2B^2 \lambda}$$
	In particular, for any $\Gamma > 0$, $\tl{Z}$ can be made arbitrarily small by taking $\Upsilon, \tau_0 \gg 1$ sufficiently large depending on $p,q,k,\Gamma, C_0$.
	
	With this uniform bound on $\tl{Z}$, the PDE for $M$ can be regarded as an inhomogeneous linear parabolic equation.
	For $x \in  \left( -\frac{1}{2}, \frac{1}{2} \right)$, the coefficients are bounded by $C(p,q,k , \Gamma)$ if $\Upsilon, \tau_0 \gg 1$ are sufficiently large (depending on $p,q,k, \Gamma, C_0$).
	Moreover, if $x \in \left(-\frac{1}{2}, \frac{1}{2} \right)$, then the inhomogeneous term satisfies the bound
	\begin{equation*} \begin{aligned}
		&\left | \gamma_*^2 \left \{ \tl{Z} e^{B^2 \lambda_k \tau} \left( \tl{U}_{\lambda_k} '' + \frac{1}{\gamma} \tl{U}_{\lambda_k}' \right) + \frac{n-1}{\gamma^2} \left( 1 - e^{-2\tl{U}} - 2 \tl{U} \right)  \right\} \right| \\
		 \lesssim_{p,q,k} & (1+ C_0) e^{2 B^2 \lambda \tau} \left( \gamma^{-2k -2B^2 \lambda_k} + \gamma^{-2B^2 \lambda} \right)^2 \\
		 \lesssim &(1+ C_0) \left( C_{\Gamma}^{-2k-2B^2 \lambda} \Upsilon^{-2k -2B^2 \lambda} + e^{B^2 \lambda \tau_0} \Gamma^{-2B^2 \lambda} \right) e^{B^2 \lambda \tau} ( \gamma^{-2k -2B^2 \lambda} + \gamma^{-2B^2 \lambda} ) \\
		\lesssim_{p,q,k, \Gamma} &(1+ C_0)  \left( C_{\Gamma}^{-2k-2B^2 \lambda} \Upsilon^{-2k -2B^2 \lambda} + e^{B^2 \lambda \tau_0} \Gamma^{-2B^2 \lambda} \right) e^{B^2 \lambda \tau_*} ( \gamma_*^{-2k -2B^2 \lambda} + \gamma_*^{-2B^2 \lambda} ) \\
	\end{aligned} \end{equation*}
	where $\gamma = \gamma_*(1+x)$ and $\tau = \tau_* + s \gamma_*^2$ in the above inequalities.
	Interior estimates for inhomogeneous linear parabolic equations with bounded coefficients (see ~\cite{LSU88}) now apply.
	There are two cases depending on the size of $\tau_*$ relative to $\gamma_*$.
	
	\noindent Case 1: When $\tau_* - \gamma_*^2 > \tau_0$, there is no dependence on the initial data and it follows that
	\begin{equation*} \begin{aligned}
		&| \gamma_* W_\gamma ( \gamma_*, \tau_*) | + | \gamma_*^2 W_{\gamma \gamma} (\gamma_*, \tau_*) |\\
		=& | M_x(0,0) | + | M_{xx} (0,0) |\\
		\lesssim_{p,q,k, \Gamma} & \sup_{(x,s) \in \left( -\frac{1}{2}, \frac{1}{2} \right) \times (-1, 0)} | M(x,s) | \\
		&+(1+ C_0)  \left( C_{\Gamma}^{-2k-2B^2 \lambda} \Upsilon^{-2k -2B^2 \lambda} + e^{B^2 \lambda \tau_0} \Gamma^{-2B^2 \lambda} \right) e^{B^2 \lambda \tau_*} ( \gamma_*^{-2k -2B^2 \lambda} + \gamma_*^{-2B^2 \lambda} ) \\
		\lesssim_{p,q,k,\Gamma} & \eta e^{B^2 \lambda \tau_*} ( \gamma_*^{-2k-2B^2 \lambda} + \gamma_*^{-2B^2 \lambda} ) \\
		&+(1+ C_0)  \left( C_{\Gamma}^{-2k-2B^2 \lambda} \Upsilon^{-2k -2B^2 \lambda} + e^{B^2 \lambda \tau_0} \Gamma^{-2B^2 \lambda} \right) e^{B^2 \lambda \tau_*} ( \gamma_*^{-2k -2B^2 \lambda} + \gamma_*^{-2B^2 \lambda} ) \\
	\end{aligned} \end{equation*}
	The coefficients on the last term can be bounded by $\eta$ if  $\Upsilon, \tau_0$ are chosen sufficiently large depending on $p,q,k, \Gamma, C_0, \eta$.
	
	\noindent Case 2: When $\tau_* - \gamma_*^2 \le \tau_0$, define $s_* \in [-1, 0]$ such that $\tau_* - s_* \gamma_*^2 = \tau_0$.
	It follows that
	\begin{equation*} \begin{aligned}
		&| \gamma_* W_\gamma ( \gamma_*, \tau_*) | + | \gamma_*^2 W_{\gamma \gamma} (\gamma_*, \tau_*) |\\
		=& | M_x(0,0) | + | M_{xx} (0,0) |\\
		\lesssim_{p,q,k,\Gamma}& 
		\sup_{(x,s) \in \left( -\frac{1}{2}, \frac{1}{2} \right) \times (s_*, 0)} | M(x,s) | \\
		&+(1+ C_0)  \left( C_{\Gamma}^{-2k-2B^2 \lambda} \Upsilon^{-2k -2B^2 \lambda} + e^{B^2 \lambda \tau_0} \Gamma^{-2B^2 \lambda} \right) e^{B^2 \lambda \tau_*} ( \gamma_*^{-2k -2B^2 \lambda} + \gamma_*^{-2B^2 \lambda} ) \\
		& + \sup_{(x,s) \in \left( -\frac{1}{2}, \frac{1}{2} \right) \times (s_*, 0)} \left( | M(x,s) | + |M_x(x,s)| + |M_{xx}(x,s) | \right) \\
		\lesssim_{p,q,k,\Gamma} & (\eta + \eta') e^{B^2 \lambda \tau_*} ( \gamma_*^{-2k-2B^2 \lambda} + \gamma_*^{-2B^2 \lambda} ) \\
		&+(1+ C_0)  \left( C_{\Gamma}^{-2k-2B^2 \lambda} \Upsilon^{-2k -2B^2 \lambda} + e^{B^2 \lambda \tau_0} \Gamma^{-2B^2 \lambda} \right) e^{B^2 \lambda \tau_*} ( \gamma_*^{-2k -2B^2 \lambda} + \gamma_*^{-2B^2 \lambda} ) \\
	\end{aligned} \end{equation*}
	As before, the coefficients on the last term can be bounded by $\eta + \eta'$ if  $\Upsilon, \tau_0$ are chosen sufficiently large depending on $p,q,k, \Gamma, C_0, \eta, \eta'$.
	
	Since $\gamma_*, \tau_*$ were arbitrary, the statement of the lemma follows.
\end{proof}



\begin{lem} \label{parabInteriorEstZ}
	(Interior estimate for $\tl{Z}$ in the Parabolic Region)
	For any $\Gamma > 0$ and $0 < \eta_{0,1,2} < 1$, there exists $\Upsilon, \tau_0 \gg 1$ sufficiently large (depending on $p,q,k,\Gamma, \eta_{0,1,2}$) and $C_\Gamma \propto e^{-\alpha \Gamma^2} \ll 1$
	such that if $\tau_1 > \tau_0$ and 
	\begin{equation*} \begin{aligned}
		| \tl{Z} - e^{B^2 \lambda \tau} \tl{Z}_{\lambda} | \le& \eta_0 e^{B^2 \lambda \tau} ( \gamma^{-2k-2B^2 \lambda} + \gamma^{-2B^2 \lambda} ) \\
		& \qquad  \text{ for all } \gamma \in \left( C_\Gamma \Upsilon e^{- \alpha \tau} , \frac{3}{2} \Gamma \right) , \tau \in [\tau_0, \tau_1] \\
		| \tl{U}_\gamma - e^{B^2 \lambda \tau} \tl{U}_{\lambda}' | \le& \eta_1 e^{B^2 \lambda \tau} \left( \gamma^{-2k - 2B^2 \lambda -1} + \gamma^{-2B^2 \lambda} \right) \\
		& \qquad \text{ for all } \gamma \in \left( C_\Gamma \Upsilon e^{- \alpha \tau} , \frac{3}{2} \Gamma \right) , \tau \in [\tau_0, \tau_1] \\
	\end{aligned} \end{equation*}
	and
	\begin{equation*} \begin{aligned}
		& | \tl{Z} - e^{B^2 \lambda \tau} \tl{Z}_{\lambda} | + | \gamma( \tl{Z}_\gamma - e^{B^2 \lambda \tau} \tl{Z}_{\lambda}') | + | \gamma^2 (\tl{Z}_{\gamma \gamma} - e^{B^2 \lambda \tau} \tl{Z}_{\lambda}'' )  | \\
		\le&  \eta_2 e^{B^2 \lambda \tau_0} ( \gamma^{-2k-2B^2 \lambda} + \gamma^{-2B^2 \lambda} ) \\
		& \qquad \text{ for all } \gamma \in \left( C_\Gamma \Upsilon e^{- \alpha \tau} , \frac{3}{2} \Gamma \right) , \tau = \tau_0
	\end{aligned} \end{equation*}
	then 
	\begin{equation*} \begin{aligned}
		& | \gamma( \tl{Z}_\gamma - e^{B^2 \lambda \tau} \tl{Z}_{\lambda}') | + | \gamma^2 (\tl{Z}_{\gamma \gamma} - e^{B^2 \lambda \tau} \tl{Z}_{\lambda}'' )  |\\
		\lesssim_{p,q,k, \Gamma}  & ( \eta_0 + \eta_1 + \eta_2 ) e^{B^2 \lambda \tau} ( \gamma^{-2k -2B^2 \lambda} + \gamma^{-2B^2 \lambda} ) \\
		& \qquad \text{ for all } \gamma \in \left( \Upsilon e^{- \alpha \tau} , \Gamma \right), \tau \in [\tau_0, \tau_1] 
	\end{aligned} \end{equation*}
\end{lem}
\begin{proof}
	Begin by defining
		$$W(\gamma, \tau) \doteqdot \tl{Z} - e^{B^2 \lambda \tau} \tl{Z}_{\lambda_k}$$
	It follows that
		$$\partial_\tau W = B^2 \cl{D}_Z W + B^2 \cl{N} ( \tl{U} - e^{B^2 \lambda \tau} \tl{U}_{\lambda_k} ) + Err_Z [ \tl{Z}, \tl{U} ]$$
	or equivalently
	\begin{equation*} \begin{aligned}
		\partial_\tau W =& B^2 \cl{D}_Z  W 
		+ \tl{Z} W_{\gamma \gamma} - \frac{1}{2} W_\gamma^2 -e^{B^2 \lambda \tau} \tl{Z}_{\lambda}' W_\gamma- \frac{1}{\gamma} \tl{Z} W_\gamma \\
		& + B^2 \cl{N} ( \tl{U} - e^{B^2 \lambda \tau} \tl{U}_{\lambda_k} ) + \tl{Z} e^{B^2 \lambda \tau} \tl{Z}_\lambda'' - \frac{1}{2} e^{2 B^2 \lambda \tau} (\tl{Z}_\lambda')^2 - \frac{1}{\gamma} e^{B^2 \lambda \tau} \tl{Z}_\lambda' \tl{Z} \\
		& -2p \left\{ B^4 \tl{U}_\gamma^2 + \frac{4B^2 }{\gamma} \tl{Z} \tl{U}_\gamma + 2B^2 \tl{Z} \tl{U}_\gamma^2 + \frac{1}{\gamma^2} \tl{Z}^2 + \frac{2}{\gamma} \tl{Z}^2 \tl{U}_\gamma + \tl{Z}^2 \tl{U}_\gamma^2 \right\}
	\end{aligned} \end{equation*}
	
	Fix $\tau_* > \tau_0$ and $\gamma_* \in \left[ \Upsilon e^{- \alpha_k \tau_*} , \Gamma \right] $
 	and define
		$$M(x, s) \doteqdot W( \gamma_* ( 1+ x) , \tau_* + s \gamma_*^2 )$$
	It follows that $M(x,s)$ satisfies
	\begin{equation*} \begin{aligned}
		M_s =& B^2 M_{xx} + \left[ \frac{B^2 (n-2)}{1+x} - \frac{\gamma_*^2 (1+x)}{2} \right] M_x - \frac{2(n-1)B^2 }{(1+x)^2} M \\
		& + \tl{Z} M_{xx} - \frac{1}{2} M_x^2 - \gamma_* e^{B^2 \lambda \tau} \tl{Z}_\lambda' M_x - \frac{1}{1+x} \tl{Z} M_x \\
		&+ \gamma_*^2 \left\{ B^2 \cl{N} ( \tl{U} - e^{B^2 \lambda \tau} \tl{U}_{\lambda_k} ) + \tl{Z} e^{B^2 \lambda \tau} \tl{Z}_\lambda'' - \frac{1}{2} e^{2 B^2 \lambda \tau} (\tl{Z}_\lambda')^2 - \frac{1}{\gamma} e^{B^2 \lambda \tau} \tl{Z}_\lambda' \tl{Z}  \right\} \\
		& -2p \gamma_*^2 \left\{ B^4 \tl{U}_\gamma^2 + \frac{4B^2 }{\gamma} \tl{Z} \tl{U}_\gamma + 2B^2 \tl{Z} \tl{U}_\gamma^2 + \frac{1}{\gamma^2} \tl{Z}^2 + \frac{2}{\gamma} \tl{Z}^2 \tl{U}_\gamma + \tl{Z}^2 \tl{U}_\gamma^2 \right\}
	\end{aligned} \end{equation*}
	where $\gamma = \gamma_* (1+x)$ and $\tau = \tau_* + s \gamma_*^2$.
	The remainder of the proof follows as in the proof of the previous lemma \ref{parabInteriorEstU}.
	
%
%
\end{proof}
\subsection{Barriers for the Parabolic-Outer Interface}
Next, we proceed to estimate $\tl{U}, \tl{Z}$ pointwise in the interface between the parabolic and outer region.

Throughout this subsection we let $\Omega$ denote the spacetime region
	$$\Omega = \Omega(\Gamma, M, \beta, \tau_0, \tau_1) = \left\{ (\gamma, \tau) \in (0, \infty) \times (\tau_0, \tau_1) : \Gamma < \gamma < M e^{\beta \tau} \right\}$$
and its parabolic boundary
	$$\partial_P \Omega = \left( [ \Gamma, M e^{\beta \tau_0} ] \times \{ \tau_0 \} \right)\cup \{ (\gamma, \tau) \in (0, \infty) \times (\tau_0, \tau_1) :\gamma = \Gamma  \text{ or }  \gamma = M e^{\beta \tau} \}$$

\begin{lem} \label{parabOuterBarriersU}
	(Upper/Lower Barriers for $\tl{U}$ in the Parabolic-Outer Interface)
	For any constant $C$, there exists $D$ (depending only on $C$), $\Gamma \gg 1$ sufficiently large (depending only on $C,D$), and $\rho \ll 1$ sufficiently small (depending only on $C,D$) such that:\\
	if $\tau_0 < \tau_1$, $\beta = 1/2$, $0 < \epsilon \le Z(\gamma, \tau) \le 1$ for all $(\gamma, \tau) \in \Omega(\Gamma, \rho, 1/2, \tau_0, \tau_1)$, and
		$$\tl{U}(\gamma, \tau) \ge (\le) C \gamma^{-2 B^2 \lambda} e^{B^2 \lambda  \tau} + D \gamma^{-2B^2 \lambda-2} e^{B^2 \lambda \tau} \qquad \text{for all } (\gamma, \tau) \in \Omega_P(\Gamma, \rho, 1/2, \tau_0, \tau_1)$$
	then
		$$\tl{U}(\gamma, \tau) \ge (\le) C \gamma^{-2 B^2 \lambda} e^{B^2 \lambda  \tau} + D \gamma^{-2B^2 \lambda-2} e^{B^2 \lambda \tau} \qquad \text{for all } (\gamma, \tau) \in \Omega(\Gamma, \rho, 1/2, \tau_0, \tau_1)$$
\end{lem}
\begin{proof}
	We prove only the lower barrier as the upper barrier proof follows analogously.
	For ease of notation, let
		$$h = B^2 \lambda \qquad k =-2h =  -2B^2 \lambda \ge 0$$
	for the course of this proof.
	
	It suffices to check that
		$$\tl{U}^- = C \gamma^k e^{h  \tau} + D \gamma^{k-2} e^{h \tau}$$
	is a subsolution for the evolution equation satisfied by $\tl{U}$ in $\Omega$.
	Using the assumed bounds on $Z$, we estimate
	\begin{equation*} \begin{aligned}
		& Z (\tl{U}^-_{\gamma \gamma} + \frac{1}{\gamma} \tl{U}^-_\gamma ) + \left( \frac{q-1}{\gamma} - \frac{\gamma}{2} \right) \tl{U}^-_\gamma + \frac{q-1}{\gamma^2} \left( 1 - e^{-2 \tl{U}^-} \right)\\
		=& Z \left( Ck(k-1) + Ck + D(k-2) \gamma^{-2} + D(k-2)(k-3) \gamma^{-2} \right) \gamma^{k-2} e^{h \tau} \\
		& + (q-1) \big( Ck + D(k-2) \gamma^{-2} \big) \gamma^{k-2} e^{h \tau} \\
		&- \frac{1}{2} \big( Ck + D(k-2) \gamma^{-2} \big) \gamma^k e^{h \tau} \\
		&+ 2 (q-1) \big( C + D \gamma^{-2} \big) \gamma^{k-2} e^{h \tau} + 2(q-1) \gamma^{-2} \left( 1 - e^{-2 \tl{U}^- } - 2 \tl{U}^- \right) \\
		=& - \frac{k}{2} e^{h \tau} \big( C \gamma^k + D \gamma^{-2} \big) + e^{h \tau} \gamma^{k-2} \big( Z C k^2 + (q-1) Ck + D + 2 (q-1)C \big) \\
		&+ e^{h \tau} \gamma^{k-4} \big( ZD (k-2) + ZD(k-2)(k-3) + (q-1)D (k-2) + 2 (q-1) D \big) \\
		&+ 2(q-1) \gamma^{-2} \left( 1 - e^{-2 \tl{U}^- } - 2 \tl{U}^- \right) \\
		\ge& - \frac{k}{2} e^{h \tau} \big( C \gamma^k + D \gamma^{-2} \big) + e^{h \tau} \gamma^{k-2} \big( - |C| k^2 - (q-1) |C|k + D - 2 (q-1)|C| \big) \\
		&- e^{h \tau} \gamma^{k-2} \Gamma^{-2} \left| ZD (k-2) + ZD(k-2)(k-3) + (q-1)D (k-2) + 2 (q-1) D \right| \\
		&- 2(q-1) \gamma^{-2} | 1 - e^{-2 \tl{U}^- } - 2 \tl{U}^- |\\
	\end{aligned} \end{equation*}
	Choose $D$ such that
		$$ - |C| k^2 - (q-1) |C|k + D - 2 (q-1)|C| > 0$$
	Observe that
		$$| \tl{U}^- | = \gamma^{k} e^{h \tau} | C + D \gamma^{-2} | \le \gamma^k e^{h \tau} \left( | C | + \frac{|D|}{\Gamma^2} \right) \le \rho^{k} \left( | C | + \frac{|D|}{\Gamma^2} \right)$$
	since $k = -2h \ge 0.$
	Using the fact that $1 - e^{-2x}$ is locally Lipschitz, it follows that for $\Gamma \gg 1$ sufficiently large and $\rho \ll 1$ sufficiently small (both depending on $C$ and $D$) we have
	\begin{equation*} \begin{aligned}
		 &\Gamma^{-2} \left| ZD (k-2) + ZD(k-2)(k-3) + (q-1)D (k-2) + 2 (q-1) D \right| \\
		<& \frac{1}{2} \big( - |C| k^2 - (q-1) |C|k + D - 2 (q-1)|C| \big) \\
	\end{aligned} \end{equation*}
	\begin{equation*} \begin{aligned}
		\text{and } 
		2(q-1) \gamma^{-2} \left| 1 - e^{-2 \tl{U} } - 2 \tl{U} \right|
		&\le 2(q-1)  \gamma^{-2} C' \rho^{k} \left( | C | + \frac{ |D|}{\Gamma^2} \right) | \tl{U}^- | \\
		&\le 2(q-1)  C' \rho^{k} \left( | C | + \frac{ |D|}{\Gamma^2} \right)^2  \gamma^{k-2} e^{h \tau} \\
		& < \frac{1}{2}  \big( - |C| k^2 - (q-1) |C|k + D - 2 (q-1)|C| \big) \\
	\end{aligned} \end{equation*}
	
	Hence,
		$$Z (\tl{U}^-_{\gamma \gamma} + \frac{1}{\gamma} \tl{U}^-_\gamma ) + \left( \frac{q-1}{\gamma} - \frac{\gamma}{2} \right) \tl{U}^-_\gamma + \frac{q-1}{\gamma^2} \left( 1 - e^{-2 \tl{U}^-} \right) $$
	$$> -\frac{k}{2} e^{h \tau} \big( C \gamma^k + D \gamma^{-2} \big) = \partial_\tau \tl{U}^-$$
	for all $(\gamma, \tau) \in \Omega$.
	This completes the proof of the lower barrier.
	
	We note that the upper barrier proof is simplified by the observation that
		$$1 - e^{-2 \tl{U}} - 2 \tl{U} \le 0$$
	Consequently, $\tl{U}^+$ is actually an upper barrier on $\Omega$ for any $\rho > 0$.
\end{proof}

\begin{remark} \label{remarkGamma_UDep}
	If, in the previous lemma \ref{parabOuterBarriersU}, we have $C = C'_{p,q,k} + \eta$ where $| \eta | < 1$,
	then the proof of the lemma shows that we may in fact take 
	$D = D(p,q,k)$,
	$\Gamma \gg 1$ sufficiently large depending only on $p,q,k$ rather than $C, D$,
	and $\rho \ll 1$ sufficiently small depending only on $p,q,k$ rather than $C,D$.
	However, in order to ensure that $\tl{U}^- \le \tl{U}^+$ for $\gamma \ge \Gamma$, it is necessary to have $\Gamma \gg 1$ be sufficiently large depending on $\eta$ as well.
\end{remark}

Before proceeding, we need a scaling argument to show that the derivatives of $\tl{U}$ satisfy corresponding pointwise bounds.

\begin{lem} \label{parabOuterInteriorEstU}
	Assume that there exist $\epsilon,C,  \Gamma, \rho, \tau_0$ with $\Gamma \ge \frac{1}{2 - \sqrt{2}}$ and $\rho e^{\tau_0/2} \ge 1$ such that
		$$0 < \epsilon \le Z \le 1 \text{ and } | \tl{U} | \le C \gamma^{-2B^2 \lambda} e^{B^2 \lambda \tau} \qquad \text{ for all } (\gamma, \tau) \in \Omega( \Gamma, \rho, \beta =1/2, \tau_0, \tau_1)$$
		$$\text{and } | \tl{U}_\gamma | +  | \tl{U}_{\gamma \gamma} | \le C \gamma^{-2B^2 \lambda} e^{B^2 \lambda \tau_0} \qquad \text{ on }  \Gamma \le \gamma \le  \rho e^{\tau_0 / 2} , \tau = \tau_0$$
	Then
		$$| \tl{U}_\gamma | + | \tl{U}_{\gamma \gamma} | \lesssim_{p,q,\epsilon, C \rho} \gamma^{-2B^2 \lambda} e^{B^2 \lambda \tau} \qquad \text{for all } (\gamma, \tau) \in \Omega( 2 \Gamma, \rho/2, 1/2, \tau_0, \tau_1)$$
\end{lem}
\begin{proof}
	Let $\tau_*, \gamma_*$ be such that $\gamma_* \in ( 2  \Gamma, \frac{1}{2} \rho e^{\tau_*/2})$ and $\tau_* > \tau_0$.
	Rescale by defining
		$$W(x, r) \doteqdot \tl{U} \left( \frac{ \gamma_* + x}{\sqrt{ 1- r}}, \tau_* - \log( 1 - r) \right)$$
	Since $\tl{U}(\gamma, \tau)$ satisfies
		$$\tl{U}_\tau = Z (\tl{U}_{\gamma \gamma} + \frac{1}{\gamma} \tl{U}_\gamma ) + \left( \frac{q-1}{\gamma} - \frac{\gamma}{2} \right) \tl{U}_\gamma + \frac{q-1}{\gamma^2} \left( 1 - e^{-2 \tl{U}} \right)$$
	it follows that $W(x,r)$ satisfies the semilinear equation
		$$W_r = Z W_{xx} + W_x \frac{q-1+Z}{\gamma_* + x}  + \frac{2(q-1)}{( \gamma_* + x)^2} W + \frac{q-1}{(\gamma_* + x)^2} \left( 1 - e^{-2W} - 2W \right)$$
	For $x \in (-1, 1)$ and $r \in (-1,0)$,
		$$\frac{\gamma_* + x}{\sqrt{ 1- r} } \ge \frac{ 2 \Gamma - 1}{\sqrt{ 2}} \ge \Gamma \qquad \text{ if } \Gamma \ge \frac{1}{2 - \sqrt{2} }$$
	and
		$$\frac{ \gamma_* + x}{ \sqrt{ 1- r} } \le \frac{ \frac{1}{2} \rho e^{\tau_*/2} + x}{\sqrt{ 1- r} } = \frac{ \frac{1}{2} \rho \sqrt{1-r} e^{\tau/2} + x}{\sqrt{ 1- r}} = \frac{ 1}{2} \rho e^{\tau/2} + \frac{x}{\sqrt{1-r}} \le \frac{ 1}{2} \rho e^{\tau/2} +1$$
		$$\le \rho e^{\tau/2} \qquad \text{ if } \rho e^{\tau_0/2} \ge 1$$
	In other words, we are in a domain where the assumed bounds on $\tl{U}, Z$ apply.
	Now,
		$$Z \ge \epsilon \text{ implies } W \text{ satisfies a strictly parabolic equation on } (x,r) \in (-1,1) \times (-1,0)$$
	and
		$$| W | = | \tl{U} | \le C \gamma^{- 2B^2 \lambda} e^{B^2 \lambda \tau} \le C \rho \qquad \text{ is uniformly bounded.}$$
	While the coefficients depend on $\gamma_*$, they are bounded independently of $\gamma_*$.
	For example,
		$$\left| \frac{q-1+Z}{\gamma_* + x} \right| \le \frac{q-1}{| \gamma_* + x|} + \frac{| Z|}{| \gamma_* + x|} \le \frac{q-1}{\Gamma_0 - 1} + \frac{1}{\Gamma_0 - 1} \le \frac{q}{\Gamma_0 - 1}$$
	Using the uniform bound on $|W|$ to control the semilinear term, we can now apply interior estimates for linear parabolic equations (see e.g. ~\cite{LSU88}).
	There are 2 cases:\\
		1) If $\tau_*$ is large enough so that $\frac{1}{2} e^{\tau_*} > e^{\tau_0}$, then we don't obtain any initial data terms in the interior estimates and simply deduce that
	\begin{equation*} \begin{aligned}
		| W_{xx}(0,0) | + |W_x(0,0)| &\lesssim_{\epsilon, p,q, C\rho} \sup_{(x,r) \in (-1,1) \times (-1,0)} |W| \\
		\implies | \tl{U}_{\gamma \gamma}(\gamma_*, \tau_*)| + | \tl{U}_\gamma(\gamma_*, \tau_*)| 
		&\lesssim \sup_{(x,r) \in (-1,1) \times (-1,0) } \left| \tl{U} \left( \frac{ \gamma_* + x}{\sqrt{ 1- r}}, \tau_* - \log( 1 - r) \right) \right| \\
		&\le \sup_{x,r} C \left( \frac{ \gamma_* + x}{\sqrt{ 1- r}} \right)^{-2B^2_\infty \lambda} e^{B^2_\infty \lambda \big( \tau_* - \log(1-r) \big)} \\
		& \lesssim (\gamma_*)^{-2B^2 \lambda} e^{B^2 \lambda \tau_*} \\
	\end{aligned} \end{equation*}
		2) If $\tau_*$ is such that $\frac{1}{2} e^{\tau_*} \le e^{\tau_0}$, then we also have to include initial data terms in the interior estimate
		\begin{gather*}
			| W_{xx}(0,0) | + |W_x(0,0)| \\
			\lesssim_{\epsilon, p, q, C \rho} \left( \sup_{(x,r) \in (-1,1) \times (-r_0,0)} |W| \right) + \left( \sup_{x \in (-1,1)} | W_x(x, -r_0) | +  | W_{xx}(x, -r_0) | \right) 
		\end{gather*}
		Here $r_0 \in (-1,0)$ is defined such that
			$$\frac{1}{1-r_0} e^{\tau_*} = e^{\tau_0}$$
		The estimate follows analogously using for example
			$$| W_{xx}(x, r_0) | = \left| \frac{1}{1-r_0} \tl{U}_{\gamma \gamma} \left( \frac{\gamma_* + x}{\sqrt{1-r_0}}, \tau_0 \right) \right| \le \frac{1}{2} \left| \tl{U}_{\gamma \gamma} \left( \frac{\gamma_* + x}{\sqrt{1-r_0}}, \tau_0 \right) \right| $$
		and the assumption on the initial data.
\end{proof}

\begin{lem} \label{parabOuterUpperBarrierZ}
	(Upper Barrier for $\tl{Z}$ in the Parabolic-Outer Interface)
	Assume there exists $C_U, \Gamma_U, \rho_U, \tau_U, \tau_1$ such that 
		$$| \tl{U}_\gamma | \le C_U \gamma^{-2B^2\lambda} e^{B^2 \lambda \tau} \qquad \text{for all } (\gamma, \tau) \in \Omega( \Gamma_U, \rho_U, \beta= 1/2, \tau_U, \tau_1)$$
	For $\beta = 1/2$ and any constant $C$, there exists $D$ (depending on $p,q,C_U$), 
	$\Gamma \gg 1$ sufficiently large (depending on $p,q,k, C, D, C_U, \Gamma_U$),
	and $\rho \ll 1 $ sufficiently small (depending on $p,q,k, C, D, C_U, \Gamma_U, \rho_U$) such that if $\tau_U \le \tau_0 < \tau_1$, $Z \ge \epsilon > 0$ for all $(\gamma, \tau) \in \Omega( \Gamma, \rho, \beta = 1/2, \tau_0, \tau_1)$, and 
		$$\tl{Z} \le C \gamma^{-2B^2 \lambda} e^{B^2 \lambda \tau} + D \gamma^{-2B^2 \lambda -1} e^{B^2 \lambda \tau}		\qquad 		\text{for all } (\gamma, \tau) \in \Omega_P( \Gamma, \rho, \beta=1/2, \tau_0, \tau_1)$$
	then
		$$\tl{Z} \le C \gamma^{-2B^2 \lambda} e^{B^2 \lambda \tau} + D \gamma^{-2B^2 \lambda -1} e^{B^2 \lambda \tau}		\qquad 		\text{for all } (\gamma, \tau) \in \Omega( \Gamma, \rho, \beta=1/2, \tau_0, \tau_1)$$
\end{lem}

\begin{proof}
	Recall that $\tl{Z}$ satisfies the equation
		$$\tl{Z}_\tau = 2 \mathcal{Q}^q[Z_0, \tl{Z} ] + \mathcal{F}^l[\tl{Z}] - \frac{\gamma}{2} \tl{Z}_\gamma + \mathcal{F}^q [ \tl{Z} ] - 2p \left\{ Z^2 U_\gamma^2 - B^4 \gamma^{-2} \right\}$$
		$$= Z ( \tl{Z}_{\gamma \gamma} - \frac{1}{\gamma} \tl{Z}_\gamma ) + \tl{Z}_\gamma \left( \frac{q-1}{\gamma} - \frac{\gamma}{2} \right) - \frac{1}{2} \tl{Z}_\gamma^2 + \frac{2(q-1)}{\gamma^2} \tl{Z} (1- Z) -2p\left\{ Z^2 U_\gamma^2 - B^4 \gamma^{-2} \right\}$$
	The term in braces can be estimated as follows
	\begin{equation*} \begin{aligned}
		Z^2 U_\gamma^2 - Z_0^2 U_0'^2 =& Z^2 U_\gamma^2 - \frac{B^4}{\gamma^2} \\
		=& 2B^4 \frac{\tl{U}_\gamma}{\gamma} + B^4 \tl{U}_\gamma^2 + 2 B^2 \tl{Z} \left( \frac{1}{\gamma^2} + 2 \frac{ \tl{U}_\gamma}{\gamma} + \tl{U}_\gamma^2 \right) + \tl{Z}^2 \left( \frac{1}{\gamma^2} + 2 \frac{ \tl{U}_\gamma}{\gamma} + \tl{U}_\gamma^2 \right)\\
		=& 2B^4 \frac{\tl{U}_\gamma}{\gamma} + 2B^2 \tl{Z} \left( \frac{1}{\gamma^2} + 2 \frac{ \tl{U}_\gamma}{\gamma} \right) + \tl{Z}^2 \left( \frac{1}{\gamma^2} + 2 \frac{ \tl{U}_\gamma}{\gamma} \right) + (B^2 + \tl{Z})^2 \tl{U}_\gamma^2 \\
		\ge& 2B^4 \frac{\tl{U}_\gamma}{\gamma} + 2B^2 \tl{Z} \left( \frac{1}{\gamma^2} + 2 \frac{ \tl{U}_\gamma}{\gamma} \right) + \tl{Z}^2 \left( \frac{1}{\gamma^2} + 2 \frac{ \tl{U}_\gamma}{\gamma} \right)
	\end{aligned} \end{equation*}
	From this estimate and $-\frac{1}{2} \tl{Z}_\gamma^2 \le 0$, it follows that
	\begin{equation*} \begin{aligned} 
		\tl{Z}_\tau \le &Z ( \tl{Z}_{\gamma \gamma} - \frac{1}{\gamma} \tl{Z}_\gamma ) + \tl{Z}_\gamma \left( \frac{q-1}{\gamma} - \frac{\gamma}{2} \right) + \frac{2(q-1)}{\gamma^2} \tl{Z} (1- Z) \\
		&-2p \left\{  2B^4 \frac{\tl{U}_\gamma}{\gamma} + 2B^2 \tl{Z} \left( \frac{1}{\gamma^2} + 2 \frac{ \tl{U}_\gamma}{\gamma} \right) + \tl{Z}^2 \left( \frac{1}{\gamma^2} + 2 \frac{ \tl{U}_\gamma}{\gamma} \right) \right\}
	\end{aligned} \end{equation*}
	It now suffices to check that
		$$\tl{Z}^+ \doteqdot C \gamma^{-2B^2 \lambda} e^{B^2 \lambda \tau} + D \gamma^{-2B^2 \lambda -1} e^{B^2 \lambda \tau}	$$
	is a supersolution to this differential equation on $\Omega$.
	For simplicity of notation, we let
		$$ h = B^2 \lambda		\qquad 		k =-2h = -2B^2 \lambda $$
	for the remainder of the proof.
	We estimate
	\begin{equation*} \begin{aligned}
		&Z ( \tl{Z}^+_{\gamma \gamma} - \frac{1}{\gamma} \tl{Z}^+_\gamma ) + \tl{Z}^+_\gamma \left( \frac{q-1}{\gamma} - \frac{\gamma}{2} \right) + \frac{2(q-1)}{\gamma^2} \tl{Z}^+ (1- Z) \\
		&-2p \left\{  2B^4 \frac{\tl{U}_\gamma}{\gamma} + 2B^2 \tl{Z}^+ \left( \frac{1}{\gamma^2} + 2 \frac{ \tl{U}_\gamma}{\gamma} \right) + (\tl{Z}^+)^2 \left( \frac{1}{\gamma^2} + 2 \frac{ \tl{U}_\gamma}{\gamma} \right) \right\} \\
		=& Z \big( Ck (k-1) - Ck + D(k-1)(k-2) \gamma^{-1} - D(k-1) \gamma^{-1} \big) \gamma^{k-2} e^{h \tau} \\
		&+(q-1) \big( Ck + D(k-1) \gamma^{-1} \big) \gamma^{k-2} e^{h \tau} \\
		& - \frac{1}{2} \big( Ck + D(k-1) \gamma^{-1} \big) \gamma^{k} e^{h \tau} \\
		& + 2(q-1)(1 - Z) (C + D \gamma^{-1}) \gamma^{k-2} e^{h \tau} \\
		&-2p \left\{ 2B^4 \frac{\tl{U}_\gamma}{\gamma} + 2B^2 \tl{Z}^+ \left( \frac{1}{\gamma^2} + 2 \frac{ \tl{U}_\gamma}{\gamma} \right) + (\tl{Z}^+)^2 \left( \frac{1}{\gamma^2} + 2 \frac{ \tl{U}_\gamma}{\gamma} \right)  \right\}\\
	\end{aligned} \end{equation*}
	and $|\tl{U}_\gamma| \le C_U \gamma^k e^{h \tau}$ implies that this is in turn less than or equal to
	\begin{equation*} \begin{aligned}
		\le& Z \big( Ck (k-1) - Ck + D(k-1)(k-2) \gamma^{-1} - D(k-1) \gamma^{-1} \big) \gamma^{k-2} e^{h \tau} \\
		&+(q-1) \big( Ck + D(k-1) \gamma^{-1} \big) \gamma^{k-2} e^{h \tau} \\
		& - \frac{1}{2} \big( Ck + D(k-1) \gamma^{-1} \big) \gamma^{k} e^{h \tau} \\
		& + 2(q-1)(1 - Z) (C + D \gamma^{-1}) \gamma^{k-2} e^{h \tau} \\
		& 4p B^4 C_U \gamma^{k-1} e^{h \tau} - 2p (C + D \gamma^{-1} ) (2B^2 + \tl{Z} ) \left( \frac{1}{\gamma} + 2 \tl{U}_\gamma \right) \gamma^{k-1} e^{h \tau} \\
		=& -\frac{1}{2}k (C  + D \gamma^{-1} ) \gamma^k e^{h \tau} + \gamma^{k-1} e^{h \tau} \left( \frac{D}{2} + 4p B^4 C_U - 2p C(2B^2 + \tl{Z}^+) \left( \frac{1}{\gamma} + 2 \tl{U}_\gamma \right) \right) \\
		&+ e^{h \tau} o(\gamma^{k-1}) + Z e^{h \tau} o(\gamma^{k-1})\\
	\end{aligned} \end{equation*}
	If $\Gamma_U \le \Gamma$ and $\rho \le \rho_U$ then on $\Gamma \le \gamma \le \rho e^{\tau /2}$
		$$| \tl{U}_\gamma| \le C_U \gamma^k e^{h \tau} \implies | \tl{U}_\gamma| \le C_U \rho^{k} \qquad (\text{since } k = -2h > 0)$$
	and
		$$| \tl{Z}^+ | = \gamma^{k} e^{h \tau} | C + D \gamma^{-1} | \le \rho^{k} \left( | C| + \frac{ |D|}{\Gamma} \right) \qquad (\text{since } k = -2h > 0)$$
	Hence, we can estimate the coefficient of $\gamma^{k-1}$ by
		$$\frac{D}{2} + 4p B^4 C_U - 2p C(2B^2 + \tl{Z}^+) \left( \frac{1}{\gamma} + 2 \tl{U}_\gamma \right) $$
		$$\le \frac{D}{2} + 4p B^4 C_U + 2p |C| \left(2B^2 + \rho^k |C| + \rho^k \frac{|D|}{\Gamma} \right) \left( \frac{1}{\Gamma} + 2 C_U \rho^k \right)$$
	Choosing $D<0$ such that 
		$$\frac{D}{2} < -4pB^4 C_U$$
	and then taking $\Gamma \gg 1$ sufficiently large and $\rho \ll 1$ sufficiently small (both depending only on $p,q,k, C, D,C_U, \Gamma_U, \rho_U$), it follows that for all $(\gamma, \tau) \in \Omega ( \Gamma, \rho, \beta=1/2, \tau_0, \tau_1)$
	\begin{equation*} \begin{aligned}
		&Z ( \tl{Z}^+_{\gamma \gamma} - \frac{1}{\gamma} \tl{Z}^+_\gamma ) + \tl{Z}^+_\gamma \left( \frac{q-1}{\gamma} - \frac{\gamma}{2} \right) + \frac{2(q-1)}{\gamma^2} \tl{Z}^+ (1- Z) \\
		&-2p \left\{  2B^4 \frac{\tl{U}_\gamma}{\gamma} + 2B^2 \tl{Z}^+ \left( \frac{1}{\gamma^2} + 2 \frac{ \tl{U}_\gamma}{\gamma} \right) + (\tl{Z}^+)^2 \left( \frac{1}{\gamma^2} + 2 \frac{ \tl{U}_\gamma}{\gamma} \right) \right\} \\	
		\le&  -\frac{1}{2}k (C  + D \gamma^{-1} ) \gamma^k e^{h \tau} \\
		= &\partial_\tau \tl{Z}^+
	\end{aligned} \end{equation*}
\end{proof}

\noindent Due to the sign of some of the coupling terms, there is a somewhat weaker lower barrier for $\tl{Z}$.

\begin{lem} \label{parabOuterLowerBarrierZ}
	(Lower Barrier for $\tl{Z}$ in the Parabolic-Outer Interface)
	Assume there exists $C_U, \Gamma_U, \rho_U, \tau_U, \tau_1$ such that 
		$$| \tl{U}_\gamma | \le C_U \gamma^{-2B^2\lambda} e^{B^2 \lambda \tau} \qquad \text{for all } (\gamma, \tau) \in \Omega( \Gamma_U, \rho_U, \beta = 1/2, \tau_U, \tau_1)$$
		
	For any constant $C$, 
	there exists $D$ (depending on $p,q,C_U, C$),
	$\Gamma \gg 1$ sufficiently large (depending on $p,q,k, C, D, C_U, \Gamma_U$),
	and $\rho \ll 1$ sufficiently small (depending on $p,q,k, C, D, C_U, \Gamma_U, \rho_U$)
	such that if 
		$$\beta = \frac{1}{2} \left( \frac{1}{1 + \frac{1}{-2B^2 \lambda}} \right) \in \left( 0,  \frac{1}{2} \right)$$
	$\tau_U \le \tau_0 < \tau_1$, $0 < \epsilon \le Z \le 1$ for all $(\gamma, \tau) \in \Omega(\Gamma, \rho, \beta, \tau_0, \tau_1)$, and
		$$\tl{Z} \ge C \gamma^{-2B^2 \lambda} e^{B^2 \lambda \tau} + D \gamma^{-2B^2 \lambda -1} e^{B^2 \lambda \tau}		\qquad \text{for all } (\gamma, \tau) \in \Omega_P(\Gamma, \rho, \beta, \tau_0, \tau_1)$$
	then	
		$$\tl{Z} \ge C \gamma^{-2B^2 \lambda} e^{B^2 \lambda \tau} + D \gamma^{-2B^2 \lambda -1} e^{B^2 \lambda \tau}		\qquad \text{for all } (\gamma, \tau) \in \Omega(\Gamma, \rho, \beta, \tau_0, \tau_1)$$
\end{lem}

\begin{proof}
	Recall that $\tl{Z}$ satisfies the equation
		$$\tl{Z}_\tau = 2 \mathcal{Q}^q[Z_0, \tl{Z} ] + \mathcal{F}^l[\tl{Z}] - \frac{\gamma}{2} \tl{Z}_\gamma + \mathcal{F}^q [ \tl{Z} ] - 2p \left\{ Z^2 U_\gamma^2 - B^4 \gamma^{-2} \right\}$$
		$$= Z ( \tl{Z}_{\gamma \gamma} - \frac{1}{\gamma} \tl{Z}_\gamma ) + \tl{Z}_\gamma \left( \frac{q-1}{\gamma} - \frac{\gamma}{2} \right) - \frac{1}{2} \tl{Z}_\gamma^2 + \frac{2(q-1)}{\gamma^2} \tl{Z} (1- Z) -2p\left\{ Z^2 U_\gamma^2 - B^4 \gamma^{-2} \right\}$$
	The term in braces can be estimated as follows
	\begin{equation*} \begin{aligned}
		Z^2 U_\gamma^2 - \frac{B^4}{\gamma^2} 
		=& 2B^4 \frac{\tl{U}_\gamma}{\gamma} + B^4 \tl{U}_\gamma^2 + 2 B^2 \tl{Z} \left( \frac{1}{\gamma^2} + 2 \frac{ \tl{U}_\gamma}{\gamma} + \tl{U}_\gamma^2 \right) \\
		&+ \tl{Z}^2 \left( \frac{1}{\gamma^2} + 2 \frac{ \tl{U}_\gamma}{\gamma} + \tl{U}_\gamma^2 \right)\\
		\le& 2 B^4 C_U \gamma^{k-1} e^{h \tau} +  B^4 C_U^2 \gamma^{2k} e^{2h \tau} + 2 B^2 \tl{Z} \left( \frac{1}{\gamma^2} + 2 \frac{ \tl{U}_\gamma}{\gamma} + \tl{U}_\gamma^2 \right) \\
		&+ \tl{Z}^2 \left( \frac{1}{\gamma^2} + 2 \frac{ \tl{U}_\gamma}{\gamma} + \tl{U}_\gamma^2 \right)\\
	\end{aligned} \end{equation*}
	From this estimate, it follows that
	\begin{equation*} \begin{aligned}
		\tl{Z}_\tau \ge & Z ( \tl{Z}_{\gamma \gamma} - \frac{1}{\gamma} \tl{Z}_\gamma ) + \tl{Z}_\gamma \left( \frac{q-1}{\gamma} - \frac{\gamma}{2} \right) - \frac{1}{2} \tl{Z}_\gamma^2 + \frac{2(q-1)}{\gamma^2} \tl{Z} (1- Z) \\
		&-2p\left\{  2 B^4 C_U \gamma^{k-1} e^{h \tau} +  B^4 C_U^2 \gamma^{2k} e^{2h \tau} + 2 B^2 \tl{Z} \left( \frac{1}{\gamma^2} + 2 \frac{ \tl{U}_\gamma}{\gamma} + \tl{U}_\gamma^2 \right) \right. \\
		&\left. + \tl{Z}^2 \left( \frac{1}{\gamma^2} + 2 \frac{ \tl{U}_\gamma}{\gamma} + \tl{U}_\gamma^2 \right) \right\}
	\end{aligned} \end{equation*}
	
	It suffices to check that
		$$\tl{Z}^- = C \gamma^{-2B^2 \lambda} e^{B^2 \lambda \tau} + D \gamma^{-2B^2 \lambda -1} e^{B^2 \lambda \tau}$$
	is a subsolution for this differential equation in $\Omega$.
	Denote $k = -2B^2\lambda$ and $h = B^2 \lambda$.
	We estimate
	\begin{equation*} \begin{aligned}
		&Z ( \tl{Z}^-_{\gamma \gamma} - \frac{1}{\gamma} \tl{Z}^-_\gamma ) + \tl{Z}^-_\gamma \left( \frac{q-1}{\gamma} - \frac{\gamma}{2} \right) - \frac{1}{2} (\tl{Z}^-_\gamma)^2 + \frac{2(q-1)}{\gamma^2} \tl{Z}^- (1- Z) \\
		&-2p\left\{  2 B^4 C_U \gamma^{k-1} e^{h \tau} +  B^4 C_U^2 \gamma^{2k} e^{2h \tau} + 2 B^2 \tl{Z}^- \left( \frac{1}{\gamma^2} + 2 \frac{ \tl{U}_\gamma}{\gamma} + \tl{U}_\gamma^2 \right) \right. \\
		& \left.+ (\tl{Z}^-)^2 \left( \frac{1}{\gamma^2} + 2 \frac{ \tl{U}_\gamma}{\gamma} + \tl{U}_\gamma^2 \right) \right\} \\
		=& Z \big( Ck (k-1) + D(k-1)(k-2) \gamma^{-1} -Ck - D(k-1) \gamma^{-1} \big) \gamma^{k-2} e^{h \tau} \\
		&+(q-1) \big( Ck + D(k-1) \gamma^{-1} \big) \gamma^{k-2} e^{h \tau} \\
		& - \frac{1}{2} \big( Ck + D(k-1)\gamma^{-1} \big) \gamma^k e^{h \tau} \\
		& - \frac{1}{2} ( \tl{Z}_\gamma^- ) \big( Ck + D(k-1) \gamma^{-1}\big) \gamma^{k-1} e^{h \tau} \\
		& +2(q-1) (1-Z) \big( C + D \gamma^{-1} \big) \gamma^{k-2} e^{h \tau} \\
		&-4pB^4 C_U \gamma^{k-1} e^{h \tau} - 2p B^4 C_U | \gamma \tl{U}_\gamma | \gamma^{k-2} e^{h \tau} \\
		&-4pB^4 (C + D \gamma^{-1} ) \big( 1 + 2 \gamma \tl{U}_\gamma + \tl{U}_\gamma^2 \big) \gamma^{k-2} e^{h \tau} \\
		&-2p \tl{Z}^- ( C + D \gamma^{-1} ) \big( 1 + 2 \gamma \tl{U}_\gamma + \tl{U}_\gamma^2 \big) \gamma^{k-2} e^{h \tau}\\
		=& - \frac{k}{2} \left( C + D \gamma^{-1} \right) \gamma^k e^{h \tau}
		+ \left( \frac{D}{2} - 4 p B^4 C_U \right) \gamma^{k-1} e^{h \tau}
		+ (*)
	\end{aligned} \end{equation*}
	Let $D$ be chosen such that
		$$\frac{D}{2} > - 4p B^4 C_U$$
	If $\Gamma_U \le \Gamma$ and $\rho \le \rho_U$ then the choice of $\beta$ implies that
		$$| \gamma \tl{U}_\gamma| \le C_U \rho^{k+1} \qquad \text{ and } \qquad | \tl{U}_\gamma |^2 \le C_U^2 \rho^{2k}$$
	It then follows 
	as in the proof of the upper barrier that the term $(*)$ satisfies
		$$|(*)|  < \left( \frac{D}{2} - 4 p B^4 C_U \right) \gamma^{k-1} e^{h \tau} 	\qquad \text{for all } (\gamma, \tau) \in \Omega( \Gamma, \rho, \beta, \tau_0,\tau_1)$$
	 for suitable choices of $\Gamma,$ and $\rho$.
\end{proof}

\begin{remark} \label{remarkGamma_ZDep}
	As analogously noted in remark \ref{remarkGamma_UDep}, the $\Gamma$ dependence can be simplified in the last two lemmas.
	Indeed, if $C = C'_{p,q,k} + \eta$ where $| \eta | < 1$, then we may take
	$\Gamma \gg 1$ sufficiently large depending only on $p,q,k, C_U, \Gamma_U$
	and $\rho \ll 1$ sufficiently small depending on $p,q,k, C_U, \rho_U$.
	Moreover, if $C_U = C''_{p,q,k} + \eta'$, then the dependence on $C_U$ can also be dropped.
	However, in order to ensure that $\tl{Z}_- \le \tl{Z}^+$ for $\gamma \ge \Gamma$ it is necessary for $\Gamma \gg 1$ to be sufficiently large depending also on $\eta$.
\end{remark}

\begin{lem} \label{parabOuterInteriorEstZ}
	Assume that there exist $\epsilon, C_U, C_Z, \Gamma, \rho, \beta, \tau_0$ with $\Gamma \ge \frac{\sqrt{2}}{2 - \sqrt{3}}$ and $\left( 1 - \frac{1}{\sqrt{2}} \right) \rho e^{\beta \tau_0 } \ge 1$ and $0 < \beta < \frac{1}{2}$ 
	and $\tau_0 \gg 1$ sufficiently large (depending on $p,q,k, \rho, \beta, C_U, C_Z$) such that
		$$0 < \epsilon \le Z \le 1 \text{ and } | \tl{U}_\gamma | \le C_U \gamma^{-2B^2 \lambda} e^{B^2 \lambda \tau} \qquad \text{ for all } ( \gamma, \tau) \in \Omega( \Gamma, \rho, \beta  , \tau_0, \tau_1)$$
		$$\text{ and } | \tl{Z}_\gamma | + | \tl{Z}_{\gamma \gamma} | \le C_Z \gamma^{-2B^2 \lambda} e^{B^2 \lambda \tau_0} \qquad \text{for all } (\gamma, \tau_0) \in \Omega( \Gamma, \rho, \beta, \tau_0, \tau_0)$$
	Then
		$$ | \tl{Z}_\gamma | + | \tl{Z}_{\gamma \gamma} | \lesssim_{p,q,k,\epsilon} (C_U + C_Z) \gamma^{-2B^2 \lambda} e^{B^2 \lambda \tau} 		\qquad \text{ for all } (\gamma, \tau) \in \Omega( 2 \Gamma, \rho / 2, \beta, \tau_0, \tau_1)$$
\end{lem}
\begin{proof}
	Let $\tau_*, \gamma_*$ be such that $\gamma_* \in \left( \sqrt{2} \Gamma, \frac{1}{\sqrt{2}} \rho e^{\beta \tau_*} \right)$ and $\tau_* > \tau_0$.
	Rescale by defining
		$$W(x,r) \doteqdot \tl{Z} \left ( \frac{ \gamma_* + x}{\sqrt{1-r} }, \tau_* - \log( 1-r) \right)$$
	Since $\tl{Z}(\gamma, \tau)$ satisfies
	\begin{equation*} \begin{aligned}
		\tl{Z}_\tau + \frac{\gamma}{2} \tl{Z}_\gamma =&
		 Z \tl{Z}_{\gamma \gamma} - \frac{Z}{\gamma} \tl{Z}_\gamma + \frac{q-1}{\gamma} \tl{Z}_\gamma 
		 - \frac{1}{2} \tl{Z}_\gamma + \frac{2(q-1)}{\gamma^2} (1-Z) \tl{Z}
		 -2p \left\{ Z^2 U_\gamma^2 - B^4 \frac{1}{\gamma^2} \right\}
	\end{aligned} \end{equation*}
	it follows that $W(x,r)$ satisfies
	\begin{equation*} \begin{aligned}
		W_r =& 
		Z W_{xx} - \frac{Z}{\gamma_* + x} W_x + \frac{q-1}{\gamma_* + x} W_x
		- \frac{1}{2} W_x^2 + \frac{2(q-1)}{(\gamma_* +x)^2} (1-Z)W
		-2p \frac{( 2 B^2 U_\gamma^2 + \tl{Z} U_\gamma^2 )}{1-r} W \\
		& + \frac{1}{1-r}\left( B^4 U_\gamma^2 - B^4 \frac{1}{\gamma^2} \right)
	\end{aligned} \end{equation*}
	where $\gamma = \frac{ \gamma_* + x}{\sqrt{1-r} }$.
	Note that for $r \in \left(-\frac{1}{2}, 0 \right)$
	\begin{equation*} \begin{aligned}
		\frac{\gamma_* + x}{\sqrt{1-r}} \ge \frac{ \sqrt{2} \Gamma - 1}{ \sqrt{ \frac{3}{2}}} \ge \Gamma \qquad 
		&& \text{ if } \Gamma \ge \frac{ \sqrt{2} }{2 - \sqrt{3} } > 1 \\
		\frac{\gamma_* + x}{ \sqrt{1-r} } \le \frac{1}{\sqrt{2}} \rho e^{\beta \tau_*} + 1 \le  \rho e^{\beta \tau_*}
		&& \text{ if } 1 \le \left( 1 - \frac{1}{\sqrt{2}} \right) \rho e^{\beta \tau_0} \\
	\end{aligned} \end{equation*}
	
	Hence, we are in a domain where the assumed bounds apply.
	Therefore,
	\begin{equation*} \begin{aligned}
		\left| \frac{1}{1-r} \left( B^4 U_\gamma^2 - B^4 \frac{1}{\gamma^2} \right) \right| 
		\lesssim_{p,q,k}  | \tl{U}_\gamma | 
		\le C_U e^{B^2 \lambda \tau} \gamma^{-2B^2 \lambda} 
		\lesssim_{p,q,k}  C_U e^{B^2 \lambda \tau_*} \gamma_*^{-2B^2 \lambda} 
	\end{aligned} \end{equation*}
	and
	\begin{equation*} \begin{aligned}
		\left| -2p \frac{( 2 B^2 U_\gamma^2 + \tl{Z} U_\gamma^2 )}{1-r}\right|
		& \lesssim_{p,q} | \tl{U}_{\gamma} |^2 \\
		& \le C_U^2 \gamma^{-2B^2 \lambda} e^{B^2 \lambda \tau} \gamma^{-2B^2 \lambda} e^{B^2 \lambda \tau} \\
		&\lesssim_{p,q,k} C_U^2 \left( \rho e^{\beta \tau} \right)^{-2 B^2 \lambda } e^{B^2 \lambda \tau} \gamma_*^{-2B^2 \lambda} e^{B^2 \lambda \tau_*} \\
		&\lesssim_{p,q,k} C_U^2 \left( \rho e^{\beta \tau_0} \right)^{-2 B^2 \lambda } e^{B^2 \lambda \tau_0} \gamma_*^{-2B^2 \lambda} e^{B^2 \lambda \tau_*} \\
	\end{aligned} \end{equation*}
	and the coefficient of $\gamma_*^{-2B^2 \lambda} e^{B^2 \lambda \tau_*}$ can be made arbitrarily small by taking $\tau_0 \gg 1$ sufficiently large depending on $p,q,k, \rho, \beta, C_U$.
	By the assumed bounds and similar logic as in the proof of lemma \ref{parabOuterInteriorEstU}, this equation is an inhomogeneous semilinear equation for $W$ with bounded coefficients where the only nonlinearity for $W$ can be regarded as the $-\frac{1}{2} W_x^2$.
	Interior estimates for such equations (see ~\cite{LSU88}) then imply the gradient bound
	\begin{equation*} \begin{aligned}
		 | \tl{Z}_\gamma(\gamma_*, \tau_*) | 
		= | W_x(0,0) | 
		\lesssim_{p,q,k, \epsilon} (C_U + C_Z)  \gamma_*^{-2B^2 \lambda} e^{B^2 \lambda \tau_*}
	\end{aligned} \end{equation*}
	as in the proof of lemma \ref{parabOuterInteriorEstU}.
	Equipped with such a gradient bound, we may then regard $W$ as satisfying an inhomogeneous \textit{linear} equation with bounded coefficients on the region $\Omega( \sqrt{2} \Gamma, \rho/ \sqrt{2}, \beta, \tau_0, \tau_1)$.
	Interior estimates for such equations then imply the stated second derivative bound.
\end{proof}

\begin{lem} \label{parabOuterInteriorEstU+}
	For $\Gamma \ge \frac{1}{2 - \sqrt{2}}$ 
	and $C_Z, M > 0$ and $ 0 < \eta, \eta_0 < 1$ 
	and $0 < \beta < \frac{1}{2}$, 
	there exists $\tau_0 \gg 1$ sufficiently large (depending on $p,q,k, \beta, \eta, \eta_0, C_Z, M$) such that the following holds:
	If $\tau_1 > \tau_0$ and 
	$\tl{Z}, \tl{U}$ satisfy the estimates
	\begin{equation*} \begin{aligned}
		| \tl{Z} | \le& C_Z e^{B^2 \lambda \tau} \gamma^{-2B^2 \lambda} 
		&& \text{for all } (\gamma, \tau) \in \Omega( \Gamma, M, \beta , \tau_0, \tau_1) \\
		| \tl{U} - e^{B^2 \lambda \tau} \tl{U}_{\lambda} | \le& \eta e^{B^2 \lambda \tau}  \gamma^{-2B^2 \lambda} 
		&& \text{for all }  (\gamma, \tau) \in \Omega( \Gamma, M, \beta , \tau_0, \tau_1) \\
	\end{aligned} \end{equation*}
	and the initial data satisfies
	\begin{equation*} \begin{aligned}
		| \tl{U} - e^{B^2 \lambda \tau} \tl{U}_{\lambda} | + | \tl{U}_\gamma - e^{B^2 \lambda \tau} \tl{U}_{\lambda}' | + | \tl{U}_{\gamma \gamma} - e^{B^2 \lambda \tau} \tl{U}_{\lambda}'' |
		\le \eta_0 e^{B^2 \lambda \tau}  \gamma^{-2B^2 \lambda} \\
		 \text{for all }  (\gamma, \tau) \in \Omega( \Gamma, M, \beta , \tau_0, \tau_0)
	\end{aligned} \end{equation*}
	then
	\begin{equation*} \begin{aligned}
		| \tl{U}_\gamma - e^{B^2 \lambda \tau} \tl{U}_{\lambda}' | + | \tl{U}_{\gamma \gamma} - e^{B^2 \lambda \tau} \tl{U}_{\lambda}'' |
		\lesssim_{p,q,k} (\eta + \eta_0) e^{B^2 \lambda \tau}  \gamma^{-2B^2 \lambda} \\
		 \text{ for all }  (\gamma, \tau) \in \Omega \left( 2\Gamma, \frac{1}{2} M, \beta , \tau_0, \tau_1 \right)
	\end{aligned} \end{equation*}
\end{lem}
\begin{proof}
	Begin by defining
		$$ W(\gamma, \tau) \doteqdot \tl{U}(\gamma, \tau) - e^{B^2 \lambda \tau} \tl{U}_{\lambda_k}(\gamma)$$
	Then $W$ satsfies
	\begin{equation*} \begin{aligned} 
		\partial_\tau|_\gamma W =& B^2 \cl{D}_U W + \tl{Z}\left( W_{\gamma \gamma} + \frac{1}{\gamma} W_\gamma \right) \\
		&+  \tl{Z} e^{B^2 \lambda_k \tau} \left( \tl{U}_{\lambda_k} '' + \frac{1}{\gamma} \tl{U}_{\lambda_k}' \right) + \frac{n-1}{\gamma^2} \left( 1 - e^{-2\tl{U}} - 2 \tl{U} \right) 
	\end{aligned} \end{equation*}
	
	Fix $\tau_* > \tau_0$ and $\gamma_* \in \left[ 2 \Gamma, \frac{1}{2} M e^{\beta \tau_*} \right] $
 	and define
		$$V(x, r) \doteqdot W \left( \frac{\gamma_* + x}{\sqrt{1-r}} , \tau_* - \log(1-r) \right)$$
	It follows that $V(x,r)$ satisfies
	\begin{equation*} \begin{aligned}
		V_r =& B^2 V_{xx} + \frac{B^2 n}{\gamma_* + x} V_x + \frac{ 2(n-1)}{(\gamma_* +x)^2} V 
		+ \tl{Z} \left( V_{xx} + \frac{ 1}{\gamma_* + x} V_x \right) \\
		&+ \frac{1}{1-r} \left \{ \tl{Z} e^{B^2 \lambda_k \tau} \left( \tl{U}_{\lambda_k} '' + \frac{1}{\gamma} \tl{U}_{\lambda_k}' \right) + \frac{n-1}{\gamma^2} \left( 1 - e^{-2\tl{U}} - 2 \tl{U} \right)  \right \}
	\end{aligned} \end{equation*}
	
	By similar logic as in the proof of lemma \ref{parabOuterInteriorEstU},
		$$\frac{\gamma_*  +x}{\sqrt{1-r} } \in \left[ \Gamma, M e^{\beta \tau} \right]$$
	for all $(x, r) \in (-1, 1) \times (-1, 0)$ if $\Gamma \ge \frac{1}{2 - \sqrt{2} }$ and $M e^{\beta \tau_0} \ge 1$.
	Observe
		$$| \tl{Z} | \le C_Z e^{B^2 \lambda \tau} \gamma^{-2B^2 \lambda} \le C_Z \left( e^{-\tau/2} \rho e^{\beta \tau} \right)^{-2B^2 \lambda} \ll 1 \qquad \text{if } \tau_0 \gg 1$$
	and so $V$ satisfies an inhomogeneous \textit{linear} parabolic equation.
	Moreover, the inhomogeneous term satisfies the following bound for $r \in (-1, 0)$
	\begin{equation*} \begin{aligned}
		& \quad \left| \frac{1}{1-r} \left \{ \tl{Z} e^{B^2 \lambda_k \tau} \left( \tl{U}_{\lambda_k} '' + \frac{1}{\gamma} \tl{U}_{\lambda_k}' \right) + \frac{n-1}{\gamma^2} \left( 1 - e^{-2\tl{U}} - 2 \tl{U} \right)  \right \} \right| \\
		& \le C_Z C_{p,q,k} e^{2B^2 \lambda \tau} \gamma^{-4B^2 \lambda -2} + C_{p,q,k} e^{	2 B^2 \lambda \tau} \gamma^{-4B^2 \lambda -2} \\
		& \lesssim_{p,q,k} (C_Z + 1)  \left( e^{-\tau_0/2} M e^{\beta \tau_0} \right)^{-2B^2 \lambda} e^{B^2 \lambda \tau_*} \gamma_*^{-2B^2 \lambda} 
	\end{aligned} \end{equation*} 
	We may now apply interior estimates for such equations.
	There are two cases depending on the size of $\tau_*$
	
	\noindent Case 1: If $\tau_* - \log(2) > \tau_0$ then 
	\begin{equation*} \begin{aligned} 
		& \quad | W_\gamma (\gamma_*, \tau_*) | + | W_{\gamma \gamma} (\gamma_*, \tau_*) |\\
		&= | V_x(0,0) | + | V_{xx}(0,0)| \\
		& \lesssim_{p,q,k}  \sup_{\gamma, \tau} \eta e^{B^2 \lambda \tau} \gamma^{-2B^2 \lambda} 
		+(C_Z +1) (e^{-\tau_0/2} M e^{\beta \tau_0} )^{-2B^2 \lambda} e^{B^2 \lambda \tau_*} \gamma_*^{-2B^2 \lambda} \\
		& \lesssim_{p,q,k} \eta e^{B^2 \lambda \tau_*} \gamma_*^{-2B^2 \lambda} \\
	\end{aligned} \end{equation*}
	If $\tau_0 \gg 1$ is sufficiently large depending on $p,q,k, \beta, \eta, C_Z, M$.
	
	\noindent Case 2: If $\tau_* - \log(2) \le \tau_0$ then we must include contributions of the initial data.
	An analogous argument to above yields
	\begin{equation*} \begin{aligned} 
		&\quad  | W_\gamma (\gamma_*, \tau_*) | + | W_{\gamma \gamma} (\gamma_*, \tau_*) |\\
		&= | V_x(0,0) | + | V_{xx}(0,0)| \\
		&\lesssim_{p,q,k}  ( \eta + \eta_0) e^{B^2 \lambda \tau_*} \gamma_*^{-2B^2 \lambda} 
		+ (C_Z +1)(e^{-\tau_0/2} M e^{\beta \tau_0} )^{-2B^2 \lambda} e^{B^2 \lambda \tau_*} \gamma_*^{-2B^2 \lambda} \\
		& \lesssim  ( \eta + \eta_0) e^{B^2 \lambda \tau_*} \gamma_*^{-2B^2 \lambda} 
	\end{aligned} \end{equation*}
	If $\tau_0 \gg 1$ is sufficiently large depending on $p,q,k, \beta, \eta, \eta_0, C_Z, M$.
\end{proof}

\begin{lem} \label{parabOuterInteriorEstZ+}
	For $\Gamma \ge \frac{ \sqrt{2} }{2 - \sqrt{3}}$
	and $ M > 0$ 
	and $ 0 < \eta_0, \eta_1, \eta_2 < 1$ 
	and $0 < \beta < \frac{1}{2}$, 
	there exists $\tau_0 \gg 1$ sufficiently large (depending on $p,q,k, M, \beta, \eta_0, \eta_1, \eta_2$) such that the following holds:
	If $\tau_1 > \tau_0$ and 
	\begin{equation*} \begin{aligned}
		| \tl{U}_\gamma - e^{B^2 \lambda \tau} \tl{U}'_{\lambda_k} (\gamma) | 
		\le& \eta_1 e^{B^2 \lambda \tau} \gamma^{-2B^2 \lambda} 
		&& \text{ for } (\gamma, \tau) \in \Omega( \Gamma, M , \beta , \tau_0, \tau_1) \\
		| \tl{Z} - e^{B^2 \lambda \tau} \tl{Z}_{\lambda_k}(\gamma) |
		\le& \eta_2 e^{B^2 \lambda \tau} \gamma^{-2B^2 \lambda}
		&& \text{ for } (\gamma, \tau) \in \Omega( \Gamma, M , \beta , \tau_0, \tau_1) \\
	\end{aligned} \end{equation*}
	and
	\begin{equation*} \begin{aligned}
		| \tl{Z} - e^{B^2 \lambda \tau} \tl{Z}_{\lambda_k}(\gamma) |
		+ | \tl{Z}_\gamma - e^{B^2 \lambda \tau} \tl{Z}_{\lambda_k}'(\gamma) |
		+ | \tl{Z}_{\gamma \gamma} - e^{B^2 \lambda \tau} \tl{Z}_{\lambda_k}''(\gamma) |
		\le& \eta_0 e^{B^2 \lambda \tau} \gamma^{-2B^2 \lambda} \\
		 \text{ for }  (\gamma, \tau_0) \in \Omega( \Gamma, M , \beta , \tau_0, \tau_0) \\
	\end{aligned} \end{equation*}
	then
	\begin{equation*} \begin{aligned}
		| \tl{Z}_\gamma - e^{B^2 \lambda \tau} \tl{Z}_{\lambda_k}'(\gamma) |
		+ | \tl{Z}_{\gamma \gamma} - e^{B^2 \lambda \tau} \tl{Z}_{\lambda_k}''(\gamma) |
		\lesssim_{p,q,k} & ( \eta_0 + \eta_1 + \eta_2) e^{B^2 \lambda \tau} \gamma^{-2B^2 \lambda} \\
		 \text{ for }  (\gamma, \tau) \in \Omega \left( 2\Gamma, \frac{1}{2}M , \beta , \tau_0, \tau_1 \right) \\
	\end{aligned} \end{equation*}
\end{lem}
\begin{proof}
	The proof is nearly identical to the one above and so we omit it here.
\end{proof}

\subsection{Outer Region Pointwise Estimates}
\subsubsection{Curvature Bounds}
	
\begin{lem} \label{outerCurvatureBounds}
	Assume that there exists $\Gamma > 0$ and $\tau_0$ such that
		$$| \tl{U} | + | \tl{U}_\gamma | + | \tl{U}_{\gamma \gamma} | \le C \gamma^{-2B^2 \lambda_k} e^{B^2 \lambda \tau}$$
		$$\text{ and } | \tl{Z} | + | \tl{Z}_{\gamma} | \le C \gamma^{-2B^2 \lambda_k } e^{B^2 \lambda_k \tau}$$
		$$\text{for all } ( \gamma, \tau) \in \{ \tau = \tau_0, \gamma \ge \Gamma \} \cup \{ \tau > \tau_0, \gamma = \Gamma \}$$
	If $\Gamma \gg 1$ is sufficiently large (depending only on $p,q$) and $\tau_0 \gg 1$ is sufficiently large (depending on $p,q,k, C, \Gamma$), then
		$$| Rm(\psi, t) | \lesssim_{p,q} \frac{1}{T-t} = e^{\tau} \qquad \text{for all } \tau \ge \tau_0, \psi \ge \Gamma e^{- \tau/2}$$
	In particular,
		$$| \tl{Z}_{\gamma} | \lesssim_{p,q} \gamma \qquad \text{for all } \tau \ge \tau_0, \gamma \ge \Gamma$$
\end{lem}
\begin{proof}
	Let $\Omega \subset S^{p+1} \times S^q \times \mathbb{R}$ denote
		$$\Omega_T \doteqdot \{ (\psi, t) | \tau(t) \ge \tau_0, \psi \ge \Gamma e^{-\tau(t)/2} \}$$
		$$\text{and let } S_T \text{ be the parabolic boundary of } \Omega_T $$
	The assumed bounds on $\tl{U}, \tl{Z}$ imply that for all $  (\psi, t) \in S_T$
		$$|u| \le \log(\psi) + C \psi^{-2B^2 \lambda} \qquad | u_\psi | \le \frac{1}{\psi} + C e^{\tau/2} \psi^{-2B^2 \lambda} \qquad | u_{\psi \psi} | \le  \frac{1}{\psi^2} + C e^\tau \psi^{-2B^2 \lambda}$$
		$$| z | \le 1 +  C \psi^{-2 B^2 \lambda} \qquad \text{and} \qquad | z_{\psi} | \le C e^{\tau/2} \psi^{-2B^2 \lambda}  $$
	It follows that 
		$$| Rm(\psi, t) | \le c e^{\tau(t)} \qquad \forall (\psi, t) \in S_T$$
	where $c$ can be made arbitrarily small by taking $\Gamma \gg 1$ sufficiently large and $\tau_0 \gg 1$ sufficiently large depending on $p,q,k, C, \Gamma$.
	
	Since under Ricci flow
		$$| (\partial_t - \Delta ) Rm | \le C_n | Rm |^2$$
	for some constant $C_n$ depending only on dimension, it follows from the parabolic maximum principle on $\Omega_T$ that if $c$ is taken sufficiently small depending on $C_n$ then 
		$$| Rm(\psi, t) | \lesssim_n \frac{1}{T-t} \qquad \forall (\psi, t) \in \Omega_T$$
		
	The final statement of the lemma then follows immediately from observing that $\frac{z_\psi}{2 \psi}$ is a sectional curvature.
\end{proof}

\subsubsection{Barriers}
Throughout this subsection, let $\psi_* = \psi_*(t)$ denote the largest value of $\psi$ on $(S^{q+1} \times S^p, g(t) )$.
By reflection symmetry and monotonicity of $\psi$, there is a unique $s_*(t) = s(x=0,t)$ such that $\psi(s_*(t), t) = \psi_*(t)$.
Moreover, the metric is reflection symmetric across the hypersurface defined by $\{ s= s_*(t) \}$.

Throughout this subsection we let $\Omega$ denote the spacetime region
	$$\Omega = \Omega(M, \beta, t_0, t_1) = \left\{ (\psi, t) \in (0, \infty) \times (t_0, t_1) : M e^{\beta \tau(t)} < \psi < \psi_*(t) \right\}$$
and its parabolic boundary
\begin{equation*} \begin{aligned}
	\partial_P \Omega = & \left( [ M e^{\beta \tau(t_0)} , \psi_*(t_0) ] \times \{ t_0 \} \right) \\
	& \cup \{ (\psi, t) \in (0, \infty) \times (t_0, t_1) : \psi = M e^{\beta \tau(t)} \text{ or } \psi = \psi_*(t) \}
\end{aligned} \end{equation*}

Recall that our initial simplifying assumptions guarantee that $0 < z \le 1$ in $\Omega$ and smoothness of the metric implies that $\lim_{\psi \nearrow \psi_*(t)} z(\psi, t) = 0$.

\begin{lem} \label{outerConeBarrierU}
	Let $c \ge \frac{A}{B}$.
	If $u \ge \log( c \psi)$ for all $(\psi, t) \in \partial_{P} \Omega$ then $u \ge \log( c \psi)$ for all $(\psi, t) \in \Omega$.
\end{lem}
\begin{proof}
	The proof is identical to that of lemma \ref{innerConeBarrierU}.
\end{proof}
	
\begin{lem} \label{outerConeBarrierGradU}
	If $u \ge \log \left( \frac{A}{B} \psi \right)$ for all $(\psi, t) \in \Omega$ and $u_\psi \le \frac{C}{\psi}$ on $\partial_{P} \Omega$ for some positive constant $C > 0$, then $u_\psi \le \frac{C}{\psi}$ for all $(\psi, t) \in \Omega$.
\end{lem}	
\begin{proof}
	The proof is identical to that of lemma \ref{innerConeBarrierGradU}.
\end{proof}

\begin{lem} \label{outerSineConeBarrierZ}
	Assume $u_\psi \le \frac{C}{\psi}$ in $\Omega$ for some constant $C > 0$ and 
	let
		$$z^- (\psi, t) = z^-(\psi, t; C, K_0) \doteqdot \max\left(  0, \frac{q-1}{q-1 + p C^2} - \frac{1}{2} K(t; K_0) \psi^2  \right)$$
		$$\text{where } K(t; K_0) = \frac{K_0}{1 - K_0 (C^2 p + q)t}	\quad \text{ and } \quad K_0 > 0$$
	If $z(\psi, t) \ge z^-(\psi, t )$ for all $(\psi, t) \in \partial_{P} \Omega$
	then $z(\psi, t) \ge z^-(\psi, t)$ for all $(\psi, t) \in \Omega$.
	
	In particular,
		$$\psi_*(t) \ge \sqrt{ 2 \frac{q-1}{q-1+pC^2} \frac{1- K_0 (p C^2 + q) t}{K_0} }$$
\end{lem}
\begin{proof}
	Since the max of two subsolutions is a subsolution, it suffices to check that
		$$\hat{z}^-(\psi, t) \doteqdot \frac{q-1}{q-1 + p C^2} - \frac{1}{2} K(t; K_0) \psi^2$$
	is a subsolution of the PDE satisfied by $z(\psi, t)$ on $\Omega$, i.e.
		$$\hat{z}^-_t \le \mathcal{F}^l_\psi [ \hat{z}^- ] + \mathcal{F}^q_\psi [ \hat{z}^- ] -2 p (\hat{z}^-)^2 u_\psi^2$$
	By the estimate $u_\psi \le \frac{C}{\psi}$, it is straightforward to check that 
	\begin{equation*} \begin{aligned}
		& \mathcal{F}^l_\psi [ \hat{z}^- ] + \mathcal{F}^q_\psi [ \hat{z}^- ] -2 p (\hat{z}^-)^2 u_\psi^2 \\
		\ge & \mathcal{F}^l_\psi [ \hat{z}^- ] + \mathcal{F}^q_\psi [ \hat{z}^- ] -2 p (\hat{z}^-)^2 \frac{C^2}{\psi^2} \\
		= & \partial_t \hat{z}^-(\psi, t)
	\end{aligned} \end{equation*}
	Hence, $z^-$ is a subsolution and the comparison principle completes the proof of the first statement of the proposition.
	
	The second statement follows from the observations that $z(\psi_*(t), t) = 0$ and that 
		$$z(\psi, t) \ge z^-(\psi, t) > 0 		\qquad \text{for all } \psi < \sqrt{ 2 \frac{q-1}{q-1+pC^2} \frac{1- K_0 (p C^2 + q) t}{K_0} }$$
\end{proof}
\begin{remark}
	When $C=1$, $\hat{z}^-(\psi, t)$ is one of the profile functions for the sine cone solution to Ricci flow.
\end{remark}

\section{No Inner Region Blow-Up} \label{NoInnBlowup}


So far, the bounds we have established do not preclude the possibility that some of the Ricci flow solutions under consideration might form a singularity at $t = T_* < T$ (equivalently, $\tau = \tau(T_*) < \infty$) in the inner region.
In this section, we use a blow-up argument to rule out such a possibility.

Throughout this section, we will assume that $U, Z$ satisfy the bounds
\begin{gather*}
	\log \left( \frac{A}{B} \gamma \right) \le U 	\qquad 
	0 \le U_\gamma \le \frac{1}{\gamma} \qquad 
	| U_{\gamma \gamma} | \lesssim \gamma^{-2} \\ 
	B^2 \le Z \le 1 \qquad 
	| Z_\gamma | \lesssim \gamma^{-1} \qquad
	0 \le \gamma U_{\gamma \gamma} + U_\gamma 
\end{gather*}
	on $0 \le \gamma \le \Upsilon_U e^{- \alpha \tau}, \tau \in[ \tau_0, \tau(T^*) ]$ and
		$$U(\Upsilon_U e^{- \alpha \tau} , \tau) > \log \left( \frac{A}{B} \Upsilon_U e^{-\alpha \tau} \right) + C \Upsilon_U^{-2k-2B^2 \lambda_k}$$
	for some positive constant $C = C(p,q,k)$.

In terms of the warping functions $\phi(s,t), \psi(s,t)$ of the metric, these bounds can be written equivalently as
\begin{gather*}
	\frac{A}{B} \psi \le \phi \qquad
	0 \le \frac{ \phi_s}{\phi} \le \frac{\psi_s}{\psi} \qquad 
	\left| \frac{1}{\psi_s} \partial_s \left( \frac{ \phi_s}{\psi_s \phi} \right) \right| \lesssim \frac{1}{\psi^2} \\
	B \le \psi_s \le 1 \qquad 
	| \psi_{ss} | \lesssim \frac{1}{\psi} \qquad 
	0 \le \partial_s \left( \frac{ \phi_s \psi}{\psi_s \phi} \right)
\end{gather*}
on the spacetime region
	$$\left\{ (s,t) \in [0, \infty) \times \big( t(\tau_0),  T_* \big) : 0 \le \psi(s,t) \le \Upsilon_U (T - t)^{-\alpha - \frac{1}{2} } \right\}$$
and
	$$\log \phi > \log \left( \frac{A}{B} \psi \right) + C \Upsilon_U^{-2k -2B^2 \lambda_k}$$
for all points such that $\psi = \Upsilon_U e^{- \alpha \tau} e^{- \tau/2}$.
Note that these estimates are invariant under rescaling the metric, that is under the rescaling $(\lambda \phi, \lambda \psi)\left( s/ \lambda, t \right)$.
We record some additional consequences of these bounds.
	\begin{prop}
		Under the assumptions above, the following hold on the spacetime region
			$$\left\{ (s,t) \in [0, \infty) \times \big( t(\tau_0),  T_* \big) : 0 \le \psi(s,t) \le \Upsilon_U (T - t)^{-\alpha - \frac{1}{2} } \right\},$$
		\begin{enumerate}
			\item $B s \le \psi \le s$
				\begin{proof}
					This estimate follows from $B \le \psi_s \le 1$ and $\psi(0,t) = 0$.
				\end{proof}
			\item for some $\delta = \delta(p,q,k,\Upsilon_U)>0$,
				$$\left(\frac{A}{B} + \delta  \right) \psi \le \phi$$
					\begin{proof}
						The bound $\frac{\phi_s}{\phi} \le \frac{\psi_s}{\psi}$ may be rewritten as 
							$$\partial_\psi ( \log \phi ) \le \frac{1}{\psi}$$
						The estimate then follows from this bound combined with the fact that
							$$\log \phi > \log \left( \frac{A}{B} \psi \right) + C \Upsilon_U^{-2k -2B^2 \lambda_k}$$
						when $\psi = \Upsilon_U (T - t)^{-\alpha - \frac{1}{2} } $.
					\end{proof}
			\item $| Rm | \lesssim_{p,q} \psi^{-2}$
			\item $| \partial_t (\psi^2) | \lesssim_{p,q} 1$
				\begin{proof}
					Use $| \partial_t \psi | \lesssim_{p,q} \psi | Rm| \lesssim \psi^{-1}$.
				\end{proof}
		\end{enumerate}
	\end{prop}
	
Recall that our metrics are on $S^{q+1} \times S^{p}$ and for $p \in S^{q+1} \times S^{p}$ we have denoted $s(p, t) = dist_{g(t)} (p, S^p_*)$ where $S^p_* \subset S^{q+1} \times S^p$ is a connected component of the submanifold where $\psi$ vanishes.
Suppose a singularity forms at $T_* < T$ or $\tau_* \doteqdot \tau( T_* ) < \infty$.
Define
	$$\mathcal{S}_* \doteqdot \Upsilon_0 e^{-\alpha_k \tau_* - \tau_*/2}$$
	$$\mathcal{B}(t) \doteqdot \{ p \in S^{q+1} \times S^p | 0 \le s(p,t) \le \mathcal{S}_* \} = B_{g(t)}( S^p_*, \mathcal{S}_*) $$
	In other words, $\mathcal{B}(t)$ is a metric neighborhood of $S^p_*$ consisting of points whose distance from this submanifold is less than $\mathcal{S}_*$.
	
	\textit{If a singularity forms in $\mathcal{B}(t)$ as $t \nearrow T_*$}, then we can pick a sequence $(p_k, t_k) \in S^{q+1} \times S^{p} \times (T_* -1, T_*)$ such that
		$$t_0 < t_1 < ... < t_k < ... \nearrow T_*$$
		$$\sup_{p \in \ol{\mathcal{B}}(t), t \le t_k } | Rm(p,t)|_{g(t)} = | Rm(p_k, t_k) |_{g(t_k)} \nearrow +\infty \text{ as } k \nearrow \infty$$
		$$proj_{S^p} p_k \in S^p \text{ is independent of $k$}$$
	Let $s_k \doteqdot s(p_k, t_k)$.
	Observe that
		$$| Rm | \lesssim \psi^{-2} \text{ and } \psi_s \ge B \implies s_k \to 0 \text{ as } k \nearrow \infty$$
	Define
		$$\epsilon_k \doteqdot | Rm(p_k, t_k) |_{g(t_k)}^{-1/2} \qquad \text{ and } \qquad \psi_k \doteqdot \psi(s_k, t_k)$$
	Observe that
		$$| Rm| \le \frac{C_0}{\psi^2} \implies \psi_k \le \sqrt{C_0} \epsilon_k$$
		$$\text{ and so } B s_k \le \psi_k  \implies s_k \le \frac{ \sqrt{C_0} }{B} \epsilon_k$$
	
	Define rescaled metrics
		$$g_k (t) \doteqdot \epsilon_k^{-2} g( t_k + \epsilon_k^2 t )$$
	and $\sigma (s,t_k) \doteqdot \epsilon_k^{-1} s$ so that
		$$g_k(0) = d \sigma^2 + \Phi_k(\sigma)^2 g_{S^p} + \Psi_k (\sigma)^2 g_{S^q}$$
		$$\Phi_k(\sigma) \doteqdot \epsilon_k^{-1} \phi(s,t_k)$$
		$$\Psi_k(\sigma) \doteqdot \epsilon_k^{-1} \psi(s, t_k)$$
	By construction, the metrics $g_k(t)$ are defined for at least for $t \in (- \epsilon_k^{-2} t_k, 0]$ and have $|Rm(g_k(t))| \le 1$ for all $t \in (- \epsilon_k^{-2} t_k, 0]$.
	Note that $B_{g_k(t)}(   S^p_* , \mathcal{S}_*/ \epsilon_k) = B_{g(t)} (   S^p_* , \mathcal{S}_* )$ and $\sigma(s_k, t_k) \le \frac{ \sqrt{ C_0}}{B}$.
	There are two cases which we now distinguish between:
	\begin{enumerate}
		\item $\limsup_{k \nearrow \infty} \Phi_k(0) = \limsup_{k \nearrow \infty} \inf_{\sigma} \Phi_k(\sigma) = \infty$
		\item or $\limsup_{k \nearrow \infty} \Phi_k (0) < \infty$
	\end{enumerate}
		
\subsection{Case 1: the $S^p$ Factor Limits to $\mathbb{R}^p$}
In the first case, by passing to a subsequence we may assume without loss of generality that $\lim_{k \nearrow \infty} \Phi_k (0) = \infty$.
After perhaps passing to a further subsequence, we then obtain a limiting complete non-flat ancient solution $g(t)$ of the Ricci flow on $\mathbb{R}^{q+1} \times \mathbb{R}^p$ defined for $t \in (-\infty, 0]$ with uniformly bounded curvature $| Rm | \le 1$ and which splits as the product of an ancient rotationally symmetric metric on $\mathbb{R}^{q+1}$ and the stationary Euclidean metric on $\mathbb{R}^p$.
Moreover, by scaling invariance of the bounds $B^2 \le \psi_s \le 1$, the warping function of the rotationally symmetric ancient solution also satisfies such a bound.
The following proposition, implicit in the work of Oliynyk \& Woolgar ~\cite{OW07}, rules out the possibility of such a blow-up limit. 
\begin{prop} \label{noSuchRotlySymm}
	If
		$$g(t) = ds^2 + \psi(s,t)^2 g_{S^q} = \frac{1}{z(\psi, t)} d\psi^2 + \psi^2 g_{S^q} \qquad t \in (-\infty, T)$$
	is a rotationally symmetric, complete, ancient Ricci flow solution on $\mathbb{R}^{q+1}$ such that
		$$0 < B^2 \le z(\psi, t) \le 1, \text{ and }  |Rm| \le 1$$
	 for all $(\psi, t) \in [0, \infty) \times (-\infty, T)$,
	 then $g(t)$ is flat for all $t$.
\end{prop}

\begin{proof}
Fix $1<m<2$ and $t_0 < T$, and consider the quantity
	$$v_m(\psi,t ) \doteqdot (1+t-t_0) \frac{1 - z(\psi, t)}{ \psi^m + \psi^2} \frac{ 1 }{z(\psi,t)}$$
Note that $v_m(0, t) = \lim_{\psi \nearrow \infty} v_m(\psi, t) = 0$ for all $t\in (-\infty, T)$.
By computing the evolution equation for $v_m$ and applying a maximum principle, Oliynyk \& Woolgar ~\cite{OW07} show that, for rotationally symmetric Ricci flows as above, $v_m$ satisfies the bound
	$$v_m( \psi,t) \le \max \left\{ \frac{1}{6 \inf_{(\psi, t) \in [0, \infty) \times [t_0, T)} z(\psi,t)^2}, \max_{\psi \in [0, \infty)} v_m(\psi, t_0) \right\} 	$$
for all $(\psi, t) \in [0, \infty) \times [t_0, T)$.
Now, using that $g(t)$ is an ancient solution with $|Rm| \le 1$ and $z = \psi_s^2 \ge B^2$, we can estimate
	$$\max_{\psi} v_m( \psi, t_0) = \max_{\psi} \frac{1 - \psi_s^2}{ \psi^m + \psi^2} \frac{ 1 }{\psi_s^2} \le \max_\psi \frac{1 - \psi_s^2}{ \psi^2} \frac{ 1 }{\psi_s^2} \le \frac{1}{B^2}$$
Thus, $v_m( \psi, t)$ is bounded by a constant independent of $m$ and $t_0$.
Taking $m \nearrow 2$ and $t_0 \searrow -\infty$, it follows that $\psi_s \equiv 1$ for any $t \in (-\infty, T)$.
\end{proof}

\subsection{Case 2: the $S^p$ Factor Remains}
Now, consider the case that $$\limsup_{k \nearrow \infty} \Phi_k (0) < \infty.$$
After perhaps passing to a subsequence, we obtain a limiting complete non-flat ancient solution $g(t)$ of the Ricci flow on $\mathbb{R}^{q+1} \times S^p$ defined for $t \in (-\infty, 0]$ with uniformly bounded curvature $| Rm | \le 1$ and which is a doubly-warped product.
Moreover, by scaling invariance of the assumed bounds, the warping functions $\phi, \psi$ of the limiting metric satisfy the bounds 
		\begin{equation*} 
			B \le \psi_s \le 1,  \qquad  
			\frac{A}{B} + \delta \le \frac{ \phi}{\psi}, \qquad
			\frac{ \phi_s \psi}{\psi_s \phi} \le 1,  \quad \text{and} \quad 
			0 \le \partial_s \left( \frac{ \phi_s \psi}{\psi_s \phi} \right) 
		\end{equation*}
The next proposition rules out the possibility of such a blow-up limit.
\begin{prop}
	For any $\delta > 0$, there does not exist a complete, doubly-warped, ancient Ricci flow solution 
		$$\left( \mathbb{R}^{q+1} \times S^p, ds^2 + \phi(s,t)^2 g_{S^p} + \psi(s,t)^2 g_{S^q} \right)$$
	satisfying
		\begin{equation*} 
			B \le \psi_s \le 1,  \qquad  
			\frac{A}{B} + \delta \le \frac{ \phi}{\psi}, \qquad
			\frac{ \phi_s \psi}{\psi_s \phi} \le 1,  \quad \text{and} \quad 
			0 \le \partial_s \left( \frac{ \phi_s \psi}{\psi_s \phi} \right) 
		\end{equation*}
\end{prop}

Before providing the proof, we first give a heuristic argument for the above result.
Namely, if such a solution were to exist, the bounds $B \le \psi_s \le 1$ and $\frac{A}{B} + \delta \le \frac{ \phi}{\psi}$ would suggest that the metric is asymptotically conical, that is
	$$\phi \sim \tl{A} s \qquad \psi \sim \tl{B} s \qquad 	\text{as } s \nearrow +\infty$$
with cone angles $\tl{A} > A$ and $\tl{B} \ge B$.
For such metrics, the scalar curvature $R$ is asymptotic to
	$$R \sim \frac{1}{s^2} \left( p(p-1) \frac{ 1 - \tl{A}^2}{\tl{A}^2} + q(q-1) \frac{1- \tl{B}^2}{ \tl{B}^2} -2pq  \right) < 0$$
However, a result of Bing-Long Chen ~\cite{BLC09} shows that ancient Ricci flow solutions have nonnegative scalar curvature.

While this heuristic argument is not rigorous because it made several assumptions on the asymptotics of the warping functions and their derivatives, a similar argument using the assumed properties of $\frac{ \phi_s \psi}{\psi_s \phi}$ will show that the scalar curvature is negative at some $s$ sufficiently large.

\begin{proof}
In what follows, all the computations are done at a fixed time $t \in (-\infty, T)$ and so the dependence on $t$ will often be elided.

Smoothness implies $\frac{ \phi_s \psi}{\psi_s \phi} = 0$ at $s=0$.
Additionally, $\frac{ \phi_s \psi}{\psi_s \phi}$ is increasing in $s$, and so $0 \le \frac{ \phi_s \psi}{\psi_s \phi} \le 1$.
In particular, $\lim_{s \to \infty} \frac{ \psi \phi_s}{\phi \psi_s}$ exists and is in $[0,1]$.
Moreover,
	$$ \frac{ \psi \phi_s}{\phi \psi_s} \le 1 \implies \phi_s \psi - \psi_s \phi \le 0 \implies \frac{\phi}{\psi} \text{ is decreasing in $s$}$$
In particular, $\lim_{s \nearrow + \infty} \frac{\phi}{\psi}$ exists and is in $\left[ \frac{A}{B} + \delta, \infty \right)$.
Multiplying this limit with that of $\frac{ \psi \phi_s}{\phi \psi_s}$, it follows that $\lim_{s \nearrow \infty} \frac{\phi_s}{\psi_s}$ also exists.
L'Hopital's rule now applies and shows that $\lim_{s \nearrow \infty} \frac{\phi}{\psi} = \lim \frac{\phi_s}{\psi_s}$.
We additionally conclude that $\lim_{s \nearrow \infty}  \frac{ \psi \phi_s}{\phi \psi_s} = 1$.
In summary,
	$$\lim_{s \nearrow \infty} \frac{ \phi }{\psi} = \lim_{s \nearrow \infty} \frac{ \phi_s}{\psi_s}  \in \left[ \frac{A}{B} + \delta, \infty \right) \qquad \text{and} \qquad \lim_{s \nearrow \infty} \frac{ \phi_s \psi}{\psi_s \phi} = 1$$

Next, we claim that $\liminf_{s \nearrow \infty} \phi_s \ge A + \delta B$.
Indeed, 
	$$\lim_{s \nearrow \infty} \frac{ \phi_s}{\psi_s}  \ge \frac{A}{B} + \delta$$
Thus, for any $\epsilon > 0$, there exists $s_0$ such that for $s \ge s_0$
	$$\phi_s \ge \left( \frac{A}{B} + \delta - \epsilon \right) \psi_s$$
	Taking $\liminf_{s \nearrow \infty}$, it follows that
		$$\liminf_{s \nearrow \infty} \phi_s \ge \left( \frac{A}{B} + \delta - \epsilon \right) \liminf_{s \nearrow \infty} \psi_s$$
	Since $\epsilon > 0$ is arbitrary, we deduce the claim
		$$\liminf_{s \nearrow \infty} \phi_s \ge \left( \frac{A}{B} + \delta \right) \liminf_{s \nearrow \infty} \psi_s \ge A + \delta B$$

Next, consider the asymptotic behavior of a simplified version of the scalar curvature obtained by omitting the terms containing $\phi_{ss}, \psi_{ss}$ and its rescaling by $\frac{\psi^2}{\psi_s^2}$, that is
\begin{equation*} \begin{aligned} 
	&\limsup_{s \nearrow \infty} \frac{ \psi^2}{\psi_s^2} \left\{ p(p-1) \frac{ 1 - \phi_s^2}{\phi^2} -2pq \frac{ \phi_s \psi_s}{\phi \psi} + q(q-1) \frac{ 1- \psi_s^2}{\psi^2} \right\} \\
	=& \limsup_{s \nearrow \infty} p(p-1) \frac{\psi^2 \phi_s^2}{\psi_s^2 \phi^2} \frac{1}{\phi_s^2} - p(p-1) \frac{ \phi_s^2 \psi^2}{\phi^2 \psi_s^2} -2pq \frac{ \phi_s \psi}{\phi \psi_s} + q(q-1) \frac{ 1}{\psi_s^2} - q(q-1) \\
	\le& p(p-1) \frac{1}{\liminf \phi_s^2} -p(p-1) -2pq + q(q-1) \frac{1}{\liminf \psi_s^2} - q(q-1) \\
	\le&  \frac{p(p-1)}{(A+ \delta B)^2} -p(p-1) -2pq + q(q-1) \frac{1}{B^2} - q(q-1) \\
	<& p(p-1) \frac{1}{ A^2} - \frac{p(p-1)}{A^2}\left( 1 - \left( 1 + \frac{\delta B}{A} \right)^{-2} \right) - p(p-1) -2pq + q(q-1) \frac{1}{B^2}\\ 
	& \qquad - q(q-1) \\
	=& p(n-1) - p(n-1) \left( 1 - \left( 1 + \frac{\delta B}{A} \right)^{-2} \right) -p(p-1) -2pq + q(n-1) - q(q-1)\\
	=& -p(n-1) \left( 1 - \left( 1 + \frac{\delta B}{A} \right)^{-2} \right)< 0\\
\end{aligned} \end{equation*}

We proceed to investigate some of the second derivative terms that appear in the scalar curvature.
Observe that
\begin{equation*} \begin{aligned}
	& &0 \le& \partial_s \left( \frac{ \phi_s \psi}{ \phi \psi_s} \right)\\
	&\implies&  ( \phi_{ss} \psi + \phi_s \psi_s) \phi \psi_s \ge& (\phi \psi_{ss} + \phi_s \psi_s) \phi_s \psi \\
	&\implies& \frac{ \phi_{ss} \psi_s }{\phi \psi} + \frac{ \phi_s \psi_s^2}{\phi \psi^2} \ge& \frac{ \psi_{ss} \phi_s}{\psi \phi} + \frac{\phi_s^2 \psi_s}{\phi^2 \psi}\\
	&\implies& -\frac{ \phi_{ss} \psi_s }{\phi \psi} \le &-\frac{ \psi_{ss} \phi_s}{\psi \phi} - \frac{\phi_s^2 \psi_s}{\phi^2 \psi} + \frac{ \phi_s \psi_s^2}{\phi \psi^2}\\
	&\implies& - \frac{\phi_{ss}}{\phi} \le &- \frac{ \psi_{ss} }{\psi} \frac{ \phi_s \psi}{\phi \psi_s} - \frac{ \phi_s^2}{ \phi^2} + \frac{ \phi_s \psi_s}{ \phi \psi} \\
\end{aligned} \end{equation*}
It now follows that
\begin{equation*}\begin{aligned}
	& \frac{\psi^2}{\psi_s^2} \left\{ -2p \frac{\phi_{ss}}{ \phi } - 2q \frac{ \psi_{ss} }{\psi} \right\}  \\
	\le& \frac{\psi^2}{\psi_s^2} \left\{ -2p \frac{ \psi_{ss} }{\psi} \frac{ \phi_s \psi}{\phi \psi_s} -2p  \frac{ \phi_s^2}{ \phi^2} + 2p \frac{ \phi_s \psi_s}{ \phi \psi}  -2q  \frac{ \psi_{ss} }{\psi} \right\} \\
	= & -2p \frac{ \psi_{ss} \psi}{\psi_s^2} \frac{ \phi_s \psi }{\phi \psi_s} - 2p \frac{ \phi_s^2 \psi^2}{ \phi^2 \psi_s^2} + 2p \frac{ \phi_s \psi }{\phi \psi_s} - 2q \frac{ \psi_{ss} \psi}{\psi_s^2} \\
	= &- \frac{ \psi_{ss} \psi}{\psi_s^2} \left( 2p \frac{ \phi_s \psi}{\phi \psi_s} + 2q \right) - 2p \frac{ \phi_s^2 \psi^2}{ \phi^2 \psi_s^2} + 2p \frac{ \phi_s \psi }{\phi \psi_s} \\
	\le &\frac{ | \psi_{ss} | \psi}{\psi_s^2} \left( 2p \frac{ \phi_s \psi}{\phi \psi_s} + 2q \right) - 2p \frac{ \phi_s^2 \psi^2}{ \phi^2 \psi_s^2} + 2p \frac{ \phi_s \psi }{\phi \psi_s} \\
	\le & \frac{ | \psi_{ss} | \psi}{\psi_s^2} \left( 2p + 2q \right) - 2p \frac{ \phi_s^2 \psi^2}{ \phi^2 \psi_s^2} + 2p \frac{ \phi_s \psi }{\phi \psi_s} \\
\end{aligned} \end{equation*}
	
Next, we claim that for any $\epsilon > 0$ and $s_0 > 0$, there exists $ s > s_0$ such that
		$$| \psi_{ss} (s) | \le \frac{ \epsilon }{s}$$
	Suppose otherwise for the sake of contradiction.
	Then there exists $\epsilon > 0$ and $s_0 > 0$ such that
		$$| \psi_{ss} (s) | > \frac{ \epsilon }{s} \qquad  \text{for all } s > s_0$$
	By continuity of $\psi_{ss}$, either
	\begin{enumerate}
		\item $\psi_{ss} (s)  > \frac{ \epsilon }{s}$  $ \text{for all } s > s_0$, or
		\item $\psi_{ss} (s)  < -\frac{ \epsilon }{s}$  $ \text{for all } s > s_0$
	\end{enumerate} 
	In the first case, integrating from $s_0$ implies
		$$\psi_s(s) \ge \psi_s(s_0) + \epsilon \log( s/s_0) \qquad  \text{for all } s > s_0$$
		$$\implies \psi_s > 1 \qquad \text{for some } s \gg 1$$
	This violates the inequality $\psi_s \le 1$.
	Similarly, in the second case, integrating implies
		$$\psi_s(s) \le \psi_s(s_0) - \epsilon \log( s/ s_0) \qquad \text{for all } s > s_0$$
		$$\implies \psi_s < B \qquad \text{for some } s \gg 1$$
	This violates the inequality $B \le \psi_s$.
	In either case, a contradiction arises and thus the claim is proven.
	
Finally, we can show that the scalar curvature $R$ is negative for some $s \gg 1$ sufficiently large.
\begin{equation*} \begin{aligned}
	&\frac{\psi^2}{\psi_s^2} R \\
	= & \frac{\psi^2}{\psi_s^2} \left\{ -2p \frac{\phi_{ss}}{ \phi } - 2q \frac{ \psi_{ss} }{\psi} \right\} 
	+ \frac{ \psi^2}{\psi_s^2} \left\{ p(p-1) \frac{ 1 - \phi_s^2}{\phi^2} -2pq \frac{ \phi_s \psi_s}{\phi \psi} + q(q-1) \frac{ 1- \psi_s^2}{\psi^2} \right\} \\
	\le& \frac{ | \psi_{ss} | \psi}{\psi_s^2} \left( 2p + 2q \right) - 2p \frac{ \phi_s^2 \psi^2}{ \phi^2 \psi_s^2} + 2p \frac{ \phi_s \psi }{\phi \psi_s} \\
	& \qquad + \frac{ \psi^2}{\psi_s^2} \left\{ p(p-1) \frac{ 1 - \phi_s^2}{\phi^2} -2pq \frac{ \phi_s \psi_s}{\phi \psi} + q(q-1) \frac{ 1- \psi_s^2}{\psi^2} \right\} \\
	\le & \frac{ | \psi_{ss} | s}{B^2} \left( 2p + 2q \right) + 2p \frac{ \phi_s \psi }{\phi \psi_s}  \left( 1 - \frac{\phi_s \psi}{\phi \psi_s} \right) \\
	& \qquad + \frac{ \psi^2}{\psi_s^2} \left\{ p(p-1) \frac{ 1 - \phi_s^2}{\phi^2} -2pq \frac{ \phi_s \psi_s}{\phi \psi} + q(q-1) \frac{ 1- \psi_s^2}{\psi^2} \right\} \\
\end{aligned} \end{equation*}
The above claims show that for any $\epsilon > 0$ there exists $s_0 > 0$ such that
\begin{equation*}\begin{aligned}
	& 2p \frac{ \phi_s \psi }{\phi \psi_s}  \left( 1 - \frac{\phi_s \psi}{\phi \psi_s} \right) \le \epsilon 
	&&  \text{for all } s > s_0, \\
	 & \frac{ \psi^2}{\psi_s^2} \left\{ p(p-1) \frac{ 1 - \phi_s^2}{\phi^2} -2pq \frac{ \phi_s \psi_s}{\phi \psi} + q(q-1) \frac{ 1- \psi_s^2}{\psi^2} \right\} \\
	& \qquad  \le -p(n-1) \left( 1 - \left( 1 + \frac{\delta B}{A} \right)^{-2} \right) + \epsilon 
	&&  \text{for all } s > s_0 , \text{ and }\\
	& \frac{ | \psi_{ss} | s}{B^2} \left( 2p + 2q \right) \le \epsilon &&  \text{for some } s > s_0\\
\end{aligned} \end{equation*} 
By taking
	$$\epsilon < \frac{p(n-1)}{3} \left( 1 - \left( 1 + \frac{\delta B}{A} \right)^{-2} \right)$$
it follows that the scalar curvature is negative at some large $s$.
This contradicts the result of Bing-Long Chen ~\cite{BLC09} that ancient Ricci flow solutions have nonnegative scalar curvature.
\end{proof}

\section{Coefficient Estimate} \label{CoeffEst}

The goal of this section is to prove the following theorem
\begin{thm} \label{smallCoeffs}

	Let $(\tl{Z}_{\ul{p}, \ul{q}}, \tl{U}_{\ul{p}, \ul{q}} )$ be as in definition \ref{assignmentOfInitData}.
	There exists $\delta = \delta(p,q,k) >0$  such that 
		$$(\ul{p}, \ul{q}) \in \mathcal{W}_{\tau_0, \tau_1} \text{ and } {P}_{\tau_0, \tau_1}(\ul{p}, \ul{q} ) = 0 \text{ for some } \tau_1 > \tau_0 $$
	implies
	\begin{equation*} \begin{aligned}
		| \ul{p} | &\lesssim_{p,q,k, \ul{D}, \Upsilon_U, \Upsilon_Z, \ul{\Upsilon} } e^{B^2 \lambda_k \tau_0} e^{- \delta \tau_0} \\
		 \text{and } \quad  
	 | \ul{q} | &\le C_{p,q,k, \ul{D}, \Upsilon_U, \Upsilon_Z, \ul{\Upsilon}} e^{B^2 \lambda_k \tau_0} e^{- \delta \tau_0} + C_{p,q,k}\eta_1^U e^{B^2 \lambda_k \tau_0} 
	\end{aligned} \end{equation*}
	if
	$\ul{\Upsilon}, \Upsilon_U, \Upsilon_Z$, and 
	$\tau_0 \gg 1$ are sufficiently large. 
\end{thm}
The proof is based on weighted $H^{-1}$ estimates of the error terms.

\subsection{Parabolic Region Contributions}
We will use the next lemma frequently to obtain such estimates.

\begin{lem} \label{inH^-1}
	Let $a \in \mathbb{N}$ and $b \in \mathbb{R}$ with $a > 1$ and $b > 0$.
	Let $g(\gamma) \in L^1_{loc}(\mathbb{R}_+, d\gamma)$ with 
	$$|g(\gamma)| \lesssim \gamma^{\kappa}  \text{ a.e.}$$
	\begin{enumerate}
	\item	If $\kappa > \frac{-3-a}{2}$ then $g \in H^{-1}_{a,b}$ with
		$$\| g \|_{H^{-1}_{a,b}} \le C \left( a,b, \kappa, \sup_{\gamma} \frac{ | g |}{\gamma^{\kappa}} \right)$$
	\item If $\kappa > \frac{-1-a}{2}$ then $g \in L^2_{a,b}$ with
		$$\| g \|_{L^2_{a,b}} \le C \left(a,b, \kappa, \sup_{\gamma} \frac{ | g |}{\gamma^{\kappa}} \right)$$
	\end{enumerate}
\end{lem}
\begin{proof}
	Assume $\kappa \ne -1$. The case $\kappa = -1$ can be handled separately.\\
	Let $v \in C_c^\infty( \mathbb{R}^{a+1} )$ and let $u(\gamma)$ denote its spherical averages
		$$u(\gamma) \doteqdot  \int_{\mathbb{S}^a} v(\gamma, \theta) d \theta $$
	Then
	\begin{equation*} \begin{aligned}
		\left| \int_0^\infty g u d\mu_{a,b} \right| \lesssim& \int_0^\infty \gamma^\kappa | u | d\mu_{a,b} \\
		=& - \int_0^\infty \left( \frac{1}{\kappa+1} \gamma^{\kappa+1} \right) \partial_\gamma \left( |u| \gamma^a e^{-b\gamma^2/2} \right) d\gamma \\
		\lesssim& \int_0^\infty \gamma^{\kappa+1} \left( | u_\gamma| + \frac{a}{\gamma} | u| + b \gamma |u| \right) d\mu_{a,b} \\ 
		\le& \left( \int_0^\infty \gamma^{2\kappa+2} \gamma^a e^{-b\gamma^2/2} d\gamma \right)^{1/2} \left( \int_0^\infty \left( | u_\gamma| + \frac{a}{\gamma} | u| + b \gamma |u| \right)^2 d\mu_{a,b} \right)^{1/2}  \\
		\le& \left( \int_0^\infty \gamma^{2\kappa+2} \gamma^a e^{-b\gamma^2/2} d\gamma \right)^{1/2} \left( \| u_\gamma \| + \| \frac{a}{\gamma} u \| + \| b \gamma u \| \right)\\
	\end{aligned} \end{equation*}
	By lemmas \ref{MultCts} and \ref{DivCts},
		$$\left( \| u_\gamma \|_{L^2_{a,b}} + \| \frac{a}{\gamma} u \|_{L^2_{a,b}} + \| b \gamma u \|_{L^2_{a,b}} \right) \lesssim \| u \|_{H^1_{a,b}} \lesssim \| v \|_{H^1_{a,b}}$$
	Moreover,
		$$ \int_0^\infty \gamma^{2\kappa+2} \gamma^a e^{-b\gamma^2/2} d\gamma < \infty$$
	if $2\kappa + 2 +a > -1$, or equivalently
		$$\text{ if } \kappa > \frac{-3-a}{2}$$
	The result then follows from density of smooth compactly supported functions.\\
	The proof of (2) is similar.
\end{proof}

\begin{remark}
	A similar estimate holds when $|g| \lesssim \gamma^{\kappa_1} + \gamma^{\kappa_2}$ by noting that
		$$\| g \| \lesssim \| \gamma^{\kappa_1} \| + \| \gamma^{\kappa_2} \|$$
\end{remark}

\begin{prop} \label{parabolicH^-1EstU}
There exists $\epsilon > 0$ such that
if 
	$$(\tl{Z}, \tl{U}) \in \cl{P}[ \ul{D}, l, \kappa, \ul{\eta}, \tau_0, \tau_1, \Upsilon_{U,Z}, M, \alpha, \beta, \lambda_k ]$$
then for all $\tau \in [\tau_0, \tau_1]$
	$$\chi_{[\Upsilon_U e^{-\alpha \tau}, M e^{\beta \tau} ]} Err_U[\tl{Z}, \tl{U}](\tau) \in H^{-1}_{n, \frac{1}{2B^2}}$$
	$$\text{with } \| \chi_{[\Upsilon_U e^{-\alpha \tau}, M e^{\beta \tau} ]} Err_U[\tl{Z}, \tl{U}](\tau) \|_{H^{-1}_{n,\frac{1}{2B^2}}} \lesssim_{p,q,k, \ul{D}, \Upsilon_U} e^{B^2 \lambda_k \tau - \epsilon \tau}$$
\end{prop}
\begin{proof}
	We obtain pointwise estimates for $\chi_{[\Upsilon_U e^{-\alpha \tau}, M e^{\beta_k \tau} ]} Err_U[\tl{Z}, \tl{U}](\tau)$ and then apply lemma \ref{inH^-1}.
%

	If $\tl{Z}, \tl{U} \in \cl{P}$, then proposition \ref{ptwiseEstsErr} implies
		$$| \chi_{[\Upsilon_U e^{-\alpha \tau}, M e^{\beta \tau} ]} Err_U[\tl{Z}, \tl{U}] | \lesssim_{p,q,k,\ul{D}} \chi_{[\Upsilon_U e^{-\alpha \tau}, M e^{\beta \tau} ]}  e^{2B^2 \lambda \tau} \left( \Upsilon_U^l \gamma^{ -4k-4B^2 \lambda - 2} + \gamma^{-4B^2 \lambda } \right)$$
			
	
	Observe that for all $ n \ge 10$
		$$ 0 < 2 - \frac{ 1 - n + \sqrt{ (n-9)(n-1)} }{1 - n + \sqrt{(n-9)(n-1)} + 2} < (n+3) \left[ n+1 - \sqrt{ (n-9)(n-1)} \right]^{-1}$$
	Select $\delta > 0$ such that
		$$ 0 < 2 - \frac{ 1 - n + \sqrt{(n-9)(n-1)} }{1 - n + \sqrt{ (n-9)(n-1)} + 2} < 2 \delta <  (n+3) \left[ n+1 - \sqrt{ (n-9)(n-1)} \right]^{-1}$$
	It follows that
	\begin{equation*} \begin{aligned}
		&\chi_{[\Upsilon_U e^{-\alpha \tau}, M e^{\beta \tau} ]}\gamma^{-4k-4B^2\lambda - 2} e^{2 B_\infty^2 \lambda \tau} \\
		=& \chi_{[\Upsilon_U e^{-\alpha \tau}, M e^{\beta \tau} ]}
		 \gamma^{\delta (-4k-4B^2 \lambda - 2)} \gamma^{(1-\delta)(-4k-4 B^2 \lambda - 2)} e^{2 B_\infty^2 \lambda \tau}\\
		\le& \gamma^{\delta (-4k-4B^2 \lambda - 2) } \left[ \Upsilon_U e^{-\alpha \tau} \right]^{(1-\delta)(-4k-4B^2 \lambda - 2)} e^{2 B_\infty^2 \lambda \tau}\\
		=&\Upsilon_U^{(1-\delta)(-4k-4B^2 \lambda - 2)} \gamma^{\delta (-4k-4B^2 \lambda - 2) } e^{-\alpha \tau (1-\delta)(-4k-4B^2 \lambda - 2)} e^{2 B_\infty^2 \lambda \tau}\\
	\end{aligned} \end{equation*}
	By the above choice of $\delta$,
		$$\delta (-4k-4B^2 \lambda - 2) > \frac{ - 3 - n}{2} \qquad \& \qquad -\alpha  (1-\delta)(-4k-4B^2 \lambda - 2) +2 B_\infty^2 \lambda < B^2 \lambda$$
		

	The statement of the proposition now follows from the remark following lemma \ref{inH^-1} if $\epsilon > 0$ is taken to satisfy
	$$ -\alpha  (1-\delta)(-4k-4B^2 \lambda - 2) +2 B_\infty^2 \lambda < B^2 \lambda - \epsilon < B^2 \lambda$$
	
 \end{proof}

 \begin{prop} \label{parabolicH^-1EstZ}
There exists $\epsilon > 0$ such that
if 
	$$(\tl{Z}, \tl{U}) \in \cl{P} [ \ul{D},l, \kappa, \ul{\eta}, \tau_0, \tau_1, \Upsilon_{U,Z}, M, \alpha, \beta, \lambda_k ]$$
then for all $\tau \in [\tau_0, \tau_1]$
	$$\chi_{[\Upsilon_Z e^{-\alpha \tau}, M e^{\beta \tau} ]} Err_Z[\tl{Z}, \tl{U}](\tau) \in H^{-1}_{n-2,\frac{1}{2B^2}}$$
	$$\text{with } \| \chi_{[\Upsilon_Z e^{-\alpha \tau}, M e^{\beta \tau} ]} Err_Z[\tl{Z}, \tl{U}](\tau) \|_{H^{-1}_{n-2,\frac{1}{2B^2}}} \lesssim_{p,q,k, \ul{D}, \Upsilon_Z} e^{B^2 \lambda_k \tau - \epsilon \tau}$$
\end{prop}
\begin{proof}
	The proof is similar to the proof of proposition \ref{parabolicH^-1EstZ}.
	Namely, we first obtain pointwise estimates and then apply lemma \ref{inH^-1}.
	
	By the estimates in proposition \ref{ptwiseEstsErr},
		$$\chi{[\Upsilon_Z e^{-\alpha \tau}, M e^{\beta \tau} ]} Err_Z[ \tl{Z}, \tl{U}](\tau) \lesssim_{p,q,k, \ul{D}} e^{2B^2 \lambda \tau} \left( \gamma^{-4k - 4B^2 \lambda - 2} + \gamma^{-4B^2 \lambda-2} \right)$$

	It can be verified that for all $n \ge 10$
		$$ 2 - \frac{ 1 - n + \sqrt{ (n-9)(n-1)} }{1 - n + \sqrt{ (n-9)(n-1)} + 2} < (n+1) \left[ n+1 - \sqrt{(n-9)(n-1)} \right]^{-1}$$
	Select $\delta > 0$ such that
		$$  2 - \frac{ 1 - n + \sqrt{ (n-9)(n-1)} }{1 - n + \sqrt{ (n-9)(n-1)} + 2} < 2 \delta < (n+1) \left[ n+1 - \sqrt{ (n-9)(n-1)} \right]^{-1}$$
	
	It follows that
	\begin{equation*} \begin{aligned}
		&\chi_{[\Upsilon_Z e^{-\alpha \tau}, M e^{\beta \tau} ]} e^{2B^2 \lambda_k \tau} \gamma^{-4k-4B^2 \lambda_k -2} \\
		\le&
		\left[ \Upsilon_Z e^{- \alpha \tau} \right]^{(1-\delta)(-4k-4B^2 \lambda_k -2)} e^{2B^2 \lambda_k \tau} \gamma^{\delta ( - 4k - 4B^2 \lambda_k -2)} \\
		\le & \Upsilon_Z^{(1-\delta)(-4k-4B^2 \lambda_k -2)} e^{ - \alpha \tau (1-\delta)(-4k-4B^2 \lambda_k -2) + 2B^2 \lambda_k \tau} \gamma^{\delta ( - 4k - 4B^2 \lambda_k -2)} \\
	\end{aligned} \end{equation*}
	By the choice of $\delta$, 
		$$\delta ( - 4k - 4B^2 \lambda_k -2) > \frac{ 3-(n-2)}{2} \qquad \& \qquad - \alpha  (1-\delta)(-4k-4B^2 \lambda_k -2) + 2B^2 \lambda_k < B^2 \lambda_k$$
	
	The statement of the proposition now follows from the remark following lemma \ref{inH^-1} if $ \epsilon > 0$ is taken to satisfy
		$$- \alpha  (1-\delta)(-4k-4B^2 \lambda_k -2) + 2B^2 \lambda_k < B^2 \lambda_k - \epsilon< B^2 \lambda_k$$
\end{proof}

\begin{prop} \label{parabolicH^-1EstN}
	If 
	$$(\tl{Z}, \tl{U}) \in \cl{P}[ \ul{D}, l , \kappa, \ul{\eta}, \tau_0, \tau_1, \Upsilon_{U,Z}, M, \alpha, \beta, \lambda_k]$$
then for all $\tau \in [\tau_0, \tau_1]$
		$$\chi_{[\Upsilon_U e^{-\alpha \tau}, M e^{\beta \tau} ]}  \mathcal{N} ( \tl{U} - e^{B^2 \lambda_k \tau} \tl{U}_{\lambda_k} ) \in H^{-1}_{n-2,\frac{1}{2B^2}}$$
		$$\text{with } \left\| \chi_{[\Upsilon_U e^{-\alpha \tau}, M e^{\beta \tau} ]}  \mathcal{N} ( \tl{U} - e^{B^2 \lambda_k \tau} \tl{U}_{\lambda_k} ) \right\|_{H^{-1}_{n-2,\frac{1}{2B^2}} } \lesssim_{p,q,k} \eta_1^U e^{B^2 \lambda_k \tau} $$
\end{prop}
\begin{proof}
	There is the pointwise estimate
		$$\chi_{[\Upsilon_U e^{-\alpha \tau}, M e^{\beta \tau} ]}  \mathcal{N} ( \tl{U} - e^{B^2 \lambda_k \tau} \tl{U}_{\lambda_k} )  \lesssim_{p,q} \eta_1^U e^{B^2 \lambda_k \tau} \left( \gamma^{-2k - 2B^2 \lambda_k -2} + \gamma^{-2B^2 \lambda_k -1} \right)$$
	by proposition \ref{ptwiseEstsErr}.
	Observe that for $n \ge 10$
		$$-2k - 2B^2 \lambda_k -2 > \frac{-3-(n-2)}{2}$$
	Hence, lemma \ref{inH^-1} applies and the proposition follows immediately.
\end{proof}

\subsection{Inner Region Contributions}
\begin{lem} \label{inH^-1Inner}
	Let $$\chi(\gamma) = \chi_{[0, \Upsilon e^{-\alpha \tau}]}(\gamma)$$ denote the characteristic function for $\{ \gamma \in \mathbb{R} :  0 \le \gamma \le  \Upsilon e^{-\alpha \tau} \}$.
	Let $a \in \mathbb{N}$, $b \in \mathbb{R}$, and $0 \le \epsilon < \frac{1}{2}$ with $a>1$ and $b > 0$.
	Then
		$$\chi(\gamma) \gamma^{-2 - \epsilon(a-1)} \in H^{-1}_{a,b} \qquad \text{with} \qquad \| \chi \gamma^{-2} \|_{H^{-1}_{a,b}} \lesssim_{a,b} \Upsilon^{ (1-\epsilon) \frac{ (a-1)}{2} } e^{-(1-\epsilon)\frac{a-1}{2} \alpha \tau}$$
\end{lem}
\begin{proof}
	Let $v \in C_c^\infty( \mathbb{R}^{a+1} )$ and let $u(\gamma)$ denote its spherical averages
		$$u(\gamma) \doteqdot  \int_{\mathbb{S}^a} v(\gamma, \theta) d \theta $$
	Then
	\begin{equation*} \begin{aligned}
		&\left| \int_{\mathbb{R}^{a+1}} \chi \gamma^{-2 - \epsilon(a-1)} v e^{-b \gamma^2/2} d(\gamma, \theta) \right| \\
		&\le  \int_{0}^\infty \chi(\gamma) \gamma^{-2 - \epsilon (a-1)} |u| \gamma^a e^{-b \gamma^2/2} d \gamma \\
		 &\le \left( \int \chi^2 \gamma^{a-2 - \epsilon(a-1)} e^{-b\gamma^2/2} d \gamma \right)^{1/2} \left( \int \gamma^{-2} u^2 \gamma^a e^{-b \gamma^2/2} d\gamma \right)^{1/2}\\
		&\le  \left( \int_0^{\Upsilon e^{-\alpha \tau}} \gamma^{a-2 - \epsilon(a-1)} d \gamma \right)^{1/2} \left \| \frac{1}{\gamma} u \right \|_{L^2_{a,b}}\\
		&\le  \left( \frac{1}{(a-1)(1-\epsilon)} \Upsilon^{(1-\epsilon)(a-1)} e^{-\alpha (1- \epsilon) (a-1) \tau} \right)^{1/2} \left \| \frac{1}{\gamma} u  \right \|_{L^2_{a,b}}\\
		&\lesssim_{a,b} \left(  \Upsilon^{(1-\epsilon)(a-1)} e^{-\alpha( 1- \epsilon) (a-1) \tau} \right)^{1/2} \| u \|_{H^1_{a,b}} && \text{(by lemma \ref{DivCts})}\\
		&\lesssim   \Upsilon^{(1- \epsilon) (a-1)/2} e^{- (1- \epsilon) \frac{a-1}{2} \alpha \tau} \| v \|_{H^1_{a,b} } \\
	\end{aligned} \end{equation*}
	
\end{proof}

\begin{prop} \label{innerH^-1EstErr}
	For any $\Upsilon > 0$ and $\tau_0$ such that $\Upsilon e^{- \alpha \tau_0} \le 1$, if
		$$\left| \frac{ \tl{U} }{\log(\gamma)} \right| + | \gamma \tl{U}_\gamma | + | \gamma^2 \tl{U}_{\gamma \gamma} | + | \tl{Z} | + | \gamma \tl{Z}_{\gamma} | + | \gamma^2 \tl{Z}_{\gamma \gamma} | \lesssim 1$$
		$$\text{for all }  \tau \in [\tau_0, \tau_1], \gamma \in (0 , \Upsilon e^{- \alpha \tau}]$$
	then
	\begin{equation*} \begin{aligned}
		 \chi_{[0, \Upsilon e^{- \alpha \tau}]} Err_U  (\tau) &\in H^{-1}_{n, \frac{1}{2B^2} } \\
		 \chi_{[0, \Upsilon e^{- \alpha \tau}]} Err_Z  (\tau) &\in H^{-1}_{n-2, \frac{1}{2B^2} } \\
		 \chi_{[0, \Upsilon e^{- \alpha \tau}]} \mathcal{N} (\tl{U} - e^{B^2 \lambda_k \tau} \tl{U}_{\lambda_k} )  (\tau) &\in H^{-1}_{n-2, \frac{1}{2B^2} } \\
	\end{aligned} \end{equation*}
	with estimates
	\begin{equation*} \begin{aligned}
		\| \chi_{[0, \Upsilon e^{- \alpha \tau}]} Err_U  (\tau) \|_{H^{-1}_{n,\frac{1}{2B^2}}} 
		&\lesssim_{p,q,k, \epsilon}  \Upsilon^{(1-\epsilon) \frac{n-1}{2}} e^{-(1-\epsilon) \frac{n-1}{2} \alpha_k \tau} 
		\\
		\| \chi_{[0, \Upsilon e^{- \alpha \tau}]} Err_Z  (\tau)\|_{H^{-1}_{n-2,\frac{1}{2B^2}}} 
		&\lesssim_{p,q,k} \Upsilon^{\frac{n-3}{2}} e^{- \frac{n-3}{2} \alpha_k \tau}
		\\
		 \|  \chi_{[0, \Upsilon e^{- \alpha \tau}]} \mathcal{N} (\tl{U} - e^{B^2 \lambda_k \tau} \tl{U}_{\lambda_k} )  (\tau)  \|_{H^{-1}_{n-2,\frac{1}{2B^2}}} 
		 &\lesssim_{p,q,k} \Upsilon^{\frac{n-3}{2}} e^{- \frac{n-3}{2} \alpha_k \tau} + \sqrt{\Upsilon} e^{\left( B^2 \lambda_k  - \frac{\alpha_k}{2} \right)\tau} 
		 \\
	\end{aligned} \end{equation*} 
	for any $0 < \epsilon < 1/2$.
\end{prop}
\begin{proof}
	The pointwise bounds 
		$$\left| \frac{ \tl{U} }{\log(\gamma)} \right| + | \gamma \tl{U}_\gamma | + | \gamma^2 \tl{U}_{\gamma \gamma} | + | \tl{Z} | + | \gamma \tl{Z}_{\gamma} | + | \gamma^2 \tl{Z}_{\gamma \gamma} | \lesssim 1$$
	imply that, for all $\tau \in [\tau_0, \tau_1], \gamma \in (0 , \Upsilon e^{- \alpha \tau}]$,
	\begin{equation*} \begin{aligned}
		| \chi_{[0, \Upsilon e^{- \alpha \tau}]} Err_U | 
		&\lesssim_{p,q,k} \frac{1}{\gamma^2} ( 1 + | \log (\gamma) | ) 
		\lesssim_{\epsilon} \frac{1}{\gamma^{2 + \epsilon n}} \\ 
		\text{and } \quad |  \chi_{[0, \Upsilon e^{- \alpha \tau}]} Err_Z  | 
		&\lesssim_{p,q,k} \frac{1}{\gamma^2} \\
	\end{aligned} \end{equation*}
	for any $0 < \epsilon < 1/2$.
	Hence, lemma \ref{inH^-1Inner} with $a = n$ and $a = n-2$ imply  
		$$ \chi_{[0, \Upsilon e^{- \alpha \tau}]} Err_U  (\tau) \in H^{-1}_{n,\frac{1}{2B^2}} \quad \text{and} \quad \chi_{[0, \Upsilon e^{- \alpha \tau}]} Err_Z  (\tau) \in H^{-1}_{n-2,\frac{1}{2B^2}}$$
	with corresponding estimates on the $H^{-1}$ norms.
	
	To estimate $\chi_{[0, \Upsilon e^{- \alpha \tau}]} \mathcal{N}(\tl{U} - e^{B^2 \lambda_k \tau} \tl{U}_{\lambda_k} )$, first note that the assumed pointwise bounds and the form of the eigenfunction $\tl{U}_{\lambda_k}(\gamma)$ imply 
		$$\chi_{[0, \Upsilon e^{- \alpha \tau}]} \mathcal{N}(\tl{U} - e^{B^2 \lambda_k \tau} \tl{U}_{\lambda_k} ) \lesssim \chi_{[0, \Upsilon e^{- \alpha \tau}]} \left( \frac{1}{\gamma^2} + e^{B^2 \lambda_k \tau} \gamma^{-2k - 2B^2 \lambda_k -2} \right)$$
	for all $ \tau \in [\tau_0, \tau_1]$.
	The factor of $\gamma^{-2}$ can be estimated as above using lemma \ref{inH^-1Inner}.
	The $H^{-1}_{n-2,b}$ norm of the remaining term is estimated directly.
	Namely, let $v \in C_c^\infty( \mathbb{R}^{a+1} )$ and let $u(\gamma)$ denote its spherical averages
		$$u(\gamma) \doteqdot  \int_{\mathbb{S}^a} v(\gamma, \theta) d \theta $$
	Then
	\begin{equation*} \begin{aligned}
		& \quad \left| \int_{\mathbb{R}^{n-1}} \chi_{[0, \Upsilon e^{- \alpha \tau}]} e^{B^2 \lambda_k \tau} \gamma^{-2k - 2B^2 \lambda_k -2} v e^{-\gamma^2/(4B^2)} d(\gamma, \theta) \right| \\
		&\le e^{B^2 \lambda_k \tau} \int_0^{\Upsilon e^{-\alpha \tau} } \gamma^{-2k - 2B^2 \lambda_k -2} |u| \gamma^{n-2} e^{- \gamma^2 /(4B^2)} d\gamma \\
		&\le  e^{B^2 \lambda_k \tau} \left( \int_0^{\Upsilon e^{- \alpha \tau} } \gamma^{2(-2k - 2B^2 \lambda_k -1)} \gamma^{n-2} e^{- \gamma^2/(4B^2)} d\gamma \right)^{1/2} \| \gamma^{-1} u \|_{L^2_{n-2, \frac{1}{2B^2} }} \\
		&\lesssim_{p,q}  e^{B^2 \lambda_k \tau} \left( \int_0^{\Upsilon e^{- \alpha \tau} } \gamma^{2(-2k - 2B^2 \lambda_k -1)} \gamma^{n-2} e^{- \gamma^2/(4B^2)} d\gamma \right)^{1/2} \|  u \|_{H^1_{n-2, \frac{1}{2B^2} }} && (\text{lemma \ref{DivCts}})\\
	\end{aligned} \end{equation*}
	Because
		$$2(-2k -2B^2 \lambda_k -1) + n-2 = \sqrt{(n-9)(n-1)} -3 \ge 0 \qquad (\text{for all } n \ge 10)$$
	it follows that
		$$\|  \chi_{[0, \Upsilon e^{- \alpha \tau}]} e^{B^2 \lambda_k \tau} \gamma^{-2k - 2B^2 \lambda_k -2} \|_{H^{-1}_{n-2,\frac{1}{2B^2}} } \lesssim_{p,q,k} e^{B^2 \lambda_k \tau}  e^{-\frac{1}{2} \alpha \tau} \sqrt{\Upsilon}$$
\end{proof}

\begin{remark}
	In the setting of the previous proposition, 
	there exists $0 < \epsilon' = \epsilon'(p,q,k)$
	such that
	\begin{equation*} \begin{aligned}
		& \quad  \| \chi_{[0, \Upsilon e^{- \alpha \tau}]} Err_U  (\tau) \|_{H^{-1}_{n, \frac{1}{2B^2} }} 
		+ \| \chi_{[0, \Upsilon e^{- \alpha \tau}]} Err_Z  (\tau)\|_{H^{-1}_{n-2, \frac{1}{2B^2} }} \\
		& \qquad + \|  \chi_{[0, \Upsilon e^{- \alpha \tau}]} N(\tl{U} - e^{B^2 \lambda_k \tau} \tl{U}_{\lambda_k} )  (\tau)  \|_{H^{-1}_{n-2, \frac{1}{2B^2} }} \\
		& \lesssim_{p,q,k, \Upsilon} e^{B^2 \lambda_k \tau - \epsilon ' \tau} \\
		& \ll e^{B^2 \lambda_k \tau} && (\text{as } \tau \nearrow \infty)
	\end{aligned} \end{equation*}
	Indeed, this asymptotic estimate follows from the fact that
		$$n \ge 10 \implies \sqrt{(n-9)(n-1)} > 2 $$
		$$\implies \frac{n-1}{2} \alpha_k > \frac{n-3}{2} \alpha_k = \frac{n-1-2}{n-1 - \sqrt{ (n-9)(n-1)}} (-  B^2 \lambda_k) > -  B^2 \lambda_k$$	
\end{remark}

\subsection{Outer Region Contributions}
\begin{lem} \label{inH^-1Outer}
	Let $a \in \mathbb{N}$ and $b \in \mathbb{R}$ with $a > 1$ and $b > 0$.
	Fix $0 < \beta < 1/2$.
	For any $\epsilon \in (0,1)$, there exists $\tau_0 \gg 1$ sufficiently large such that if
	$g(\gamma, \tau) \in L^1_{loc}$ is supported on 
		$$\{ \tau \ge \tau_0 \quad \& \quad \gamma \ge M e^{\beta \tau} \}$$
	and satisfies the pointwise estimate
		$$| g| \le C ( 1 + e^{\kappa_1 \tau} ) ( 1 + \gamma^{\kappa_2} ), \qquad (\kappa_1, \kappa_2 > 0)$$
	then $g \in L^2_{a,b} \subset H^{-1}_{a,b}$ and 
		$$\| g \|_{H^{-1}_{a,b}} \le \| g \|_{L^2_{a,b}} \lesssim_{a,b} e^{- (1-\epsilon) \frac{b}{4} M^2 e^{2 \beta \tau} } \ll e^{B^2 \lambda_k \tau}$$
\end{lem}
\begin{proof}
	Let $v \in C_c^\infty( \mathbb{R}^{a+1} )$ and let $u(\gamma)$ denote its spherical averages
		$$u(\gamma) \doteqdot  \int_{\mathbb{S}^a} v(\gamma, \theta) d \theta $$
	Then
	\begin{equation*} \begin{aligned}
		\left| \int_{\mathbb{R}^{a+1}} g v \gamma^a e^{- b \gamma^2 /2 } d( \gamma, \theta) \right| \le & C(1 + e^{\kappa_1 \tau} ) \int_{M e^{\beta \tau}}^\infty ( 1 + \gamma^{\kappa_2} ) |u| \gamma^a e^{-b\gamma^2/2} d \gamma\\
		= & C(1 +e^{\kappa_1 \tau} ) \left[  \int_{M e^{\beta \tau}}^\infty |u| \gamma^a e^{-b\gamma^2/2} d \gamma + \int_{M e^{\beta \tau}}^\infty \gamma^{\kappa_2} |u| \gamma^a e^{-b\gamma^2/2} d \gamma \right] \\
		\doteqdot & C( 1 + e^{\kappa_1 \tau}  ) ( I + II ) \\
	\end{aligned} \end{equation*}
	$I$ and $II$ can each be estimated with H{\"o}lder's inequality and lemma \ref{asympsOfExpInt}.
	For example,
	\begin{equation*} \begin{aligned}
		II &\le \left( \int_{M e^{\beta \tau}}^\infty \gamma^{2 \kappa_2} \gamma^a e^{- b \gamma^2 /2} d\gamma \right)^{1/2} \| u \|_{L^2_{a,b}} \\
		& \lesssim_b (M e^{\beta \tau})^{ \kappa_2 + \frac{a-1}{2}} e^{ -\frac{b}{4} (M e^{\beta \tau})^2 } \| u \|_{L^2_{a,b}} 
		&& (\text{lemma \ref{asympsOfExpInt}})\\
		& \lesssim_{a,b}  (M e^{\beta \tau})^{ \kappa_2 + \frac{a-1}{2}} e^{ -\frac{b}{4} (M e^{\beta \tau})^2 } \| v \|_{L^2_{a,b}}  
	\end{aligned} \end{equation*}
	A similar argument shows that
		$$I \lesssim_{a,b} (M e^{\beta \tau})^{\frac{a-1}{2}} e^{-\frac{b}{4} (M e^{\beta \tau})^2 } \| v \|_{L^2_{a,b}}$$
	The conclusion follows from observing that the doubly-exponential term dominates at large $\tau$.
\end{proof}

\begin{prop} \label{outerH^-1EstErr}
	For any $M > 0$ and $0 < \beta< 1/2$ and $\epsilon \in (0,1)$, if 
	$\tau_0 \gg 1$ is sufficiently large depending on $p,q,k, M, \beta, \epsilon$ and
	$$(\tl{Z}, \tl{U}) \in \cl{P} [ \ul{D}, l, \kappa, \ul{\eta} , \tau_0, \tau_1, \Upsilon_{U,Z}, M, \alpha, \beta, \lambda_k ]$$
	then
	\begin{equation*} \begin{aligned}
		 \chi_{[M e^{\beta \tau}, \infty )} \grave{\chi} Err_U  (\tau) &\in H^{-1}_{n,\frac{1}{2B^2}} \\
		 \chi_{[M e^{\beta \tau}, \infty )} \grave{\chi} Err_Z  (\tau) &\in H^{-1}_{n-2,\frac{1}{2B^2}} \\
		 \chi_{[M e^{\beta \tau},\infty)} \left( \grave{\chi} \mathcal{N} \tl{U} - \mathcal{N} e^{B^2 \lambda_k \tau} \tl{U}_{\lambda_k} \right)  (\tau) &\in H^{-1}_{n-2,\frac{1}{2B^2}} \\
		\chi_{[M e^{\beta \tau},\infty)} \left( [ \grave{\chi}, B^2 \mathcal{D}_U ] \tl{U} + \grave{\chi}_\tau \tl{U} \right) &\in H^{-1}_{n,\frac{1}{2B^2}} \\
		\chi_{[M e^{\beta \tau},\infty)} \left( [\grave{\chi}, B^2 \mathcal{D}_Z ] \tl{Z} + \grave{\chi}_\tau \tl{Z}  \right) &\in H^{-1}_{n-2,\frac{1}{2B^2}} \\
	\end{aligned} \end{equation*}
	with estimates
	\begin{equation*} \begin{aligned}
		\|  \chi_{[M e^{\beta \tau}, \infty )} \grave{\chi} Err_U  (\tau) \|_{H^{-1}_{n,\frac{1}{2B^2}}} 
		+ \| \chi_{[M e^{\beta \tau},\infty)} \left( [ \grave{\chi}, B^2 \mathcal{D}_U ] \tl{U} + \grave{\chi}_\tau \tl{U} \right) \|_{H^{-1}_{n,\frac{1}{2B^2}} }  \\
		\lesssim_{p,q,k, \ul{D}} e^{B^2 \lambda_k \tau - \epsilon \tau}
	\end{aligned} \end{equation*}
	\begin{equation*} \begin{aligned}
		\| \chi_{[M e^{\beta \tau}, \infty )} \grave{\chi} Err_Z  (\tau) \|_{H^{-1}_{n-2,\frac{1}{2B^2}} } + \|  \chi_{[M e^{\beta \tau},\infty)} \left( \grave{\chi} \mathcal{N} \tl{U} - \mathcal{N} e^{B^2 \lambda_k \tau} \tl{U}_{\lambda_k} \right)  (\tau) \|_{H^{-1}_{n-2,\frac{1}{2B^2}}} \\
		+ \| \chi_{[M e^{\beta \tau},\infty)} \left( [\grave{\chi}, B^2 \mathcal{D}_Z ] \tl{Z} + \grave{\chi}_\tau \tl{Z}  \right)  \|_{H^{-1}_{n-2,\frac{1}{2B^2}} } \lesssim_{p,q,k, \ul{D}} e^{B^2 \lambda_k \tau - \epsilon \tau}
	\end{aligned} \end{equation*}
\end{prop}

\begin{proof}
	We begin with pointwise estimates.
	Proposition \ref{ptwiseEstsErr} implies that
	\begin{equation*} \begin{aligned}
		\chi_{[M e^{\beta \tau}, \infty )} \grave{\chi} Err_U \lesssim_{p,q,k, \ul{D}}& e^{2B^2 \lambda_k \tau} \gamma^{-4B^2 \lambda_k}\\
		\chi_{[M e^{\beta \tau}, \infty )} \grave{\chi} Err_Z \lesssim_{p,q,k, \ul{D}}& e^{2B^2 \lambda_k \tau} \gamma^{-4B^2 \lambda_k}\\
		\chi_{[M e^{\beta \tau},\infty)} \left( \grave{\chi} \mathcal{N} \tl{U} - \mathcal{N} e^{B^2 \lambda_k \tau} \tl{U}_{\lambda_k} \right) 
		\lesssim_{p,q,k, \ul{D}}& e^{B^2 \lambda_k \tau} \gamma^{-2B^2 \lambda_k + 1} \\
		\chi_{[M e^{\beta \tau},\infty)} \left( [ \grave{\chi}, B^2 \mathcal{D}_U ] \tl{U} + \grave{\chi}_\tau \tl{U} \right) 
		\lesssim_{p,q,k, \ul{D}}&  e^{B^2 \lambda_k \tau} \gamma^{-2B^2 \lambda_k + 1} \\ 
		\chi_{[M e^{\beta \tau},\infty)} \left( [\grave{\chi}, B^2 \mathcal{D}_Z ] \tl{Z} + \grave{\chi}_\tau \tl{Z}  \right)
		\lesssim_{p,q,k, \ul{D}}&  e^{B^2 \lambda_k \tau} \gamma^{-2B^2 \lambda_k + 1} \\
	\end{aligned} \end{equation*}
	The remainder of the proof now follows from lemma \ref{inH^-1Outer} and estimating the resulting doubly-exponential bound by $e^{B^2 \lambda_k \tau - \epsilon \tau}$ with $\epsilon \in (0,1)$.
\end{proof}

We summarize the results of the last three subsections for the readers' convenience and for reference in later sections.

\begin{prop} \label{summH^-1EstErr}
	There exists $\epsilon = \epsilon(p,q,k) > 0$ such that if 
		$$\left( \tl{Z}, \tl{U} \right) \in \mathcal{P} [ \ul{D}, l, \kappa, \ul{\eta},  \tau_0, \tau_1, \Upsilon_U, \Upsilon_Z, M , \alpha , \beta, \lambda_k ]$$
	and $\tau_0 \gg 1$ is sufficiently large depending on $p,q,k, \Upsilon_U, \Upsilon_Z, M, \beta$
	then 
	\begin{equation*} \begin{aligned}
		\| \grave{\chi} Err_U(\tau) |_{H^{-1}_{n, \frac{1}{2B^2} } } 
		&\le C_{p,q,k, \ul{D}, \Upsilon_U} e^{B^2 \lambda \tau - \epsilon \tau} \\
		\| \grave{\chi} Err_Z(\tau) |_{H^{-1}_{n-2, \frac{1}{2B^2} } } 
		&\le C_{p,q,k, \ul{D}, \Upsilon_Z} e^{B^2 \lambda \tau - \epsilon \tau} \\
		\| \grave{\chi} \mathcal{N} \tl{U} - \mathcal{N} e^{B^2 \lambda \tau} \tl{U}_{\lambda_k} \|_{H^{-1}_{n-2,b}}
		& \le C_{p,q,k,\ul{D},  \Upsilon_U  } e^{B^2 \lambda \tau - \epsilon \tau} 
		+ C_{p,q,k, \ul{D} } \eta_1^U e^{B^2 \lambda \tau} \\
		\| B^2 [ \grave{\chi}, \mathcal{D}_U ] \tl{U} + \grave{\chi}_\tau \tl{U} \|_{H^{-1}_{n, \frac{1}{2B^2} } }
		&\le C_{p,q,k, \ul{D}} e^{B^2 \lambda \tau - \epsilon \tau} \\
		\| B^2 [ \grave{\chi}, \mathcal{D}_Z ] \tl{Z} + \grave{\chi}_\tau \tl{U} \|_{H^{-1}_{n-2, \frac{1}{2B^2} } }
		&\le C_{p,q,k, \ul{D}} e^{B^2 \lambda \tau - \epsilon \tau} \\
	\end{aligned} \end{equation*}
	for all $\tau \in [ \tau_0, \tau_1 ]$.
\end{prop}

\subsection{Initial Data Contributions} 
\begin{lem} \label{L^2EstInitU}
	For initial data satisfying
		$$0 \le \tl{U}_{\ul{p}, \ul{q}} ( \tau_0, \gamma) \le \left[ C- \log \left( \frac{A}{B} \gamma \right) \right]  \qquad \text{for all } 0 \le \gamma \le \ul{\Upsilon} e^{- \alpha_k \tau_0}$$
	and
		$$0 \le \tl{U}_{\ul{p}, \ul{q}} ( \tau_0, \gamma) \le D + \frac{\tau_0}{2} - \log \left( \frac{A}{B} \gamma \right) \qquad \text{for all } \gamma \ge M e^{\ol{\beta} \tau_0} \quad \big( \ol{\beta} \in (0, 1/2) \big)$$ 
	there exists $\epsilon = \epsilon(p,q,k) >0$ such that
		$$\left \| \check{\grave{U}}_{\ul{p}, \ul{q}} ( \tau_0, \cdot) - \tl{U}_{\lambda_k}(\cdot) \right \|_{L^2_{n,\frac{1}{2B^2}}} \lesssim_{p,q,k,\ul{\Upsilon}, C, D} e^{- \epsilon \tau_0} \ll 1$$
	for $\tau_0 \gg 1$ sufficiently large depending on $p,q,k, D, M , \ol{\beta}$. 
	
	In particular, for all $ j \ne k$
		$$\left| \left \langle \check{\grave{U}}_{\ul{p}, \ul{q}}(\tau_0, \cdot) , \tl{U}_{\lambda_j} \right \rangle_{L^2_{n,\frac{1}{2B^2}}} \right| \lesssim_{p,q,k, \ul{\Upsilon}, C, D} e^{- \epsilon \tau_0} \ll 1$$
\end{lem}

\begin{proof}
	Because 
		$$\check{\grave{U}}_{\ul{p}, \ul{q}}(\tau_0, \gamma) = \tl{U}_{\lambda_k}(\gamma) \qquad \text{for all } \ul{\Upsilon} e^{-\alpha_k \tau_0} \le \gamma \le M e^{\ol{\beta} \tau_0}$$
	it follows that
	\begin{equation*} \begin{aligned}
		\left \| \check{\grave{U}}_{\ul{p}, \ul{q}} ( \tau_0, \cdot) - \tl{U}_{\lambda_k}(\cdot) \right\|_{L^2_{n,\frac{1}{2B^2}}}^2 
		\le & \int_0^{\ul{\Upsilon} e^{- \alpha \tau_0} } \left| \check{\grave{U}}_{\ul{p}, \ul{q}} ( \tau_0, \cdot) - \tl{U}_{\lambda_k}(\cdot) \right|^2 \gamma^{n} e^{- \gamma^2/(4B^2)} d \gamma \\
		&+ \int_{M e^{\ol{\beta} \tau_0}}^\infty \left| \check{\grave{U}}_{\ul{p}, \ul{q}} ( \tau_0, \cdot) - \tl{U}_{\lambda_k}(\cdot) \right|^2 \gamma^{n} e^{- \gamma^2/(4B^2)} d \gamma \\
	\end{aligned} \end{equation*}
	
	For $\gamma \le \ul{\Upsilon} e^{- \alpha \tau_0}$,
	\begin{equation*} \begin{aligned}
		\left| \check{\grave{U}}_{\ul{p}, \ul{q}} - \tl{U}_{\lambda_k} \right| 
		&=\left|  e^{- B^2 \lambda_k \tau_0} \left[ \grave{U}_{\ul{p}, \ul{q}}( \tau_0, \gamma) - \sum_{j = 1}^{k-1} p_j \tl{U}_{\lambda_j} \right] - \tl{U}_{\lambda_k} \right| \\
		&\le \left| C - \log \left( \frac{A}{B} \gamma \right) \right| e^{-B^2 \lambda_k \tau_0} \\
		& \quad + \sum_{j=1}^{k-1} | p_j| e^{- B_\infty^2 \lambda_k \tau_0} C_j \gamma^{- 2k - 2 B^2 \lambda_k}  + C_k \gamma^{-2k - 2B^2 \lambda_k}\\
		&\lesssim_{p,q,k} \left| C - \log \left( \frac{A}{B} \gamma \right) \right| e^{-B^2 \lambda_k \tau_0} 
		+ \gamma^{-2k- 2B^2 \lambda_k} 
		&& (\ul{p} \in B_{\epsilon_0 e^{B^2 \lambda_k \tau_0}}) \\
		&\le  \gamma^{-2k- 2B^2 \lambda_k} + (C + C_\delta) \gamma^{-\delta} e^{-B^2 \lambda_k \tau_0} 
	\end{aligned} \end{equation*}
	for any $\delta > 0$.
	It then follows that
	\begin{equation*} \begin{aligned}
		 & \quad  \int_0^{\ul{\Upsilon} e^{- \alpha \tau_0} } \left| \check{\grave{U}}_{\ul{p}, \ul{q}} - \tl{U}_{\lambda_k} \right|^2 \gamma^{n} e^{-  \gamma^2/(4B^2)} d \gamma \\
		& \lesssim_{p,q,k}   \int_0^{\ul{\Upsilon} e^{- \alpha \tau_0} } \gamma^{-4k - 4B^2 \lambda_k } \gamma^n d \gamma  
		 + ( C + C_\delta)^2 e^{- 2 B^2 \lambda_k \tau_0} \int_0^{\ul{\Upsilon} e^{- \alpha \tau_0}} \gamma^{n - 2 \delta} d \gamma\\
		  &=  \int_0^{\ul{\Upsilon} e^{- \alpha \tau_0} } \gamma^{1 + \sqrt{ (n-9)(n-1)} } d \gamma  
		  + ( C + C_\delta)^2 e^{- 2 B^2 \lambda_k \tau_0} \int_0^{\ul{\Upsilon} e^{- \alpha \tau_0}} \gamma^{n - 2 \delta} d \gamma\\
		 &\lesssim_{p,q,k, \ul{\Upsilon}} e^{- \alpha_k \left( 2+ \sqrt{ (n-9)(n-1)} \right) \tau_0 } 
		 + ( C + C_\delta)^2 e^{\tau_0 ( - 2 B^2 \lambda_k - \alpha_k (n+1-2\delta))} \\
	\end{aligned} \end{equation*}
	It is possible to take $0 < \delta \ll 1$ sufficiently small so that
		$$- 2 B^2 \lambda_k - \alpha_k (n+1-2 \delta) <0$$
	Such a choice of $\delta$ yields the desired estimate.
	
	For $\gamma \ge M e^{\ol{\beta} \tau_0}$, a similar pointwise estimate reveals
	\begin{equation*} \begin{aligned}
		\left| \check{\grave{U}}_{\ul{p}, \ul{q}} - \tl{U}_{\lambda_k} \right| 
		&=\left|  e^{- B^2 \lambda_k \tau_0} \left[ \grave{U}_{\ul{p}, \ul{q}}( \tau_0, \gamma) - \sum_{j = 1}^{k-1} p_j \tl{U}_{\lambda_j} \right] - \tl{U}_{\lambda_k} \right| \\
		& \lesssim_{p,q,k} e^{-B^2 \lambda_k \tau_0}  \left| D + \frac{\tau_0}{2} \right| 
		+ e^{- B^2 \lambda_k \tau_0} \left| \log \left( \frac{A}{B} \gamma \right) \right|
		 + \gamma^{-2 B^2 \lambda_k } \\
	\end{aligned} \end{equation*} 
	It then follows that
	\begin{equation*} \begin{aligned}
		& \int_{M e^{\ol{\beta} \tau_0}}^\infty | \check{\grave{U}}_{\ul{p}, \ul{q}} - \tl{U}_{\lambda_k} |^2 \gamma^n e^{- \gamma^2 /(4B^2) } d \gamma \\
		\lesssim_{p,q,k}& \left| D+ \frac{\tau_0}{2}  \right|^2 e^{-2 B^2 \lambda_k \tau_0} \int_{M e^{\ol{\beta} \tau_0}}^\infty \gamma^n e^{-  \gamma^2 /(4B^2)} d \gamma \\
		&+ e^{-2B^2 \lambda_k \tau_0} \int_{M e^{\ol{\beta} \tau_0}}^\infty \left| \log \left( \frac{A}{B} \gamma \right) \right|^2 \gamma^n e^{- \gamma^2/(4B^2)} d \gamma \\
		&+ \int_{M e^{\ol{\beta} \tau_0}}^\infty \gamma^{-  \lambda_k /(4B^2)} \gamma^n e^{- \gamma^2/(4B^2)} d \gamma \\
	\end{aligned} \end{equation*}
	By applying lemma \ref{asympsOfExpInt}, each term will have a factor with doubly-exponential decay in $\tau_0$ that dominates.
	The first statement of the lemma follows.
	The second statement of the lemma then follows from orthogonality of the eigenfunctions and Cauchy-Schwartz.
\end{proof}

\begin{lem} \label{L^2EstInitZ}
	For initial data satisfying
		$$-B^2 \le \tl{Z}_{\ul{p}, \ul{q} } (\gamma, \tau_0) \le 1-B^2 \qquad \text{for all } 0 \le \gamma \le \ul{\Upsilon} e^{- \alpha_k \tau_0} \text{ and } \gamma \ge M e^{\ol{\beta} \tau_0}$$
	there exists $\epsilon = \epsilon(p,q,k) > 0$ so that
		$$\left \| \check{\grave{Z}}_{\ul{p}, \ul{q}}(\cdot, \tau_0) - \tl{Z}_{\lambda_k} \right\|_{L^2_{n-2,\frac{1}{2B^2}}} \lesssim_{p,q,k, \ul{\Upsilon}} e^{-\epsilon \tau_0} \ll 1$$
	if $\tau_0 \gg 1$ is sufficiently large depending on $p,q,k, M ,\ol{\beta}$.
	
	In particular, for all $ j$
		$$\left| \left \langle \check{\grave{Z}}_{\ul{p}, \ul{q}}(\cdot, \tau_0) - \tl{Z}_{\lambda_k}, \tl{Z}_{h_j} \right\rangle \right| \lesssim_{p,q,k, \ul{\Upsilon}} e^{- \epsilon \tau_0} \ll 1$$
\end{lem}
\begin{proof}
	Because
		$$\check{\grave{Z}}_{\ul{p}, \ul{q}}(\gamma, \tau_0) = \tl{Z}_{\lambda_k}(\gamma) \qquad 		\text{for all } \ul{\Upsilon} e^{-\alpha_k \tau_0} < \gamma < M e^{\beta \tau_0}$$
	it follows that
	\begin{equation*} \begin{aligned}
		\left \| \check{\grave{Z}}_{\ul{p}, \ul{q}}(\cdot, \tau_0) - \tl{Z}_{\lambda_k} \right\|_{L^2_{n-2, \frac{1}{2B^2} }}^2 
		\le& \int_0^{\ul{\Upsilon} e^{-\alpha_k \tau_0} } \left| \check{\grave{Z}}_{\ul{p}, \ul{q}} - \tl{Z}_{\lambda_k} \right|^2 \gamma^{n-2} e^{- \gamma^2/(4B^2)} d \gamma\\
		& + \int_{M e^{\ol{\beta} \tau_0}}^\infty \left| \check{\grave{Z}}_{\ul{p}, \ul{q}} - \tl{Z}_{\lambda_k} \right|^2 \gamma^{n-2} e^{- \gamma^2/(4B^2)} d \gamma
	\end{aligned} \end{equation*}
		
	In the inner region,
	\begin{equation*} \begin{aligned}
		& \quad \left| \check{\grave{Z}}_{\ul{p}, \ul{q}}(\gamma, \tau_0) - \tl{Z}_{\lambda_k} \right|\\
		&= \left| e^{-B^2 \lambda_k \tau_0} \left[ \grave{Z}_{\ul{p}, \ul{q}}(\gamma, \tau_0) - \sum_{j=0}^K q_j \tl{Z}_{h_j}(\gamma) \right] - \tl{Z}_{\lambda_k} \right| \\
		&\le e^{-B^2 \lambda_k \tau_0} \max( B^2, 1-B^2) \\
		& \quad + \sum_{j=0}^K |q_j| D_j \gamma^{-2k - 2B^2 \lambda_k } e^{-B^2 \lambda_k \tau_0} + D_k \gamma^{-2k - 2B^2 \lambda_k } \\
		&\lesssim_{p,q,k} e^{-B^2 \lambda_k \tau_0} + \gamma^{-2k - 2B^2 \lambda_k}
		&& ( \ul{q} \in B_{\epsilon_0 e^{B^2 \lambda \tau_0} }) \\
	\end{aligned} \end{equation*}
	
	It then follows that
	\begin{equation*} \begin{aligned}
		& \quad  \int_0^{\ul{\Upsilon} e^{- \alpha_k \tau_0}} \left| \check{\grave{Z}}_{\ul{p}, \ul{q}} - \tl{Z}_{\lambda_k} \right|^2 \gamma^{n-2} e^{- \gamma^2/(4B^2)} d \gamma \\
		&\lesssim_{p,q,k} e^{-2B^2 \lambda_k \tau_0} \int_0^{\ul{\Upsilon} e^{- \alpha_k \tau_0}} \gamma^{n-2}  d \gamma 
		+  \int_0^{\ul{\Upsilon} e^{- \alpha_k \tau_0}} \gamma^{-4k - 4B^2 \lambda_k} \gamma^{n-2}  d \gamma \\
		&= e^{-2B^2 \lambda_k \tau_0} \int_0^{\ul{\Upsilon} e^{- \alpha_k \tau_0}} \gamma^{n-2}  d \gamma 
		+  \int_0^{\ul{\Upsilon} e^{- \alpha_k \tau_0}} \gamma^{-1 + \sqrt{(n-9)(n-1)} } d \gamma \\
		&\lesssim_{p,q,k, \ul{\Upsilon}} e^{-2B^2 \lambda_k \tau_0 - (n-1)\alpha_k \tau_0} + e^{- \sqrt{ (n-9)(n-1)} \alpha_k \tau_0} \\ 
		&\ll  1
	\end{aligned} \end{equation*}
	
	In the outer region, a similar pointwise estimate reveals
		$$\left| \check{\grave{Z}}_{\ul{p}, \ul{q}}(\gamma, \tau_0) - \tl{Z}_{\lambda_k} \right| \lesssim_{p,q,k} e^{-B^2 \lambda_k \tau_0} + \gamma^{-2B^2 \lambda_k -2}$$
	We apply this pointwise estimate to obtain integral estimates
	\begin{equation*} \begin{aligned}
		& \quad \int_{M e^{\ol{\beta} \tau_0}}^\infty \left| \check{\grave{Z}}_{\ul{p}, \ul{q}}(\gamma, \tau_0) - \tl{Z}_{\lambda_k} \right|^2 \gamma^{n-2} e^{- \gamma^2 /(4B^2)} d \gamma \\
		&\lesssim_{p,q,k} e^{-2 B^2 \lambda_k \tau_0} \int_{M e^{\ol{\beta} \tau_0}}^\infty  \gamma^{n-2} e^{- \frac{\gamma^2}{4B^2}} d \gamma
		+ \int_{M e^{\ol{\beta} \tau_0}}^\infty \gamma^{-4B^2 \lambda_k -4} \gamma^{n-2} e^{- \frac{\gamma^2}{4B^2}} d \gamma
	\end{aligned} \end{equation*}
	By applying lemma \ref{asympsOfExpInt}, each term will have a factor wtih doubly-exponential decay in $\tau_0$ that dominates. The statement of the lemma follows.
\end{proof}	

\subsection{Proof of Theorem \ref{smallCoeffs}}
\begin{lem} \label{smallCoeffsU}
	Let $\tau_1 > \tau_0$.
	If ${P}_{\tau_0, \tau_1} ( \ul{p}, \ul{q}) = 0$
	and there exists $\epsilon >0$ such that
	\begin{equation*} \begin{aligned}
		\left| \langle \check{U}_{\ul{p}} ( \tau_0), \tl{U}_{\lambda_j} \rangle_{L^2_{n,\frac{1}{2B^2}}} \right| 
		&\lesssim e^{- \epsilon \tau_0} 
		&& \text{for all } j < k \\
		\text{and } \| Err_U [ \tl{Z}, \tl{U} ] ( \tau, \cdot) \|_{H^{-1}_{n,\frac{1}{2B^2}}} 
		&\lesssim e^{B^2 \lambda_k \tau} e^{- \epsilon \tau} 
		&& \text{for all } \tau \in [ \tau_0, \tau_1]
	\end{aligned} \end{equation*}
	then
		$$| \ul{p} | \lesssim_{p,q,k} e^{B^2 \lambda_k \tau_0} e^{- \epsilon \tau_0} \ll e^{B^2 \lambda_k \tau_0} \ll 1$$
\end{lem}
\begin{proof}
	Recall that
		$$\partial_\tau \grave{U} = B^2 \mathcal{D}_U \grave{U} + \grave{\chi} Err_U [ \tl{Z}, \tl{U} ] + B^2 [ \grave{\chi}, \mathcal{D}_U ] \tl{U} + \grave{\chi}_\tau \tl{U}$$
	so the variation of constants formula implies that
		\begin{equation*} \begin{aligned} 
			\grave{U}_{\ul{p}, \ul{q}}( \tau_1) 
			=& e^{(\tau_1 - \tau_0) B^2 \mathcal{D}_U} \grave{U}_{\ul{p}, \ul{q}}(\tau_0) + \int_{\tau_0}^{\tau_1} e^{(\tau_1 - \tau) B^2 \mathcal{D}_U} \grave{\chi} Err_U [ \tl{Z}, \tl{U} ] (\tau) d \tau \\
			& + \int_{\tau_0}^{\tau_1} e^{B^2 \mathcal{D}_U (\tau_1 - \tau)} 
			\left(		B^2 [ \grave{\chi}, \mathcal{D}_U ] \tl{U} + \grave{\chi}_\tau \tl{U} \right)(\tau)	d\tau
		\end{aligned} \end{equation*}
	Letting $j < k$ and taking the $L^2_{n,\frac{1}{2B^2}}$ inner product of both sides with $\tl{U}_{\lambda_j}$, it follows that
	\begin{equation*} \begin{aligned}
		0 &= \langle \grave{U}_{\ul{p}, \ul{q}}(\tau_1), \tl{U}_{\lambda_j} \rangle \\ 
		&= e^{(\tau_1 - \tau_0) B^2 \lambda_j} p_j
		+ e^{( \tau_1 - \tau_0) B^2 \lambda_j + B^2 \lambda_k \tau_0} \langle \check{\grave{U}}_{\ul{p}, \ul{q}}(\tau_0), \tl{U}_{\lambda_j} \rangle  \\
		& \quad +\int_{\tau_0}^{\tau_1} e^{(\tau_1 - \tau) B^2 \lambda_j } \langle \grave{\chi} Err_U [ \tl{Z}, \tl{U} ], \tl{U}_{\lambda_j} \rangle d \tau \\
		& \quad +\int_{\tau_0}^{\tau_1} e^{(\tau_1 - \tau) B^2 \lambda_j } \langle B^2 [ \grave{\chi}, \mathcal{D}_U ] \tl{U} + \grave{\chi}_\tau \tl{U} , \tl{U}_{\lambda_j} \rangle d\tau\\
		\implies | p_j| &\le e^{B^2 \lambda_k \tau_0} \left|  \langle \check{\grave{U}}_{\ul{p}, \ul{q}}(\tau_0), \tl{U}_{\lambda_j} \rangle \right| \\
		&\quad + \int_{\tau_0}^{\tau_1} e^{(\tau_0 - \tau) B^2 \lambda_j } \left| \langle  \grave{\chi} Err_U [ \tl{Z}, \tl{U} ], \tl{U}_{\lambda_j} \rangle \right| d \tau \\
		&\quad + \int_{\tau_0}^{\tau_1} e^{(\tau_0 - \tau) B^2 \lambda_j } \left|  \langle B^2 [ \grave{\chi}, \mathcal{D}_U ] \tl{U} + \grave{\chi}_\tau \tl{U} , \tl{U}_{\lambda_j} \rangle \right| d \tau\\
		&\le e^{B^2 \lambda_k \tau_0} \left|  \langle \check{\grave{U}}_{\ul{p}, \ul{q}}(\tau_0), \tl{U}_{\lambda_j} \rangle \right| \\
		& \quad + \left\|  \tl{U}_{\lambda_j} \right\|_{H^1_{n,\frac{1}{2B^2}}} \int_{\tau_0}^{\tau_1} e^{(\tau_0 - \tau) B^2 \lambda_j } 	\left\|   \grave{\chi} Err_U [ \tl{Z}, \tl{U} ]( \tau) \right\|_{H^{-1}_{n,\frac{1}{2B^2}}}  d \tau \\
		& \quad + \left\|  \tl{U}_{\lambda_j} \right\|_{H^1_{n,\frac{1}{2B^2}}} \int_{\tau_0}^{\tau_1} e^{(\tau_0 - \tau) B^2 \lambda_j }   \left\| B^2 [ \grave{\chi}, \mathcal{D}_U ] \tl{U}(\tau) + \grave{\chi}_\tau \tl{U}(\tau) \right\|_{H^{-1}_{n,\frac{1}{2B^2}}} d \tau \\
		&\lesssim_{p,q,k}  e^{B^2 \lambda_k \tau_0} e^{- \epsilon \tau_0} 
		+ \| \tl{U}_{\lambda_j} \|_{H^1_{n,\frac{1}{2B^2}}} \int_{\tau_0}^{\tau_1} e^{(\tau_0 - \tau) B^2 \lambda_j } e^{B^2 \lambda_k \tau} e^{- \epsilon \tau}  d \tau  \\ 
		&\lesssim_{p,q,k}  e^{B^2 \lambda_k \tau_0} e^{- \epsilon \tau_0}\\
	\end{aligned} \end{equation*}
\end{proof}

\begin{lem} \label{smallCoeffsZ}
	Let $\tau_1 > \tau_0$.
	If ${P}_{\tau_0, \tau_1}( \ul{p}, \ul{q} ) = 0$ and there exists $\epsilon > 0$ such that
	\begin{equation*} \begin{aligned}
		\left| \left \langle \check{\grave{Z}}_{\ul{p}, \ul{q}} ( \tau_0) - \tl{Z}_{\lambda_k}, \tl{Z}_{h_j} \right \rangle_{L^2_{n-2,\frac{1}{2B^2}}} \right| & \lesssim e^{-\epsilon \tau_0} 
		&&  \text{for all } j \le K  \\
		\| Err_Z [ \tl{Z}, \tl{U} ](\tau) \|_{H^{-1}_{n-2,b}} & \lesssim e^{B^2 \lambda_k \tau} e^{- \epsilon \tau} 		
		&& \text{for all } \tau \in [ \tau_0, \tau_1] \\
		\| [\grave{\chi}, B^2 \mathcal{D}_Z ] \tl{Z} + \grave{\chi}_\tau \tl{Z}   \|_{H^{-1}_{n-2,\frac{1}{2B^2}}}
		& \lesssim e^{B^2 \lambda_k \tau} e^{- \epsilon \tau} 		
		&& \text{for all } \tau \in [ \tau_0, \tau_1] \\
		\| \grave{\chi} \cl{N} \tl{U}_{\ul{p}, \ul{q}} ( \tau) - \cl{N} e^{B^2 \lambda_k \tau} \tl{U}_{\lambda_k} \|_{H^{-1}_{n-2,\frac{1}{2B^2}}} & \lesssim e^{B^2 \lambda_k \tau} ( e^{- \epsilon \tau} + \eta_1^U ) 		
		&&  \text{for all } \tau \in [ \tau_0, \tau_1] \\
	\end{aligned} \end{equation*}
	then
		$$| \ul{q} | \lesssim_{p,q,k} e^{B^2 \lambda_k \tau_0} ( e^{- \epsilon \tau_0} + \eta_1^U)$$
\end{lem}
\begin{proof}
	The variation of constants formula implies that 
	\begin{equation*} \begin{aligned}
		\grave{Z}(\tau_1) - e^{B^2 \lambda_k \tau_1} \tl{Z}_{\lambda_k} 
		=& e^{B^2 \cl{D}_Z (\tau_1 - \tau_0)} \left( \grave{Z}(\tau_0) - \tl{Z}_{\lambda_k} e^{B^2 \lambda_k \tau_0 }\right) \\
		&+ \int_{\tau_0}^{\tau_1} e^{B^2 \cl{D}_Z ( \tau_1 - {\tau} )} \left( \grave{\chi} \cl{N} \tl{U}_{\ul{p}, \ul{q}} ({\tau}) - B^2 \cl{N} \tl{U}_{\lambda_k} e^{B^2 \lambda_k {\tau} } \right) d {\tau} \\
		&+ \int_{\tau_0}^{\tau_1} e^{B^2 \cl{D}_Z ( \tau_1 - {\tau} )} \left( \grave{\chi} Err_Z [ \tl{Z}, \tl{U} ]({\tau})   \right) d {\tau} \\
		&+ \int_{\tau_0}^{\tau_1} e^{B^2 \cl{D}_Z ( \tau_1 - {\tau} )} \left( [\grave{\chi}, B^2 \mathcal{D}_Z ] \tl{Z} + \grave{\chi}_\tau \tl{Z}  \right) d {\tau} \\
	\end{aligned} \end{equation*}
	By taking the $L^2_{n-2,b}$ inner product with $\tl{Z}_{h_j}$ for $j \le K$ and using $\cl{P}_{\tau_0, \tau_1} ( \ul{p}, \ul{q} ) = 0$, it follows that
	\begin{equation*} \begin{aligned}
		0  &= \left \langle 	\grave{Z}(\tau_1) - e^{B^2 \lambda_k \tau_1} \tl{Z}_{\lambda_k}, \tl{Z}_{h_j} \right \rangle_{L^2_{n-2,\frac{1}{2B^2}}} \\
		 &= e^{B^2 h_j (\tau_1 - \tau_0)} \left \langle \left( \grave{Z}(\tau_0) - \tl{Z}_{\lambda_k} e^{B^2 \lambda_k \tau_0 }\right) , \tl{Z}_{h_j} \right \rangle \\
		 & \quad + \int_{\tau_0}^{\tau_1} e^{B^2 h_j (\tau_1 - {\tau} ) }
		 \left \langle \left( \grave{\chi} \cl{N} \tl{U}_{\ul{p}, \ul{q}} ({\tau}) - B^2 \cl{N} \tl{U}_{\lambda_k} e^{B^2 \lambda_k {\tau} } \right), \tl{Z}_{h_j} \right \rangle d {\tau}\\
		  & \quad + \int_{\tau_0}^{\tau_1} e^{B^2 h_j (\tau_1 - {\tau} ) }
		 \left \langle \left( \grave{\chi} Err_Z [ \tl{Z}, \tl{U} ]({\tau})   \right) , \tl{Z}_{h_j} \right \rangle d {\tau}\\
		  & \quad + \int_{\tau_0}^{\tau_1} e^{B^2 h_j (\tau_1 - {\tau} ) }
		 \left \langle \left( [\grave{\chi}, B^2 \mathcal{D}_Z ] \tl{Z} + \grave{\chi}_\tau \tl{Z}  \right), \tl{Z}_{h_j} \right \rangle d {\tau}\\
		 \implies |q_j| 
		 &\le e^{B^2 \lambda_k \tau_0} \left| \left \langle \check{\grave{Z}}(\tau_0) - \tl{Z}_{\lambda_k}, \tl{Z}_{h_j} \right \rangle \right| \\
		 & \quad  + \| \tl{Z}_{h_j} \|_{H^1_{n-2,\frac{1}{2B^2}}} 
		 \int_{\tau_0}^{\tau_1} e^{B^2 h_j ( \tau_0 - {\tau} ) }
		 \| \grave{\chi} \cl{N} \tl{U}_{\ul{p}, \ul{q}} ({\tau}) - B^2 \cl{N} \tl{U}_{\lambda_k} e^{B^2 \lambda_k {\tau} }  \|_{H^{-1}_{n-2,\frac{1}{2B^2}}} d {\tau} \\
		 & \quad + \| \tl{Z}_{h_j} \|_{H^1_{n-2,\frac{1}{2B^2}}} 
		 \int_{\tau_0}^{\tau_1} e^{B^2 h_j ( \tau_0 - {\tau} ) }
		 \| \grave{\chi} Err_Z [ \tl{Z}, \tl{U} ]({\tau})   \|_{H^{-1}_{n-2,\frac{1}{2B^2}}} d {\tau} \\
		 & \quad + \| \tl{Z}_{h_j} \|_{H^1_{n-2,\frac{1}{2B^2}}}
		  \int_{\tau_0}^{\tau_1} e^{B^2 h_j ( \tau_0 - {\tau} ) }
		 \| [\grave{\chi}, B^2 \mathcal{D}_Z ] \tl{Z} + \grave{\chi}_\tau \tl{Z} \|_{H^{-1}_{n-2,\frac{1}{2B^2}}} d {\tau} \\
		 &\lesssim_{p,q,k}  e^{B^2 \lambda_k \tau_0 } e^{- \epsilon \tau_0} 
		 + \int_{\tau_0}^{\tau_1} e^{B^2 h_j ( \tau_0 - {\tau} ) } e^{B^2 \lambda_k {\tau} } ( e^{- \epsilon {\tau} } + \eta_1^U ) d {\tau} \\
		& \lesssim_{p,q,k}  e^{B^2 \lambda_k \tau_0} ( e^{- \epsilon \tau_0} + \eta_1^U ) 
	\end{aligned} \end{equation*}	
\end{proof}

\begin{proof} (of theorem \ref{smallCoeffs})
	By the construction of $\tl{Z}_{\ul{p}, \ul{q}}, \tl{U}_{\ul{p}, \ul{q}}$, the pointwise estimates of theorem \ref{ptwiseEstsZU} and \ref{ptwiseEstsErr} apply if $\ul{\Upsilon}, \Upsilon_U, \Upsilon_Z, \tau_0 \gg 1$ are sufficiently large.
	These pointwise bounds and initial data bounds imply the $H^{-1}$ estimates from lemmas \ref{parabolicH^-1EstU} to \ref{L^2EstInitZ}.
	Hence, lemmas \ref{smallCoeffsU} and \ref{smallCoeffsZ} apply and conclude the proof of the theorem.
\end{proof}

\section{Short-Time Estimates} \label{ShortTimeEsts}


Now, we will proceed to establish pointwise estimates on the ``parabolic region" for the evolution of the terms that are not included in the linearization.
We begin by estimating on ``short-time" regimes $\tau_0 < \tau \le \tau_1 \le \tau_0 + 1$.
For the remainder of the paper, we will use $``\mathcal{P}"$ as shorthand for $\mathcal{P}[ \ul{D}, l, \kappa, \ul{\eta}, \tau_0, \tau_1, \Upsilon_U, \Upsilon_Z, M, \alpha, \beta, \lambda_k]$.

\subsection{Estimates for Initial Data Contributions}
	\begin{lem}
		Let $( \tl{Z}_{\ul{p}, \ul{q}}, \tl{U}_{\ul{p}, \ul{q}})$ be as in \ref{assignmentOfInitData}.
		Assume that $\ul{\Upsilon}, \Upsilon_U, \Upsilon_Z, \tau_0 \gg 1$ are sufficiently large so that the conclusions of theorems \ref{ptwiseEstsZU} and \ref{smallCoeffs} hold.
		For any $\nu \in (0,1)$, there exists 
		$\Upsilon_U \gg 1$ sufficiently large depending on $p,q,k, l, \ul{\Upsilon}, \ul{D}$ and 
		$\tau_0 \gg 1$ sufficiently large  
		such that
			$$\left| e^{B^2 \cl{D}_U (\tau - \tau_0)} \left( \grave{U}_{\ul{p}, \ul{q}}(\tau_0) - e^{B^2 \lambda_k \tau_0} \tl{U}_{\lambda_k} \right)(\gamma) \right| \le \nu e^{B^2 \lambda_k \tau} \left( \gamma^{-2k - 2B^2 \lambda_k} + \gamma^{2B^2 \lambda_k} \right)$$
			$$\text{for all } \quad  \gamma \in [ \Upsilon_U e^{-\alpha_k \tau}, M e^{\beta \tau} ], \quad \tau_0 \le \tau  \le \tau_0+1$$
	\end{lem}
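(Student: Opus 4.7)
The plan is to first obtain the desired bound at $\tau=\tau_0$ with a small prefactor $\nu_0 = \nu_0(\ul{\Upsilon},\Upsilon_U,\tau_0)$ that tends to zero as $\Upsilon_U,\tau_0 \to \infty$, and then to propagate it forward by the semigroup $e^{B^2\cl{D}_U(\tau-\tau_0)}$ over the short window $\tau-\tau_0\le 1$, during which only bounded multiplicative loss can occur. The largeness of the parameters $\Upsilon_U$ and $\tau_0$ then supplies the desired factor of $\nu$.

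At $\tau=\tau_0$ the construction in \ref{assignmentOfInitData} realises $\grave{U}_{\ul{p},\ul{q}}(\tau_0)$ as $e^{B^2\lambda_k\tau_0}\tl{U}_{\lambda_k}$ plus a remainder $R$ built from modes other than $\lambda_k$. Combining the modewise estimates of \ref{ptwiseEstsZU} with the smallness of the non-$\lambda_k$ coefficients supplied by \ref{smallCoeffs} yields
$$|R(\gamma)| \;\le\; \nu_0\, e^{B^2\lambda_k\tau_0}\bigl(\gamma^{-2k-2B^2\lambda_k}+\gamma^{2B^2\lambda_k}\bigr)$$
on the admissible $\gamma$-range. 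For the propagation, note that $\gamma^{-2k-2B^2\lambda_k}$ and $\gamma^{2B^2\lambda_k}$ are, to leading order, the two homogeneous solutions of $\cl{D}_U\phi = \lambda_k\phi$, so
$$\Psi(\tau,\gamma) \;=\; e^{B^2\lambda_k\tau}\bigl(\gamma^{-2k-2B^2\lambda_k}+\gamma^{2B^2\lambda_k}\bigr)$$
is (up to a lower-order correction) an eigenmode of $\partial_\tau - B^2\cl{D}_U$. Using $\pm(1+C(\tau-\tau_0))\nu_0\Psi$ as barriers and invoking a parabolic comparison principle, the initial bound propagates forward with prefactor $(1+C)\nu_0$ on $[\tau_0,\tau_0+1]$; choosing $\Upsilon_U,\tau_0$ large enough makes this smaller than $\nu$.

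The main obstacle is the comparison/propagation step. The operator $\cl{D}_U$ carries lower-order corrections that prevent $\Psi$ from being an exact eigenmode, so the resulting commutator with $B^2\cl{D}_U - \partial_\tau$ must be absorbed into the short-time factor $\tau-\tau_0\le 1$. Additionally, one must check that the barrier continues to dominate the actual evolution on the inner portion $\gamma\sim \Upsilon_U e^{-\alpha_k\tau}$ of the moving parabolic region, where the singular weight $\gamma^{-2k-2B^2\lambda_k}$ is largest and the region is contracting in $\gamma$: this is precisely where the largeness of $\Upsilon_U$ enters, since it keeps the barrier above the true evolution even after the lower-order correction is accounted for.
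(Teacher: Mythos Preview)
Your propagation step has a genuine gap. The semigroup $e^{B^2\cl{D}_U(\tau-\tau_0)}$ is a non-local integral operator on the full half-line, so the value of the evolved solution at any $\gamma$ in the admissible window depends on the initial data $R$ \emph{everywhere}, not just on $[\Upsilon_U e^{-\alpha_k\tau_0}, M e^{\beta\tau_0}]$. A parabolic comparison principle on the moving space--time slab would require your barrier $\pm(1+C(\tau-\tau_0))\nu_0\Psi$ to dominate the actual evolved solution along the lateral boundaries $\gamma = \Upsilon_U e^{-\alpha_k\tau}$ and $\gamma = M e^{\beta\tau}$. But the evolved solution at those points receives contributions from the inner region $\gamma < \ul{\Upsilon} e^{-\alpha_k\tau_0}$, where $|R|\lesssim \Upsilon_U^{l}\,\gamma^{-2k-2B^2\lambda_k}$ carries a \emph{large} prefactor $\Upsilon_U^{l}$, and from the outer region $\gamma > M e^{\ol{\beta}\tau_0}$. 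Your small barrier cannot dominate the solution at the lateral boundary unless you already possess an independent bound on the solution there --- which is precisely what you are trying to prove. Your final paragraph flags this but does not close it: ``largeness of $\Upsilon_U$'' makes the inner initial data larger, not smaller, so it does not by itself supply the needed boundary control.

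The paper sidesteps this circularity by dropping comparison altogether and estimating the semigroup kernel directly on each spatial piece of $R$. One writes $R = \sum_{j<k} p_j \tl{U}_{\lambda_j} + e^{B^2\lambda_k\tau_0}\bigl(\check{\grave{U}}_{\ul p,\ul q}(\tau_0) - \tl{U}_{\lambda_k}\bigr)$. The eigenfunction sum is handled exactly, since the semigroup multiplies $\tl{U}_{\lambda_j}$ by $e^{B^2\lambda_j(\tau-\tau_0)}$, and smallness comes from $|p_j|$ via theorem~\ref{smallCoeffs}. The second piece is zero on the middle interval and is split by $\chi_{\text{inner}} = \chi_{(0,\ul{\Upsilon}e^{-\alpha_k\tau_0}]}$ and $\chi_{\text{outer}} = \chi_{[Me^{\ol{\beta}\tau_0},\infty)}$; on each, the weighted maximal-function bound of lemma~\ref{maximalFuncBoundU} turns the semigroup action into an explicit ratio of weighted integrals. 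The inner contribution is small because $(\ul{\Upsilon}/\Upsilon_U)^{2+\sqrt{(n-9)(n-1)}}$ beats $\Upsilon_U^{l}$ (this is where the hypothesis on $l$ is used), and the outer contribution is small because the Gaussian weight at $Me^{\ol{\beta}\tau_0}$ overwhelms everything once $\tau_0$ is large. These kernel estimates are the substantive content your barrier sketch is missing.
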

	\begin{proof}
		We rewrite
			$$\grave{U}_{\ul{p}, \ul{q}}(\tau_0) - e^{B^2 \lambda_k \tau_0} \tl{U}_{\lambda_k} = \sum_{j = 0}^{k-1} p_j \tl{U}_{\lambda_j} + e^{B^2 \lambda_k \tau_0} \left( \check{\grave{U}}_{\ul{p}, \ul{q}} (\tau_0) - \tl{U}_{\lambda_k} \right) $$
		and estimate the action of the semigroup on each term.
		\begin{equation*} \begin{aligned}
			& \quad \left| e^{B^2 \cl{D}_U (\tau - \tau_0)} \sum p_j \tl{U}_{\lambda_j}  \right| \\
			&\le \sum | p_j| e^{B^2 \lambda_j (\tau - \tau_0)} | \tl{U}_{\lambda_j} | \\
			& \lesssim_{p,q,k} \left( \gamma^{-2 k -2 B^2  \lambda_k} + \gamma^{ - 2B^2 \lambda_k } \right) \sum | p_j | 
			&& (\ref{eigfuncUAsymps})\\
			&\lesssim_{p,q,k, \ul{D}, \Upsilon_U, \Upsilon_Z, \ul{\Upsilon}}
			 e^{B^2 \lambda_k \tau} e^{- \epsilon \tau_0}  \left( \gamma^{-2 k -2 B^2 \lambda_k } + \gamma^{ - 2B^2 \lambda_k } \right) 
			 && (\text{theorem } \ref{smallCoeffs})\\
			&\le  \frac{1}{2}\nu e^{B^2 \lambda_k \tau} \left( \gamma^{-2 k - 2B^2 \lambda_k } + \gamma^{ - 2B^2 \lambda_k } \right)
		\end{aligned} \end{equation*}
		if $\tau_0 \gg 1$ is sufficiently large. 
		
		For the other term, we'll use lemma \ref{maximalFuncBoundU}.
		First note that $\tl{Z}, \tl{U} \in \cl{P}$ imply
			\begin{equation*}
				\left | \check{\grave{U}}_{\ul{p}, \ul{q}}(\tau_0) - \tl{U}_{\lambda_k} \right|(\gamma) \lesssim_{p,q,k, \ul{D}} \left\{
					\begin{array}{ll}
						\Upsilon_U^l \gamma^{-2k - 2B^2 \lambda_k}, \quad & \text{if } \gamma < \ul{\Upsilon} e^{- \alpha_k \tau_0} \\
						0, & \text{if } \ul{\Upsilon} e^{- \alpha_k \tau_0} \le \gamma \le M e^{\ol{\beta} \tau_0} \\
						\gamma^{-2 B^2 \lambda_k},  & \text{if } \gamma > M e^{\ol{\beta} \tau_0} \\
					\end{array} \right.
			\end{equation*}
		Now, let
			$$\chi_{inner}(\gamma) \doteqdot \chi_{(0, \ul{\Upsilon} e^{- \alpha_k \tau_0} ]}(\gamma)$$
		For $\cl{M}_U$ defined in equation \ref{maximalFuncU}, lemma \ref{maximalFuncBoundU} implies
			\begin{equation*} \begin{aligned}
				& \quad \cl{M}_U ( \chi_{inner} \gamma^{-2k - 2B^2 \lambda_k} ) (\gamma) \\
				&= \frac{ \int_0^\gamma \chi_{inner}(\ol{\gamma}) \ol{\gamma}^{1 + \sqrt{(n-9)(n-1)}} e^{- \ol{\gamma}^2/(4B^2)} d \ol{\gamma} }{\int_0^\gamma \ol{\gamma}^{1 + \sqrt{(n-9)(n-1)}} e^{- \ol{\gamma}^2/(4B^2)} d \ol{\gamma} }\\
				 &\le  \frac{ \int_0^{ \ul{\Upsilon} e^{- \alpha_k \tau_0} }  \ol{\gamma}^{1 + \sqrt{(n-9)(n-1)}} e^{- \ol{\gamma}^2/(4B^2)} d \ol{\gamma} }{\int_0^\gamma \ol{\gamma}^{1 + \sqrt{(n-9)(n-1)}} e^{- \ol{\gamma}^2/(4B^2)} d \ol{\gamma} }\\
				& \lesssim_{p,q}  \frac{ ( \ul{\Upsilon} e^{- \alpha_k \tau_0} )^{2 + \sqrt{(n-9)(n-1) } } }{\int_0^\gamma \ol{\gamma}^{1 + \sqrt{(n-9)(n-1)}} e^{- \ol{\gamma}^2/(4B^2)} d \ol{\gamma} }\\
				& \lesssim_{p,q}  \left( \frac{ \ul{\Upsilon} e^{- \alpha_k \tau_0}} { \Upsilon e^{- \alpha_k \tau} } \right)^{2 + \sqrt{ (n-9)(n-1) } } 
				&& \left(\gamma > \Upsilon_U e^{- \alpha_k \tau} \right)\\
				&\lesssim_{p,q,k}  \left( \frac{ \ul{\Upsilon} }{ \Upsilon_U} \right)^{2 + \sqrt{ (n-9)(n-1) } } 
			\end{aligned} \end{equation*}
		It follows that
			\begin{equation*} \begin{aligned}				
				& \quad \left| e^{B^2 D_U (\tau - \tau_0)} e^{B^2 \lambda_k \tau_0} \left( \check{\grave{U}}_{\ul{p}, \ul{q}}(\tau_0) - \tl{U}_{\lambda_k} \right) \chi_{inner} \right| \\
				&\lesssim_{p,q,k, \ul{D}}  e^{B^2 \lambda_k \tau_0} \Upsilon_U^l \left( \gamma e^{- \frac{1}{2} (\tau - \tau_0)} \right)^{-2k- 2B^2 \lambda_k  } \cl{M}_U (\chi_{inner} \gamma^{-2k- 2B^2 \lambda_k} ) \\
				&\lesssim_{p,q,k} e^{B^2 \lambda_k \tau_0} \gamma^{-2k - 2B^2 \lambda_k} \left( \frac{ \ul{\Upsilon} }{ \Upsilon_U} \right)^{2 + \sqrt{ (n-9)(n-1) } } \Upsilon_U^l\\
			\end{aligned} \end{equation*}
		using $0 \le \tau - \tau_0 \le 1$.
		Because
			$$l < \frac{1}{2} ( 2k + 2B^2 \lambda_k) = \frac{n-1}{4} - \frac{1}{4} \sqrt{(n-9)(n-1)} \le 2 + \sqrt{ (n-9)(n-1)},$$
		taking $\Upsilon_U \gg 1$ sufficiently large depending on $p,q,k, \ul{D}, l , \ul{\Upsilon}$ yields the desired estimate.
		
		Finally, let
			$$\chi_{outer}(\gamma) \doteqdot \chi_{[M e^{\ol{\beta} \tau_0}, \infty)}(\gamma)$$
		and observe
			$$\gamma^{2k + 2B^2 \lambda_k} \gamma^{-2B^2 \lambda_k} \chi_{outer}= \chi_{outer} \gamma^{2k}$$
		is nondecreasing.
		Hence, lemma \ref{maximalFuncBoundU} implies
			\begin{equation*} \begin{aligned}
				\cl{M}_U( \chi_{outer} \gamma^{-2 B^2 \lambda_k})(\gamma) 
				&= \frac{ \int_\gamma^\infty \chi_{outer} \ol{\gamma}^{2k} \ol{\gamma}^{1 + \sqrt{ (n-9)(n-1) } } e^{- \ol{\gamma}^2/(4B^2)} d \ol{\gamma} }{ \int_\gamma^\infty \ol{\gamma}^{1 + \sqrt{ (n-9)(n-1)} } e^{ -  \ol{\gamma}^2/(4B^2)} d \ol{\gamma} } \\
				&\le  \frac{ \int_{ M e^{\ol{\beta} \tau_0}}^\infty \ol{\gamma}^{2k} \ol{\gamma}^{1 + \sqrt{ (n-9)(n-1)} } e^{-  \ol{\gamma}^2/(4B^2)} d \ol{\gamma} }{ \int_\gamma^\infty \ol{\gamma}^{1 + \sqrt{ (n-9)(n-1) } } e^{ -  \ol{\gamma}^2/(4B^2)} d \ol{\gamma} } \\
				& \lesssim_{p,q,k}  \frac{ ( M e^{\ol{\beta} \tau_0} )^{2k + \sqrt{ (n-9)(n-1) } } e^{- \frac{1}{4B^2} (  M e^{\ol{\beta} \tau_0})^2} }{ \int_\gamma^\infty \ol{\gamma}^{1 + \sqrt{ (n-9)(n-1) } } e^{ -  \ol{\gamma}^2/(4B^2)} d \ol{\gamma} }  && (\ref{asympsOfExpInt}) \\
			\end{aligned} \end{equation*}
			It follows that for $\gamma \le M e^{\beta \tau}$
			\begin{equation*} \begin{aligned}
				& \quad |\gamma^{-2k - 2B^2 \lambda_k} \cl{M}_U (\chi_{outer} \gamma^{-2B^2 \lambda_k} )| \\
				&\lesssim_{p,q,k} \gamma^{-2k - 2B^2 \lambda_k} \frac{ ( M e^{\ol{\beta} \tau_0} )^{2k + \sqrt{ (n-9)(n-1) } } e^{- \frac{1}{4B^2} (  M e^{\ol{\beta} \tau_0})^2} }{ \int_\gamma^\infty \ol{\gamma}^{1 + \sqrt{ (n-9)(n-1) } } e^{ -  \ol{\gamma}^2/(4B^2)} d \ol{\gamma} } \\
				& \lesssim_{p,q,k}
				( M e^{\ol{\beta} \tau_0} )^{2k + \sqrt{ (n-9)(n-1) } } e^{- \frac{1}{4B^2} (  M e^{\ol{\beta} \tau_0})^2} \\
				& \quad \cdot \left( \gamma^{-2k -2B^2 \lambda} + \gamma^{-2B^2 \lambda} 
				\left( M e^{\beta \tau} \right)^{-2k - \sqrt{(n-9)(n-1)} } e^{ \frac{1}{4B^2} ( M e^{ \beta \tau} )^2 } \right)			
				&& (\ref{asympsOfExpInt})\\
				&\lesssim_{p,q,k}
				( M e^{\ol{\beta} \tau_0} )^{2k + \sqrt{ (n-9)(n-1) } } e^{- \frac{1}{4B^2} (  M e^{\ol{\beta} \tau_0})^2} \gamma^{-2k -2B^2 \lambda_k} \\
				& \quad +
				 \left( e^{(\ol{\beta} - \beta) \tau_0} \right)^{2k + \sqrt{(n-9)(n-1)}} 
				 e^{ - \frac{M^2}{4B^2} \left( e^{ 2 \ol{\beta} \tau_0} - C e^{2 \beta \tau_0} \right) }
				 \gamma^{-2B^2 \lambda}
			\end{aligned} \end{equation*}
		Therefore, we get the desired bounds if $\tau_0 \gg 1$ is sufficiently large.
	\end{proof}
	
	\begin{lem}
		Let $( \tl{Z}_{\ul{p}, \ul{q}}, \tl{U}_{\ul{p}, \ul{q}})$ be as in \ref{assignmentOfInitData}.
		Assume that $\ul{\Upsilon}, \Upsilon_U, \Upsilon_Z, \tau_0 \gg 1$ are sufficiently large so that the conclusions of theorems \ref{ptwiseEstsZU} and \ref{smallCoeffs} hold.
		For any $\Gamma > 0$ and $\nu \in (0,1)$,
		there exists 
		$\Upsilon_Z \gg 1$ sufficiently large depending on $p,q,k, l, \ul{D}, \ul{\Upsilon}, \Upsilon_U$, and
		$\tau_0 \gg 1$ sufficiently large
		such that
		\begin{gather*}
			\left| e^{B^2 \cl{D}_Z (\tau_1 - \tau_0)} \left( \grave{Z}_{\ul{p}, \ul{q}}(\tau_0)(\gamma) - e^{B^2 \lambda_k \tau_0} \tl{Z}_{\lambda_k} \right) \right| \le \nu e^{B^2 \lambda_k \tau} \left( \gamma^{-2k - 2B^2 \lambda_k} + \gamma^{-2B^2 \lambda_k} \right) \\
			\le ( C_{p,q,k} \eta_1^U + \nu) e^{B^2 \lambda_k \tau} \left( \gamma^{-2k - 2B^2 \lambda_k} + \gamma^{-2B^2 \lambda_k} \right)
		\end{gather*}
		for all $\gamma \in [ \Upsilon_Z e^{-\alpha_k \tau}, \Gamma ], \quad \tau_0 \le \tau  \le \tau_0+1$.
	\end{lem}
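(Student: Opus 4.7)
The plan is to mirror the strategy of the preceding lemma, working now with the semigroup $e^{B^2 \cl{D}_Z (\tau - \tau_0)}$ and the $Z$-analogue of the maximal function bound in place of the $U$-version. I would start from the decomposition
$$\grave{Z}_{\ul{p}, \ul{q}}(\tau_0) - e^{B^2 \lambda_k \tau_0} \tl{Z}_{\lambda_k} = \sum_{j=0}^{k-1} p_j \tl{Z}_{\lambda_j} + e^{B^2 \lambda_k \tau_0}\left( \check{\grave{Z}}_{\ul{p}, \ul{q}}(\tau_0) - \tl{Z}_{\lambda_k} \right)$$
and treat the two summands separately.

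For the eigenfunction sum, the $Z$-analogue of (\ref{eigfuncUAsymps}) bounds each $|\tl{Z}_{\lambda_j}|$ by $\gamma^{-2k - 2B^2 \lambda_k} + \gamma^{-2B^2 \lambda_k}$ uniformly on the relevant range, and then theorem \ref{smallCoeffs} supplies the decaying factor $e^{-\epsilon \tau_0}$ controlling $\sum |p_j|$. Since $\tau - \tau_0 \le 1$ makes $e^{B^2 \lambda_j (\tau - \tau_0)}$ uniformly bounded, taking $\tau_0$ sufficiently large absorbs this contribution into $\tfrac{1}{2}\nu e^{B^2 \lambda_k \tau}(\gamma^{-2k - 2B^2 \lambda_k} + \gamma^{-2B^2 \lambda_k})$, exactly as in the $U$-case.

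For the second summand I would use the piecewise bound on $|\check{\grave{Z}}_{\ul{p},\ul{q}}(\tau_0) - \tl{Z}_{\lambda_k}|$ coming from $\tl{Z}, \tl{U} \in \cl{P}$ --- identical in shape to the $U$-bound in the previous proof, with $\Upsilon_U$ replaced by $\Upsilon_Z$ --- and split the domain via $\chi_{inner}$ and $\chi_{outer}$ as before. The inner estimate reduces, after application of the $Z$-maximal function lemma, to a factor of the form $(\ul{\Upsilon}/\Upsilon_Z)^{2 + \sqrt{(n-9)(n-1)}} \Upsilon_Z^l$, which is made smaller than $\nu/4$ by taking $\Upsilon_Z \gg \ul{\Upsilon}$ large, using the same inequality $l < 2 + \sqrt{(n-9)(n-1)}$. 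The outer estimate is actually easier here than in the $U$-case: because the target range caps $\gamma$ at the fixed constant $\Gamma$ rather than $M e^{\beta \tau}$, the Gaussian factor $e^{-\frac{1}{4B^2}(Me^{\ol{\beta} \tau_0})^2}$ dominates any polynomial in $\gamma \le \Gamma$ once $\tau_0$ is large, and no delicate competition between $\ol{\beta}$- and $\beta$-exponentials is required.

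The second inequality in the conclusion is immediate from the first because $C_{p,q,k} \eta_1^U \ge 0$; it is recorded in this weakened form so that the $\eta_1^U$-contribution can be grouped together with analogous terms later on in the iteration. The main obstacle will therefore be purely bookkeeping: tracking the dependence of $\Upsilon_Z$ on $\Upsilon_U$ through $\ul{\Upsilon}$ and on $l$ carefully enough that the inner-piece smallness survives all the constants coming from the piecewise bound.
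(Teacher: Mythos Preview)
Your proposal has the right overall shape but contains two genuine errors, not just bookkeeping slips.

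First, the decomposition is wrong. The operator $\cl{D}_Z$ has its own eigenfunctions $\tl{Z}_{h_j}$ with eigenvalues $h_j$, and the initial data carries coefficients $q_j$, not $p_j$; the paper writes
\[
\grave{Z}_{\ul{p}, \ul{q}}(\tau_0) - e^{B^2 \lambda_k \tau_0} \tl{Z}_{\lambda_k}
= \sum_{j=0}^{K} q_j \tl{Z}_{h_j} + e^{B^2 \lambda_k \tau_0}\bigl( \check{\grave{Z}}_{\ul{p}, \ul{q}}(\tau_0) - \tl{Z}_{\lambda_k} \bigr).
\]
This matters because theorem~\ref{smallCoeffs} bounds the $q_j$ by
$C_{p,q,k,\ul{D},\Upsilon_U,\Upsilon_Z,\ul{\Upsilon}}\,e^{-\epsilon\tau_0} + C_{p,q,k}\,\eta_1^U$,
and the $\eta_1^U$ piece does \emph{not} decay as $\tau_0\to\infty$. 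Consequently the eigenfunction sum cannot be pushed below $\tfrac12\nu\,e^{B^2\lambda_k\tau}(\cdots)$ by enlarging $\tau_0$ alone; a residual $C_{p,q,k}\,\eta_1^U$ survives. The second displayed inequality in the statement is therefore not a gratuitous weakening of the first --- it is where the $\eta_1^U$ term genuinely enters.

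Second, several details you copied over from the $U$-argument change. The piecewise bound on $|\check{\grave{Z}}(\tau_0)-\tl{Z}_{\lambda_k}|$ still carries $\Upsilon_U^l$ (not $\Upsilon_Z^l$) in the inner region, which is exactly why $\Upsilon_Z$ must be chosen large depending on $\Upsilon_U$. The $Z$-maximal function has weight $\ol{\gamma}^{n+2}e^{-\ol{\gamma}^2/(4B^2)}$, so the inner smallness factor is $(\ul{\Upsilon}/\Upsilon_Z)^{\,n-2k-2B^2\lambda_k+1}\Upsilon_U^{\,l}$, not $(\ul{\Upsilon}/\Upsilon_Z)^{2+\sqrt{(n-9)(n-1)}}\Upsilon_Z^{\,l}$. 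And the outer piece is not uniformly easier: one must split into the cases $-2B^2\lambda-2\ge 0$ and $-2B^2\lambda-2<0$, since in the latter $\chi_{outer}\gamma^{-2B^2\lambda-2}$ is not monotone and the maximal-function argument requires the restriction $\gamma\le\Gamma$ in a nontrivial way.
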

	\begin{proof}
		We rewrite
			$$\grave{Z}_{\ul{p}, \ul{q}}(\tau_0) - e^{B^2 \lambda_k \tau_0} \tl{Z}_{\lambda_k} = 	\sum_{j=0}^K q_j \tl{Z}_{h_j} + e^{B^2 \lambda_k \tau_0} \left( \check{ \grave{Z} }_{\ul{p}, \ul{q}}(\tau_0) - \tl{Z}_{\lambda_k} \right)$$
		and estimate the action of the semigroup on each term.
		
		One part of the estimate reads
		\begin{equation*} \begin{aligned}
			\left| e^{B^2 D_Z (\tau_1 - \tau_0)} \sum_j q_j \tl{Z}_{h_j} \right| 
			&= \sum_j |q_j| e^{B^2 h_j (\tau - \tau_0) } \left| \tl{Z}_{h_j} \right| \\
			&\lesssim_{p,q,k} \sum_j | q_j | e^{B^2 h_j (\tau - \tau_0)} ( \gamma^2 + \gamma^{-2B^2 h_j} ) 
			&& ( \ref{eigfuncZAsymps} )\\
			&\lesssim  e^{B^2 \lambda_k \tau_0}  ( \gamma^2 + \gamma^{-2B^2 h_j} ) \\
			& \quad \cdot \left( C_{p,q,k, \ul{D}, \Upsilon_U, \Upsilon_Z, \ul{\Upsilon}} e^{- \epsilon \tau_0} 
			+ C_{p,q,k} \eta_1^U \right)
			&& (\text{theorem \ref{smallCoeffs}} )\\
			&\lesssim_{p,q,k} \left( C_{p,q,k, \ul{D}, \Upsilon_U, \Upsilon_Z, \ul{\Upsilon}} e^{- \epsilon \tau_0} 
			+ C_{p,q,k} \eta_1^U \right) \\
			& \quad \cdot e^{B^2 \lambda_k \tau }  ( \gamma^2 + \gamma^{ -2B^2 \lambda_k}) &&(j \le K)\\
		\end{aligned} \end{equation*}

		From $\tl{Z}, \tl{U} \in \cl{P}$, we have the pointwise estimates
		\begin{equation*}
			|\check{ \grave{Z}}(\tau_0) - \tl{Z}_{\lambda_k}| \lesssim_{p,q,k, \ul{D}} \left\{
				\begin{array}{ll}
					\Upsilon_U^l \gamma^{-2k - 2B^2 \lambda_k}, \qquad & \gamma < \ul{\Upsilon} e^{ - \alpha_k \tau_0} \\
					0 , &  \ul{\Upsilon} e^{ - \alpha_k \tau_0} \le \gamma \le  M e^{ \ol{\beta} \tau_0}\\
					\gamma^{-2B^2 \lambda_k }, & \gamma > M e^{ \ol{\beta} \tau_0} \\
				\end{array} \right.
		\end{equation*}

		We use a maximal function estimate to bound these contributions.
		First, note that
		\begin{gather*}
			\chi_{inner} \gamma^{-2k - 2B^2 \lambda_k} \gamma^{-2} \text{ is decreasing,} 
			\text{ where } \chi_{inner} (\gamma) \doteqdot \chi_{( 0 , \ul{\Upsilon} e^{- \alpha \tau_0} ]} (\gamma)
		\end{gather*}
		Hence, for $\Upsilon_Z e^{ - \alpha_k \tau} \le \gamma \le M e^{ \beta \tau}$ 
		and $\tau_0 \le \tau \le \tau_0 + 1$
		\begin{equation*} \begin{aligned}
			& \quad \left| e^{B^2 \cl{D}_Z (\tau - \tau_0)} e^{B^2 \lambda_k \tau_0} \Upsilon_U^l \chi_{inner} \gamma^{-2k- 2B^2 \lambda_k} \right| \\
			& \lesssim_{p,q}  e^{B^2 \lambda_k \tau_0} \Upsilon_U^l \gamma^2 e^{ - \frac{1}{2} (\tau - \tau_0)} 
			 \frac{ \int_0^\gamma \chi_{inner} \ol{\gamma}^{-2k- 2B^2 \lambda_k } \ol{\gamma}^n e^{ -  \ol{\gamma}^2/(4B^2)} d\ol{\gamma} }{ \int_0^\gamma \ol{\gamma}^{n+2} e^{ -  \ol{\gamma}^2/(4B^2)} d\ol{\gamma}}
			 && (\ref{maximalFuncBoundZ})\\
			&\lesssim  \frac{ \int_0^\gamma \chi_{inner} \ol{\gamma}^{-2k- 2B^2 \lambda_k } \ol{\gamma}^n e^{ -  \ol{\gamma}^2/(4B^2)} d\ol{\gamma} }{  \int_0^\gamma \ol{\gamma}^{-2k- 2B^2 \lambda_k } \ol{\gamma}^n e^{ -  \ol{\gamma}^2/(4B^2)} d\ol{\gamma} } 
			e^{B^2 \lambda_k \tau_0} \Upsilon_U^l	\frac{ \gamma^2 \int_0^\gamma \ol{\gamma}^{ -2k-2B^2 \lambda_k +n} e^{-  \ol{\gamma}^2/(4B^2)} d \ol{\gamma} }{ \int_0^\gamma \ol{\gamma}^{n+2} e^{ -  \ol{\gamma}^2/(4B^2)} d \ol{\gamma} }\\
			&\le  \frac{ \int_0^{\ul{\Upsilon} e^{ - \alpha_k \tau_0}} \gamma^{ n - 2k - 2B^2 \lambda_k} e^{ -  \gamma^2 /(4B^2)} d\gamma}{ \int_0^{ \Upsilon_Z e^{ - \alpha_k \tau }}  \gamma^{ n - 2k - 2B^2 \lambda_k} e^{ - \gamma^2 /(4B^2)} d\gamma} 
			e^{B^2 \lambda_k \tau_0}	\Upsilon_U^l
			\frac{ \gamma^2 \int_0^\gamma \ol{\gamma}^{ -2k-2B^2 \lambda_k +n} e^{-  \ol{\gamma}^2/(4B^2)} d \ol{\gamma} }{ \int_0^\gamma \ol{\gamma}^{n+2} e^{ -  \ol{\gamma}^2/(4B^2)} d \ol{\gamma} }\\
			&\lesssim_{p,q,k}   \frac{ \int_0^{\ul{\Upsilon} e^{ - \alpha_k \tau_0}} \gamma^{ n - 2k -2B^2 \lambda_k} e^{ -  \gamma^2 /(4B^2)} d\gamma}{ \int_0^{ \Upsilon_0 e^{ - \alpha_k \tau }}  \gamma^{ n - 2k - 2B^2 \lambda_k} e^{ -  \gamma^2 /(4B^2)} d\gamma} e^{B^2 \lambda_k \tau_0}	 \Upsilon_U^l 
			\left( \frac{ \gamma^2 \gamma^{ -2k - 2B^2 \lambda_k  +n +1} }{\gamma^{n+3}} + \gamma^2 \right)\\
			& \lesssim_{p,q,k} \left[ \frac{ \ul{\Upsilon} e^{ - \alpha_k \tau_0} }{ \Upsilon_Z e^{ - \alpha_k \tau} } \right]^{n-2k- 2B^2 \lambda_k+1} e^{B^2 \lambda_k \tau_0} \Upsilon_U^l
			\left( \frac{ \gamma^2 \gamma^{ -2k - 2B^2 \lambda_k  +n +1} }{\gamma^{n+3}} + \gamma^2 \right)\\
			& \lesssim_{p,q,k}  \left[ \frac{ \ul{\Upsilon}  }{ \Upsilon_Z  } \right]^{n-2k- 2B^2 \lambda_k+1} e^{B^2 \lambda_k \tau}	 \Upsilon_U^l
			 \left(  \gamma^{ -2k - 2B^2 \lambda_k  } + \gamma^2 \right) \\
		\end{aligned} \end{equation*}
		Since $n-2k -2B^2  \lambda_k > \frac{ 1+n}{2} + \frac{1}{2} \sqrt{ (n-9)(n-1)} > 0$, it follows that if 
		$\Upsilon_Z \gg 1$ is sufficiently large depending on $p,q,k, \ul{D}, \ul{\Upsilon}, \Upsilon_U$ then we get the desired contribution from the inner region.
	
		Finally, let
			$$\chi_{outer}(\gamma) \doteqdot \chi_{[M e^{\ol{\beta} \tau_0}, \infty)}(\gamma)$$
		and observe
			$\gamma^{-2} \gamma^{-2B^2 \lambda_k} \chi_{outer}$
		is either nondecreasing or can be bounded above by $(M e^{\ol{\beta} \tau_0})^{ -2B^2 -2} \chi_{outer}$. 
		In the first case, with $\cl{M}_Z$ as defined in equation \ref{maximalFuncZ}, lemma \ref{maximalFuncBoundZ} implies
		\begin{equation*} \begin{aligned}
			& \quad \left | e^{B^2 \cl{D}_Z ( \tau - \tau_0)} \chi_{outer} \gamma^{-2B^2 \lambda_k} \right|\\
			&\lesssim_{p,q}  \gamma^2 e^{-(\tau - \tau_0)} \cl{M}_Z \chi_{outer} \gamma^{-2B^2 \lambda_k} \\
			&= \gamma^2 e^{-(\tau - \tau_0)}
			 \frac{ \int_\gamma^\infty \chi_{outer} \ol{\gamma}^{-2B^2 \lambda} \ol{\gamma}^{-2} \ol{\gamma}^{n+2} e^{-  \ol{\gamma}^2/(4B^2)} d \ol{\gamma} } {\int_\gamma^\infty \ol{\gamma}^{n+2} e^{- \ol{\gamma}^2/(4B^2)} d \ol{\gamma} } \\
			 &\lesssim_{p,q,k}  \gamma^2 
			 \frac{ ( M e^{\ol{\beta} \tau_0} )^{-2B^2 \lambda + n -1 } e^{- (M e^{\ol{\beta} \tau_0} )^2 / (4B^2)} }{\int_\gamma^\infty \ol{\gamma}^{n+2} e^{- \ol{\gamma}^2 /(4B^2) } d \ol{\gamma} }\\
			& \lesssim_{p,q,k} ( M e^{\ol{\beta} \tau_0} )^{-2B^2 \lambda + n -1 } e^{- (M e^{\ol{\beta} \tau_0} )^2 / (4B^2)} \\
			 & \quad \cdot  \left( \gamma^2 
			 + \gamma^{-2B^2 \lambda} \frac{1}{ ( M e^{\beta \tau} )^{-2B^2 \lambda + n -1} e^{- ( M e^{\beta \tau})^2/(4B^2) } } \right) 
			 && (\ref{asympsOfExpInt}) \\
			& \lesssim_{p,q,k}
			 ( M e^{\ol{\beta} \tau_0} )^{-2B^2 \lambda + n -1 } e^{- (M e^{\ol{\beta} \tau_0} )^2 / (4B^2)} \gamma^{-2k -2B^2 \lambda_k}\\
			 & \quad + \left( e^{( \ol{\beta} - \beta ) \tau_0} \right)^{-2B^2 \lambda + n -1} 
			 e^{- \frac{M^2}{4B^2} \left[ e^{2 \ol{\beta} \tau_0} -C e^{ 2 \beta \tau_0} \right] }
			 \gamma^{-2 B^2 \lambda}\\
			 %
		\end{aligned} \end{equation*}
		Since $\ol{\beta} > \beta$, taking $\tau_0 \gg 1$ sufficiently large yields the estimate.
		
		In the second case where $-2B^2 \lambda -2 < 0$, we again use lemma \ref{maximalFuncBoundZ}
		but we also need to restrict to $\gamma < \Gamma$
		\begin{equation*} \begin{aligned}
			& \quad \left| e^{B^2 \cl{D}_Z(\tau - \tau_0)} \chi_{outer} \gamma^{-2B^2 \lambda} \right| \\
			&\lesssim_{p,q}  \gamma^2 e^{-(\tau - \tau_0)} \cl{M}_Z \chi_{outer} \gamma^{-2B^2 \lambda} \\
			&=  \gamma^2 e^{-(\tau - \tau_0)} \sup_{I \ni \gamma} 
			\frac{ \int_I \chi_{outer} \ol{\gamma}^{-2B^2 \lambda - 2} \ol{\gamma}^{n+1} e^{-  \ol{\gamma}^2 /(4B^2)} d \ol{\gamma} } {\int_I \ol{\gamma}^{n+1} e^{- \ol{\gamma}^2 /(4B^2)} d \ol{\gamma} } \\
			&=   \gamma^2 e^{-(\tau - \tau_0)} \sup_{b \ge M e^{\ol{\beta} \tau_0}} 
			\frac{ \int_{M e^{\ol{\beta} \tau_0}}^b \ol{\gamma}^{-2B^2 \lambda - 2} \ol{\gamma}^{n+2} e^{-  \ol{\gamma}^2 / (4B^2)} d \ol{\gamma} } {\int_\gamma^b \ol{\gamma}^{n+2} e^{-  \ol{\gamma}^2 /(4B^2) } d \ol{\gamma}} \\
			&\le \gamma^2 
			\frac{ \int_{M e^{ \ol{\beta} \tau_0}}^\infty \ol{\gamma}^{n+2} e^{-  \ol{\gamma}^2 /(4B^2) } d \ol{\gamma} }{ \int_\Gamma^{M e^{ \ol{\beta} \tau_0}} \ol{\gamma}^{n+2} e^{-  \ol{\gamma}^2 /(4B^2) } d \ol{\gamma} }
			&& (-2B^2 \lambda - 2 < 0 \text{ and } \gamma \le \Gamma) \\
			&\le \gamma^2 ( M e^{\ol{\beta} \tau_0} )^{n+2} e^{ -  (M e^{\ol{\beta} \tau_0})^2 /(4B^2) }  \frac{ 1}{ \int_\Gamma^{M e^{ \ol{\beta} \tau_0}} \ol{\gamma}^{n+2} e^{-  \ol{\gamma}^2 /(4B^2) } d \ol{\gamma} }
		\end{aligned} \end{equation*}
		For any $\Gamma > 0$, we can take $\tau_0 \gg 1$ sufficiently large depending on $p,q,k, \Gamma, M, \ol{\beta}$ such that 
			$$\int_\Gamma^{M e^{ \ol{\beta} \tau_0}} \ol{\gamma}^{n+2} e^{-  \ol{\gamma}^2 /(4B^2) } d \ol{\gamma}  \ge \frac{1}{2} \int_{\Gamma}^\infty \ol{\gamma}^{n+2} e^{-  \ol{\gamma}^2 /(4B^2) } d \ol{\gamma} = C(\Gamma) > 0$$
		It then follows that
		\begin{equation*} \begin{aligned}
			 & \quad \gamma^2 ( M e^{\ol{\beta} \tau_0} )^{n+2} e^{ -  (M e^{\ol{\beta} \tau_0})^2 /(4B^2) }  \frac{ 1}{ \int_\Gamma^{M e^{ \ol{\beta} \tau_0}} \ol{\gamma}^{n+2} e^{-  \ol{\gamma}^2 /(4B^2) } d \ol{\gamma} } \\
			& \lesssim_{\Gamma}  \gamma^2 ( M e^{\ol{\beta} \tau_0} )^{n+2} e^{ -  (M e^{\ol{\beta} \tau_0})^2 /(4B^2)}   \\
			& \lesssim_{p,q,k, \Gamma} \gamma^{-2B^2 \lambda_k} ( M e^{\ol{\beta} \tau_0} )^{n+2} e^{ - (M e^{\ol{\beta} \tau_0})^2 /(4B^2) }   
			&&  (-2B^2 \lambda - 2 < 0 \text{ and } \gamma \le \Gamma)\\ 
		\end{aligned} \end{equation*}
		By taking $\tau_0 \gg 1$ sufficiently large, the estimate follows.
	\end{proof}
	

\subsection{Estimates for $Err$ Contributions}
\subsubsection{Estimates for $Err_U$ Contributions}
\begin{lem} \label{polyGrowthU}
	If $C_0 > 0$ and $m <2k + 2B^2 \lambda + \sqrt{(n-9)(n-1)}$, then 
	\begin{gather*}
		\left| \int_{\tau_0}^{\tau} e^{B^2 \cl{D}_U (\tau - \ol{\tau})} \chi_{[0, C_0 \gamma]} (\ol{\gamma}) \ol{\gamma}^{-m-2} d\ol{\tau} \right| (\gamma) \lesssim_{p,q,C_0, m} \gamma^{-m} \\
		 \text{for all }  \tau \in [\tau_0, \tau_0+1] \text{ and } \gamma \in [\Upsilon_U e^{- \alpha \tau}, \infty)
	\end{gather*}
\end{lem}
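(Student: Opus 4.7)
The approach is to follow the maximal-function template already used twice in this subsection: pass the semigroup through lemma \ref{maximalFuncBoundU} at each intermediate time $\ol\tau$, estimate the resulting weighted average, and absorb the short $\ol\tau$-integration into a bounded multiplicative constant.

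Concretely, for each fixed $\ol\tau \in [\tau_0, \tau]$ I would apply lemma \ref{maximalFuncBoundU} to the function $f(\xi) := \chi_{[0, C_0 \gamma]}(\xi)\, \xi^{-m-2}$ (with $\gamma$, the outer evaluation point, acting as a parameter inside the cutoff), thereby reducing the semigroup action to a pointwise bound involving $\mathcal{M}_U f(\gamma)$. The monotonicity of the auxiliary function $\xi \mapsto \xi^{2k + 2B^2 \lambda_k - m - 2} \chi_{[0, C_0 \gamma]}(\xi)$ dictates the interval over which $\mathcal{M}_U f(\gamma)$ is realized, in exactly the same way $\chi_{inner}$ and $\chi_{outer}$ were treated separately in the two preceding lemmas; the generic case $C_0 \ge 1$ selects $I = [0, \gamma]$, while $C_0 < 1$ forces truncation at $C_0 \gamma$ and introduces a harmless power of $C_0$.

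The crucial analytic input is integrability at the origin. The numerator of the resulting maximal-function ratio is, up to the Gaussian weight,
\[
\int_0^{\min(\gamma, C_0 \gamma)} \ol\gamma^{\, 2k + 2B^2 \lambda_k - m - 1 + \sqrt{(n-9)(n-1)}} e^{-\ol\gamma^2/(4B^2)} d\ol\gamma,
\]
and the hypothesis $m < 2k + 2B^2 \lambda + \sqrt{(n-9)(n-1)}$ is exactly what forces this exponent to be strictly greater than $-1$, so that the integral converges at $\ol\gamma = 0$ and behaves polynomially in $\min(\gamma, C_0 \gamma)$. Comparing to the reference denominator $\int_0^\gamma \ol\gamma^{\,1 + \sqrt{(n-9)(n-1)}} e^{-\ol\gamma^2/(4B^2)} d\ol\gamma$ and combining with the $(\gamma e^{-(\tau - \ol\tau)/2})^{-2k - 2B^2 \lambda_k}$ prefactor from lemma \ref{maximalFuncBoundU} yields the desired polynomial decay in $\gamma$.

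Because $\tau - \tau_0 \le 1$, the $\ol\tau$-integration contributes only a bounded multiplicative constant: the exponential $e^{(\tau - \ol\tau)(2k + 2B^2 \lambda_k)/2}$ is uniformly controlled on this short time interval, and the spatial $\gamma$-dependence has already been factored out. The main obstacle I anticipate is the small-$\gamma$ endpoint $\gamma = \Upsilon_U e^{-\alpha \tau}$ together with the case $C_0 < 1$: there the effective cutoff $C_0 \gamma$ sits strictly below $\gamma$, and the power of $C_0$ produced by the truncated upper limit must be tracked carefully to confirm that all dependence on $C_0$ is absorbed entirely into the implicit $\lesssim_{p,q, C_0, m}$ constant rather than leaking into the $\gamma$-profile.
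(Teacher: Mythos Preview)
There is a genuine gap: the maximal-function bound of lemma \ref{maximalFuncBoundU} is a fixed-time estimate and does not capture the parabolic smoothing that this lemma requires. The input is $\ol\gamma^{-m-2}$ while the desired output is $\gamma^{-m}$, so two spatial powers must be gained, and that gain can only come from the interaction of the $\ol\tau$-integration with the heat-kernel scaling. If you apply lemma \ref{maximalFuncBoundU} at each fixed $\ol\tau$, the resulting pointwise bound is essentially independent of $\ol\tau$: the prefactor $e^{(\tau-\ol\tau)(k+B^2\lambda_k)}$ is $O(1)$ on the unit interval and $\mathcal{M}_U$ does not see $\ol\tau$ at all. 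Carrying out your own ratio computation for small $\gamma$ (numerator $\sim\gamma^{\,2k+2B^2\lambda_k-m+\sqrt{(n-9)(n-1)}}$, denominator $\sim\gamma^{\,2+\sqrt{(n-9)(n-1)}}$, prefactor $\gamma^{-2k-2B^2\lambda_k}$) yields a bound of order $\gamma^{-m-2}$, and integrating an $\ol\tau$-independent quantity over an interval of length at most $1$ cannot recover the missing $\gamma^{2}$. Compare with the later lemma bounding $\int e^{B^2\mathcal{D}_U(\tau-\ol\tau)}\ol\gamma^{-4B^2\lambda_k}\,d\ol\tau$: there the maximal-function route works precisely because no powers need to be gained.

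The paper therefore bypasses lemma \ref{maximalFuncBoundU} entirely here and works directly with the kernel representation of proposition \ref{kernelU}. After the substitutions $u=\ol\gamma/\sqrt{\tau-\ol\tau}$ and $\xi=\gamma^2/(\tau-\ol\tau)$, the $\ol\tau$- and $\ol\gamma$-integrals are coupled and the extra $\ol\gamma^{-2}$ is absorbed by the parabolic scaling; the hypothesis on $m$ then appears as the condition making $\int_0^{C_0\sqrt\xi} u^{-m-2+\frac{n+1}{2}+\frac12\sqrt{(n-9)(n-1)}}\,du$ finite. Showing the remaining $\xi$-integral is bounded uniformly in $\gamma$ still requires a further near/far splitting of the $u$-domain relative to $e^{-\gamma^2/(2\xi)}\sqrt\xi$, so the argument is genuinely more delicate than the maximal-function template.
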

\begin{proof}
	\begin{equation*} \begin{aligned}
		& \quad \left| \int_{\tau_0}^{\tau_1} e^{B^2 \cl{D}_U (\tau - \ol{\tau})} \chi_{[0, C_0 \gamma]} (\ol{\gamma}) \ol{\gamma}^{-m-2} d \ol{\tau} \right| \\
		& \lesssim_{p,q}  \gamma^{-2k- 2B^2 \lambda} \int_{\tau_0}^{\tau} (\tau - \ol{\tau})^{-1 - \frac{1}{2} \sqrt{ (n-9)(n-1) }} 
		  \int_0^{C_0 \gamma} H_U \ol{\gamma}^{-m-2+ \frac{n+1}{2} + \frac{1}{2} \sqrt{ (n-9)(n-1) } } d \ol{\gamma} d \ol{\tau} \\
		& \qquad (\text{by proposition } \ref{kernelU}) \\
		& \le \gamma^{-2k - 2B^2 \lambda} \int_{\tau_0}^{\tau} (\tau - \ol{\tau})^{-1 - \frac{1}{2} \sqrt{ (n-9)(n-1) } }
		\int_0^{C_0 \gamma} exp \left[ - \frac{1}{4B^2} \left( \frac{ e^{- (\tau - \ol{\tau})/2} \gamma - \ol{\gamma} }{ \sqrt{ \tau - \ol{\tau}} } \right)^2 \right]\\
		& \quad \cdot\left( 1 + \frac{1}{2 B^2 C_0 \sqrt{e} } \frac{ \ol{\gamma}^2}{\tau - \ol{\tau}} \right)^{-\frac{1}{2} - \frac{1}{2} \sqrt{ (n-9)(n-1) } } 
		\ol{\gamma}^{-m-2 + \frac{n+1}{2} + \frac{1}{2} \sqrt{ (n-9)(n-1) } } d \ol{\gamma} d \ol{\tau}\\
		&= \gamma^{-2k - 2B^2 \lambda} \int_{\tau_0}^{\tau} (\tau - \ol{\tau})^{-1 - \frac{1}{2} \sqrt{ (n-9)(n-1) }  - \frac{m}{2} -1 + \frac{n+1}{4} + \frac{1}{4} \sqrt{ (n-9)(n-1) } + \frac{1}{2}} \\
		& \quad  \cdot \int_{0}^{C_0 \gamma / \sqrt{\tau - \ol{\tau}} } exp \left[ - \frac{1}{4B^2} \left( \frac{ e^{- (\tau - \ol{\tau})/2} \gamma }{ \sqrt{ \tau - \ol{\tau}} }  - u\right)^2 \right]\\
		& \quad \cdot\left( 1 + \frac{1}{2B^2 C_0 \sqrt{e} } u^2 \right)^{-\frac{1}{2} - \frac{1}{2} \sqrt{ (n-9)(n-1) } } 
		 u^{-m-2 + \frac{n+1}{2} + \frac{1}{2} \sqrt{ (n-9)(n-1) } } d u d \ol{\tau}\\
		& \qquad \left( \text{where } u = \ol{\gamma}/ \sqrt{ \tau - \ol{\tau}} \right) \\
		&= \gamma^{-m} \int_{\gamma^2 / (\tau - \tau_0)}^\infty \xi^{ \frac{m}{2} - \frac{n}{4} + \frac{1}{4} \sqrt{ (n-9)(n-1) } - \frac{3}{4} }
		\int_0^{C_0 \sqrt{\xi} } exp \left[ - \frac{1}{4B^2} \left( e^{- \gamma^2/(2 \xi)} \sqrt{\xi}  - u\right)^2 \right]\\
		&\quad  \cdot\left( 1 + \frac{1}{2 B^2 C_0 \sqrt{e} } u^2 \right)^{-\frac{1}{2} - \frac{1}{2} \sqrt{ (n-9)(n-1) } } 
		u^{-m-2 + \frac{n+1}{2} + \frac{1}{2} \sqrt{ (n-9)(n-1) } } d u d \xi \\
		& \qquad \big( \text{where }  \xi = \gamma^2/ ( \tau - \ol{\tau}) \big)\\
		&\le  \gamma^{-m} \int_{\gamma^2}^\infty \xi^{ \frac{m}{2} - \frac{n}{4} + \frac{1}{4} \sqrt{ (n-9)(n-1) } - \frac{3}{4} } 
		\int_0^{C_0 \sqrt{\xi} } exp \left[ - \frac{1}{4B^2} \left( e^{- \gamma^2/(2 \xi)} \sqrt{\xi}  - u\right)^2 \right]\\
		&\quad  \cdot\left( 1 + \frac{1}{2 B^2 C_0 \sqrt{e} } u^2 \right)^{-\frac{1}{2} - \frac{1}{2} \sqrt{ (n-9)(n-1) } } 
		u^{-m-2 + \frac{n+1}{2} + \frac{1}{2} \sqrt{ (n-9)(n-1) } } d u d \xi
	\end{aligned} \end{equation*}

\noindent At this point, it suffices to prove that the integral is finite, uniformly in $\gamma$.
To this end, define $\xi$-dependent domains
	\begin{gather*}
	D_1(\xi) \doteqdot \left\{ u \in \left[ 0, C_0 \sqrt{\xi} \right] : \left| e^{- \gamma^2/(2 \xi) } \sqrt{\xi} - u \right| \ge \frac{1}{2}  e^{- \gamma^2/(2 \xi) } \sqrt{\xi}  \right\} \\
	D_2(\xi) \doteqdot \left\{ u \in \left[ 0, C_0 \sqrt{\xi} \right] : \left| e^{- \gamma^2/(2 \xi) } \sqrt{\xi} - u \right| \le \frac{1}{2}  e^{- \gamma^2/(2 \xi) } \sqrt{\xi}  \right\}
	\end{gather*}
and split the integral into two integrals according to $D_1$ and $D_2$.
It follows that
	\begin{equation*} \begin{aligned}
		&\quad \int_{\gamma^2}^\infty \xi^{ \frac{m}{2} - \frac{n}{4} + \frac{1}{4} \sqrt{ (n-9)(n-1) } - \frac{3}{4} } 
		\int_{D_1(\xi)} exp \left[ - \frac{1}{4B^2} \left(  e^{- \gamma^2/(2 \xi)} \sqrt{\xi}  - u\right)^2 \right]\\
		& \quad \cdot\left( 1 + \frac{1}{2B^2 C_0 \sqrt{e} } u^2 \right)^{-\frac{1}{2} - \frac{1}{2} \sqrt{ (n-9)(n-1) } } u^{-m-2 + \frac{n+1}{2} + \frac{1}{2} \sqrt{ (n-9)(n-1) } } d u d \xi\\
		&\le \int_{\gamma^2}^\infty \xi^{ \frac{m}{2} - \frac{n}{4} + \frac{1}{4} \sqrt{ (n-9)(n-1) } - \frac{3}{4} } \\
		& \quad \cdot \int_{D_1(\xi)} exp \left[ - \frac{1}{4B^2} \frac{1}{4} e^{- \gamma^2/\xi} \xi \right] u^{-m-2 + \frac{n+1}{2} + \frac{1}{2} \sqrt{ (n-9)(n-1) } } d u d \xi\\
		&\le \int_{\gamma^2}^\infty \xi^{ \frac{m}{2} - \frac{n}{4} + \frac{1}{4} \sqrt{ (n-9)(n-1) } - \frac{3}{4} } exp \left[ - \frac{1}{16B^2 e} \xi \right]\\
		& \quad \cdot \int_0^{C_0 \sqrt{\xi} }  u^{-m-2 + \frac{n+1}{2} + \frac{1}{2} \sqrt{ (n-9)(n-1) } } d u d \xi\\
		&\lesssim_{m,n} C_0^{-m + \frac{n-1}{2} + \frac{1}{2} \sqrt{(n-9)(n-1)} } \int_{\gamma^2}^\infty \xi^{\frac{1}{2} \sqrt{ (n-9)(n-1) } - 1} e^{- \frac{1}{16 B^2 e} \xi} d \xi \\
		& \qquad (\text{by the upper bound on $m$})\\
		&\le  C_0^{-m + \frac{n-1}{2} + \frac{1}{2} \sqrt{(n-9)(n-1)} } \int_{0}^\infty \xi^{\frac{1}{2} \sqrt{...} - 1} e^{- \frac{1}{16 B^2 e} \xi} d \xi \\
		&\lesssim_{p,q}  C_0^{-m + \frac{n-1}{2} + \frac{1}{2} \sqrt{(n-9)(n-1)} }		
		 = C_0^{-m + 2k + 2B^2 \lambda + \sqrt{(n-9)(n-1)}}	\\
	\end{aligned} \end{equation*}
	
To estimate the $D_2(\xi)$ integral, first observe that
\begin{equation*} \begin{aligned}
	&|u - e^{-\gamma^2/(2\xi)} \sqrt{\xi} | \le \frac{1}{2} e^{- \gamma^2/(2\xi)} \sqrt{\xi}\\
	\iff&  \frac{1}{2} e^{- \gamma^2/(2\xi)} \sqrt{\xi} \le u \le  \frac{3}{2} e^{- \gamma^2/(2\xi)} \sqrt{\xi}\\
	 \implies&  \frac{1}{2 \sqrt{e}}  \sqrt{\xi} \le u \le  \frac{3}{2} \sqrt{\xi}\\
\end{aligned} \end{equation*}
It follows that
	\begin{equation*} \begin{aligned}
		& \quad \int_{\gamma^2}^\infty \xi^{ \frac{m}{2} - \frac{n}{4} + \frac{1}{4} \sqrt{ (n-9)(n-1) } - \frac{3}{4} } 
		\int_{D_2(\xi)} exp \left[ - \frac{1}{4B^2} \left(  e^{- \gamma^2/(2 \xi)} \sqrt{\xi}  - u\right)^2 \right]\\
		& \quad \cdot\left( 1 + \frac{1}{2B^2 C_0 \sqrt{e} } u^2 \right)^{-\frac{1}{2} - \frac{1}{2} \sqrt{ (n-9)(n-1) } } u^{-m-2 + \frac{n+1}{2} + \frac{1}{2} \sqrt{ (n-9)(n-1) } } d u d \xi\\
		&\le \int_{\gamma^2}^\infty \xi^{ \frac{m}{2} - \frac{n}{4} + \frac{1}{4} \sqrt{...} - \frac{3}{4} } \\
		& \quad \cdot \int_{ \frac{1}{2 \sqrt{e} } \sqrt{\xi} }^{ \frac{3}{2} \sqrt{\xi} } \left( 1 + \frac{1}{2B^2 C_0 \sqrt{e} } u^2 \right)^{-\frac{1}{2} - \frac{1}{2} \sqrt{ (n-9)(n-1) } } u^{-m-2 + \frac{n+1}{2} + \frac{1}{2} \sqrt{ (n-9)(n-1) } } d u d \xi\\
		&\lesssim_{m,n} \int_{\gamma^2}^\infty \xi^{ \frac{1}{2} \sqrt{ (n-9)(n-1) } - 1} \left( 1 + \frac{1}{8 B^2 C_0 e \sqrt{e} } \xi \right)^{- \frac{1}{2} - \frac{1}{2} \sqrt{ (n-9)(n-1) } } d \xi\\
		&\le \int_{0}^\infty \xi^{ \frac{1}{2} \sqrt{ (n-9)(n-1) } - 1} \left( 1 + \frac{1}{8 B^2 C_0 e \sqrt{e} } \xi \right)^{- \frac{1}{2} - \frac{1}{2} \sqrt{ (n-9)(n-1) } } d \xi\\
		&\lesssim_{p,q,C_0}  1 \\
	\end{aligned} \end{equation*}
This completes the proof.
\end{proof}

\begin{lem}
	For any $\nu \in (0,1)$ and $0 < \kappa < \sqrt{(n-9)(n-1)}$ and $l < \sqrt{(n-9)(n-1)}$, there exists $\Upsilon_U \gg 1$ sufficiently large (depending on $p,q,k,l, \kappa, \nu$) such that
	\begin{gather*}
		\left| \int_{\tau_0}^{\tau} e^{B^2 \cl{D}_U(\tau - \ol{\tau})} \left[ \chi_{(0, e^{-\alpha_k \ol{\tau}} )}(\ol{\gamma}) \Upsilon_U^l ( \ol{\gamma} e^{\alpha \ol{\tau}} )^{-2k-2B^2 \lambda - \kappa} \ol{\gamma}^{-2} \right] d \ol{\tau}\right|(\gamma) \le \nu e^{B^2 \lambda_k \tau} \gamma^{-2k - 2B^2 \lambda_k} \\
		 \text{for all }  \quad 0 \le \tau - \tau_0 \le 1, \quad \gamma \ge \Upsilon_U e^{-\alpha \tau}
	\end{gather*}
\end{lem}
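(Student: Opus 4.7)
Introduce $m := 2k + 2B^2\lambda_k + \kappa$, so that the hypothesis $\kappa < \sqrt{(n-9)(n-1)}$ is exactly the admissibility condition $m < 2k + 2B^2 \lambda_k + \sqrt{(n-9)(n-1)}$ required to invoke lemma \ref{polyGrowthU}. The integrand rewrites as
$\chi_{(0, e^{-\alpha_k \ol{\tau}})}(\ol{\gamma})\, \Upsilon_U^l\, e^{-m \alpha_k \ol{\tau}}\, \ol{\gamma}^{-m-2}$,
so the plan is to pull the scalar $\ol{\tau}$-factors outside the semigroup, reduce the spatial piece to the canonical form $\chi_{(0, C_0\gamma)}(\ol{\gamma})\, \ol{\gamma}^{-m-2}$ handled by lemma \ref{polyGrowthU}, and then extract smallness from $\Upsilon_U$ using the two strict inequalities $\kappa, l < \sqrt{(n-9)(n-1)}$.

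\textbf{Execution.} For $\ol{\tau} \in [\tau_0, \tau]$ with $\tau - \tau_0 \le 1$ and $\gamma \ge \Upsilon_U e^{-\alpha_k \tau}$, the elementary chain
$e^{-\alpha_k \ol{\tau}} \le e^{-\alpha_k \tau_0} \le e^{\alpha_k} e^{-\alpha_k \tau} \le (e^{\alpha_k}/\Upsilon_U)\,\gamma$
gives $\chi_{(0, e^{-\alpha_k \ol{\tau}})}(\ol{\gamma}) \le \chi_{(0, C_0\gamma)}(\ol{\gamma})$ for $C_0 := e^{\alpha_k}/\Upsilon_U$, which is small when $\Upsilon_U$ is large. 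Using positivity of the heat kernel of $\cl{D}_U$ together with the monotonicity $e^{-m \alpha_k \ol{\tau}} \le e^{-m\alpha_k \tau_0}$ on the integration domain, I factor out the scalar and apply lemma \ref{polyGrowthU}; a close inspection of the $D_1$-piece inside that proof shows the resulting constant scales as $C_0^{\sqrt{(n-9)(n-1)}-\kappa}$. Splitting $\gamma^{-m} = \gamma^{-2k-2B^2\lambda_k}\gamma^{-\kappa}$ and using $\gamma^{-\kappa} \le \Upsilon_U^{-\kappa} e^{\kappa \alpha_k \tau}$ from the lower bound on $\gamma$ then collapses the aggregate $\Upsilon_U$-power to $\Upsilon_U^{\,l - \sqrt{(n-9)(n-1)}}$, which is strictly negative by hypothesis and hence can be made arbitrarily small.

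\textbf{Main obstacle.} The only delicate point is matching the residual $\tau$-exponential $e^{-m\alpha_k \tau_0 + \kappa \alpha_k \tau}$ against the target prefactor $e^{B^2 \lambda_k \tau}$. Writing $s := \tau - \tau_0 \in [0,1]$, the net exponent in $\tau_0$ reduces to $-\bigl((2k + 2B^2\lambda_k)\alpha_k + B^2 \lambda_k\bigr)\tau_0$ modulo an $O(1)$ contribution from $s$; by the standard calibration of $\alpha_k$ to the parabolic scaling this combination is nonpositive, and in any case the standing assumption $\tau_0 \gg 1$ can absorb any residual gap. Taking $\Upsilon_U$ sufficiently large depending only on $p,q,k,l,\kappa,\nu$ then folds every remaining constant into $\nu$, completing the argument.
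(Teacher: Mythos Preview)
Your approach is essentially the same as the paper's, and the core steps---rewriting with $m = 2k+2B^2\lambda_k+\kappa$, replacing the cutoff by $\chi_{(0,C_0\gamma)}$ with $C_0 = e^{\alpha_k}/\Upsilon_U$, reading off the $C_0^{\sqrt{(n-9)(n-1)}-\kappa}$ scaling, and absorbing $\gamma^{-\kappa}$ via $\gamma \ge \Upsilon_U e^{-\alpha_k\tau}$---are all correct.

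Two small points deserve tightening. First, you only track the $D_1$-contribution inside the proof of lemma \ref{polyGrowthU}; the paper explicitly notes that $D_2(\xi) = \emptyset$ once $C_0 < \tfrac{1}{2\sqrt{e}}$ (i.e.\ once $\Upsilon_U$ is moderately large), so there is nothing to estimate there---you should say this rather than leave the $D_2$-piece unaddressed. Second, your fallback ``the standing assumption $\tau_0 \gg 1$ can absorb any residual gap'' is not available: the lemma only allows $\Upsilon_U$ to be taken large, not $\tau_0$. Fortunately the calibration $\alpha_k(2k+2B^2\lambda_k) = -B^2\lambda_k$ makes the residual $\tau_0$-exponent exactly zero (this is why the paper obtains $e^{B^2\lambda\tau - \kappa\alpha\tau}$ in one step), so no fallback is needed---but you should commit to the calibration rather than hedge.
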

\begin{proof}
	Begin by observing that
	\begin{equation*} \begin{aligned}
		& \left| \int_{\tau_0}^{\tau} e^{B^2 \cl{D}_U(\tau - \ol{\tau})} \chi_{(0, e^{-\alpha_k \ol{\tau}} )}(\ol{\gamma}) \Upsilon_U^l ( \ol{\gamma} e^{\alpha \ol{\tau}} )^{-2k-2B^2 \lambda - \kappa} \ol{\gamma}^{-2} d \ol{\tau}\right| \\
		\lesssim_{p,q,k} &\Upsilon_U^l e^{B^2 \lambda \tau - \kappa \alpha \tau}  \int_{\tau_0}^{\tau} e^{B^2 \cl{D}_U(\tau - \ol{\tau})} \chi_{(0, \frac{ e^{\alpha}}{\Upsilon_U} \gamma )}  \ol{\gamma}^{-2k-2B^2 \lambda - \kappa} \ol{\gamma}^{-2} d \ol{\tau} \\	
	\end{aligned} \end{equation*}
	Now, apply the estimate in the proof of lemma \ref{polyGrowthU} and note that $D_2(\xi) = \emptyset$ if $\Upsilon_U$ is sufficiently large depending on $p,q,k$.
	It thus follows that 
	\begin{equation*} \begin{aligned}
		& \quad \Upsilon_U^l e^{B^2 \lambda \tau - \kappa \alpha \tau}  \int_{\tau_0}^{\tau} e^{B^2 \cl{D}_U(\tau - \ol{\tau})} \chi_{(0, \frac{ e^{\alpha}}{\Upsilon_U} \gamma )}  \ol{\gamma}^{-2k-2B^2 \lambda - \kappa} \ol{\gamma}^{-2} d \ol{\tau} \\	
		&\lesssim_{p,q,k, \kappa}  \Upsilon_U^l e^{B^2 \lambda \tau - \kappa \alpha \tau}  \Upsilon_U ^{ \kappa- \sqrt{(n-9)(n-1)}} \gamma^{-2k -2B^2 \lambda -\kappa} \\
		&\le  \Upsilon_U^{l - \sqrt{(n-9)(n-1)} } e^{B^2 \lambda_k \tau} \gamma^{-2k -2B^2 \lambda} 		&& (\Upsilon_U  e^{-\alpha \tau} \le \gamma) \\
	\end{aligned} \end{equation*}
	Taking $\Upsilon_U \gg 1$ sufficiently large completes the proof.

\end{proof}

\begin{lem}
	For any $l < k + B^2 \lambda_k$ and $\nu \in (0,1)$, there exists $\Upsilon_U \gg 1$ sufficiently large (depending on $p,q,k, l$ and $\nu$) such that
	\begin{gather*}
		\left| \int_{\tau_0}^{\tau} e^{B^2 \cl{D}_U(\tau - \ol{\tau})} \chi_{[e^{- \alpha_k \ol{\tau}}, \Upsilon_U e^{-\alpha_k \ol{\tau}} ]}(\ol{\gamma}) \Upsilon_U^{2l} \ol{\gamma}^{-4k-4B^2 \lambda_k  - 2} e^{2B^2 \lambda_k \ol{\tau} } d \ol{\tau}\right|(\gamma)\\
		 \le \nu e^{B^2 \lambda_k \tau} \gamma^{-2k - 2B^2 \lambda_k} \\
	\end{gather*}
	for all $ 0 \le \tau - \tau_0 \le 1$ and $\gamma \ge \Upsilon_U e^{-\alpha \tau}$.
\end{lem}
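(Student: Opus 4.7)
The plan is to bound the integrand by something that lemma \ref{polyGrowthU} controls, then transfer the excess exponential and $\Upsilon_U$ factors using the lower bound $\gamma \ge \Upsilon_U e^{-\alpha\tau}$, following the strategy of the two preceding lemmas. The new obstacle is that the singularity $\ol{\gamma}^{-4k - 4B^2\lambda_k - 2}$ is too strong for the polyGrowthU hypothesis to apply directly, so the first step is to tame it by peeling off a power of $\ol{\gamma}$ using the lower bound $\ol{\gamma} \ge e^{-\alpha_k \ol\tau}$ that holds on the annular support.

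Concretely, for a parameter $\delta > 0$ to be chosen, factor
\[
	\ol{\gamma}^{-4k - 4B^2\lambda_k - 2} = \ol{\gamma}^{-(2k + 2B^2\lambda_k - \delta)} \cdot \ol{\gamma}^{-(2k + 2B^2\lambda_k + \delta) - 2}
\]
and use $\ol{\gamma} \ge e^{-\alpha_k \ol\tau}$ on the first factor to bound it by $e^{(2k + 2B^2\lambda_k - \delta)\alpha_k \ol\tau}$. Combined with the integrand's $e^{2B^2\lambda_k \ol\tau}$ and the algebraic identity $(2k + 2B^2\lambda_k)\alpha_k = -B^2\lambda_k$ implicit in the preceding lemma, the $\ol\tau$-exponential consolidates to $e^{(B^2\lambda_k - \delta\alpha_k)\ol\tau}$, which is $\lesssim e^{(B^2\lambda_k - \delta\alpha_k)\tau}$ since $|\tau - \ol\tau| \le 1$. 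At the same time, $\chi_{[e^{-\alpha_k \ol\tau},\,\Upsilon_U e^{-\alpha_k \ol\tau}]} \le \chi_{[0,\, e^{\alpha_k}\gamma]}$ by the upper bound on $\ol\gamma$ combined with $\gamma \ge \Upsilon_U e^{-\alpha\tau}$ and $\tau - \ol\tau \le 1$. Setting $m \doteqdot 2k + 2B^2\lambda_k + \delta$, which satisfies the polyGrowthU hypothesis provided $\delta < \sqrt{(n-9)(n-1)}$, lemma \ref{polyGrowthU} then yields a bound of the form $\Upsilon_U^{2l}\, e^{(B^2\lambda_k - \delta\alpha_k)\tau}\, \gamma^{-(2k + 2B^2\lambda_k) - \delta}$. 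Applying $\gamma^{-\delta} \le \Upsilon_U^{-\delta} e^{\delta\alpha\tau}$ (with $\alpha = \alpha_k$ in this application) cancels the residual exponential and leaves $\Upsilon_U^{2l - \delta}\, e^{B^2\lambda_k\tau}\, \gamma^{-(2k + 2B^2\lambda_k)}$.

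It remains to choose $\delta$ so that $2l - \delta < 0$ and $\delta < \sqrt{(n-9)(n-1)}$ hold simultaneously. The interval $(2l,\, \sqrt{(n-9)(n-1)})$ is nonempty under the hypothesis $l < k + B^2\lambda_k$ combined with the numerical identification $k + B^2\lambda_k = \frac{n-1}{4} - \frac{1}{4}\sqrt{(n-9)(n-1)}$ recorded in the first lemma of this subsection, which gives $2l < 2(k + B^2\lambda_k) \le \sqrt{(n-9)(n-1)}$ in the dimensional regime considered. With such a $\delta$, taking $\Upsilon_U$ sufficiently large depending on $p, q, k, l, \nu$ makes the remaining constant less than $\nu$. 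The main technical difficulty is precisely the bookkeeping of these three constraints on $\delta$ (the polyGrowthU hypothesis, exact cancellation of the $\tau$-exponentials, and strict negativity of $2l - \delta$), and the identity $(2k + 2B^2\lambda_k)\alpha_k = -B^2\lambda_k$ is the kinematic relation that makes the exponential cancellation come out exactly to the target $e^{B^2\lambda_k\tau}$.
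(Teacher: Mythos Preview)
Your proof is correct, but it takes a detour the paper avoids. The premise that the singularity $\ol{\gamma}^{-4k-4B^2\lambda_k-2}$ is ``too strong for the polyGrowthU hypothesis'' is mistaken: with $m = 4k + 4B^2\lambda_k = n-1 - \sqrt{(n-9)(n-1)}$ one has $m < 2k + 2B^2\lambda_k + \sqrt{(n-9)(n-1)} = \tfrac{n-1}{2} + \tfrac{1}{2}\sqrt{(n-9)(n-1)}$ in the dimensional regime of the paper, so Lemma~\ref{polyGrowthU} applies directly. The paper does exactly this, obtaining the bound $\Upsilon_U^{2l} e^{2B^2\lambda_k\tau}\gamma^{-4k-4B^2\lambda_k}$ and then trading one factor $\gamma^{-2k-2B^2\lambda_k}$ via $\gamma \ge \Upsilon_U e^{-\alpha_k\tau}$ together with the same identity $(2k+2B^2\lambda_k)\alpha_k = -B^2\lambda_k$ you use, arriving at $\Upsilon_U^{2l-2k-2B^2\lambda_k}e^{B^2\lambda_k\tau}\gamma^{-2k-2B^2\lambda_k}$. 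Your peeling of $\ol{\gamma}^{-(2k+2B^2\lambda_k-\delta)}$ via the support lower bound is a ``pre-integration'' version of this same exchange, yielding the weaker gain $\Upsilon_U^{2l-\delta}$ with $\delta < 2(k+B^2\lambda_k)$. The paper's route is shorter and gives the sharper power of $\Upsilon_U$; your route buys a small robustness margin, since your hypothesis $\delta < \sqrt{(n-9)(n-1)}$ holds with room to spare even at the borderline dimension where the paper's direct inequality becomes an equality.
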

\begin{proof}
	We estimate the integrand and apply lemma \ref{polyGrowthU}.
	Note that 
		$$4k + 4B^2 \lambda = n-1 - \sqrt{(n-9)(n-1)} < \frac{n+1}{2} + \frac{1}{2} \sqrt{(n-9)(n-1)} -1$$
	so that the assumptions of lemma \ref{polyGrowthU} are valid.
	\begin{equation*} \begin{aligned}
		& \quad  \left| \int_{\tau_0}^{\tau} e^{B^2 \cl{D}_U(\tau - \ol{\tau})} \chi_{[e^{-\alpha \ol{\tau}} , \Upsilon_U e^{- \alpha_k \ol{\tau}},  ]}(\ol{\gamma}) \Upsilon_U^{2l} \ol{\gamma}^{-4k-4B^2 \lambda_k  - 2} e^{2B^2 \lambda_k \tau } d \ol{\tau}\right|(\gamma) \\
		&\le  \Upsilon_U^{2l} e^{2B^2 \lambda \tau_0} \left| \int_{\tau_0}^{\tau} e^{B^2 \cl{D}_U(\tau - \ol{\tau})} \chi_{[0, e^\alpha \gamma ]}(\ol{\gamma}) \ol{\gamma}^{-4k-4B^2 \lambda_k  - 2}  d \ol{\tau}\right|(\gamma) \\
		&\lesssim_{p,q,k}  \Upsilon_U^{2l}e^{2B^2 \lambda \tau} \gamma^{-4k -4B^2 \lambda } \\
		&\le  \Upsilon_U^{2l-2k-2B^2 \lambda} e^{B^2 \lambda \tau} \gamma^{-2k -2B^2 \lambda} 
	\end{aligned} \end{equation*}
\end{proof}

\begin{lem} \label{polyGrowthUBig}
	If $2k +2B^2 \lambda < m < 2k + 2B^2 \lambda +  \sqrt{(n-9)(n-1)} $, then
	\begin{gather*}
		\left| \int_{\tau_0}^{\tau} e^{B^2 \cl{D}_U (\tau - \ol{\tau})} \chi_{[4 \gamma, \infty)} (\ol{\gamma}) \ol{\gamma}^{-m-2} d\ol{\tau} \right| (\gamma) \lesssim_{p,q, m} \gamma^{-m} \\
		 \text{for all } \tau \in [\tau_0, \tau_0+1] \text{ and } \gamma \in [\Upsilon_U e^{- \alpha \tau}, \infty)
	\end{gather*}
\end{lem}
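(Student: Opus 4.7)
The strategy will parallel that of lemma \ref{polyGrowthU}, with one important simplification: because the integration is now restricted to $\ol{\gamma} \ge 4\gamma$, the Gaussian factor in the kernel estimate provides uniform decay, so no delicate $D_1/D_2$ split is needed.

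First, I will apply the kernel estimate from proposition \ref{kernelU} to the integrand and carry out exactly the same two substitutions as in lemma \ref{polyGrowthU}: first $u = \ol{\gamma}/\sqrt{\tau - \ol{\tau}}$ in the inner $\ol{\gamma}$-integral and then $\xi = \gamma^2/(\tau - \ol{\tau})$ in the $\ol{\tau}$-integral. The $\gamma^{-m}$ factor in the claimed bound emerges naturally from this substitution, exactly as in lemma \ref{polyGrowthU}, so the task reduces to showing that the resulting double integral over $\xi \in [\gamma^2/(\tau - \tau_0), \infty)$ and $u \in [4\sqrt{\xi}, \infty)$ is bounded uniformly in $\gamma$.

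Next, I will exploit the restriction $\ol{\gamma} \ge 4\gamma$, which after the substitutions becomes $u \ge 4\sqrt{\xi}$. Since $e^{-(\tau - \ol{\tau})/2} \le 1$, one has $e^{-\gamma^2/(2\xi)}\sqrt{\xi} \le \sqrt{\xi} \le u/4$, and consequently
\[
\left| e^{-\gamma^2/(2\xi)}\sqrt{\xi} - u \right| \ge \frac{3}{4} u,
\]
so the Gaussian factor is bounded uniformly by $\exp(-\frac{9}{64 B^2} u^2)$ on the entire $u$-domain. This single estimate replaces the two-case analysis $D_1, D_2$ of lemma \ref{polyGrowthU}.

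Finally, combining this Gaussian decay with the trivial bound $(1 + \text{const}\cdot u^2)^{-\cdots} \le 1$, the inner $u$-integral reduces to a tail of a Gaussian integral over $[4\sqrt{\xi}, \infty)$ and is dominated by $C\,\xi^{(\alpha-1)/2} e^{-c\xi}$ for some $\alpha, c > 0$ depending only on $n$ and $m$. The outer $\xi$-integral then becomes $\int_{\gamma^2}^{\infty} \xi^{\beta} e^{-c\xi}\,d\xi$, which is manifestly bounded independently of $\gamma$. The hypothesis $2k + 2B^2 \lambda < m < 2k + 2B^2 \lambda + \sqrt{(n-9)(n-1)}$ enters only through the exponent bookkeeping: the lower bound ensures $\gamma^{-m}$ is the correct decay rate to seek, while the upper bound keeps the polynomial factors in $u$ under control just as in lemma \ref{polyGrowthU}. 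The principal (minor) obstacle is simply to verify the exponent arithmetic at each step; this proceeds in exact analogy with the already-executed substitution in lemma \ref{polyGrowthU}.
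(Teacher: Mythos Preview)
Your approach is correct and essentially the same as the paper's: both exploit $\ol{\gamma}\ge 4\gamma$ to extract uniform Gaussian decay from $H_U$ without any $D_1/D_2$ split (the paper writes this as $H_U \le e^{-c\gamma^2/(4B^2(\tau-\ol{\tau}))}\,e^{-c\ol{\gamma}^2/(4B^2(\tau-\ol{\tau}))}$ \emph{before} substituting, you do the equivalent bound $|e^{-\gamma^2/(2\xi)}\sqrt{\xi}-u|\ge \tfrac{3}{4}u$ \emph{after}). One correction to your commentary: the lower bound $m>2k+2B^2\lambda$ is not merely about ``$\gamma^{-m}$ being the correct decay rate'' --- in the paper's organization it is exactly what makes the exponent of $\xi$ exceed $-1$, so that the $d\xi$-integral can be extended to $[0,\infty)$ and hence bounded uniformly in $\gamma$.
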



\begin{proof}
	\begin{equation*} \begin{aligned}
		& \quad  \left| \int_{\tau_0}^{\tau} e^{B^2 \cl{D}_U (\tau - \ol{\tau})} \chi_{[4 \gamma, \infty)} (\ol{\gamma}) \ol{\gamma}^{-m-2} d\ol{\tau} \right| (\gamma) \\
		&\lesssim_{p,q} \gamma^{-2k - 2B^2 \lambda} \int_{\tau_0}^{\tau} (\tau - \ol{\tau})^{-1- \frac{1}{2} \sqrt{ (n-9)(n-1) } } 
		\int_{4 \gamma}^{\infty} \ol{\gamma}^{-m - 2 } H_U \ol{\gamma}^{\frac{n+1}{2} + \frac{1}{2} \sqrt{  (n-9)(n-1)  } } d \ol{\gamma} d \ol{\tau}\\	
		& \qquad  ( \text{by proposition }\ref{kernelU})\\
		&\le   \gamma^{-2k - 2B^2 \lambda} \int_{\tau_0}^{\tau} (\tau - \ol{\tau})^{-1- \frac{1}{2} \sqrt{  (n-9)(n-1)  } } e^{-\frac{1}{4B^2} \frac{c \gamma^2}{\tau - \ol{\tau}} } \\
		& \quad \cdot  \int_{4 \gamma}^\infty \ol{\gamma}^{\frac{n+1}{2} + \frac{1}{2} \sqrt{  (n-9)(n-1)  } -m-2 } e^{-\frac{1}{4B^2} \frac{ c \ol{\gamma}^2}{\tau - \ol{\tau}} } d \ol{\gamma} d \ol{\tau} \\
		&=  \gamma^{-2k - 2B^2 \lambda}  \int_{\tau_0}^{\tau_1} e^{- \frac{1}{4B^2} \frac{c \gamma^2}{\tau - \ol{\tau}}} (\tau - \ol{\tau})^{-1 - \frac{1}{2} \sqrt{  (n-9)(n-1)  } + \frac{n+1}{4} + \frac{1}{4} \sqrt{ (n-9)(n-1) } - \frac{m}{2} - \frac{1}{2} } \\
		&\quad  \cdot \int_{4 \gamma (\tau - \ol{\tau})^{-1/2} }^{\infty} u^{ \frac{n+1}{2} + \frac{1}{2} \sqrt{  (n-9)(n-1)  } -m-2 } e^{-\frac{1}{4B^2} c u^2} du d \ol{\tau}\\
		& \qquad (\text{where } u = \ol{\gamma} (\tau - \ol{\tau})^{-1/2} ) \\
		&=  \gamma^{-2k - 2 B^2 \lambda} 
		\int_{\tau_0}^{\tau} e^{- \frac{1}{4B^2} \frac{c \gamma^2}{\tau - \ol{\tau}}} (\tau - \ol{\tau})^{-1+ \frac{n-1}{4} - \frac{1}{4} \sqrt{ (n-9)(n-1) }  - \frac{m}{2}} \\
		& \quad \cdot \int_{4 \gamma (\tau - \ol{\tau})^{-1/2} }^{\infty} u^{ \frac{n+1}{2} + \frac{1}{2} \sqrt{  (n-9)(n-1)  } -m-2 } e^{-\frac{1}{4B^2} c u^2} du d \ol{\tau}\\
		&\lesssim  \gamma^{-2k - 2B^2  \lambda} 
		\int_{\gamma^2 (\tau - \tau_0)^{-1}}^\infty e^{- \frac{1}{4B^2} c \xi} \left( \frac{\xi}{\gamma^2} \right)^{1 + \frac{1-n}{4} + \frac{1}{4}\sqrt{ (n-9)(n-1) }  + \frac{m}{2} } \frac{\gamma^2}{\xi^2}\\
		& \quad \cdot \int_{4 \sqrt{ \xi} }^\infty u^{ \frac{n+1}{2} + \frac{1}{2} \sqrt{  (n-9)(n-1) } -m-2 } e^{- \frac{1}{4B^2} c u^2} du  d \xi \\
		& \qquad (\text{where } \xi = \gamma^2 (\tau - \ol{\tau})^{-1} ) \\
		&=  \gamma^{-2k - 2B^2 \lambda} \gamma^{  \frac{n-1}{2}- \frac{1}{2}\sqrt{ (n-9)(n-1) }  - m }
		 \int_{\gamma^2 (\tau - \tau_0)^{-1}}^\infty e^{- \frac{1}{4B^2} c \xi } \xi^{ \frac{1-n}{4} + \frac{1}{4} \sqrt{ (n-9)(n-1) } + \frac{m}{2} -1} \\
		& \qquad  \cdot \int_{4 \sqrt{ \xi} }^\infty u^{\frac{n+1}{2} + \frac{1}{2} \sqrt{  (n-9)(n-1)  } -m-2} e^{- \frac{1}{4B^2} c u^2 } du d\xi \\
		&\lesssim_{p,q,m}  \gamma^{-m} \int_{\gamma^2}^\infty e^{- \frac{1}{4B^2} c \xi } \xi^{ \frac{1-n}{4} + \frac{1}{4} \sqrt{ (n-9)(n-1) } + \frac{m}{2} -1} d \xi \\
		&\lesssim_{p,q,m} \gamma^{-m} 	
	\end{aligned} \end{equation*}
	where the assumed bounds on $m$ are used to estimate the $du$ and $d \xi$ integrals in the last two lines.
\end{proof}

\begin{lem}
	For any $l < 2k + 2B^2 \lambda_k$ and $\nu \in (0,1)$, there exists $\Upsilon_U \gg 1$ sufficiently large (depending on $p,q,k, l $ and $\nu$) such that
	\begin{gather*}
		\left| \int_{\tau_0}^{\tau} e^{B^2 \cl{D}_U(\tau - \ol{\tau})} \chi_{[\Upsilon_U e^{- \alpha_k \ol{\tau}}, \Upsilon_Z e^{-\alpha_k \ol{\tau}} ]}(\ol{\gamma}) \Upsilon_U^l \ol{\gamma}^{-4k-4B^2 \lambda_k  - 2} e^{2B^2 \lambda_k \tau } d \ol{\tau}\right|(\gamma)\\
		 \le \nu e^{B^2 \lambda_k \tau} \gamma^{-2k - 2B^2 \lambda_k} 
	\end{gather*}
	for all $0 \le \tau - \tau_0 \le 1$ and  $\gamma \ge \Upsilon_U e^{-\alpha \tau}$.
\end{lem}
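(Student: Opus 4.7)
The plan is to mirror the strategy of the preceding lemma: reduce to lemma \ref{polyGrowthU} after enlarging the support of the indicator to $[0, C_0 \gamma]$, and then absorb the surplus factor $e^{B^2 \lambda_k \tau} \gamma^{-2k - 2B^2 \lambda_k}$ into a small negative power of $\Upsilon_U$.

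First, since $\tau \le \tau_0 + 1$ and $\ol{\tau} \ge \tau_0$, for any $\gamma \ge \Upsilon_U e^{-\alpha_k \tau}$ the upper endpoint of the support satisfies
$$\Upsilon_Z e^{-\alpha_k \ol{\tau}} \le e^{\alpha_k} \Upsilon_Z e^{-\alpha_k \tau} \le \frac{e^{\alpha_k} \Upsilon_Z}{\Upsilon_U}\, \gamma \doteqdot C_0 \gamma,$$
so $\chi_{[\Upsilon_U e^{-\alpha_k \ol{\tau}},\, \Upsilon_Z e^{-\alpha_k \ol{\tau}}]}(\ol{\gamma}) \le \chi_{[0, C_0 \gamma]}(\ol{\gamma})$. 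Pulling the $\ol{\tau}$-independent prefactor $\Upsilon_U^l e^{2B^2 \lambda_k \tau}$ out of the integral and applying lemma \ref{polyGrowthU} with $m = 4k + 4B^2 \lambda_k$ --- which obeys $m < 2k + 2B^2 \lambda_k + \sqrt{(n-9)(n-1)}$ exactly as verified in the preceding lemma --- bounds the left-hand side by
$$\lesssim_{p,q,k,\Upsilon_Z} \Upsilon_U^l e^{2B^2 \lambda_k \tau} \gamma^{-4k - 4B^2 \lambda_k}.$$

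To pass to the target form I use the defining relation between the inner scale $\alpha_k$ and $\lambda_k$, exactly as in the preceding lemma, which gives $e^{B^2 \lambda_k \tau} \gamma^{-2k - 2B^2 \lambda_k} \le \Upsilon_U^{-(2k + 2B^2 \lambda_k)}$ whenever $\gamma \ge \Upsilon_U e^{-\alpha_k \tau}$. Multiplying by a second copy of this factor yields
$$\Upsilon_U^l e^{2B^2 \lambda_k \tau} \gamma^{-4k - 4B^2 \lambda_k} \le \Upsilon_U^{\,l - 2k - 2B^2 \lambda_k} \cdot e^{B^2 \lambda_k \tau} \gamma^{-2k - 2B^2 \lambda_k}.$$
Because $l < 2k + 2B^2 \lambda_k$, the exponent on $\Upsilon_U$ is strictly negative, and taking $\Upsilon_U$ sufficiently large drives the coefficient below $\nu$.

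The only bookkeeping point --- and the part most worth checking --- is the implicit $C_0$-dependence in the constant from lemma \ref{polyGrowthU}. Revisiting that proof, the relevant factor is $C_0^{\sqrt{(n-9)(n-1)} - (2k + 2B^2 \lambda_k)}$, an exponent that is positive in the dimension range of interest. Substituting $C_0 = e^{\alpha_k} \Upsilon_Z / \Upsilon_U$ and combining with $\Upsilon_U^{\,l - 2k - 2B^2 \lambda_k}$ leaves a net $\Upsilon_U$-exponent of $l - \sqrt{(n-9)(n-1)}$, which remains negative since $l < 2k + 2B^2 \lambda_k < \sqrt{(n-9)(n-1)}$, at the cost of a fixed power of $\Upsilon_Z$ absorbed into the constant. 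Thus smallness of $\Upsilon_U^{-1}$ still suffices and no genuinely new obstacle appears beyond those already handled in the preceding lemma.
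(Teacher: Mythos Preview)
Your reduction to lemma \ref{polyGrowthU} is the right instinct, but the choice $C_0 = e^{\alpha_k}\Upsilon_Z/\Upsilon_U$ creates a real problem: the constant in lemma \ref{polyGrowthU} then grows with $\Upsilon_Z$. You partly track this, but only through the $D_1$ branch of that proof; the $D_2$ branch contributes a factor $C_0^{\frac{1}{2}\sqrt{(n-9)(n-1)}}$ as well, so in both branches a positive power of $\Upsilon_Z$ survives. The net bound is of the form $\Upsilon_Z^{a}\,\Upsilon_U^{\,l-\sqrt{(n-9)(n-1)}}$ (times the target), with $a>0$ for $n>10$, which means the threshold for $\Upsilon_U$ must depend on $\Upsilon_Z$. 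That contradicts the lemma's stated dependencies and, more importantly, is circular in the paper's parameter hierarchy: $\Upsilon_Z$ is chosen \emph{after} $\Upsilon_U$, in fact depending on $\Upsilon_U$ (see lemma \ref{summShortTimeEstN} and the $Z$ initial-data lemma). Your final sentence (``absorbed into the constant'') glosses over exactly the dependence that the statement forbids.

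The paper avoids this by splitting at a fixed cutoff $4\gamma$ rather than at $\Upsilon_Z e^{-\alpha_k\ol\tau}$: on $[0,4\gamma]$ it applies lemma \ref{polyGrowthU} with $C_0=4$, and on $[4\gamma,\infty)$ it uses the companion lemma \ref{polyGrowthUBig}. Both give $\lesssim_{p,q,k}\Upsilon_U^{l}e^{2B^2\lambda_k\tau}\gamma^{-4k-4B^2\lambda_k}$ with constants independent of $\Upsilon_Z$, and the remaining step $e^{B^2\lambda_k\tau}\gamma^{-2k-2B^2\lambda_k}\le\Upsilon_U^{-2k-2B^2\lambda_k}$ then yields $\Upsilon_U^{l-2k-2B^2\lambda_k}$ as the only large-parameter factor. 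The fix to your argument is simply to make the same split at $4\gamma$ and invoke lemma \ref{polyGrowthUBig} for the outer piece.
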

\begin{proof}
	Begin by splitting the integral
	\begin{equation*} \begin{aligned}
		& \left| \int_{\tau_0}^{\tau} e^{B^2 \cl{D}_U(\tau - \ol{\tau})} \chi_{[\Upsilon_U e^{- \alpha_k \ol{\tau}}, \Upsilon_Z e^{-\alpha_k \ol{\tau}} ]}(\ol{\gamma}) \Upsilon_U^l \ol{\gamma}^{-4k-4B^2 \lambda_k  - 2} e^{2B^2 \lambda_k \ol{\tau} } d \ol{\tau}\right|(\gamma) \\
		 \le  & \Upsilon_U^l \left| \int_{\tau_0}^{\tau} e^{B^2 \cl{D}_U(\tau - \ol{\tau})} \chi_{[\Upsilon_U e^{- \alpha_k \ol{\tau}}, 4 \gamma ]}(\ol{\gamma}) \ol{\gamma}^{-4k-4B^2 \lambda_k  - 2} e^{2B^2 \lambda_k \ol{\tau} } d \ol{\tau}\right|(\gamma) \\
		& +\Upsilon_U^l  \left| \int_{\tau_0}^{\tau} e^{B^2 \cl{D}_U(\tau - \ol{\tau})} \chi_{[4 \gamma, \Upsilon_Z e^{-\alpha_k \ol{\tau}} ]}(\ol{\gamma}) \ol{\gamma}^{-4k-4B^2 \lambda_k  - 2} e^{2B^2 \lambda_k \ol{\tau} } d \ol{\tau}\right|(\gamma) \\
		 \le  &\Upsilon_U^l  \left| \int_{\tau_0}^{\tau} e^{B^2 \cl{D}_U(\tau - \ol{\tau})} \chi_{[0, 4 \gamma ]}(\ol{\gamma}) \ol{\gamma}^{-4k-4B^2 \lambda_k  - 2} e^{2B^2 \lambda_k \ol{\tau} } d \ol{\tau}\right|(\gamma) \\
		& +\Upsilon_U^l  \left| \int_{\tau_0}^{\tau} e^{B^2 \cl{D}_U(\tau - \ol{\tau})} \chi_{[4 \gamma, \infty ]}(\ol{\gamma}) \ol{\gamma}^{-4k-4B^2 \lambda_k  - 2} e^{2B^2 \lambda_k \ol{\tau} } d \ol{\tau}\right|(\gamma) \\		
		=& (I) + (II)
	\end{aligned} \end{equation*}
	Lemma \ref{polyGrowthU} implies that
		$$(I) \lesssim_{p,q,k} \Upsilon_U^l \gamma^{-4k-4B^2 \lambda_k} e^{2B^2 \lambda_k \tau} \lesssim_{p,q,k} \Upsilon_U^{l-2k -2B^2 \lambda} e^{B^2 \lambda \tau} \gamma^{-2k -2B^2 \lambda}$$	
	Similarly, lemma \ref{polyGrowthUBig} implies
		$$(II) \lesssim_{p,q,k} \Upsilon_U^{l-2k -2B^2 \lambda} e^{B^2 \lambda \tau} \gamma^{-2k -2B^2 \lambda}$$
\end{proof}

\begin{lem} \label{shortTimeErrUParabNeg}
	For any $\nu \in (0,1)$, there exists $\Upsilon_U \gg 1$ sufficiently large (depending on $p,q,k$ and $\nu$) such that
		$$\left| \int_{\tau_0}^{\tau} e^{B^2 \cl{D}_U(\tau - \ol{\tau})} \chi_{[ \Upsilon_Z e^{- \alpha_k \ol{\tau}}, M e^{\beta \ol{\tau}} ]}(\ol{\gamma}) \ol{\gamma}^{-4k-4B^2 \lambda_k - 2} e^{2B^2 \lambda_k \tau} d \ol{\tau}\right|(\gamma) \le \nu e^{B^2 \lambda_k \tau}  \gamma^{-2k - 2B^2 \lambda_k}  $$
		$$ \text{for all }  \quad 0 \le \tau - \tau_0 \le 1, \quad  \Upsilon_U e^{-\alpha \tau}  \le \gamma \le M e^{\beta \tau}$$
\end{lem}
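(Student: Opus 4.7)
The plan is to imitate the proof of the preceding lemma: split the radial integration in $\ol{\gamma}$ at $\ol{\gamma} = 4\gamma$, pull the factor $e^{2B^2 \lambda_k \tau}$ (which is constant in $\ol{\tau}$) outside the time integral, and enlarge the characteristic function $\chi_{[\Upsilon_Z e^{-\alpha_k \ol{\tau}}, M e^{\beta \ol{\tau}}]}$ to $\chi_{[0, 4\gamma]} + \chi_{[4\gamma, \infty)}$ by discarding both the inner cutoff at $\Upsilon_Z e^{-\alpha_k \ol{\tau}}$ and the outer cutoff at $M e^{\beta \ol{\tau}}$. With the exponent $m \doteqdot 4k + 4B^2 \lambda_k = n - 1 - \sqrt{(n-9)(n-1)}$, the two resulting pieces fall respectively into the scope of lemma \ref{polyGrowthU} (with $C_0 = 4$) and lemma \ref{polyGrowthUBig}.

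Before invoking these lemmas I would verify their hypotheses on $m$. The bound $m < 2k + 2B^2 \lambda_k + \sqrt{(n-9)(n-1)}$ is the same inequality already validated in the preceding lemma (it is equivalent to $2k + 2B^2 \lambda_k < \sqrt{(n-9)(n-1)}$). The additional requirement $m > 2k + 2B^2 \lambda_k$ demanded by lemma \ref{polyGrowthUBig} reduces to $2k + 2B^2 \lambda_k > 0$, which is immediate. Summing the two outputs then yields
\[
 \left|\int_{\tau_0}^{\tau} e^{B^2 \cl{D}_U(\tau-\ol{\tau})} \chi_{[\Upsilon_Z e^{-\alpha_k \ol{\tau}}, M e^{\beta \ol{\tau}}]}(\ol{\gamma})\, \ol{\gamma}^{-4k - 4B^2 \lambda_k - 2} e^{2B^2 \lambda_k \tau}\, d\ol{\tau}\right|(\gamma) \lesssim_{p,q,k} e^{2B^2 \lambda_k \tau}\, \gamma^{-(4k + 4B^2 \lambda_k)}.
\]

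To match the target, I would factor the right-hand side as $\bigl[e^{B^2 \lambda_k \tau}\gamma^{-(2k+2B^2 \lambda_k)}\bigr] \cdot \bigl[e^{B^2 \lambda_k \tau}\gamma^{-(2k+2B^2 \lambda_k)}\bigr]$ and absorb the second bracket into $\nu$ using the lower bound $\gamma \ge \Upsilon_U e^{-\alpha \tau}$ together with $2k + 2B^2 \lambda_k > 0$. This yields a prefactor $\Upsilon_U^{-(2k + 2B^2 \lambda_k)}$ multiplied by a residual $e^{c\tau}$ factor whose sign is dictated by the scaling relation between $\alpha$ and $\lambda_k$ built into $\cl{P}$. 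The main (essentially only) nontrivial point is checking that this residual $\tau$-factor is uniformly controlled on $[\tau_0, \tau_0+1]$, which is the same rearrangement $\Upsilon_U^{l - 2k - 2B^2 \lambda} e^{B^2 \lambda \tau} \gamma^{-2k - 2B^2 \lambda}$ used at the close of the previous lemma; with that in hand, taking $\Upsilon_U \gg 1$ depending on $p,q,k,\nu$ drives the prefactor below $\nu$ and closes the argument.
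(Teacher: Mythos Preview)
Your proposal is correct and matches the paper's proof essentially line for line: split at $\ol{\gamma}=4\gamma$, enlarge the characteristic function, apply lemmas \ref{polyGrowthU} and \ref{polyGrowthUBig} with $m=4k+4B^2\lambda_k$, and then use $\gamma\ge\Upsilon_U e^{-\alpha_k\tau}$ to extract the small factor $\Upsilon_U^{-(2k+2B^2\lambda_k)}$. One small sharpening: the ``residual $e^{c\tau}$ factor'' you worry about is in fact exactly $1$, because the scaling relation defining $\alpha_k$ gives $\alpha_k(2k+2B^2\lambda_k)+B^2\lambda_k=0$; it is not merely bounded on $[\tau_0,\tau_0+1]$ (which would be insufficient, since $\tau_0\gg1$), but identically absent---this is precisely the rearrangement you cite from the preceding lemma.
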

\begin{proof}
	Begin by splitting the integral
	\begin{equation*} \begin{aligned}
		& \left| \int_{\tau_0}^{\tau} e^{B^2 \cl{D}_U(\tau - \ol{\tau})} \chi_{[ \Upsilon_Z e^{- \alpha_k \ol{\tau}}, M e^{\beta \ol{\tau}} ]}(\ol{\gamma}) \ol{\gamma}^{-4k-4B^2 \lambda_k - 2} e^{2B^2 \lambda_k \tau} d \ol{\tau}\right|(\gamma) \\
		\le & \left| \int_{\tau_0}^{\tau} e^{B^2 \cl{D}_U(\tau - \ol{\tau})} \chi_{[ \Upsilon_Z e^{- \alpha_k \ol{\tau}}, 4 \gamma ]}(\ol{\gamma}) \ol{\gamma}^{-4k-4B^2 \lambda_k - 2} e^{2B^2 \lambda_k \tau} d \ol{\tau}\right|(\gamma) \\
		& + \left| \int_{\tau_0}^{\tau} e^{B^2 \cl{D}_U(\tau - \ol{\tau})} \chi_{[4 \gamma, M e^{\beta \ol{\tau}} ]}(\ol{\gamma}) \ol{\gamma}^{-4k-4B^2 \lambda_k - 2} e^{2B^2 \lambda_k \tau} d \ol{\tau}\right|(\gamma) \\
		\le & \left| \int_{\tau_0}^{\tau} e^{B^2 \cl{D}_U(\tau - \ol{\tau})} \chi_{[ 0, 4 \gamma ]}(\ol{\gamma}) \ol{\gamma}^{-4k-4B^2 \lambda_k - 2} e^{2B^2 \lambda_k \tau} d \ol{\tau}\right|(\gamma) \\
		& + \left| \int_{\tau_0}^{\tau} e^{B^2 \cl{D}_U(\tau - \ol{\tau})} \chi_{[4 \gamma, \infty )}(\ol{\gamma}) \ol{\gamma}^{-4k-4B^2 \lambda_k - 2} e^{2B^2 \lambda_k \tau} d \ol{\tau}\right|(\gamma) \\
		= & (I) + (II)
	\end{aligned} \end{equation*}
	
	As in the previous lemma, lemmas \ref{polyGrowthU} and \ref{polyGrowthUBig} imply
		$$(I), (II) \lesssim_{p,q,k} \Upsilon_U^{-2k - 2B^2 \lambda } e^{B^2 \lambda \tau} \gamma^{-2k -2 B^2 \lambda}$$
\end{proof}

\begin{lem}
	For any $\nu \in (0,1)$, $\beta \in \left(0, \frac{1}{2} \right)$, and $M > 0$, there exists $\tau_0 \gg 1$ sufficiently large (depending on $p,q,k,M, \beta, \nu$) such that
	\begin{gather*}
		\left| \int_{\tau_0}^{\tau} e^{B^2 \cl{D}_U(\tau - \ol{\tau})} \chi_{[ \Upsilon_Z e^{- \alpha_k \ol{\tau}}, M e^{\beta \ol{\tau}} +1]}(\ol{\gamma}) \ol{\gamma}^{-4B^2 \lambda_k } e^{2B^2 \lambda_k \tau} d \ol{\tau}\right|(\gamma) \\
		\le \nu e^{B^2 \lambda_k \tau} \left( \gamma^{-2k - 2B^2 \lambda_k}  + \gamma^{-2B^2 \lambda_k} \right)\\
	\end{gather*}
	for all $0 \le \tau - \tau_0 \le 1$ and $\Upsilon_U e^{-\alpha \tau} \le \gamma \le M e^{\beta \tau}$.
\end{lem}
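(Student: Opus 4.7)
The plan is to mimic the splitting strategy of lemma \ref{shortTimeErrUParabNeg}, adapted to the new integrand $\ol{\gamma}^{-4B^2\lambda_k}$. Split the interval of integration at $\ol{\gamma}=4\gamma$, so that the left side is bounded by $(I)+(II)$ where, after enlarging the integration sets as in the previous proof,
$$(I)=\left|\int_{\tau_0}^{\tau}e^{B^2\cl{D}_U(\tau-\ol{\tau})}\chi_{[0,4\gamma]}(\ol{\gamma})\,\ol{\gamma}^{-4B^2\lambda_k}e^{2B^2\lambda_k\tau}\,d\ol{\tau}\right|,$$
$$(II)=\left|\int_{\tau_0}^{\tau}e^{B^2\cl{D}_U(\tau-\ol{\tau})}\chi_{[4\gamma,\infty)}(\ol{\gamma})\,\ol{\gamma}^{-4B^2\lambda_k}e^{2B^2\lambda_k\tau}\,d\ol{\tau}\right|.$$
Writing $\ol{\gamma}^{-4B^2\lambda_k}=\ol{\gamma}^{-m-2}$ with $m=4B^2\lambda_k-2$, I would first verify that $m$ satisfies the admissible ranges of lemma \ref{polyGrowthU} (for $(I)$) and lemma \ref{polyGrowthUBig} (for $(II)$), namely $m<2k+2B^2\lambda_k+\sqrt{(n-9)(n-1)}$ in the first case and $2k+2B^2\lambda_k<m<2k+2B^2\lambda_k+\sqrt{(n-9)(n-1)}$ in the second; both reduce to inequalities on $B^2\lambda_k$ which follow from the explicit form of the eigenvalues introduced earlier.

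Applying these two lemmas and pulling the factor $e^{2B^2\lambda_k\tau}$ through yields
$$(I),\,(II)\lesssim_{p,q,k}e^{2B^2\lambda_k\tau}\,\gamma^{2-4B^2\lambda_k}.$$
To recast this as the target $\nu e^{B^2\lambda_k\tau}(\gamma^{-2k-2B^2\lambda_k}+\gamma^{-2B^2\lambda_k})$, I would split on whether $\gamma$ is near the lower end $\Upsilon_U e^{-\alpha_k\tau}$ or the upper end $M e^{\beta\tau}$ of the parabolic region. On the outer part, factor $\gamma^{2-4B^2\lambda_k}=\gamma^{-2B^2\lambda_k}\cdot\gamma^{2-2B^2\lambda_k}$ and use $\gamma\le Me^{\beta\tau}$ on the second factor, producing an overall exponential coefficient in $\tau$ of the form $B^2\lambda_k+\beta(2-2B^2\lambda_k)=B^2\lambda_k(1-2\beta)+2\beta$. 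On the inner part, factor $\gamma^{2-4B^2\lambda_k}=\gamma^{-2k-2B^2\lambda_k}\cdot\gamma^{2+2k-2B^2\lambda_k}$ and again apply $\gamma\le Me^{\beta\tau}$ to the second factor. In both cases the resulting coefficient of $\tau$ is strictly negative under the hypothesis $\beta<\tfrac12$ together with the sign of $B^2\lambda_k$ inherited from the setup, so the combined prefactor is $C\,e^{-\epsilon(\tau-\tau_0)}e^{-\epsilon\tau_0}$ for some $\epsilon>0$. Choosing $\tau_0\gg 1$ absorbs this into $\nu$.

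The main obstacle is that, in contrast to the previous lemmas in this subsection, the integrand carries no $\Upsilon_U^l$ factor, so the required smallness cannot come from $\Upsilon_U\gg 1$; it must come entirely from the strictly negative exponential coefficient produced via the strict inequality $\beta<\tfrac12$. The delicate verification is that $B^2\lambda_k(1-2\beta)+2\beta<0$ (and its inner-region analogue) genuinely holds, which is precisely where $\beta<\tfrac12$ — rather than the weaker $\beta<1$ — is essential. The two-term structure on the right-hand side emerges naturally from the lower-vs-upper split of $\gamma$; this bookkeeping, together with checking the admissibility windows of the two polynomial-growth lemmas, are the only non-routine steps.
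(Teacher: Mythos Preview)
Your splitting strategy breaks down in two places.

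First, lemma \ref{polyGrowthUBig} does not apply to $(II)$. With $\ol{\gamma}^{-4B^2\lambda_k}=\ol{\gamma}^{-m-2}$ you have $m=4B^2\lambda_k-2$, and since $B^2\lambda_k<0$ this gives $m<-2$. But the lemma requires $m>2k+2B^2\lambda_k$, and $2k+2B^2\lambda_k=\tfrac{n-1}{2}-\tfrac12\sqrt{(n-9)(n-1)}>0$, so the lower bound fails. In the proof of lemma \ref{polyGrowthUBig} this lower bound is exactly what makes the final $d\xi$ integral $\int_{\gamma^2}^\infty e^{-c\xi}\xi^{(m-2k-2B^2\lambda_k)/2-1}\,d\xi$ bounded uniformly as $\gamma\to 0$; with your $m$ the exponent is $B^2\lambda_k-k-2<-3$ and the bound blows up.

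Second, even where lemma \ref{polyGrowthU} does apply to $(I)$, the output $e^{2B^2\lambda_k\tau}\gamma^{2-4B^2\lambda_k}$ carries an extra $\gamma^2$ relative to what you need. Your claimed exponent $B^2\lambda_k(1-2\beta)+2\beta$ is \emph{not} negative for all $\beta<\tfrac12$: as $\beta\to\tfrac12^-$ it tends to $1>0$, regardless of the value of $B^2\lambda_k$. So the smallness cannot be obtained from $\tau_0\gg1$ in the full stated range of $\beta$.

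The paper avoids both problems by not splitting at all. It applies the maximal function bound (lemma \ref{maximalFuncBoundU}) directly, using that $\ol{\gamma}^{2k+2B^2\lambda_k}\cdot\ol{\gamma}^{-4B^2\lambda_k}=\ol{\gamma}^{2k-2B^2\lambda_k}$ is nondecreasing. This yields
\[
e^{2B^2\lambda_k\tau_0}\bigl(\gamma^{-2k-2B^2\lambda_k}+\gamma^{-4B^2\lambda_k}\bigr),
\]
which is sharper by exactly the missing factor of $\gamma^2$. The first term is controlled by $e^{B^2\lambda_k\tau_0}\cdot e^{B^2\lambda_k\tau}\gamma^{-2k-2B^2\lambda_k}$, and for the second one uses $\gamma^{-2B^2\lambda_k}\le (Me^{\beta\tau})^{-2B^2\lambda_k}$ to extract the prefactor $e^{(1-2\beta)B^2\lambda_k\tau_0}$, which is genuinely negative for every $\beta<\tfrac12$.
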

\begin{proof}
	Applying lemma \ref{maximalFuncBoundU} and using that 
	$$-4B^2  \lambda + 2k + 2B^2 \lambda > 0		\qquad \text{and} \qquad 0 \le \tau - \tau_0 \le 1,$$ we deduce
	\begin{equation*} \begin{aligned}
		& \quad \left| \int_{\tau_0}^{\tau} e^{B^2 \cl{D}_U (\tau  - \ol{\tau})}  e^{2B^2 \lambda_k \ol{\tau}} \ol{\gamma}^{-4 B^2 \lambda} d \ol{\tau} \right| \\
		&\lesssim_{p,q,k}  e^{2B^2 \lambda_k \tau_0} \\
		& \quad \cdot \int_{\tau_0}^{\tau} \gamma^{-2k - 2B^2 \lambda}
		\frac{ \int_{\gamma}^\infty \ol{\gamma}^{2k  - 2B^2 \lambda  } \ol{\gamma}^{1 + \sqrt{(n-9)(n-1)} } e^{- \ol{\gamma}^2/(4B^2)} d \ol{\gamma} }{ \int_{\gamma}^\infty  \ol{\gamma}^{1 + \sqrt{(n-9)(n-1)} } e^{-  \ol{\gamma}^2/(4B^2)} d \ol{\gamma} } d \ol{\tau} \\
		&\lesssim_{p,q,k} e^{2B^2 \lambda_k \tau_0} \int_{\tau_0}^{\tau} \gamma^{-2k - 2B^2 \lambda} ( 1+ \gamma^{2k - 2B^2 \lambda } ) d \ol{\tau} && (\text{lemma }\ref{asympsOfExpInt}) \\
		&\lesssim_{p,q,k}  e^{2B^2 \lambda_k \tau}  (\gamma^{-2k - 2B^2 \lambda} + \gamma^{ -4B^2 \lambda } ) \\
		&\le  e^{B^2 \lambda_k \tau_0} e^{B^2 \lambda_k \tau} \gamma^{-2k -2B^2 \lambda } + M^{-2B^2 \lambda} e^{( 1 - 2 \beta) B^2 \lambda \tau_0 } e^{B^2 \lambda \tau} \gamma^{-2B^2 \lambda}
	\end{aligned} \end{equation*}
	The statement of the lemma then follows immediately.
\end{proof}

We summarize the estimates in this subsection in the following statement:
\begin{lem} \label{summShortTimeEstErrU}
	For any $\nu \in (0,1)$, there exists
		$\Upsilon_U \gg 1$ sufficiently large depending on $p,q,k, l, \kappa , \ul{D}, \nu$, and
		$\tau_0 \gg 1$ sufficiently large depending on $p,q,k, M , \beta, \ul{D}, \nu$
	such that $(\tl{Z}, \tl{U}) \in \mathcal{P}$ implies
	$$\left| \int_{\tau_0}^\tau e^{B^2 \mathcal{D}_U ( \tau - \ol{\tau} ) } \grave{\chi} Err_U(\ol{\tau} ) d \ol{\tau} \right| (\gamma) \le \nu e^{B^2 \lambda_k \tau} \left( \gamma^{-2k -2B^2 \lambda_k} + \gamma^{-2B^2 \lambda_k} \right)$$
	for all $\tau_0 \le \tau \le \tau_1 \le \tau_0 + 1$ and $\Upsilon_U e^{- \alpha_k \tau} \le \gamma \le M e^{ \beta \tau}$.
\end{lem}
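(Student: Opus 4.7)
The plan is to exploit the fact that membership $(\tl{Z}, \tl{U}) \in \cl{P}$ gives pointwise bounds on $\tl{Z}, \tl{U}$ (and their derivatives) that are of precisely the form handled, piece by piece, in the preceding lemmas. I will expand $Err_U$ into schematic summands, split the spatial integration domain of $\ol{\gamma}$ according to the natural breakpoints $e^{-\alpha_k \ol{\tau}}, \Upsilon_U e^{-\alpha_k \ol{\tau}}, \Upsilon_Z e^{-\alpha_k \ol{\tau}}, M e^{\beta \ol{\tau}}$, and route each resulting piece into the matching lemma from this subsection.

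More concretely, I would first write out $Err_U$ as a finite sum of terms controlled by the $\cl{P}$-norm and insert the cutoff $\grave{\chi}$ to localize to the parabolic region $[\Upsilon_U e^{-\alpha_k \ol{\tau}}, M e^{\beta \ol{\tau}}]$. Using the pointwise bounds supplied by the definition of $\cl{P}$ (which I will record at the start of the proof), each summand has the schematic form $\chi_{I(\ol{\tau})}(\ol{\gamma}) \cdot (\text{polynomial weight in } \ol{\gamma}) \cdot e^{c B^2 \lambda_k \ol{\tau}}$, for $I(\ol{\tau})$ one of the four subintervals above and polynomial weights of the types $(\ol{\gamma} e^{\alpha \ol{\tau}})^{-2k - 2B^2\lambda - \kappa} \ol{\gamma}^{-2}$, $\Upsilon_U^{2l}\ol{\gamma}^{-4k - 4B^2\lambda_k - 2}$, $\ol{\gamma}^{-4k - 4B^2\lambda_k - 2}$, or $\ol{\gamma}^{-4B^2\lambda_k}$. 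Each of these is exactly the integrand bounded in one of the five short-time lemmas above.

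Then for each piece I invoke the appropriate lemma: the inner region $(0, e^{-\alpha_k \ol{\tau}})$ is handled by the second lemma of the subsection; the transition region $[e^{-\alpha_k \ol{\tau}}, \Upsilon_U e^{-\alpha_k \ol{\tau}}]$ by the third; the region $[\Upsilon_U e^{-\alpha_k \ol{\tau}}, \Upsilon_Z e^{-\alpha_k \ol{\tau}}]$ by the fifth (which itself combines \ref{polyGrowthU} and \ref{polyGrowthUBig}); the outer parabolic piece $[\Upsilon_Z e^{-\alpha_k \ol{\tau}}, M e^{\beta \ol{\tau}}]$ with the negative weight by \ref{shortTimeErrUParabNeg}; and finally the outer piece with the $\ol{\gamma}^{-4B^2 \lambda_k}$ weight by the last lemma. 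Each of those lemmas absorbs a factor of $\nu/N$ (where $N$ is the finite number of pieces in the decomposition) by choosing $\Upsilon_U$ and $\tau_0$ large enough, and summing yields the stated bound.

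The main obstacle, which is largely bookkeeping rather than analysis, is verifying that every term arising in $Err_U$ under the $\cl{P}$-bounds really is dominated by one of the five integrand templates above, with the correct exponents $l$, $\kappa$ satisfying the hypotheses (in particular $l < k + B^2 \lambda_k$, $l < \sqrt{(n-9)(n-1)}$, $\kappa < \sqrt{(n-9)(n-1)}$, etc.). Once the exponent accounting is checked, selecting $\Upsilon_U$ large depending on $p,q,k,l,\kappa,\ul{D},\nu$ and then $\tau_0$ large depending on $p,q,k,M,\beta,\ul{D},\nu$ forces the sum below $\nu e^{B^2\lambda_k\tau}(\gamma^{-2k - 2B^2\lambda_k} + \gamma^{-2B^2 \lambda_k})$, completing the proof.
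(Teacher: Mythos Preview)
Your proposal is correct and matches the paper's approach exactly: the paper presents Lemma~\ref{summShortTimeEstErrU} explicitly as a summary of the preceding five lemmas in the subsection (it is introduced with ``We summarize the estimates in this subsection in the following statement:'' and carries no separate proof), so your plan of decomposing $\grave{\chi}\,Err_U$ into the region-by-region templates and invoking each matching lemma is precisely what is intended. The only content beyond what you wrote is the bookkeeping you already flagged, namely checking that the $\cl{P}$-bounds on $\tl{Z},\tl{U}$ produce exactly those integrand templates with admissible exponents.
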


\subsubsection{Estimates for $Err_Z$ Contributions}
\begin{lem}
	For any $\nu \in (0,1)$, there exists $\Upsilon_Z \gg 1$ sufficiently large (depending on $p,q,k, \nu$) such that
	\begin{gather*}
		\left| \int_{\tau_0}^{\tau} e^{B^2 \cl{D}_Z(\tau - \ol{\tau})} \chi_{(0, e^{-\alpha_k \ol{\tau}} )}(\ol{\gamma}) \ol{\gamma}^{-2} d \ol{\tau}\right|(\gamma) \le \nu e^{B^2 \lambda_k \tau} \gamma^{-2k - 2B^2 \lambda_k} \\
		 \text{for all }  \quad 0 \le \tau - \tau_0 \le 1, \quad \gamma \ge \Upsilon_Z e^{-\alpha \tau}
	\end{gather*}
\end{lem}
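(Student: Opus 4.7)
My plan is to follow the strategy of the second lemma in the preceding $Err_U$ subsection, substituting the $Z$-semigroup maximal-function estimate (lemma \ref{maximalFuncBoundZ}) for the $U$-version. Since there is no $Z$-analog of lemma \ref{polyGrowthU} at this point in the paper, I will work directly from the maximal-function inequality rather than invoking a parallel polynomial-growth lemma.

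First I observe that, on its support, the integrand $\chi_{(0, e^{-\alpha_k \ol{\tau}})}(\ol{\gamma}) \, \ol{\gamma}^{-2}$, multiplied by the extra factor $\ol{\gamma}^{-2}$, is a non-increasing function of $\ol{\gamma}$. Lemma \ref{maximalFuncBoundZ} then reduces pointwise evaluation of $e^{B^2 \cl{D}_Z(\tau - \ol{\tau})}$ applied to this integrand to a ratio of Gaussian-weighted integrals of the form
$$\gamma^2 e^{-(\tau - \ol{\tau})/2} \cdot \frac{\int_0^\gamma \chi_{(0, e^{-\alpha_k \ol{\tau}})}(\ol{\gamma}) \, \ol{\gamma}^{\, n} e^{-\ol{\gamma}^2/(4B^2)} d\ol{\gamma}}{\int_0^\gamma \ol{\gamma}^{\, n+2} e^{-\ol{\gamma}^2/(4B^2)} d\ol{\gamma}}.$$
The numerator is controlled by $\int_0^{e^{-\alpha_k \ol{\tau}}} \ol{\gamma}^{\, n-2} d\ol{\gamma} \lesssim e^{-(n-1)\alpha_k \ol{\tau}}$, while the denominator has the expected behavior $\gamma^{n+3}/(n+3)$ for $\gamma \lesssim 1$ and saturates at a positive constant for $\gamma \gtrsim 1$, via lemma \ref{asympsOfExpInt}.

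Using $\gamma \ge \Upsilon_Z e^{-\alpha_k \tau}$ together with $\ol{\tau} \ge \tau - 1$, I would convert $e^{-(n-1)\alpha_k \ol{\tau}}$ into $\lesssim (\gamma/\Upsilon_Z)^{n-1}$, absorbing the remaining short-time exponentials into universal constants. Integrating over the time interval $[\tau_0, \tau]$ of length at most $1$ produces a bound of the form $\Upsilon_Z^{-(n-1)} \gamma^{-2}$ in the regime $\gamma \lesssim 1$. The identity $2k + 2B^2 \lambda_k \ge 2$ (equivalent to $n - 1 - \sqrt{(n-9)(n-1)} \ge 4$, used above) then upgrades $\gamma^{-2}$ to $\gamma^{-2k - 2B^2 \lambda_k}$ in this range, so that choosing $\Upsilon_Z \gg 1$ sufficiently large depending on $p, q, k, \nu$ yields the required factor $\nu$.

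The main obstacle I anticipate is the regime where $\gamma$ is not small: there the denominator of the maximal-function ratio saturates and the crude $\gamma^2$ prefactor loses the decay one wants. In that range I expect to invoke Gaussian decay of the heat kernel directly, using the wide separation between $\gamma$ and the inner support $(0, e^{-\alpha_k \ol{\tau}})$ to extract exponential decay in $\gamma^2/(\tau - \ol{\tau})$, in the spirit of lemma \ref{polyGrowthUBig}. A secondary bookkeeping task will be to arrange the time-exponential factors so that $e^{B^2 \lambda_k \tau}$ on the right-hand side emerges naturally rather than having to be forced in by a lossy bound.
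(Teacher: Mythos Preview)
Your maximal-function strategy has a genuine gap: it cannot produce the factor $e^{B^2\lambda_k\tau}$ on the right-hand side, and this is not merely bookkeeping. The integrand $\chi_{(0,e^{-\alpha_k\ol\tau})}\,\ol\gamma^{-2}$ carries no exponential time weight, so your estimate yields at best $C\,\Upsilon_Z^{-(n-1)}\gamma^{-2}$, which must then be compared with $\nu e^{B^2\lambda_k\tau}\gamma^{-2k-2B^2\lambda_k}$. Since $\lambda_k<0$, the target decays in $\tau$ while your bound does not, so no choice of $\Upsilon_Z$ depending only on $p,q,k,\nu$ can close the estimate uniformly in $\tau_0$. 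Concretely, at the inner endpoint $\gamma=\Upsilon_Z e^{-\alpha_k\tau}$ your bound becomes $\Upsilon_Z^{-(n+1)}e^{2\alpha_k\tau}$ while the target equals $\nu\,\Upsilon_Z^{-2k-2B^2\lambda_k}$; the $e^{2\alpha_k\tau}$ growth is fatal. The obstacle you anticipated at large $\gamma$ is also real, but the small-$\gamma$ failure already rules out the approach.

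The paper does not use lemma \ref{maximalFuncBoundZ} here at all; it works directly from the kernel representation of proposition \ref{kernelZ}. Two devices are essential. First, on the support $\ol\gamma\le e^{-\alpha_k\ol\tau}$ one inserts $1\le e^{B^2\lambda_k\ol\tau}\,\ol\gamma^{-2k-2B^2\lambda_k}$ (via the relation $\alpha_k(2k+2B^2\lambda_k)=-B^2\lambda_k$), which manufactures the required time weight at the cost of a more singular spatial power. Second, one keeps the Gaussian factor $\exp\bigl[-c\gamma^2/(\tau-\ol\tau)\bigr]$ from the kernel; after the substitutions $u=\ol\gamma/\sqrt{\tau-\ol\tau}$ and $\xi=\gamma^2/(\tau-\ol\tau)$ this makes the double integral converge uniformly and produces the smallness $\Upsilon_Z^{(1-n)/2-\frac12\sqrt{(n-9)(n-1)}}$. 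The maximal-function inequality is time-uniform and discards exactly this Gaussian factor, which is why it loses; in this section the paper reserves it for initial-data terms or for Duhamel integrands that already carry an $e^{2B^2\lambda_k\ol\tau}$ prefactor supplying extra decay.
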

\begin{proof}
	\begin{equation*} \begin{aligned}
		&\quad \left| \int_{\tau_0}^{\tau} e^{B^2 \cl{D}_Z (\tau - \ol{\tau})} \chi_{ [0,  e^{- \alpha_k \ol{\tau} } ]} \ol{\gamma}^{-2} d \ol{\tau} \right| (\gamma) \\
		& \lesssim_{p,q} \int_{\tau_0}^{\tau} \gamma^2 (\tau - \ol{\tau})^{-1 - \frac{n+1}{2} } 
		\int_0^\infty H_Z \chi_{ [0,  e^{- \alpha_k \ol{\tau} } ]} \ol{\gamma}^{n-2} d \ol{\gamma} d \ol{\tau} \\
		& \qquad (\text{by proposition \ref{kernelZ}})\\
		&\le \gamma^2 e^{B^2 \lambda_k \tau_0} \int_{\tau_0}^{\tau} \gamma^2 (\tau - \ol{\tau})^{-1 - \frac{n+1}{2} } 
		\int_0^{e^{- \alpha_k \ol{\tau}} } H_Z  \ol{\gamma}^{n-2-2k - 2B^2 \lambda} d \ol{\gamma} d \ol{\tau} \\
		&\le \gamma^2 e^{B^2 \lambda_k \tau_0} \int_{\tau_0}^{\tau} exp\left[ - \frac{c}{4B^2} \frac{\gamma^2}{\tau - \ol{\tau}} \right] (\tau - \ol{\tau})^{-1 - \frac{n+1}{2} } 
		\int_0^{ e^{- \alpha_k \ol{\tau}}} \ol{\gamma}^{n - 2 - 2k - 2B^2 \lambda} d \ol{\gamma} d \ol{\tau}\\
		&= \gamma^2 e^{B^2 \lambda \tau_0} \int_{\tau_0}^{\tau} exp \left[ - \frac{c}{4B^2 } \frac{\gamma^2}{\tau - \ol{\tau}} \right] (\tau - \ol{\tau})^{-2 -k - B^2 \lambda} \int_0^{ e^{- \alpha_k \ol{\tau}} / \sqrt{\tau - \ol{\tau}} }	u^{n-2-2k-2B^2 \lambda} du d \ol{\tau}\\
		& \qquad \left(\text{where } u = \ol{\gamma} (\tau - \ol{\tau})^{-1/2} \right)\\
		&\le \gamma^2  e^{B^2 \lambda \tau_0} \int_{\gamma^2/(\tau - \tau_0)}^{\infty} exp \left[ - \frac{c}{4B^2} \xi \right] \left( \frac{\gamma^2}{\xi} \right)^{-2 - k -B^2 \lambda} \frac{\gamma^2}{\xi^2}
		\int_0^{ \frac{e^{\alpha}}{\Upsilon_Z}  \sqrt{\xi} } u^{n-2-2k-2B^2 \lambda} du d\xi\\
		& \qquad (\text{where } \xi = \gamma^2 (\tau - \ol{\tau})^{-1} )\\
		&\le \gamma^{-2k-2B^2 \lambda}  e^{B^2 \lambda \tau_0} \int_{\gamma^2/(\tau - \tau_0)}^{\infty} exp \left[ - \frac{c}{4B^2} \xi \right] \xi^{k + B^2 \lambda}
		\int_0^{ \frac{e^\alpha}{\Upsilon_Z} \sqrt{\xi} } u^{n-2-2k-2B^2 \lambda} du d\xi\\
		&\lesssim_{p,q,k}  \Upsilon_Z^{\frac{1-n}{2} - \frac{1}{2} \sqrt{(n-9)(n-1)} }  e^{B^2 \lambda \tau} \gamma^{-2k - 2B^2 \lambda}
		\int_0^\infty \xi^{ \frac{n-1}{2} } exp \left[ - \frac{c}{4B^2} \xi \right] d \xi \\
		&\lesssim_{p,q,k}  \Upsilon_Z^{\frac{1-n}{2} - \frac{1}{2} \sqrt{(n-9)(n-1)} }  e^{B^2 \lambda \tau} \gamma^{-2k - 2B^2 \lambda}
	\end{aligned} \end{equation*}
	Since $\frac{1-n}{2} - \frac{1}{2} \sqrt{(n-9)(n-1)} < 0$, we get the desired estimate by taking $\Upsilon_Z \gg 1$ sufficiently large.
\end{proof}

\begin{lem} \label{polyGrowthZ}
If $C_0 > 0$ and $m < n-1$, then 
	\begin{gather*}
		\left| \int_{\tau_0}^{\tau} e^{B^2 \cl{D}_Z (\tau - \ol{\tau})} \chi_{[0, C_0 \gamma]} \ol{\gamma}^{-m-2} d \ol{\tau} \right| (\gamma)  \lesssim_{p,q,m,C_0}  \gamma^{-m} \\
	\text{for all } \quad 0 \le \tau - \tau_0 \le 1, \quad \gamma \ge \Upsilon_Z e^{- \alpha \tau}
	\end{gather*}
\end{lem}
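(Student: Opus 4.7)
The strategy is to mirror the proof of lemma \ref{polyGrowthU}, substituting proposition \ref{kernelZ} for proposition \ref{kernelU} and tracking the different polynomial measure and prefactor that the $Z$ kernel carries. Specifically, proposition \ref{kernelZ} will produce a prefactor of $\gamma^2(\tau - \ol{\tau})^{-1 - (n+1)/2}$ and an inner measure of $\ol{\gamma}^n\,d\ol{\gamma}$ (compare the very first lemma of the $\mathrm{Err}_Z$ subsection), so after integrating $\ol{\gamma}^{-m-2}$ the inner integrand becomes $H_Z\, \ol{\gamma}^{n-m-2}\,d\ol{\gamma}$ with $H_Z$ carrying a Gaussian factor in $(e^{-(\tau - \ol{\tau})/2}\gamma - \ol{\gamma})/\sqrt{\tau - \ol{\tau}}$ together with a polynomial correction analogous to the one in the $U$ case.

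The plan is then to perform the same two changes of variables as in lemma \ref{polyGrowthU}. First substitute $u = \ol{\gamma}/\sqrt{\tau - \ol{\tau}}$, which converts $\ol{\gamma}^{n-m-2}\,d\ol{\gamma}$ into $(\tau - \ol{\tau})^{(n-m-1)/2} u^{n-m-2}\,du$ and rewrites the Gaussian's argument in terms of $e^{-(\tau - \ol{\tau})/2}\gamma/\sqrt{\tau - \ol{\tau}}$ and $u$. Then substitute $\xi = \gamma^2/(\tau - \ol{\tau})$, using $d\ol{\tau} = (\gamma^2/\xi^2)\,d\xi$, collapse the explicit powers of $\gamma$ into a single factor of $\gamma^{-m}$ out front, and note that the $\ol{\tau}$-range $[\tau_0, \tau]$ with $\tau - \tau_0 \le 1$ maps to $\xi \in [\gamma^2/(\tau - \tau_0), \infty) \supset [\gamma^2, \infty)$.

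At this point the claim reduces to showing that the resulting double integral in $(\xi, u)$ is bounded by a constant independent of $\gamma$. I would handle this exactly as in the $U$ case by splitting the $u$-integration into
\[
D_1(\xi) = \bigl\{u \in [0, C_0\sqrt{\xi}] : |e^{-\gamma^2/(2\xi)}\sqrt{\xi} - u| \ge \tfrac{1}{2} e^{-\gamma^2/(2\xi)}\sqrt{\xi}\bigr\}
\]
and its complement $D_2(\xi)$. On $D_1$ the Gaussian supplies decay of the form $\exp[-c\,e^{-\gamma^2/\xi}\xi]$, which bootstraps to $\exp[-c\xi/e]$ on the effective range $\xi \ge \gamma^2$, and the hypothesis $m < n-1$, i.e.\ $n - m - 2 > -1$, ensures integrability of $u^{n-m-2}$ at $u = 0$; the resulting $u$-integral is bounded by $C(C_0, m) \xi^{(n-m-1)/2}$, and the remaining $\xi$-integral converges by the Gaussian. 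On $D_2$ we have $u \sim \sqrt{\xi}$, so the polynomial correction in $H_Z$ contributes decay $(1 + c u^2)^{-\theta} \lesssim (1 + c\xi)^{-\theta}$ for the appropriate $\theta$ coming from proposition \ref{kernelZ}, and this again yields an integrable $\xi$-tail.

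The main obstacle I expect is purely bookkeeping: carefully verifying that the exponents arising from the two substitutions collapse to exactly $\gamma^{-m}$ (i.e.\ that the algebra of exponents of $\gamma$, $\tau - \ol{\tau}$, $u$, and $\xi$ balances as it does in lemma \ref{polyGrowthU}), and pinning down the precise power $\theta$ in the polynomial correction of $H_Z$ so that the $D_2$-integral converges at infinity. The hypothesis $m < n-1$ should be used only at the very last step to get $u^{n-m-2}$ integrable at $0$; no upper bound on $m$ beyond $n-1$ is needed because the Gaussian and the polynomial correction together always dominate for large $u$.
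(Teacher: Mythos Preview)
Your proposal is correct and follows essentially the same approach as the paper's proof: apply proposition~\ref{kernelZ} to get the prefactor $\gamma^2(\tau-\ol{\tau})^{-1-(n+1)/2}$ and inner integrand $H_Z\,\ol{\gamma}^{n-m-2}$, perform the substitutions $u=\ol{\gamma}/\sqrt{\tau-\ol{\tau}}$ and $\xi=\gamma^2/(\tau-\ol{\tau})$ to isolate $\gamma^{-m}$, then split into $D_1(\xi)$ and $D_2(\xi)$ exactly as you describe. For the record, the polynomial correction in $H_Z$ carries exponent $\theta = \tfrac{1}{2}+\tfrac{n+1}{2}$, so on $D_2$ the $\xi$-integrand behaves like $\xi^{(n-1)/2}(1+c\xi)^{-(n+2)/2}\sim\xi^{-3/2}$ at infinity, which is integrable as you anticipated.
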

\begin{proof}
	\begin{equation*} \begin{aligned}
		& \left| \int_{\tau_0}^{\tau} e^{B^2 \cl{D}_Z (\tau - \ol{\tau})} \chi_{[0, C_0 \gamma]} \ol{\gamma}^{-m-2} d \ol{\tau} \right|\\
		&\lesssim_{p,q}  \gamma^2 \int_{\tau_0}^{\tau} (\tau -  \ol{\tau})^{-1 - \frac{n+1}{2} } 
		\int_{0}^{C_0 \gamma } H_Z \ol{\gamma}^{n-m-2} d \ol{\gamma} d \ol{\tau} \\
		&= \gamma^2 \int_{\tau_0}^{\tau} (\tau - \ol{\tau})^{-1 - \frac{n+1}{2} } 
		\int_{0}^{C_0 \gamma } exp \left[ - \frac{1}{4B^2} \frac{ ( e^{-(\tau_1 - \tau)/2} \gamma - \ol{\gamma} )^2 }{1 - e^{-(\tau_1 - \tau)} } \right] \\
		& \quad \cdot \left( 1 + \frac{1}{2B^2} \frac{ e^{-(\tau_1 - \tau)/2} \gamma \ol{\gamma} }{1 - e^{-(\tau_1 - \tau)} } \right)^{-\frac{1}{2} - \frac{n+1}{2} } \ol{\gamma}^{n-m-2} d \ol{\gamma}  d\tau \\
		&\le  \gamma^2 \int_{\tau_0}^{\tau} (\tau - \ol{\tau})^{-1 - \frac{n+1}{2} }
		\int_{0}^{C_0 \gamma} exp \left[ - \frac{1}{4B^2 } \left( \frac{  e^{-(\tau - \ol{\tau})/2} \gamma - \ol{\gamma} }{ \sqrt{\tau - \ol{\tau}} } \right)^2 \right] \\
		& \quad  \cdot  \left( 1 + \frac{1}{2B^2 C_0 \sqrt{e} } \frac{ \ol{\gamma}^2}{\tau - \ol{\tau}} \right)^{-\frac{1}{2} - \frac{n+1}{2} } \ol{\gamma}^{n-m-2} d \ol{\gamma} d \ol{\tau}\\
		&= \gamma^2 \int_{\tau_0}^{\tau} (\tau - \ol{\tau})^{-2 - \frac{m}{2}}
		\int_{0}^{C_0 \gamma / \sqrt{\tau - \ol{\tau}} } exp \left[ - \frac{1}{4B^2} \left( \frac{  e^{-(\tau - \ol{\tau})/2} \gamma }{ \sqrt{\tau - \ol{\tau}} } - u \right)^2 \right] \\
		& \quad \cdot \left( 1 + \frac{1}{2B^2 C_0 \sqrt{e}} u^2 \right)^{-\frac{1}{2} - \frac{n+1}{2} } u^{n-m-2} du d \ol{\tau}\\
		& \qquad \left(\text{where } u = \ol{\gamma} / \sqrt{ \tau - \ol{\tau}} \right)\\
		&= \gamma^{-m} \int_{\gamma^2/(\tau - \tau_0)}^\infty \xi^{ \frac{m}{2} }
		\int_{0 }^{C_0 \sqrt{\xi} } exp \left[ - \frac{1}{4B^2} \left( e^{- \gamma^2/(2 \xi) } \sqrt{ \xi} - u \right)^2 \right]  \\
		& \quad \cdot \left( 1 + \frac{1}{2B^2 C_0 \sqrt{e}} u^2 \right)^{-\frac{1}{2} - \frac{n+1}{2} } u^{n-m-2} du d \xi\\
		& \qquad ( \text{where } \xi = \gamma^2 / (\tau - \ol{\tau}) )\\
		&\le \gamma^{-m} \int_{\gamma^2}^\infty \xi^{ \frac{m}{2} }
		\int_{0}^{C_0 \sqrt{\xi} } exp \left[ - \frac{1}{4B^2} \left( e^{- \gamma^2/(2 \xi) } \sqrt{ \xi} - u \right)^2 \right]  \\
		&\quad  \cdot \left( 1 + \frac{1}{2B^2 C \sqrt{e}} u^2 \right)^{-\frac{1}{2} - \frac{n+1}{2} } u^{n-m-2} du d \xi\\ 
	\end{aligned} \end{equation*}
	
	\noindent At this point, it suffices to prove that the integral is finite.
	To this end define
	\begin{gather*}
		D_1(\xi) \doteqdot \left \{ u \in \left( 0, C_0 \sqrt{\xi} \right) :  \left| u - e^{- \gamma^2/(2\xi)} \sqrt{ \xi} \right| \ge \frac{1}{2} e^{- \gamma^2/(2\xi)} \sqrt{\xi} \right\} \\
		D_2(\xi) \doteqdot \left \{ u \in \left( 0, C_0 \sqrt{\xi} \right) : \left| u - e^{- \gamma^2/(2\xi)} \sqrt{ \xi} \right | \le \frac{1}{2} e^{- \gamma^2/(2\xi)} \sqrt{\xi} \right\}
	\end{gather*}
	and split the integral as 
	\begin{equation*} \begin{aligned}
	&\quad \int_{\gamma^2}^\infty \xi^{ \frac{m}{2} } \int_{0}^{C_0 \sqrt{\xi} } exp \left[ - \frac{1}{4B^2} \left( e^{- \gamma^2/(2 \xi) } \sqrt{ \xi} - u \right)^2 \right] \\
	& \quad \cdot  \left( 1 + \frac{1}{2 B^2 C_0 \sqrt{e}} u^2 \right)^{-\frac{1}{2} - \frac{n+1}{2} } u^{n-m-2} du d \xi \\
	&= \int_{\gamma^2}^\infty \xi^{ \frac{m}{2} }	\int_{D_1(\xi)} exp \left[ - \frac{1}{4B^2} \left( e^{- \gamma^2/(2 \xi) } \sqrt{ \xi} - u \right)^2 \right] \\
	& \qquad \cdot  \left( 1 + \frac{1}{2B^2 C_0 \sqrt{e}} u^2 \right)^{-\frac{1}{2} - \frac{n+1}{2} } u^{n-m-2} du d \xi \\
	& \quad + \int_{\gamma^2}^\infty \xi^{ \frac{m}{2} }	\int_{D_2(\xi) } exp \left[ - \frac{1}{4B^2} \left( e^{- \gamma^2/(2 \xi) } \sqrt{ \xi} - u \right)^2 \right]  \\
	& \qquad \cdot \left( 1 + \frac{1}{2B^2 C_0 \sqrt{e}} u^2 \right)^{-\frac{1}{2} - \frac{n+1}{2} } u^{n-m-2} du d \xi \\
	&= (I) + (II) 
	\end{aligned} \end{equation*}
	Observe that
	\begin{equation*} \begin{aligned}
		(I) &\le \int_{\gamma^2}^\infty \xi^{m/2} \int_{D_1} exp\left[ - \frac{1}{4B^2} \frac{1}{4} e^{- \gamma^2/\xi} \xi \right] u^{n-m-2} du d \xi \\
		&\le \int_{\gamma^2}^\infty \xi^{m/2} e^{- \frac{1}{16 B^2 e} \xi} \int_0^{C_0 \sqrt{\xi}} u^{n-m-2} du d\xi\\
		&\lesssim_{C_0, n, m} \int_{\gamma^2}^\infty \xi^{\frac{n}{2} - \frac{1}{2} } e^{- \frac{1}{16 B^2 e} \xi} d\xi && (\text{since } n-m-2>-1)\\
		&\le \int_0^\infty \xi^{\frac{n}{2} - \frac{1}{2} } e^{- \frac{1}{16 B^2 e} \xi} d\xi  < \infty
	\end{aligned} \end{equation*}
	
	To estimate $(II)$, we begin by observing that
	\begin{equation*} \begin{aligned}
		&|u - e^{-\gamma^2/(2\xi)} \sqrt{\xi} | \le \frac{1}{2} e^{- \gamma^2/(2\xi)} \sqrt{\xi} \\
		 \iff&   \frac{1}{2} e^{- \gamma^2/(2\xi)} \sqrt{\xi} \le u \le  \frac{3}{2} e^{- \gamma^2/(2\xi)} \sqrt{\xi} \\
		 \implies&  \frac{1}{2 \sqrt{e}}  \sqrt{\xi} \le u \le  \frac{3}{2} \sqrt{\xi}
	\end{aligned} \end{equation*}
	since $\gamma^2 \le \xi$.
	It follows that
	\begin{equation*} \begin{aligned}
		(II) &\le \int_{\gamma^2}^\infty \xi^{m/2} \int_{ \frac{1}{2 \sqrt{e}} \sqrt{\xi}}^{\frac{3}{2} \sqrt{\xi}} \left( 1 + \frac{1}{2 B^2 C_0 \sqrt{e}} u^2 \right)^{-\frac{1}{2} - \frac{n+1}{2} } u^{n-m-2} du d\xi \\
		&\lesssim_{n,m} \int_{\gamma^2}^\infty \xi^{\frac{n-1}{2}} \left( 1 + \frac{1}{2 B^2 C_0 \sqrt{e}} \frac{\xi}{4 e} \right)^{- \frac{1}{2} - \frac{n+1}{2} } d\xi\\
		&\le \int_{0}^\infty \xi^{\frac{n-1}{2}} \left( 1 + \frac{1}{2 B^2 C_0 \sqrt{e}} \frac{\xi}{4 e} \right)^{- \frac{1}{2} - \frac{n+1}{2} } d\xi \\
		&\lesssim_{p,q, C_0}  1 
	\end{aligned} \end{equation*}
\end{proof}

\begin{lem}
	For any $l < k + B^2 \lambda_k$ and  $\nu \in (0,1)$, there exists $\Upsilon_Z \gg 1$ sufficiently large (depending on $p,q,k,l$ and $\nu$) such that
	\begin{gather*}
		\left| \int_{\tau_0}^{\tau} e^{B^2 \cl{D}_Z(\tau - \ol{\tau})} \chi_{[e^{- \alpha_k \ol{\tau}}, \Upsilon_Z e^{-\alpha_k \ol{\tau}} ]}(\ol{\gamma}) \Upsilon_U^{2l}\ol{\gamma}^{-4k-4B^2 \lambda_k  - 2} e^{2B^2 \lambda_k \ol{\tau} } d \ol{\tau}\right|(\gamma) \\
		\le \nu e^{B^2 \lambda_k \tau} \gamma^{-2k - 2B^2 \lambda_k} \\
	\end{gather*}
	for all $0 \le \tau - \tau_0 \le 1$ and $\gamma \ge \Upsilon_Z e^{-\alpha \tau}$.
\end{lem}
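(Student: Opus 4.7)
The structure will mirror the analogous $U$--estimate proved just above, with lemma \ref{polyGrowthU} replaced by its $Z$--counterpart lemma \ref{polyGrowthZ}. The first step is to verify that the exponent $m = 4k + 4B^2\lambda_k$ satisfies the hypothesis $m < n-1$ of lemma \ref{polyGrowthZ}; indeed
$$4k + 4B^2 \lambda_k = n - 1 - \sqrt{(n-9)(n-1)} < n-1,$$
so lemma \ref{polyGrowthZ} will be applicable.

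Next, I will rewrite the characteristic function in a form directly amenable to lemma \ref{polyGrowthZ}. For $\tau_0 \le \ol{\tau} \le \tau \le \tau_0 + 1$ and $\gamma \ge \Upsilon_Z e^{-\alpha_k \tau}$, the upper endpoint of the support satisfies
$$\Upsilon_Z e^{- \alpha_k \ol{\tau}} \le e^{\alpha_k(\tau - \ol{\tau})} \cdot \Upsilon_Z e^{-\alpha_k \tau} \le e^{\alpha_k} \gamma,$$
so $\chi_{[e^{-\alpha_k \ol{\tau}}, \Upsilon_Z e^{-\alpha_k \ol{\tau}}]}(\ol{\gamma}) \le \chi_{[0, e^{\alpha_k} \gamma]}(\ol{\gamma})$. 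I will then pull out the constant $\Upsilon_U^{2l}$ and use $0 \le \ol{\tau} - \tau_0 \le 1$ to bound $e^{2B^2 \lambda_k \ol{\tau}} \lesssim e^{2B^2 \lambda_k \tau}$ (with an absolute constant that can be absorbed). Applying lemma \ref{polyGrowthZ} with $C_0 = e^{\alpha_k}$ and $m = 4k + 4B^2 \lambda_k$ yields
$$\left| \int_{\tau_0}^\tau e^{B^2 \cl{D}_Z(\tau - \ol\tau)} \chi_{[0, e^{\alpha_k}\gamma]}(\ol\gamma) \ol\gamma^{-4k - 4B^2\lambda_k - 2}\, d\ol\tau \right|(\gamma) \lesssim_{p,q,k} \gamma^{-4k - 4B^2 \lambda_k}.$$

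The final step is to use the lower bound on $\gamma$ to convert the excess decay in $\gamma$ into a smallness factor in $\Upsilon_Z$. Writing
$$\Upsilon_U^{2l} e^{2B^2 \lambda_k \tau} \gamma^{-4k - 4B^2 \lambda_k} = \Upsilon_U^{2l} \bigl( e^{B^2 \lambda_k \tau} \gamma^{-2k - 2B^2 \lambda_k}\bigr) \cdot \bigl( e^{B^2 \lambda_k \tau} \gamma^{-2k - 2B^2 \lambda_k}\bigr)$$
and invoking $\gamma \ge \Upsilon_Z e^{-\alpha_k \tau}$ together with the identity $B^2 \lambda_k + \alpha_k(2k + 2B^2 \lambda_k) = 0$ (built into the definition of $\alpha_k$ and already used in the $U$--version), the second parenthesized factor is bounded by $\Upsilon_Z^{-(2k + 2B^2 \lambda_k)}$. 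Since $2k + 2B^2 \lambda_k > 0$, choosing $\Upsilon_Z \gg 1$ sufficiently large depending on $p,q,k,l,\nu$ (and the already--fixed value of $\Upsilon_U$) gives the desired bound by $\nu\, e^{B^2 \lambda_k \tau} \gamma^{-2k - 2B^2 \lambda_k}$.

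The only nontrivial obstacle is verifying the hypothesis of lemma \ref{polyGrowthZ}, which is immediate from the explicit form of $4k + 4B^2 \lambda_k$; the rest of the argument is a mechanical transcription of the previous $U$--lemma, and the restriction $l < k + B^2 \lambda_k$ plays no essential role in this particular estimate (it is carried along for compatibility with the definition of $\mathcal{P}$).
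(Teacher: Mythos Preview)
Your argument follows the paper's proof almost exactly: replace the characteristic function by $\chi_{[0,e^{\alpha}\gamma]}$, apply lemma~\ref{polyGrowthZ} with $m = 4k+4B^2\lambda_k < n-1$, and then convert the extra factor $e^{B^2\lambda_k\tau}\gamma^{-2k-2B^2\lambda_k}$ into $\Upsilon_Z^{-(2k+2B^2\lambda_k)}$ using $\gamma \ge \Upsilon_Z e^{-\alpha_k\tau}$.

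The one point where you diverge from the paper is the final step, and your remark that ``the restriction $l < k + B^2\lambda_k$ plays no essential role'' is not quite right for the lemma \emph{as stated}. The lemma asserts that $\Upsilon_Z$ can be chosen depending only on $p,q,k,l,\nu$ --- \emph{not} on $\Upsilon_U$. Your version leaves the coefficient as $\Upsilon_U^{2l}\Upsilon_Z^{-(2k+2B^2\lambda_k)}$ and then absorbs $\Upsilon_U$ into the choice of $\Upsilon_Z$, which introduces exactly that forbidden dependence. The paper instead uses the standing assumption $\Upsilon_U < \Upsilon_Z$ to bound $\Upsilon_U^{2l}\Upsilon_Z^{-(2k+2B^2\lambda_k)} \le \Upsilon_Z^{2l-2k-2B^2\lambda_k}$, and it is precisely the hypothesis $l < k+B^2\lambda_k$ that makes this exponent negative and allows $\Upsilon_Z$ to be chosen independently of $\Upsilon_U$. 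So the hypothesis is essential for the stated dependency structure, and you should restore that last inequality rather than treating $\Upsilon_U$ as an independent parameter.
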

\begin{proof}
	\begin{equation*} \begin{aligned}
		& \quad \left| \int_{\tau_0}^{\tau} e^{B^2 \cl{D}_Z(\tau - \ol{\tau})} \chi_{[e^{- \alpha_k \ol{\tau}}, \Upsilon_Z e^{-\alpha_k \ol{\tau}} ]}(\ol{\gamma}) \Upsilon_U^{2l} \ol{\gamma}^{-4k-4B^2 \lambda_k  - 2} e^{2B^2 \lambda_k \ol{\tau} } d \ol{\tau}\right|(\gamma) \\
		&\le  \Upsilon_U^{2l} \left| \int_{\tau_0}^{\tau} e^{B^2 \cl{D}_Z(\tau - \ol{\tau})} \chi_{[0, e^\alpha \gamma ]}(\ol{\gamma}) \ol{\gamma}^{-4k-4B^2 \lambda_k  - 2} e^{2B^2 \lambda_k \ol{\tau} } d \ol{\tau}\right|(\gamma) \\	
		&\lesssim_{p,q,k}  \Upsilon_U^{2l} e^{2B^2 \lambda_k \tau} \gamma^{-4k -4B^2 \lambda_k}	&& (\text{lemma \ref{polyGrowthZ}}) \\
		&\le \Upsilon_U^{2l} \Upsilon_Z^{-2k-2B^2 \lambda_k} e^{B^2 \lambda_k \tau} \gamma^{-2k -2B^2 \lambda_k} \\
		&\le  \Upsilon_Z^{2l -2k -2B^2 \lambda_k} e^{B^2 \lambda_k \tau} \gamma^{-2k -2B^2 \lambda_k}		&& ( \Upsilon_U < \Upsilon_Z) \\
	\end{aligned} \end{equation*}
	Note that lemma \ref{polyGrowthZ} applies in this case since
		$$m = 4k + 4B^2 \lambda_k  = n-1 - \sqrt{(n-9)(n-1)} < n-1$$
\end{proof}

\begin{lem} \label{polyGrowthZBig}
	If $-2 < m < n-1$, then
	\begin{gather*}
		\left| \int_{\tau_0}^{\tau} e^{B^2 \cl{D}_Z (\tau - \ol{\tau})} \chi_{[4 \gamma, \infty)} (\ol{\gamma}) \ol{\gamma}^{-m-2} d\ol{\tau} \right| (\gamma) \lesssim_{p,q, m} \gamma^{-m} \\
	\text{for all } \quad 0 \le \tau - \tau_0 \le 1, \quad \gamma \ge \Upsilon_Z e^{- \alpha \tau} 
	\end{gather*}
\end{lem}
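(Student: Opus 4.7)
The plan is to follow the template of lemma \ref{polyGrowthUBig}, adapted to the $\cl{D}_Z$ semigroup by replacing proposition \ref{kernelU} with proposition \ref{kernelZ}. First I apply proposition \ref{kernelZ} to bound the kernel, which introduces an initial factor $\gamma^2$ (the natural $\cl{D}_Z$ weight at the origin, exactly as in lemma \ref{polyGrowthZ}) together with an integrand of the form $\ol{\gamma}^{n-m-2}$ multiplied by a Gaussian centered near $e^{-(\tau - \ol{\tau})/2}\gamma$ of width $\sqrt{\tau - \ol{\tau}}$, and an algebraic factor $(1 + c u^2)^{-(n+2)/2}$ which I simply bound above by $1$.

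On the region $\ol{\gamma} \ge 4 \gamma$ we have $|\ol{\gamma} - e^{-(\tau - \ol{\tau})/2}\gamma| \gtrsim \ol{\gamma} + \gamma$, so the Gaussian factor is dominated by a product $\exp[-c\gamma^2/(4B^2(\tau - \ol{\tau}))] \cdot \exp[-c\ol{\gamma}^2/(4B^2(\tau - \ol{\tau}))]$. This factorization is the structural heart of the estimate: the $\ol{\gamma}$-piece gives spatial decay, while the $\gamma$-piece supplies temporal decay for the outer integral.

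I then substitute $u = \ol{\gamma}/\sqrt{\tau - \ol{\tau}}$ to rewrite the inner integral as $(\tau - \ol{\tau})^{(n-m-1)/2} \int_{4\gamma/\sqrt{\tau - \ol{\tau}}}^\infty e^{-cu^2/(4B^2)} u^{n-m-2}\,du$. The hypothesis $m < n-1$ makes $u^{n-m-2}$ integrable at $u = 0$ and the Gaussian handles $u \to \infty$, so this factor is bounded by a constant depending only on $m,n$, uniformly in $\gamma$ and $\tau - \ol{\tau}$. Collecting powers of $\tau - \ol{\tau}$, the outer exponent simplifies to $(\tau - \ol{\tau})^{-2-m/2}$, and a second substitution $\xi = \gamma^2/(\tau - \ol{\tau})$ reduces the outer integral to $\gamma^{-m} \int_{\gamma^2/(\tau - \tau_0)}^\infty \xi^{m/2} e^{-c\xi/(4B^2)}\,d\xi$. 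The bound $m > -2$ ensures integrability at $\xi = 0$, so extending the lower limit to $0$ yields the desired $\gamma^{-m}$ bound with implicit constant depending on $p,q$ (through $B$) and $m$.

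I expect the only real obstacle to be purely bookkeeping --- correctly tracking the exponents of $\gamma$ and $\tau - \ol{\tau}$ through the two successive substitutions so that the final prefactor collapses to exactly $\gamma^{-m}$. Conceptually, this estimate is easier than its $\cl{D}_U$ counterpart \ref{polyGrowthUBig}: because the entire region $\ol{\gamma} \ge 4\gamma$ lies away from the Gaussian's peak $e^{-(\tau - \ol{\tau})/2}\gamma$, no decomposition of the $u$-integral into sets $D_1 \sqcup D_2$ is needed, and the Gaussian decay in $\ol{\gamma}$ is exploited unconditionally.
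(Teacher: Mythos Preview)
Your proposal is correct and follows essentially the same route as the paper's proof: apply proposition \ref{kernelZ}, factor the Gaussian on $\ol{\gamma}\ge 4\gamma$ into a $\gamma$-piece and an $\ol{\gamma}$-piece, substitute $u=\ol{\gamma}/\sqrt{\tau-\ol{\tau}}$ and then $\xi=\gamma^2/(\tau-\ol{\tau})$, and use $-2<m<n-1$ to control the resulting $du$ and $d\xi$ integrals. The only cosmetic difference is that you bound the $u$-integral by a uniform constant before the $\xi$-substitution, whereas the paper carries the lower limit $4\sqrt{\xi}$ through and bounds both integrals at the end; the arithmetic is identical.
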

\begin{proof}
	The proof is similar to that of lemma \ref{polyGrowthUBig}.
	\begin{equation*} \begin{aligned}
		& \quad \left| \int_{\tau_0}^{\tau} e^{B^2 \cl{D}_Z (\tau - \ol{\tau})} \chi_{[4 \gamma, \infty)} (\ol{\gamma}) \ol{\gamma}^{-m-2} d\ol{\tau} \right| \\
		&\lesssim_{p,q}  \gamma^{2} \int_{\tau_0}^\tau ( \tau - \ol{\tau} )^{-1 - \frac{n+1}{2} } \\
		&\quad  \cdot \int_{4 \gamma}^\infty H_Z \ol{\gamma}^{n-m-2} d \ol{\gamma} d \ol{\tau} 	
		&& (\text{proposition } \ref{kernelZ})\\
		&\le  \gamma^2 \int_{\tau_0}^\tau ( \tau - \ol{\tau} )^{-1 - \frac{n+1}{2} } e^{ -\frac{1}{4 B^2 } \frac{c \gamma^2}{\tau - \ol{\tau} }} \\
		& \quad \cdot \int_{4 \gamma}^\infty e^{-\frac{1}{4 B^2 } \frac{c \ol{\gamma}^2}{\tau - \ol{\tau}} } \ol{\gamma}^{n-m-2} d \ol{\gamma} d \ol{\tau} \\
		& \qquad  \left(\text{where } u = \ol{\gamma} ( \tau - \ol{\tau})^{-1/2} \right) \\
		&=  \gamma^2 \int_{\tau_0}^\tau ( \tau - \ol{\tau} )^{-2 - \frac{m}{2} } e^{ -\frac{1}{4 B^2 } \frac{c \gamma^2}{\tau - \ol{\tau} }} \\
		& \quad \cdot \int_{4 \gamma ( \tau - \ol{\tau})^{-1/2}}^\infty e^{ -\frac{1}{4 B^2 } c u^2 } u^{n-m-2} du d \ol{\tau} \\
		& \qquad (\text{where } \xi = \gamma^2/ (\tau - \ol{\tau}) ) \\
		&\lesssim  \gamma^{-m} \int_{\gamma^2}^\infty \xi^{m/2} e^{ -\frac{1}{4 B^2 }c \xi} \int_{4 \sqrt{\xi} }^\infty e^{- \frac{1}{4B^2} c u^2} u^{n-m-2} du d \xi \\
		&\lesssim_{p,q,m}  \gamma^{-m} 		&& (-2 < m < n-1)\\
	\end{aligned} \end{equation*}
\end{proof}

\begin{lem}
	For any $\nu \in (0,1)$, there exists $\Upsilon_Z \gg 1$ sufficiently large/small (depending on $p,q,k$ and $\nu$) such that
	\begin{gather*}
		\left| \int_{\tau_0}^{\tau} e^{B^2 \cl{D}_Z(\tau - \ol{\tau})} \chi_{[ \Upsilon_Z e^{- \alpha_k \ol{\tau}}, M e^{\beta \ol{\tau}} ]}(\ol{\gamma}) \ol{\gamma}^{-4k-4B^2 \lambda_k - 2} e^{2B^2 \lambda_k \ol{\tau}} d \ol{\tau}\right|(\gamma) \le \nu e^{B^2 \lambda_k \tau}  \gamma^{-2k - 2B^2 \lambda_k}  \\
		 \text{for all }  \quad 0 \le \tau - \tau_0 \le 1, \quad  \Upsilon_Z e^{-\alpha \tau}  \le \gamma \le M e^{\beta \tau}
	\end{gather*}
\end{lem}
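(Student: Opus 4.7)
The plan is to mirror the argument of lemma \ref{shortTimeErrUParabNeg} almost verbatim, swapping in the $Z$-growth lemmas \ref{polyGrowthZ} and \ref{polyGrowthZBig} for \ref{polyGrowthU} and \ref{polyGrowthUBig}.

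First I would split the integral at $\ol{\gamma} = 4\gamma$ and enlarge each truncated indicator, extending to $0$ on the left (inner piece) and to $\infty$ on the right (outer piece):
\begin{equation*}\begin{aligned}
&\left|\int_{\tau_0}^{\tau} e^{B^2 \cl{D}_Z(\tau - \ol{\tau})} \chi_{[\Upsilon_Z e^{-\alpha_k \ol{\tau}},\, M e^{\beta \ol{\tau}}]}(\ol{\gamma})\, \ol{\gamma}^{-4k - 4B^2 \lambda_k - 2} e^{2B^2 \lambda_k \ol{\tau}}\, d\ol{\tau}\right|(\gamma) \\
\le{}&\left|\int_{\tau_0}^{\tau} e^{B^2 \cl{D}_Z(\tau - \ol{\tau})} \chi_{[0,\, 4\gamma]}(\ol{\gamma})\, \ol{\gamma}^{-4k - 4B^2 \lambda_k - 2} e^{2B^2 \lambda_k \ol{\tau}}\, d\ol{\tau}\right|(\gamma) \\
&+\left|\int_{\tau_0}^{\tau} e^{B^2 \cl{D}_Z(\tau - \ol{\tau})} \chi_{[4\gamma,\infty)}(\ol{\gamma})\, \ol{\gamma}^{-4k - 4B^2 \lambda_k - 2} e^{2B^2 \lambda_k \ol{\tau}}\, d\ol{\tau}\right|(\gamma) \\
={}&(I) + (II).
\end{aligned}\end{equation*}

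Next I would verify that the exponent $m = 4k + 4B^2 \lambda_k$ lies in the admissible range of both growth lemmas. The identity $4k + 4B^2 \lambda_k = n-1 - \sqrt{(n-9)(n-1)}$ (already used in the previous lemma) gives $-2 < m < n-1$, so \ref{polyGrowthZ} applies to $(I)$ with $C_0 = 4$ and \ref{polyGrowthZBig} applies to $(II)$. Since $\ol{\tau} \in [\tau_0, \tau]$ with $\tau - \tau_0 \le 1$, the factor $e^{2B^2 \lambda_k \ol{\tau}}$ is comparable to $e^{2B^2 \lambda_k \tau}$ up to a universal constant and may be pulled outside the $d\ol{\tau}$-integral. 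Both pieces therefore satisfy
$$(I),\,(II) \;\lesssim_{p,q,k}\; e^{2B^2 \lambda_k \tau}\, \gamma^{-4k - 4B^2 \lambda_k}.$$

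To convert to the stated form, I would use the lower bound $\gamma \ge \Upsilon_Z e^{-\alpha_k \tau}$. The balancing identity between $\alpha_k$ and $\lambda_k$ (already implicit in the $U$-analogue) yields
$$e^{2B^2 \lambda_k \tau}\, \gamma^{-4k - 4B^2 \lambda_k} \;\lesssim\; \Upsilon_Z^{-2k - 2B^2 \lambda_k}\, e^{B^2 \lambda_k \tau}\, \gamma^{-2k - 2B^2 \lambda_k};$$
since $2k + 2B^2 \lambda_k > 0$, the $\Upsilon_Z$-prefactor is made smaller than $\nu$ by choosing $\Upsilon_Z \gg 1$ sufficiently large depending on $p,q,k,\nu$. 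There is no genuine obstacle beyond the range check on $m$: the argument is structurally identical to lemma \ref{shortTimeErrUParabNeg}, and the two invocations of the growth lemmas do all of the real work.
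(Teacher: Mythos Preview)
Your proposal is correct and follows essentially the same approach as the paper: the same split at $\ol{\gamma}=4\gamma$, the same invocations of lemmas \ref{polyGrowthZ} and \ref{polyGrowthZBig} with $m = 4k+4B^2\lambda_k = n-1-\sqrt{(n-9)(n-1)} \in (0,n-1)$, and the same conversion $e^{2B^2\lambda_k\tau}\gamma^{-4k-4B^2\lambda_k} \le \Upsilon_Z^{-2k-2B^2\lambda_k} e^{B^2\lambda_k\tau}\gamma^{-2k-2B^2\lambda_k}$ using $\gamma \ge \Upsilon_Z e^{-\alpha_k\tau}$.
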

\begin{proof}
	The proof is similar to the proof of lemma \ref{shortTimeErrUParabNeg}.
	We begin by splitting the integral
	\begin{equation*} \begin{aligned}
		& \left| \int_{\tau_0}^{\tau} e^{B^2 \cl{D}_Z(\tau - \ol{\tau})} \chi_{[ \Upsilon_Z e^{- \alpha_k \ol{\tau}}, M e^{\beta \ol{\tau}} ]}(\ol{\gamma}) \ol{\gamma}^{-4k-4B^2 \lambda_k - 2} e^{2B^2 \lambda_k \ol{\tau}} d \ol{\tau}\right|(\gamma) \\
		\le & \left| \int_{\tau_0}^{\tau} e^{B^2 \cl{D}_Z(\tau - \ol{\tau})} \chi_{[ 0, 4 \gamma ]}(\ol{\gamma}) \ol{\gamma}^{-4k-4B^2 \lambda_k - 2} e^{2B^2 \lambda_k \ol{\tau}} d \ol{\tau}\right|(\gamma) \\
		& + \left| \int_{\tau_0}^{\tau} e^{B^2 \cl{D}_Z(\tau - \ol{\tau})} \chi_{[ 4 \gamma, \infty)}(\ol{\gamma}) \ol{\gamma}^{-4k-4B^2 \lambda_k - 2} e^{2B^2 \lambda_k \ol{\tau}} d \ol{\tau}\right|(\gamma) \\
		=& (I) + (II)
	\end{aligned} \end{equation*}
	Lemmas \ref{polyGrowthZ} and \ref{polyGrowthZBig} imply
		$$(I), (II) \lesssim_{p,q,k} e^{2B^2 \lambda_k \tau} \gamma^{-4k -4B^2 \lambda_k} \le \Upsilon_Z^{-2k-2B^2 \lambda_k} e^{B^2 \lambda_k \tau} \gamma^{-2k -2B^2 \lambda_k}$$
	Note that here
		$$m = 4k + 4B^2 \lambda_k = n-1 - \sqrt{(n-9)(n-1)} \in (0, n-1)$$
	so the assumptions of lemmas \ref{polyGrowthZ} and \ref{polyGrowthZBig} hold.
\end{proof}

\begin{lem}
	For any $\nu \in (0,1)$, $M > 0$, and $\beta \in  \left(0, \frac{1}{2} \right)$, there exists $\tau_0 \gg 1$ sufficiently large (depending on $p,q,k, \beta, M, \nu$) such that
	\begin{gather*}
		\left| \int_{\tau_0}^{\tau} e^{B^2 \cl{D}_Z(\tau - \ol{\tau})} \chi_{[ \Upsilon_Z e^{- \alpha_k \ol{\tau}}, M e^{\beta \ol{\tau}} +1 ]}(\ol{\gamma}) \ol{\gamma}^{-4B^2 \lambda_k } e^{2B^2 \lambda_k \ol{\tau}} d \ol{\tau}\right|(\gamma) \\
		\le \nu e^{B^2 \lambda_k \tau} \left( \gamma^{-2k - 2B^2 \lambda_k}  + \gamma^{-2B^2 \lambda_k} \right) \\
	\end{gather*}
 for all $0 \le \tau - \tau_0 \le 1$ and  $\Upsilon_Z e^{-\alpha \tau} \le \gamma \le M e^{\beta \tau}$.
\end{lem}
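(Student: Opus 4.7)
The plan is to mirror the proof of the analogous $U$-lemma proved in the previous subsection (the one bounding $\bigl|\int_{\tau_0}^\tau e^{B^2 \cl{D}_U(\tau-\ol\tau)} \chi\,\ol\gamma^{-4B^2\lambda_k} e^{2B^2 \lambda_k \ol\tau} d\ol\tau\bigr|$), with the $Z$-maximal function bound \ref{maximalFuncBoundZ} playing the role of \ref{maximalFuncBoundU}. First, since $0 \le \tau-\tau_0 \le 1$, I would pull out $e^{2B^2\lambda_k\ol\tau} \lesssim_{p,q,k} e^{2B^2\lambda_k\tau_0}$ and apply lemma \ref{maximalFuncBoundZ} to reduce the integrand to the form $\gamma^2 e^{-(\tau-\ol\tau)}\cl{M}_Z(\chi\,\ol\gamma^{-4B^2\lambda_k})$, which after noting the monotonicity of $\chi \ol\gamma^{-4B^2\lambda_k}\ol\gamma^{-2}$ on the relevant range becomes
\begin{equation*}
\cl{M}_Z(\chi\,\ol\gamma^{-4B^2\lambda_k})(\gamma) = \frac{\int_\gamma^\infty \chi\,\ol\gamma^{-4B^2\lambda_k}\ol\gamma^{n} e^{-\ol\gamma^2/(4B^2)}\,d\ol\gamma}{\int_\gamma^\infty \ol\gamma^{n+2} e^{-\ol\gamma^2/(4B^2)}\,d\ol\gamma}.
\end{equation*}

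Next, I would invoke the asymptotics in lemma \ref{asympsOfExpInt} to bound this ratio. Since $-4B^2\lambda_k + 2k + 2B^2\lambda_k = 2k-2B^2\lambda_k > 0$, the ratio is dominated, up to constants depending on $p,q,k$, by $1 + \gamma^{2k-2B^2\lambda_k}$. Multiplying through by $\gamma^{-2k-2B^2\lambda_k}$ (from the $\cl{M}_Z$-to-semigroup identity) and integrating in $\ol\tau$ over an interval of length at most $1$ then gives a pointwise bound of the form
\begin{equation*}
C_{p,q,k}\,e^{2B^2\lambda_k\tau_0}\bigl(\gamma^{-2k-2B^2\lambda_k} + \gamma^{-4B^2\lambda_k}\bigr).
\end{equation*}

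Finally, the two terms are absorbed into the right-hand side. For the first term, I would write $e^{2B^2\lambda_k\tau_0} \le e^{B^2\lambda_k\tau_0}\cdot e^{B^2\lambda_k\tau}$ (up to a constant from $\tau-\tau_0\le1$), and note that $e^{B^2\lambda_k\tau_0}$ can be made $\le \nu/(2C)$ by choosing $\tau_0\gg 1$ depending on $p,q,k,\nu$. For the second term, I would exploit the spatial restriction $\gamma \le M e^{\beta\tau}$ together with $\lambda_k < 0$ to bound $\gamma^{-2B^2\lambda_k} \le M^{-2B^2\lambda_k} e^{-2\beta B^2\lambda_k \tau}$, producing an overall prefactor proportional to $M^{-2B^2\lambda_k}e^{(1-2\beta)B^2\lambda_k\tau_0}$ times $e^{B^2\lambda_k\tau}\gamma^{-2B^2\lambda_k}$; since $\lambda_k<0$ and $1-2\beta>0$, this prefactor tends to zero and can be made $\le \nu/2$ by taking $\tau_0$ large depending additionally on $\beta$ and $M$.

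The main obstacle is simply bookkeeping: getting the exponent $n-4B^2\lambda_k$ on $\ol\gamma$ in the numerator paired correctly with the $\ol\gamma^{n+2}$ denominator of $\cl{M}_Z$, and confirming that both the monotonicity condition needed for lemma \ref{maximalFuncBoundZ} and the positivity conditions needed to invoke lemma \ref{asympsOfExpInt} hold throughout the relevant $\gamma$-range $[\Upsilon_Z e^{-\alpha_k\tau}, M e^{\beta\tau}]$. Once this alignment with the $U$-argument is in place, the computation is essentially routine and the result follows by taking $\tau_0\gg 1$.
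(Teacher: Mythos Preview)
Your overall approach is correct and is exactly the one the paper takes: drop the indicator, apply the maximal function bound \ref{maximalFuncBoundZ}, use \ref{asympsOfExpInt} to control the resulting ratio of Gaussian integrals, and then absorb into the target using $\gamma\le M e^{\beta\tau}$ and $\tau_0\gg 1$. The final absorption step you describe matches the paper's line for line.

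However, the intermediate bookkeeping is off in two places, and the errors happen to conspire to give the right-looking answer for the wrong reason. First, you correctly state that \ref{maximalFuncBoundZ} produces the prefactor $\gamma^2 e^{-(\tau-\ol\tau)}$, but then a few lines later you write ``multiplying through by $\gamma^{-2k-2B^2\lambda_k}$ (from the $\cl M_Z$-to-semigroup identity)''. That $\gamma^{-2k-2B^2\lambda_k}$ is the $U$-prefactor; for $\cl D_Z$ the prefactor is $\gamma^2$. Second, the ratio
\[
\frac{\int_\gamma^\infty \ol\gamma^{\,n-4B^2\lambda_k} e^{-\ol\gamma^2/(4B^2)}\,d\ol\gamma}{\int_\gamma^\infty \ol\gamma^{\,n+2} e^{-\ol\gamma^2/(4B^2)}\,d\ol\gamma}
\]
is bounded by $1+\gamma^{-4B^2\lambda_k-2}$, not by $1+\gamma^{2k-2B^2\lambda_k}$; the latter is again the $U$-formula. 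With the correct prefactor and ratio one gets
\[
C_{p,q,k}\,e^{2B^2\lambda_k\tau_0}\bigl(\gamma^{2}+\gamma^{-4B^2\lambda_k}\bigr),
\]
which is what the paper obtains. To pass from $\gamma^2$ to the target $\gamma^{-2k-2B^2\lambda_k}+\gamma^{-2B^2\lambda_k}$ one then needs the observation, made explicitly in the paper, that for $n\ge 10$ the choice of $k$ forces $-4B^2\lambda_k\ge 2$; this is exactly the monotonicity condition you flagged but did not verify, and it is what lets $\gamma^2$ be absorbed (by $\gamma^{-2k-2B^2\lambda_k}$ for $\gamma\le 1$ and by $\gamma^{-4B^2\lambda_k}$ for $\gamma\ge 1$). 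Once these three points are corrected, your write-up is the paper's proof.
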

\begin{proof}
	Note that for $n \ge 10$, the choice of $k$ implies $-4B^2 \lambda_k \ge 2$.
	We use the maximal function bound from lemma \ref{maximalFuncBoundZ}.
	\begin{equation*} \begin{aligned}
		& \quad \left| \int_{\tau_0}^{\tau} e^{B^2 \cl{D}_Z(\tau - \ol{\tau})} \ol{\gamma}^{-4B^2 \lambda_k } e^{2B^2 \lambda_k \ol{\tau}} d \ol{\tau}\right|  \\
		&\lesssim_{p,q,k} e^{2B^2 \lambda \tau_0} \int_{\tau_0}^\tau \gamma^2 e^{- (\tau - \ol{\tau})}
		\frac{ \int_\gamma^\infty \ol{\gamma}^{-4B^2 \lambda - 2} \ol{\gamma}^{n+2} e^{-\ol{\gamma}^2 / (4B^2 )} d \ol{\gamma} }{ \int_\gamma^\infty  \ol{\gamma}^{n+2} e^{-\ol{\gamma}^2 / (4B^2 )} d \ol{\gamma} } d \ol{\tau}\\
		&\lesssim_{p,q,k}  e^{2B^2 \lambda \tau_0} \int_{\tau_0}^\tau \gamma^2 e^{- (\tau - \ol{\tau})} ( 1 + \gamma^{-4B^2 \lambda - 2} ) d \ol{\tau} 
		&& (\text{lemma }\ref{asympsOfExpInt})\\
		&\lesssim_{p,q,k}  e^{2B^2 \lambda \tau_0} (\gamma^2 + \gamma^{-4B^2 \lambda} ) \\
		&\lesssim  e^{B^2 \lambda \tau_0} e^{B^2 \lambda \tau} \gamma^{-2k -2B^2 \lambda} + M^{-2B^2 \lambda} e^{(1-2 \beta) B^2 \lambda \tau_0} e^{B^2 \lambda \tau} \gamma^{-2B^2 \lambda}
	\end{aligned} \end{equation*}
\end{proof}

We summarize the estimates in this subsection in the following statement:
\begin{lem} \label{summShortTimeEstErrZ}
	For any $\nu \in (0,1)$, there exists
		$\Upsilon_Z \gg 1$ sufficiently large depending on $p,q,k, l , \ul{D}, \nu$, and
		$\tau_0 \gg 1$ sufficiently large depending on $p,q,k, M , \beta, \ul{D}, \nu$
	such that $(\tl{Z}, \tl{U}) \in \mathcal{P}$ implies
	$$\left| \int_{\tau_0}^\tau e^{B^2 \mathcal{D}_Z ( \tau - \ol{\tau} ) } \grave{\chi} Err_Z(\ol{\tau} ) d \ol{\tau} \right| (\gamma) \le \nu e^{B^2 \lambda_k \tau} \left( \gamma^{-2k -2B^2 \lambda_k} + \gamma^{-2B^2 \lambda_k} \right)$$
	for all $\tau_0 \le \tau \le \tau_1 \le \tau_0 + 1$ and $\Upsilon_U e^{- \alpha_k \tau} \le \gamma \le M e^{ \beta \tau}$.
\end{lem}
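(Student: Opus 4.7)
The plan is to reduce this summary statement to the four regional estimates already established in this subsection. The function $\grave{\chi} Err_Z$ should be controlled pointwise on the parabolic region by a finite sum of model integrands, each matching the hypothesis of one of the preceding lemmas, after which linearity of the semigroup integral and a union-of-constants argument finishes.

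First, I would extract pointwise bounds on $|\grave{\chi} Err_Z|$ by unpacking the definition of $\mathcal{P}$ together with the explicit structure of $Err_Z$ (the quadratic and higher-order terms excluded from the linearization). The pointwise control on $\tl{Z}, \tl{U} \in \mathcal{P}$, combined with the localization provided by $\grave{\chi}$, should yield a decomposition
\[
|\grave{\chi} Err_Z|(\ol{\gamma}, \ol{\tau}) \lesssim_{p,q,k, \ul{D}} \sum_j F_j(\ol{\gamma}, \ol{\tau}),
\]
where each $F_j$ is supported in exactly one of the three characteristic subregions $(0, e^{-\alpha_k \ol{\tau}})$, $[e^{-\alpha_k \ol{\tau}}, \Upsilon_Z e^{-\alpha_k \ol{\tau}}]$, or $[\Upsilon_Z e^{-\alpha_k \ol{\tau}}, M e^{\beta \ol{\tau}}+1]$, and has exactly one of the algebraic shapes $\ol{\gamma}^{-2}$, $\Upsilon_U^{2l} \ol{\gamma}^{-4k - 4B^2 \lambda_k - 2} e^{2B^2 \lambda_k \ol{\tau}}$, $\ol{\gamma}^{-4k - 4B^2 \lambda_k - 2} e^{2B^2 \lambda_k \ol{\tau}}$, or $\ol{\gamma}^{-4B^2 \lambda_k} e^{2B^2 \lambda_k \ol{\tau}}$ treated in the four preceding lemmas.

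Second, by linearity of $\int_{\tau_0}^\tau e^{B^2 \mathcal{D}_Z (\tau - \ol{\tau})} \cdot \, d\ol{\tau}$ and the triangle inequality, the contribution of each $F_j$ matches the left-hand side of one of those lemmas. I would invoke each with tolerance $\nu / N$ in place of $\nu$, where $N$ is the number of decomposition pieces, and then choose $\Upsilon_Z$ large depending on $p,q,k,l,\ul{D},\nu$ and $\tau_0$ large depending additionally on $M, \beta$, so that all the smallness conditions are satisfied simultaneously. Summing the outputs produces exactly the claimed bound $\nu e^{B^2 \lambda_k \tau}\bigl(\gamma^{-2k - 2B^2 \lambda_k} + \gamma^{-2B^2 \lambda_k}\bigr)$, and the stated parameter dependencies are inherited from those of the individual lemmas.

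The main obstacle is the pointwise bookkeeping in step one: one must inspect the explicit form of $Err_Z$ (which quadratic combinations of $\tl{Z}$, $\tl{U}$ appear, with what $\gamma$-dependence and what lower-order prefactors) and verify that the bounds available from $(\tl{Z}, \tl{U}) \in \mathcal{P}$ dominate every such contribution by one of the four model shapes on the appropriate region, up to constants depending only on $p, q, k, \ul{D}$. In particular, cross terms mixing innermost-region decay of $\tl{U}$ against parabolic-region behavior of $\tl{Z}$ must be shown not to escape the three-region partition; once that domination is in place, the rest of the argument is a direct reduction to the four lemmas already proved.
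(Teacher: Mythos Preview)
Your proposal is correct and matches the paper's intent exactly: the lemma is stated explicitly as a summary of the four preceding regional estimates in the subsection, with no separate proof given, so the argument is precisely the decomposition-and-sum you describe. The only bookkeeping you flag as an obstacle---dominating $|\grave{\chi} Err_Z|$ by the four model shapes on the three subregions---is implicit in how those four lemmas were set up, and the paper treats it as already done.
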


\subsection{Estimates for $\mathcal{N}$ Contributions}

\begin{lem}
	For any $l$ and any $\nu \in (0,1)$, there exists
	$$\left( \frac{ \Upsilon_U^{1 + \frac{l}{n-2k-2B^2 \lambda_k -1}}}{\Upsilon_Z} \right) \ll 1$$
	sufficiently small (depending on $p,q,k,l, \nu$) such that
	\begin{gather*}
		\left| \int_{\tau_0}^{\tau} e^{B^2 \cl{D}_Z(\tau - \ol{\tau})} \chi_{[ 0, \Upsilon_U e^{- \alpha_k \ol{\tau}} ]}(\ol{\gamma}) \Upsilon_U^l \ol{\gamma}^{-2k-2B^2 \lambda_k-2 } e^{B^2 \lambda_k \ol{\tau}} d \ol{\tau}\right|(\gamma) \le \nu e^{B^2 \lambda_k \tau}  \gamma^{-2k - 2B^2 \lambda_k}  \\
		\text{for all }  \quad 0 \le \tau - \tau_0 \le 1, \quad \Upsilon_Z e^{-\alpha \tau} \le \gamma \le M e^{\beta \tau}
	\end{gather*}
\end{lem}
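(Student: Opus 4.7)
The plan is to reduce the estimate to an application of lemma \ref{polyGrowthZ}, revisited to track the dependence on the effective scale constant. First I would pull out the factor $\Upsilon_U^l e^{B^2 \lambda_k \tau_0}$ (using that on the short interval $\tau - \tau_0 \le 1$ one has $e^{B^2\lambda_k \ol{\tau}} \lesssim e^{B^2\lambda_k\tau_0}$ with constant depending only on $\lambda_k$), reducing the problem to bounding
\begin{equation*}
    \left| \int_{\tau_0}^{\tau} e^{B^2 \cl{D}_Z(\tau - \ol{\tau})} \chi_{[0,\Upsilon_U e^{-\alpha_k \ol{\tau}}]}(\ol{\gamma}) \ol{\gamma}^{-m-2} d\ol{\tau} \right|(\gamma), \qquad m \doteqdot 2k + 2B^2 \lambda_k.
\end{equation*}
Since $\gamma \ge \Upsilon_Z e^{-\alpha_k \tau}$, $\ol{\tau} \in [\tau_0, \tau]$, and $\tau - \tau_0 \le 1$, the support of the cutoff is contained in $[0, C_0 \gamma]$ for $C_0 \doteqdot e^{\alpha_k} \Upsilon_U/\Upsilon_Z$, which is small under the hypothesis of the lemma. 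Moreover $m < n-1$ for $n \ge 10$, so the hypotheses of \ref{polyGrowthZ} are satisfied.

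Next I would revisit the proof of \ref{polyGrowthZ} with the $C_0$-dependence tracked explicitly. Using the same changes of variables $u = \ol{\gamma}/\sqrt{\tau-\ol{\tau}}$ and $\xi = \gamma^2/(\tau-\ol{\tau})$ and the same splitting into $D_1(\xi)$ and $D_2(\xi)$, the $D_1$-integral produces a factor $\int_0^{C_0\sqrt{\xi}} u^{n-m-2} du \lesssim C_0^{n-m-1} \xi^{(n-m-1)/2}$, which propagates to an overall contribution of size $C_0^{n-m-1}\gamma^{-m}$. The $D_2$-set is by definition contained in $\{u : u \ge (2\sqrt{e})^{-1}\sqrt{\xi}\}$, so compatibility with the support constraint $u \le C_0\sqrt{\xi}$ forces $C_0 \ge (2\sqrt{e})^{-1}$. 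Hence taking $\Upsilon_U/\Upsilon_Z$ small enough that $C_0 < (2\sqrt{e})^{-1}$ makes $D_2(\xi)$ empty, eliminating its contribution entirely.

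Combining these pieces, the left-hand side is bounded by
\begin{equation*}
    \Upsilon_U^l e^{B^2 \lambda_k \tau_0} C_0^{n-m-1} \gamma^{-m} \lesssim_{p,q,k} \left( \frac{\Upsilon_U^{1 + l/(n-m-1)}}{\Upsilon_Z} \right)^{n-m-1} e^{B^2 \lambda_k \tau} \gamma^{-2k - 2B^2 \lambda_k},
\end{equation*}
and since $n - m - 1 = n - 2k - 2B^2 \lambda_k - 1 > 0$, the stated smallness hypothesis delivers the $\nu$-bound. The main obstacle is the careful bookkeeping to justify omitting the $D_2$ contribution: using \ref{polyGrowthZ} as a black box would pick up an additional term of size $C_0^{(n+1)/2}$ from $D_2$ which, for $n \ge 10$, dominates $C_0^{n-m-1}$ as $C_0 \to 0$ and would force a strictly stronger smallness condition than the one asserted. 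The observation that $D_2$ is \emph{empty} (rather than merely giving a worse but still small contribution) in the small-$C_0$ regime is precisely what allows the stated exponent $1 + l/(n-2k-2B^2\lambda_k-1)$.
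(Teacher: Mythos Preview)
Your proposal is correct and follows essentially the same approach as the paper. Both arguments pull out the $\Upsilon_U^l e^{B^2\lambda_k\tau}$ prefactor, apply the kernel bound from proposition \ref{kernelZ}, perform the same substitutions $u=\ol{\gamma}/\sqrt{\tau-\ol{\tau}}$ and $\xi=\gamma^2/(\tau-\ol{\tau})$, and exploit the separation $\ol{\gamma}\le \Upsilon_U e^{-\alpha_k\ol{\tau}}\ll \Upsilon_Z e^{-\alpha_k\tau}\le\gamma$ to extract the factor $(\Upsilon_U/\Upsilon_Z)^{n-2k-2B^2\lambda_k-1}$. The only cosmetic difference is that you phrase the separation as ``$D_2(\xi)=\emptyset$ when $C_0<(2\sqrt{e})^{-1}$'' (mirroring what the paper does in the analogous $\cl{D}_U$ lemma immediately after \ref{polyGrowthU}), whereas the paper's proof of this particular lemma instead bounds $H_Z\le\exp[-c\gamma^2/(4B^2(\tau-\ol{\tau}))]$ directly under the WLOG assumption $2e^{\alpha}\le\Upsilon_Z/\Upsilon_U$; these are two ways of saying the same thing.
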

\begin{proof}
	By proposition \ref{kernelZ},
	\begin{equation*} \begin{aligned}
		& \left| \int_{\tau_0}^{\tau} e^{B^2 \cl{D}_Z(\tau - \ol{\tau})} \chi_{[ 0, \Upsilon_U e^{- \alpha_k \ol{\tau}} ]}(\ol{\gamma}) \Upsilon_U^l\ol{\gamma}^{-2k-2B^2 \lambda_k-2 } e^{B^2 \lambda_k \ol{\tau}} d \ol{\tau} \right|   \\
		\lesssim_{p,q,k} & \Upsilon_U^l e^{B^2 \lambda_k \tau} \gamma^2 \int_{\tau_0}^\tau (\tau - \ol{\tau})^{-1 - (n+1)/2}
		\int_0^{\Upsilon_U e^{- \alpha \ol{\tau}}} H_Z  \ol{\gamma}^{n-2k-2B^2 \lambda - 2} d \ol{\gamma} d \ol{\tau} 
	\end{aligned} \end{equation*}
	Without loss of generality, say $2 e^{\alpha} \le \frac{ \Upsilon_Z}{\Upsilon_U}$.
	Then there exists a universal constant $c> 0$ such that this quantity can in turn be estimated by
	\begin{equation*} \begin{aligned} 
		&\le \Upsilon_U^l e^{B^2 \lambda_k \tau} \gamma^2 
		\int_{\tau_0}^\tau (\tau - \ol{\tau})^{-1 - (n+1)/2} exp \left[ - \frac{c}{4B^2} \frac{ \gamma^2}{\tau  - \ol{\tau} } \right] \\
		& \quad \cdot \int_0^{\Upsilon_U e^{- \alpha \ol{\tau} } } \ol{\gamma}^{n - 2k -2B^2 \lambda -2} d \ol{\gamma} d \ol{\tau} \\
		&= \Upsilon_U^l e^{B^2 \lambda \tau} \gamma^2
		\int_{\tau_0}^\tau ( \tau - \ol{\tau})^{- 1 - (n+1)/2 + n/2 - k - B^2 \lambda - 1 + 1/2} exp \left[ - \frac{c}{4B^2} \frac{ \gamma^2}{\tau  - \ol{\tau} } \right] \\
		& \quad \cdot  \int_0^{\Upsilon_U e^{- \alpha \ol{\tau} } / \sqrt{ \tau - \ol{\tau} } } u^{n - 2k -2 B^2 \lambda - 2} du d \ol{\tau} \\
		& \qquad \left( \text{where } u = \ol{\gamma} ( \tau - \ol{\tau} )^{-1/2} \right) \\
		&\le  \Upsilon_U^l e^{B^2 \lambda \tau} \gamma^2
		\int_{\tau_0}^\tau ( \tau - \ol{\tau})^{-k - B^2 \lambda - 2 } exp \left[ - \frac{c}{4B^2} \frac{ \gamma^2}{\tau  - \ol{\tau} } \right] \\
		& \quad \cdot \int_0^{\Upsilon_U e^{- \alpha \tau_0 } / \sqrt{ \tau - \ol{\tau} } } u^{n - 2k -2 B^2 \lambda - 2} du d \ol{\tau} \\
		&\lesssim  \Upsilon_U^l e^{B^2 \lambda \tau} \gamma^2
		\int_{\gamma^2/ (\tau - \tau_0)}^\infty \left( \frac{ \gamma^2}{\xi} \right)^{-k - B^2 \lambda - 2} exp \left[ - \frac{ c}{4B^2} \xi \right] \frac{ \gamma^2}{\xi^2} \\
		& \quad \cdot \int_0^{ e^\alpha \frac{ \Upsilon_U}{\Upsilon_Z} \sqrt{\xi} } u^{n-2k -2B^2 \lambda - 2} du d \xi \\
		& \qquad (\text{where } \xi = \gamma^2 / ( \tau - \ol{\tau} ) ) \\
		&\le  \Upsilon_U^l e^{B^2 \lambda \tau}  \gamma^{-2k -2B^2 \lambda }
		\int_{\gamma^2}^\infty \xi^{k + B^2 \lambda} exp \left[ - \frac{ c}{4B^2} \xi \right] 
		\int_0^{ e^{\alpha} \frac{ \Upsilon_U}{\Upsilon_Z} \sqrt{\xi} } u^{n-2k -2B^2 \lambda - 2} du d \xi \\
		&\lesssim_{p,q,k} \Upsilon_U^l e^{B^2 \lambda \tau} \gamma^{-2k -2B^2 \lambda } \left( \frac{ \Upsilon_U}{\Upsilon_Z} \right)^{n - 2k -2B^2 \lambda -1}
		\int_{\gamma^2}^\infty \xi^{\frac{n}{2} -1} exp \left[ - \frac{c}{4B^2} \xi \right] d \xi \\
		&\lesssim_{p,q,k}  \Upsilon_U^l e^{B^2 \lambda \tau} \gamma^{-2k -2B^2 \lambda } \left( \frac{ \Upsilon_U}{\Upsilon_Z} \right)^{n - 2k -2B^2 \lambda -1}\\
		&=  e^{B^2 \lambda \tau} \gamma^{-2k -2B^2 \lambda } \left( \frac{ \Upsilon_U^{1 + \frac{l}{n-2k-2B^2 \lambda -1}}}{\Upsilon_Z} \right)^{n - 2k -2B^2 \lambda -1}
	\end{aligned} \end{equation*}
	Note that
		$$n - 2k -2B^2 \lambda -1 = \frac{ n+1}{2} + \frac{1}{2} \sqrt{ (n-9)(n-1) } - 1 > 0$$
	so that this quantity can be made arbitrarily small by making 
	$$\frac{ \Upsilon_U^{1 + \frac{l}{n-2k-2B^2 \lambda -1}}}{\Upsilon_Z} \ll 1$$
\end{proof}

\begin{lem}
	For any $\nu \in (0,1)$, there exists $\eta_1^U \ll 1$ sufficiently small (depending on $p,q,k, \nu$) such that
	\begin{gather*}
		\left| \int_{\tau_0}^{\tau} e^{B^2 \cl{D}_Z(\tau - \ol{\tau})} \chi_{[ \Upsilon_U e^{- \alpha_k \ol{\tau}} , M e^{\beta \ol{\tau}} ]}(\ol{\gamma})  \eta_1^U  \ol{\gamma}^{-2k-2B^2 \lambda_k-2 }  e^{B^2 \lambda_k \ol{\tau}} d \ol{\tau}\right|(\gamma) \le \nu e^{B^2 \lambda_k \tau}  \gamma^{-2k - 2B^2 \lambda_k}  \\
		\text{for all }  \quad 0 \le \tau - \tau_0 \le 1, \quad \Upsilon_Z e^{-\alpha \tau} \le \gamma \le M e^{\beta \tau}
	\end{gather*}
\end{lem}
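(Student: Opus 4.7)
The plan is to follow the same two-region splitting strategy used in the preceding \lq\lq$ErrZ$\rq\rq\ lemmas and reduce the estimate to the polynomial-growth bounds \ref{polyGrowthZ} and \ref{polyGrowthZBig}. First, I factor the constants $\eta_1^U$ and $e^{B^2 \lambda_k \ol{\tau}}$ through, bounding $e^{B^2 \lambda_k \ol{\tau}} \le e^{B^2 \lambda_k \tau}$ since $\ol{\tau} \le \tau$ on the domain of integration. This reduces the task to estimating
$$\eta_1^U\, e^{B^2 \lambda_k \tau} \left| \int_{\tau_0}^{\tau} e^{B^2 \cl{D}_Z(\tau - \ol{\tau})} \chi_{[ \Upsilon_U e^{- \alpha_k \ol{\tau}} , M e^{\beta \ol{\tau}} ]}(\ol{\gamma}) \ol{\gamma}^{-2k-2B^2 \lambda_k-2 } \, d \ol{\tau}\right|(\gamma).$$

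Second, I split the indicator function into two pieces: $\chi_{[\Upsilon_U e^{-\alpha_k \ol{\tau}}, 4\gamma]}(\ol{\gamma})$, which I dominate by $\chi_{[0,4\gamma]}(\ol{\gamma})$, and $\chi_{[4\gamma, M e^{\beta \ol{\tau}}]}(\ol{\gamma})$, which I dominate by $\chi_{[4\gamma, \infty)}(\ol{\gamma})$. On each piece the integrand is a pure negative power $\ol{\gamma}^{-m-2}$ with $m = 2k + 2B^2 \lambda_k$.

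Third, I verify that this exponent falls in the range required by both polynomial-growth lemmas. Since
$$m = 2k + 2B^2 \lambda_k = \tfrac{n-1}{2} - \tfrac{1}{2}\sqrt{(n-9)(n-1)},$$
we have $0 < m < n-1$, so the hypothesis $m < n-1$ of lemma \ref{polyGrowthZ} is satisfied and also the hypothesis $-2 < m < n-1$ of lemma \ref{polyGrowthZBig}. Applying lemma \ref{polyGrowthZ} with $C_0 = 4$ to the first piece and lemma \ref{polyGrowthZBig} to the second piece yields, for each piece,
$$\left| \int_{\tau_0}^\tau e^{B^2 \cl{D}_Z(\tau-\ol{\tau})} \chi(\ol{\gamma}) \ol{\gamma}^{-m-2} d\ol{\tau} \right|(\gamma) \lesssim_{p,q,k} \gamma^{-m} = \gamma^{-2k - 2B^2 \lambda_k}.$$
Combining the two pieces and reinserting the prefactors gives the bound $\lesssim_{p,q,k} \eta_1^U\, e^{B^2 \lambda_k \tau}\, \gamma^{-2k - 2B^2 \lambda_k}$, so taking $\eta_1^U \ll 1$ sufficiently small (depending on $p,q,k,\nu$) completes the proof.

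There is no real obstacle here---the only thing to watch is the exponent check for the polynomial-growth lemmas, which is clean because $m = 2k + 2B^2 \lambda_k$ automatically lies in the admissible range for dimensions $n \ge 10$. Note that the lower endpoint $\Upsilon_U e^{-\alpha_k \ol{\tau}}$ of the indicator plays no role in the estimate and is simply discarded by enlarging to $[0, 4\gamma]$; the smallness of the contribution comes entirely from the prefactor $\eta_1^U$, which is why (in contrast to the analogous $Err_Z$ lemmas) no largeness of $\Upsilon_Z$ or $\tau_0$ is required.
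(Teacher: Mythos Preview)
Your approach is essentially identical to the paper's: split at $4\gamma$, dominate by $\chi_{[0,4\gamma]}$ and $\chi_{[4\gamma,\infty)}$, and invoke lemmas \ref{polyGrowthZ} and \ref{polyGrowthZBig} with $m=2k+2B^2\lambda_k$. The exponent check and the concluding absorption into $\eta_1^U$ are exactly what the paper does.

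One small slip: you write $e^{B^2\lambda_k\ol\tau}\le e^{B^2\lambda_k\tau}$ ``since $\ol\tau\le\tau$''. In this setting $\lambda_k<0$ (cf.\ the paper's use of $-4B^2\lambda_k\ge 2$ for $n\ge 10$), so that inequality goes the wrong way. The fix is harmless: since $0\le\tau-\ol\tau\le 1$ one has $e^{B^2\lambda_k\ol\tau}\le e^{-B^2\lambda_k}\,e^{B^2\lambda_k\tau}$, and the extra factor $e^{-B^2\lambda_k}$ is absorbed into the $\lesssim_{p,q,k}$ constant. With that correction your argument matches the paper's.
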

\begin{proof}
	\begin{equation*} \begin{aligned}
		& \left| \int_{\tau_0}^{\tau} e^{B^2 \cl{D}_Z(\tau - \ol{\tau})} \chi_{[ \Upsilon_U e^{- \alpha_k \ol{\tau}} , M e^{\beta \ol{\tau}} ]}(\ol{\gamma})  \eta_1^U  \ol{\gamma}^{-2k-2B^2 \lambda_k-2 }  e^{B^2 \lambda_k \ol{\tau}} d \ol{\tau}\right|(\gamma) \\
		\le & \left| \int_{\tau_0}^{\tau} e^{B^2 \cl{D}_Z(\tau - \ol{\tau})} \chi_{[ 0, 4 \gamma ]}(\ol{\gamma})  \eta_1^U  \ol{\gamma}^{-2k-2B^2 \lambda_k-2 }  e^{B^2 \lambda_k \ol{\tau}} d \ol{\tau}\right|(\gamma) \\
		& + \left| \int_{\tau_0}^{\tau} e^{B^2 \cl{D}_Z(\tau - \ol{\tau})} \chi_{[ 4 \gamma, \infty )}(\ol{\gamma})  \eta_1^U  \ol{\gamma}^{-2k-2B^2 \lambda_k-2 }  e^{B^2 \lambda_k \ol{\tau}} d \ol{\tau}\right|(\gamma) \\
		=& (I) + (II)
	\end{aligned} \end{equation*}
	By lemmas \ref{polyGrowthZ} and \ref{polyGrowthZBig},
		$$(I), (II) \lesssim_{p,q,k} \eta_1^U e^{B^2 \lambda_k \tau} \gamma^{-2k -2B^2 \lambda_k}$$
\end{proof}

\begin{lem}
	For any $\nu \in (0,1)$, there exists $\eta_1^U \ll 1$ sufficiently small (depending on $p,q,k, \nu$) such that
	\begin{gather*}
		\left| \int_{\tau_0}^{\tau} e^{B^2 \cl{D}_Z(\tau - \ol{\tau})} \chi_{[ \Upsilon_U e^{- \alpha_k \ol{\tau}} , M e^{\beta \ol{\tau}} ]}(\ol{\gamma})  \eta_1^U  \ol{\gamma}^{-2B^2 \lambda_k-1 }  e^{B^2 \lambda_k \ol{\tau}} d \ol{\tau}\right|(\gamma) \\
		\le \nu e^{B^2 \lambda_k \tau}  \left( \gamma^{-2k - 2B^2 \lambda_k} + \gamma^{-2B^2 \lambda_k} \right)  
	\end{gather*}
	 for all  $ 0 \le \tau - \tau_0 \le 1$ and  $\Upsilon_Z e^{-\alpha \tau} \le \gamma \le M e^{\beta \tau}$.
\end{lem}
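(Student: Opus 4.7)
The argument parallels that of the immediately preceding lemma, which handled the integrand $\ol\gamma^{-4B^2\lambda_k}e^{2B^2\lambda_k\ol\tau}$.  The current integrand $\eta_1^U\ol\gamma^{-2B^2\lambda_k - 1}e^{B^2\lambda_k\ol\tau}$ carries one fewer power of $\ol\gamma^{-2B^2\lambda_k}$ (together with one extra $\ol\gamma^{-1}$) and only one (rather than two) factors of $e^{B^2\lambda_k\ol\tau}$, reflecting the linear-in-$Z$ structure of the error term being estimated.  For the relevant range of $k$ one has $-2B^2\lambda_k \ge 1$, so $\ol\gamma^{-2B^2\lambda_k - 3} = \ol\gamma^{-2B^2\lambda_k - 1}\,\ol\gamma^{-2}$ is nondecreasing in $\ol\gamma$.

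First I would apply the maximal function bound from lemma \ref{maximalFuncBoundZ} to the semigroup action on the cutoff integrand, using the nondecreasing form of $\cl{M}_Z$ (an integral from $\gamma$ to $\infty$).  Combined with lemma \ref{asympsOfExpInt} to evaluate the resulting Gaussian moments, this yields
$$\cl{M}_Z\left( \ol\gamma^{-2B^2\lambda_k - 1} \chi_{[\Upsilon_U e^{-\alpha_k\ol\tau},\, Me^{\beta\ol\tau}]} \right)(\gamma) \lesssim_{p,q,k} 1 + \gamma^{-2B^2\lambda_k - 1}.$$
Multiplying by $\gamma^2 e^{-(\tau - \ol\tau)}$, integrating against $\eta_1^U e^{B^2\lambda_k\ol\tau}\, d\ol\tau$ over $[\tau_0,\tau]$, and pulling $e^{B^2\lambda_k\tau_0}$ out using $\tau - \tau_0 \le 1$, the overall bound becomes
$$\lesssim_{p,q,k} \eta_1^U\, e^{B^2\lambda_k\tau_0} \left( \gamma^2 + \gamma^{-2B^2\lambda_k + 1} \right).$$

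Second, I would match this against the target $\nu\, e^{B^2\lambda_k\tau}(\gamma^{-2k - 2B^2\lambda_k} + \gamma^{-2B^2\lambda_k})$.  Using $\gamma \le Me^{\beta\tau}$, the $\gamma^2$ piece splits exactly as in the preceding lemma into a multiple of $e^{B^2\lambda_k\tau}\gamma^{-2k - 2B^2\lambda_k}$ plus $M^{-2B^2\lambda_k}e^{(1 - 2\beta)B^2\lambda_k\tau_0}e^{B^2\lambda_k\tau}\gamma^{-2B^2\lambda_k}$.  The piece $\gamma^{-2B^2\lambda_k + 1}$ factors as $\gamma \cdot \gamma^{-2B^2\lambda_k}$ and is bounded by $Me^{\beta\tau}\gamma^{-2B^2\lambda_k}$, giving an extra prefactor of $Me^{\beta\tau - B^2\lambda_k(\tau - \tau_0)}$ relative to the target.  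Because $\tau - \tau_0 \le 1$, every such prefactor is bounded uniformly in $\tau_0$ and depends only on $p,q,k,M,\beta$.

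Finally, choose $\eta_1^U$ small enough, depending on $p,q,k,\nu$, to absorb all of these constants into the factor $\nu$.  The main obstacle will be step two: one must verify that trading the extra $\gamma$ in $\gamma^{-2B^2\lambda_k + 1}$ against the desired $\gamma^{-2B^2\lambda_k}$ does not introduce $\tau_0$-dependence in the choice of $\eta_1^U$.  This works because the short-time constraint $\tau - \tau_0 \le 1$ keeps the competing $e^{\beta\tau}$ factor comparable to $e^{\beta\tau_0}$, which in turn is controlled against $e^{B^2\lambda_k\tau}$ on the right-hand side through the explicit algebraic relation between $\beta$ and $B^2\lambda_k$ built into the previous lemma's bookkeeping.
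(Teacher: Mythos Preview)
There are two genuine gaps in your argument.

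\textbf{First, the monotonicity claim is insufficient.} You assert that $-2B^2\lambda_k \ge 1$ makes $\ol\gamma^{-2B^2\lambda_k - 3}$ nondecreasing, but nondecreasing requires the exponent to be nonnegative, i.e.\ $-2B^2\lambda_k \ge 3$. The paper does not assume this holds for all admissible $k$: it splits into the case $-2B^2\lambda_k - 1 \ge 2$ (where the ``integral from $\gamma$ to $\infty$'' form of the maximal function applies) and the case $-2B^2\lambda_k - 1 < 2$ (where the ``integral from $0$ to $\gamma$'' form is used instead). You have only treated one case, and with an incorrect threshold.

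\textbf{Second, and more seriously, there is an algebra slip that derails step two.} After the maximal function and multiplying by $\gamma^2$, the correct intermediate bound is
\[
\eta_1^U\, e^{B^2\lambda_k\tau}\bigl(\gamma^2 + \gamma^{-2B^2\lambda_k - 1}\bigr),
\]
not $\gamma^{-2B^2\lambda_k + 1}$. With the correct exponent, the comparison to the target is immediate: since $k \ge 1$ one has $-2k - 2B^2\lambda_k \le -2B^2\lambda_k - 1 \le -2B^2\lambda_k$, so $\gamma^{-2B^2\lambda_k - 1} \lesssim \gamma^{-2k-2B^2\lambda_k} + \gamma^{-2B^2\lambda_k}$, and (in the case $-2B^2\lambda_k \ge 3$) also $\gamma^2 \lesssim \gamma^{-2k-2B^2\lambda_k} + \gamma^{-2B^2\lambda_k}$. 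This is exactly what the paper does, yielding a constant depending only on $p,q,k$, so $\eta_1^U$ small depending only on $p,q,k,\nu$ suffices.

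Your version instead tries to absorb a spurious extra factor of $\gamma$ via $\gamma \le M e^{\beta\tau}$. That introduces a prefactor $M e^{\beta\tau} \sim M e^{\beta\tau_0}$, which is \emph{not} bounded uniformly in $\tau_0$ (it grows without bound), and there is no ``algebraic relation between $\beta$ and $B^2\lambda_k$'' in the paper that would save this: $\beta \in (0,1/2)$ is a free parameter while $\lambda_k < 0$, so $e^{\beta\tau_0}$ and $e^{B^2\lambda_k\tau}$ move in opposite directions. Consequently your choice of $\eta_1^U$ would be forced to depend on $\tau_0$, contradicting the lemma's stated dependence.
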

\begin{proof}
	If $-2B^2 \lambda - 1 \ge 2$, then we use the maximal function estimate of lemma \ref{maximalFuncBoundZ} to deduce
	\begin{equation*} \begin{aligned}
		&  \quad \left| \int_{\tau_0}^{\tau} e^{B^2 \cl{D}_Z(\tau - \ol{\tau})} \chi_{[ \Upsilon_U e^{- \alpha_k \ol{\tau}} , M e^{\beta \ol{\tau}} ]}(\ol{\gamma})  \eta_1^U  \ol{\gamma}^{-2B^2 \lambda_k-1 }  e^{B^2 \lambda_k \ol{\tau}} d \ol{\tau}\right| \\
		&\le  \left| \int_{\tau_0}^{\tau} e^{B^2 \cl{D}_Z(\tau - \ol{\tau})} \eta_1^U  \ol{\gamma}^{-2B^2 \lambda_k-1 }  e^{B^2 \lambda_k \ol{\tau}} d \ol{\tau}\right| \\
		&\lesssim_{p,q,k}  \eta_1^U e^{B^2 \lambda_k \tau} \int_{\tau_0}^\tau \gamma^2 e^{- (\tau - \ol{\tau})}
		\frac{ \int_\gamma^\infty \ol{\gamma}^{-2B^2 \lambda - 3} \ol{\gamma}^{n+2} e^{- \ol{\gamma}^2/ (4B^2) } d \ol{\gamma} }{ \int_\gamma^\infty \ol{\gamma}^{n+2} e^{- \ol{\gamma}^2/ (4B^2) } d \ol{\gamma} } \\
		&\lesssim_{p,q,k}   \eta_1^U e^{B^2 \lambda_k \tau} \int_{\tau_0}^\tau \gamma^2 ( 1 + \gamma^{-2B^2 \lambda_k -3} ) d \ol{\tau} 
		&& (\ref{asympsOfExpInt})\\
		&\le  \eta_1^U e^{B^2 \lambda_k \tau} ( \gamma^2 + \gamma^{-2B^2 \lambda -1 }) \\
		&\lesssim_{p,q,k}  \eta_1^U e^{B^2 \lambda_k \tau} ( \gamma^{-2k -2B^2 \lambda} + \gamma^{-2B^2 \lambda} )
	\end{aligned} \end{equation*}
	
	If $-2B^2 \lambda - 1 < 2$, then lemma \ref{maximalFuncBoundZ} implies
	\begin{equation*} \begin{aligned}
		& \quad \left| \int_{\tau_0}^{\tau} e^{B^2 \cl{D}_Z(\tau - \ol{\tau})} 
		\chi_{[ \Upsilon_U e^{- \alpha_k \ol{\tau}} , M e^{\beta \ol{\tau}} ]}(\ol{\gamma})  
		\eta_1^U  \ol{\gamma}^{-2B^2 \lambda_k-1 }  e^{B^2 \lambda_k \ol{\tau}} d \ol{\tau}\right| \\
		&\le  \left| \int_{\tau_0}^{\tau} e^{B^2 \cl{D}_Z(\tau - \ol{\tau})} 
		\eta_1^U  \ol{\gamma}^{-2B^2 \lambda_k-1 }  e^{B^2 \lambda_k \ol{\tau}} d \ol{\tau}\right| \\
		&\lesssim_{p,q,k}  \eta_1^U e^{B^2 \lambda_k \tau} \int_{\tau_0}^\tau \gamma^2 e^{- (\tau - \ol{\tau})}
		\frac{ \int_0^\gamma \ol{\gamma}^{-2B^2 \lambda - 3} \ol{\gamma}^{n+2} e^{- \ol{\gamma}^2/ (4B^2) } d \ol{\gamma} }{ \int_0^\gamma \ol{\gamma}^{n+2} e^{- \ol{\gamma}^2/ (4B^2) } d \ol{\gamma} } d \ol{\tau}\\
		& \lesssim_{p,q,k}  \eta_1^U e^{B^2 \lambda_k \tau} \int_{\tau_0}^\tau \gamma^2 e^{- (\tau - \ol{\tau})} ( \gamma^{-2B^2 \lambda - 3} + 1) d \ol{\tau}	\\
		&\le  \eta_1^U e^{B^2 \lambda_k \tau} ( \gamma^{-2B^2 \lambda-1} + \gamma^2 ) \\
		&\lesssim_{p,q,k}  \eta_1^U e^{B^2 \lambda_k \tau} ( \gamma^{-2k -2B^2 \lambda_k} + \gamma^{-2B^2 \lambda_k} ) 
		&& (\text{since } k \ge 1)\\
	\end{aligned} \end{equation*}
\end{proof}

\begin{lem}
	For any $\nu \in (0,1)$, $M, \Gamma > 0$, and $\beta \in \left(0, \frac{1}{2} \right)$, there exists $\tau_0 \gg 1$ sufficiently large (depending on $p, q, k, M, \beta, \Gamma, \nu$) such that
	\begin{gather*}
		\left| \int_{\tau_0}^{\tau} e^{B^2 \cl{D}_Z(\tau - \ol{\tau})} \chi_{[ M e^{\beta \ol{\tau}}, M e^{\beta \ol{\tau}}+1 ]}(\ol{\gamma})   \ol{\gamma}^{-2B^2 \lambda_k-1 }  e^{B^2 \lambda_k \ol{\tau}} d \ol{\tau}\right|(\gamma) \\
		\le \nu e^{B^2 \lambda_k \tau}  \left( \gamma^{ -2k - 2B^2 \lambda_k}   + \gamma^{-2B^2 \lambda_k } \right)
	\end{gather*}
	for all $0 \le \tau - \tau_0 \le 1$ and $\Upsilon_Z e^{-\alpha \tau} \le \gamma \le \Gamma$.
\end{lem}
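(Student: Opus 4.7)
The plan is to exploit the large spatial separation between the evaluation point $\gamma \le \Gamma$ (bounded, independent of $\tau_0$) and the support of the integrand $\ol{\gamma} \in [M e^{\beta \ol{\tau}}, M e^{\beta \ol{\tau}} + 1]$ (which grows like $e^{\beta \tau_0}$ since $\beta > 0$). The Gaussian factor in the heat kernel from proposition \ref{kernelZ} then produces a doubly-exponentially small factor $\exp[-c (M e^{\beta \tau_0})^2]$ that defeats every polynomial prefactor. This parallels the strategy already used for the $\chi_{outer}$ contribution in the first initial-data lemma of the subsection and in lemma \ref{polyGrowthZBig}.

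First, apply proposition \ref{kernelZ}. Choose $\tau_0 \gg 1$ depending on $\Gamma, M, \beta$ large enough that $e^{-(\tau - \ol{\tau})/2}\gamma \le \Gamma \le \tfrac{1}{2} M e^{\beta \ol{\tau}} \le \tfrac{1}{2} \ol{\gamma}$ for all $\ol{\gamma}$ in the support, so that
$$\left( e^{-(\tau - \ol{\tau})/2} \gamma - \ol{\gamma} \right)^2 \ge \tfrac{1}{4}\bigl( M e^{\beta \ol{\tau}} \bigr)^2.$$
Because the shell has width $1$ while the radial weight $\ol{\gamma}^{n - 2 B^2 \lambda_k - 1}$ is essentially constant across it, the inner $d\ol{\gamma}$ integral is bounded above by
$$\bigl( M e^{\beta \ol{\tau}} \bigr)^{n - 2B^2 \lambda_k - 1}\exp\!\left[-\frac{(M e^{\beta \ol{\tau}})^2}{16 B^2 (\tau - \ol{\tau})}\right].$$

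Second, carry out the remaining $d\ol{\tau}$ integral using the substitution $\xi = (M e^{\beta \ol{\tau}})^2/(\tau - \ol{\tau})$, or equivalently applying lemma \ref{asympsOfExpInt} to the resulting one-variable Laplace integral. This yields an upper bound of the form $C_{p,q,k,M,\beta,\Gamma}\, (M e^{\beta \tau_0})^{\ast}\exp[-c (M e^{\beta \tau_0})^2]$, uniform in $\tau \in [\tau_0, \tau_0+1]$, for some power $\ast$ depending on $n, k, \lambda_k$. Combined with the prefactors $e^{B^2 \lambda_k \ol{\tau}}$ and $\gamma^2 \le \Gamma^2$ already present, the total quantity remains doubly-exponentially small in $\tau_0$.

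Finally, compare to the right-hand side. For $\gamma \in [\Upsilon_Z e^{-\alpha \tau}, \Gamma]$ we have the lower bound
$$\gamma^{-2k - 2B^2 \lambda_k} + \gamma^{-2B^2 \lambda_k} \ge \min\bigl\{\Gamma^{-2k - 2B^2 \lambda_k},\; \Gamma^{-2B^2 \lambda_k}\bigr\} > 0,$$
a constant depending only on $\Gamma, k, B$, and $e^{B^2 \lambda_k \ol{\tau}}$ and $e^{B^2 \lambda_k \tau}$ agree up to a universal multiplicative constant since $\tau - \tau_0 \le 1$. The doubly-exponential factor $\exp[-c (M e^{\beta \tau_0})^2]$ then dominates the ratio $\text{LHS}/\text{RHS}$ and can be made smaller than any desired $\nu$ by choosing $\tau_0$ large depending on $p, q, k, M, \beta, \Gamma, \nu$. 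The only substantive obstacle is verifying that the short-time singularity $(\tau - \ol{\tau})^{-1 - (n+1)/2}$ of the heat kernel is tamed by the Gaussian near $\ol{\tau} = \tau$, but this is a routine Laplace-integral estimate identical to those appearing in the proofs of lemmas \ref{polyGrowthZ} and \ref{polyGrowthZBig}.
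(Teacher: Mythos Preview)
Your proposal is correct and uses the same underlying mechanism as the paper: Gaussian decay in the kernel of proposition \ref{kernelZ} coming from the spatial separation between $\gamma \le \Gamma$ and the support at $\ol{\gamma} \approx M e^{\beta \ol{\tau}}$. The paper packages this slightly more modularly by first pulling out a factor $\ol{\gamma}^{-2} \lesssim M^{-2} e^{-2\beta\tau_0}$ on the support so that the remaining integrand is exactly that of lemma \ref{shortTimeEstCommZ}, which it then cites; your direct application of proposition \ref{kernelZ} amounts to reproving that lemma inline with a shifted power of $\ol{\gamma}$.
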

\begin{proof}
	\begin{equation*} \begin{aligned}
		& \left| \int_{\tau_0}^{\tau} e^{B^2 \cl{D}_Z(\tau - \ol{\tau})} \chi_{[ M e^{\beta \ol{\tau}}, M e^{\beta \ol{\tau}}+1 ]}(\ol{\gamma})   \ol{\gamma}^{-2B^2 \lambda_k-1 }  e^{B^2 \lambda_k \ol{\tau}} d \ol{\tau}\right|(\gamma) \\
		\lesssim_{p,q,k} & e^{B^2 \lambda_k \tau} M^{-2} e^{-2 \beta \tau_0} \int_{\tau_0}^{\tau} \left|  e^{B^2 \cl{D}_Z(\tau - \ol{\tau})} \chi_{[ M e^{\beta \ol{\tau}}, M e^{\beta \ol{\tau}}+1 ]}(\ol{\gamma})   \ol{\gamma}^{-2B^2 \lambda_k+1 }  \right| d \ol{\tau} \\
	\end{aligned} \end{equation*}
	The reader is now referred to the proof of lemma \ref{shortTimeEstCommZ}, where it is shown that this quantity is bounded by
	\begin{equation*} \begin{aligned}
		\lesssim_{p,q,k} & M^{-2} e^{-2 \beta \tau_0} e^{- C e^{2 \beta \tau_0} } e^{B^2 \lambda \tau} \gamma^2 \\
		\lesssim_{p,q,k} & M^{-2} e^{-2 \beta \tau_0} e^{-C e^{2 \beta \tau_0}} ( 1 + \Gamma^{2 + 2B^2 \lambda} ) e^{B^2 \lambda \tau} ( \gamma^{-2k - 2B^2 \lambda} + \gamma^{-2B^2 \lambda} )
	\end{aligned} \end{equation*}
	Taking $\tau_0 \gg 1$ sufficiently large completes the proof.
\end{proof}

We summarize the estimates of this subsection in the following lemma.

\begin{lem} \label{summShortTimeEstN}
	For any $\nu \in (0,1)$ there exists
	$\Upsilon_Z \gg 1$ sufficiently large depending on $p,q,k, l , \ul{D}, \Upsilon_U, \nu$
	and $\tau_0 \gg 1$ sufficiently large depending on $p,q,k, M, \beta, \Gamma, \ul{D}, \nu$ such that
	$(\tl{Z}, \tl{U}) \in \mathcal{P}$ implies
	\begin{gather*}
		\left| \int_{\tau_0}^\tau e^{B^2 \cl{D}_Z ( \tau - \ol{\tau} ) } 
		\left ( \grave{\chi} B^2 \mathcal{N} \tl{U} - B^2 \mathcal{N} e^{B^2 \lambda \ol{\tau}} \tl{U}_{\lambda} \right)
		d \ol{\tau} \right| \\
		\le \nu e^{B^2 \lambda_k \tau} \left( \gamma^{-2k - 2B^2 \lambda_k} + \gamma^{-2B^2 \lambda_k} \right) 
		+ C_{p,q} \eta_1^U e^{B^2 \lambda_k \tau} \left( \gamma^{-2k - 2B^2 \lambda_k} + \gamma^{-2B^2 \lambda_k} \right) 
	\end{gather*}
	for all $\tau_0 \le \tau \le \tau_1 \le \tau_0 + 1$ and $\Upsilon_Z e^{- \alpha \tau} \le \gamma \le \Gamma$.
\end{lem}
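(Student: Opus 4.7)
The approach is to split the integrand so that each piece matches the integrand of one of the four preceding lemmas of this subsection, and then add up the resulting estimates. First, write
\begin{equation*}
\grave{\chi} \mathcal{N} \tl{U} - \mathcal{N} e^{B^2 \lambda_k \ol{\tau}} \tl{U}_{\lambda_k}
= \grave{\chi}\bigl( \mathcal{N} \tl{U} - \mathcal{N} e^{B^2 \lambda_k \ol{\tau}} \tl{U}_{\lambda_k} \bigr)
- (1-\grave{\chi})\, \mathcal{N} e^{B^2 \lambda_k \ol{\tau}} \tl{U}_{\lambda_k},
\end{equation*}
so that the second summand is supported in the transition annulus $[M e^{\beta \ol{\tau}}, M e^{\beta \ol{\tau}}+1]$ and the first is handled by a nonlinear difference.

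Next, since $\mathcal{N}$ is (at least) quadratic in its argument with the same $\ol{\gamma}^{-1}$-type prefactor that appears in the previous lemmas, a first-order expansion about $e^{B^2 \lambda_k \ol{\tau}} \tl{U}_{\lambda_k}$ yields
\begin{equation*}
\left| \mathcal{N} \tl{U} - \mathcal{N} e^{B^2 \lambda_k \ol{\tau}} \tl{U}_{\lambda_k} \right|
\lesssim_{p,q,k} \bigl( |\tl{U}| + e^{B^2 \lambda_k \ol{\tau}} |\tl{U}_{\lambda_k}| \bigr)
\cdot \bigl| \tl{U} - e^{B^2 \lambda_k \ol{\tau}} \tl{U}_{\lambda_k} \bigr| \cdot \ol{\gamma}^{-1}.
\end{equation*}
I would then apply the pointwise bounds guaranteed by $(\tl{Z}, \tl{U}) \in \mathcal{P}$ on each subregion separately. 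On the inner subregion $\ol{\gamma} \le \Upsilon_U e^{-\alpha_k \ol{\tau}}$ the envelope from $\mathcal{P}$ produces an integrand of the form $\Upsilon_U^l \ol{\gamma}^{-2k - 2B^2 \lambda_k - 2} e^{B^2 \lambda_k \ol{\tau}}$, which is exactly the integrand handled by the first lemma of the subsection provided $\Upsilon_U^{1 + l/(n-2k-2B^2 \lambda_k-1)}/\Upsilon_Z \ll 1$. On the middle subregion $\Upsilon_U e^{-\alpha_k \ol{\tau}} \le \ol{\gamma} \le M e^{\beta \ol{\tau}}$, the factor $|\tl{U} - e^{B^2 \lambda_k \ol{\tau}} \tl{U}_{\lambda_k}|$ is controlled by the $\eta_1^U$-smallness built into $\mathcal{P}$ (through the conclusion of theorem \ref{smallCoeffs}), yielding integrands of the two forms $\eta_1^U \ol{\gamma}^{-2k - 2B^2 \lambda_k - 2} e^{B^2 \lambda_k \ol{\tau}}$ and $\eta_1^U \ol{\gamma}^{-2B^2 \lambda_k - 1} e^{B^2 \lambda_k \ol{\tau}}$, which are precisely what the second and third lemmas of the subsection estimate. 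Finally, the boundary piece $(1-\grave{\chi}) \mathcal{N} e^{B^2 \lambda_k \ol{\tau}} \tl{U}_{\lambda_k}$, supported on $[Me^{\beta\ol{\tau}}, Me^{\beta\ol{\tau}}+1]$, is of the form treated by the fourth lemma of the subsection and closes upon taking $\tau_0 \gg 1$.

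\textbf{Main obstacle.} The principal technical difficulty is not any single estimate but the bookkeeping: verifying that the expansion of $\mathcal{N}\tl{U} - \mathcal{N} e^{B^2 \lambda_k \ol{\tau}} \tl{U}_{\lambda_k}$ on each subregion of $\mathcal{P}$ reduces exactly to a finite sum of integrands already bounded by the four preceding lemmas, with the correct $\ol{\gamma}$ exponents, the correct $\Upsilon_U^l$ or $\eta_1^U$ coefficients, and the correct exponential rate $e^{B^2 \lambda_k \ol{\tau}}$. Once this alignment is established, the four lemmas discharge the individual pieces directly, and the only free parameters to tune at the end are $\Upsilon_Z \gg 1$ (to absorb the inner contribution by the first lemma) and $\tau_0 \gg 1$ (to absorb the boundary contribution from the fourth lemma); the middle contributions produce the advertised $C_{p,q} \eta_1^U$ remainder.
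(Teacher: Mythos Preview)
Your proposal is correct and matches the paper's approach exactly: the paper offers no separate proof for this lemma, stating only that it summarizes the estimates of the subsection, and your decomposition into the inner, middle, and boundary pieces aligned with the four preceding lemmas is precisely the intended reading. Your observation that the middle contributions are not absorbed into $\nu$ but instead produce the residual $C_{p,q}\eta_1^U$ term (so that $\eta_1^U$ is \emph{not} tuned here) is also consistent with the statement and with how the second and third lemmas are proved.
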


\subsection{Estimates for Commutator Contributions}

\begin{lem} \label{shortTimeEstCommU}
	For any $\nu \in (0,1)$ and $\Gamma > 0$, there exists $\tau_0 \gg 1$ sufficiently large (depending on $p, q, k, M, \beta, \Gamma, \nu$) such that
	\begin{gather*}
		\left| \int_{\tau_0}^{\tau} e^{B^2 \cl{D}_U(\tau - \ol{\tau})} \chi_{[ M e^{\beta \ol{\tau}}, M e^{\beta \ol{\tau}} +1 ]}(\ol{\gamma}) \ol{\gamma}^{-2B^2 \lambda_k + 1} e^{B^2 \lambda_k \ol{\tau}} d \ol{\tau}\right|(\gamma) \le \nu e^{B^2 \lambda_k \tau}  \gamma^{-2k - 2B^2 \lambda_k} \\
		\text{for all }  \quad 0 \le \tau - \tau_0 \le 1, \quad \Upsilon_U e^{-\alpha \tau} \le \gamma \le \Gamma
	\end{gather*}
\end{lem}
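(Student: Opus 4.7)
The plan is to reduce this to a maximal-function estimate, exactly in the spirit of the outer-region arguments already carried out in the first two lemmas and in the analogous $Z$-side commutator bound. The integrand is supported on $\bar{\gamma} \in [M e^{\beta \bar{\tau}}, M e^{\beta \bar{\tau}} + 1]$, which for $\tau_0 \gg 1$ is uniformly far above the target region $\gamma \le \Gamma$. So I expect the semigroup action to be governed by the Gaussian tail of the kernel, which produces a factor like $e^{-(M e^{\beta \tau_0})^2/(4B^2)}$ — double-exponentially small in $\tau_0$. This smallness will absorb every polynomial and exponential prefactor.

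Concretely, I would first invoke lemma \ref{maximalFuncBoundU} to bound
\[
\bigl|e^{B^2 \cl{D}_U(\tau - \bar{\tau})} \bigl(\chi_{[M e^{\beta \bar{\tau}}, M e^{\beta \bar{\tau}}+1]}\, \bar{\gamma}^{-2B^2 \lambda_k + 1}\bigr)\bigr|(\gamma) \lesssim_{p,q} \gamma^{-2k - 2B^2 \lambda_k}\, \cl{M}_U\!\bigl(\chi_{[M e^{\beta \bar{\tau}}, M e^{\beta \bar{\tau}}+1]}\, \bar{\gamma}^{-2B^2 \lambda_k + 1}\bigr)(\gamma),
\]
using $0 \le \tau - \bar{\tau} \le 1$ to absorb the $(\gamma e^{-(\tau-\bar{\tau})/2})^{-2k - 2B^2 \lambda_k}$ factor. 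Since the indicator is supported well above $\gamma$, the maximal function reduces to the ratio of the Gaussian-weighted integrals over $[\gamma, \infty)$. I would then bound the numerator by evaluating at the support:
\[
\int_\gamma^\infty \chi_{[M e^{\beta \bar{\tau}}, M e^{\beta \bar{\tau}}+1]}\, \bar{\gamma}^{-2B^2 \lambda_k + 1} \bar{\gamma}^{1+\sqrt{(n-9)(n-1)}} e^{-\bar{\gamma}^2/(4B^2)} d\bar{\gamma} \lesssim_{p,q,k} (M e^{\beta \bar{\tau}})^{-2B^2 \lambda_k + 2 + \sqrt{(n-9)(n-1)}} e^{-(M e^{\beta \bar{\tau}})^2/(4B^2)},
\]
and bound the denominator from below by $C(\Gamma) > 0$, using $\gamma \le \Gamma$ to restrict the integration to $[\Gamma, \infty)$, which is a $\Gamma$-dependent positive constant.

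Multiplying through by the $e^{B^2 \lambda_k \bar{\tau}}$ weight, integrating $\bar{\tau}$ over $[\tau_0, \tau_0 + 1]$, and monotonicity of $\bar{\tau} \mapsto e^{\beta \bar{\tau}}$ gives
\[
\bigl|\text{LHS}\bigr|(\gamma) \lesssim_{p,q,k, \Gamma, M, \beta} \gamma^{-2k - 2B^2 \lambda_k} (M e^{\beta \tau_0})^{-2B^2 \lambda_k + 2 + \sqrt{(n-9)(n-1)}} e^{B^2 \lambda_k \tau_0} e^{-(M e^{\beta \tau_0})^2/(4B^2)}.
\]
Because $e^{-(M e^{\beta \tau_0})^2/(4B^2)}$ decays doubly exponentially in $\tau_0$ while all other $\tau_0$-dependent factors grow at most exponentially, the product is bounded by $\nu e^{B^2 \lambda_k \tau}$ for $\tau_0 \gg 1$ depending on $p,q,k,M,\beta,\Gamma,\nu$.

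The only place where any subtlety appears is in verifying that the denominator of the maximal function is indeed $\gtrsim_\Gamma 1$ uniformly for $\gamma \in [\Upsilon_U e^{-\alpha \tau}, \Gamma]$; this follows because the Gaussian-weighted measure assigns a strictly positive mass to any fixed interval $[\gamma_*, 2 \gamma_*]$ inside $[0, \Gamma]$, so the integral from $\gamma$ to $\infty$ is bounded below by this constant. Every other step is routine given the template provided by the preceding lemmas.
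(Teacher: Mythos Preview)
Your argument is correct and arrives at the same double-exponential smallness $e^{-c(Me^{\beta\tau_0})^2}$ that drives the estimate, but it takes a different route from the paper. The paper works directly with the kernel representation from proposition \ref{kernelU}: it writes the semigroup action as an integral against $H_U$, observes that for $\gamma\le\Gamma$ and $\ol{\gamma}\ge Me^{\beta\ol{\tau}}$ one has $(e^{-(\tau-\ol{\tau})/2}\gamma-\ol{\gamma})^2\gtrsim e^{2\beta\ol{\tau}}$, splits the Gaussian factor accordingly, and then computes the resulting $d\ol{\gamma}\,d\ol{\tau}$ integral by the substitution $u=\ol{\gamma}/\sqrt{\tau-\ol{\tau}}$. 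This yields the bound $e^{B^2\lambda_k\tau}\gamma^{-2k-2B^2\lambda_k}e^{-Ce^{2\beta\tau_0}}$ after an elementary computation that the $(\tau-\ol{\tau})$-power is $k+\tfrac12>0$, so the time integral is harmless.

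Your maximal-function route is cleaner in that it packages the kernel analysis into lemma \ref{maximalFuncBoundU}, at the cost of a small extra justification: the function $\chi_{[Me^{\beta\ol{\tau}},Me^{\beta\ol{\tau}}+1]}\,\ol{\gamma}^{2k+1}$ is not monotone, so strictly speaking $\mathcal{M}_U$ does not \emph{reduce} to the $[\gamma,\infty)$ ratio. You need either to bound the indicator above by $\chi_{[Me^{\beta\ol{\tau}},\infty)}$ (whose product with $\ol{\gamma}^{2k+1}$ is nondecreasing) and use positivity of the semigroup, or to argue directly that any interval $I\ni\gamma$ meeting the support must contain $[\gamma,Me^{\beta\ol{\tau}}]\supset[\Gamma,Me^{\beta\ol{\tau}}]$, so the denominator is bounded below by $\int_\Gamma^{Me^{\beta\ol{\tau}}}\ol{\gamma}^{1+\sqrt{(n-9)(n-1)}}e^{-\ol{\gamma}^2/(4B^2)}\,d\ol{\gamma}\gtrsim_\Gamma 1$ once $\tau_0$ is large. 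With that patch your proof goes through. The paper's kernel computation avoids this subtlety entirely and is somewhat more self-contained; your approach has the advantage of reusing the same template as the outer-region initial-data estimate.
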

\begin{proof}
	By proposition \ref{kernelU},
	\begin{equation*} \begin{aligned}
		& \left| \int_{\tau_0}^{\tau} e^{B^2 \cl{D}_U (\tau - \ol{\tau})} \chi_{[M e^{\beta \ol{\tau}}, M e^{\beta \ol{\tau}} + 1]}(\ol{\gamma}) \ol{\gamma}^{-2B^2 \lambda_k +1} e^{B^2 \lambda_k \ol{\tau}} d \ol{\tau} \right| \\
		\lesssim_{p,q,k} & e^{B^2 \lambda_k \tau} \gamma^{-2k -2 B^2 \lambda_k}  \int_{\tau_0}^{\tau} (\tau - \ol{\tau})^{-1 - \frac{1}{2} \sqrt{(n-9)(n-1)}}\\
		& \cdot \int_{M e^{\beta \ol{\tau}}}^\infty  \ol{\gamma}^{-2B^2 \lambda_k +1}  H_U(\gamma, \ol{\gamma}; \tau - \ol{\tau}) \ol{\gamma}^{\frac{n+1}{2} + \frac{1}{2} \sqrt{(n-9)(n-1)}} d \ol{\gamma} d \ol{\tau} \\
	\end{aligned} \end{equation*}	
	If $\tau_0 \gg 1$ is sufficiently large depending on $M, \beta, \Gamma$, then 
	$$\gamma \le \Gamma \text{ and } \ol{\gamma} \ge M e^{\beta \ol{\tau}} \ge M e^{\beta \tau_0} \implies ( e^{- (\tau - \ol{\tau})/2} \gamma - \ol{\gamma} )^2 \gtrsim e^{2 \beta \ol{\tau}}$$
	Hence,
	\begin{equation*} \begin{aligned}
		H_U( \gamma, \ol{\gamma} ; \tau - \ol{\tau} ) \le & exp \left( - \frac{1}{8B^2} \frac{C e^{2 \beta \ol{\tau}} }{(\tau - \ol{\tau})} \right) exp \left( - \frac{1}{8B^2} \frac{\ol{\gamma}^2 }{(\tau - \ol{\tau})} \right) \\
	\end{aligned} \end{equation*}
	From this estimate on $H_U$, it follows that
	\begin{equation*} \begin{aligned}
		& \quad e^{B^2 \lambda_k \tau} \gamma^{-2k -2 B^2 \lambda_k} \int_{\tau_0}^{\tau}  (\tau - \ol{\tau})^{-1 - \frac{1}{2} \sqrt{(n-9)(n-1)}}\\
		& \quad \cdot \int_{M e^{\beta \ol{\tau}}}^\infty  \ol{\gamma}^{-2B^2 \lambda_k +1}  H_U(\gamma, \ol{\gamma}; \tau - \ol{\tau}) \ol{\gamma}^{\frac{n+1}{2} + \frac{1}{2} \sqrt{(n-9)(n-1)}} d \ol{\gamma} d \ol{\tau} \\
		&\le  e^{B^2 \lambda_k \tau} \gamma^{-2k -2 B^2 \lambda_k} \int_{\tau_0}^{\tau}  (\tau - \ol{\tau})^{-1 - \frac{1}{2} \sqrt{(n-9)(n-1)}}	exp \left( - C \frac{e^{2 \beta \ol{\tau}} }{\tau - \ol{\tau}} \right)\\
		& \quad  \cdot \int_{M e^{\beta \ol{\tau}}}^\infty e^{-\frac{1}{8B^2} \ol{\gamma}^2/ (\tau - \ol{\tau})} \ol{\gamma}^{-2B^2 \lambda_k +1 + \frac{n+1}{2} + \frac{1}{2} \sqrt{(n-9)(n-1)}} d \ol{\gamma} d \ol{\tau} \\
		&=  e^{B^2 \lambda_k \tau} \gamma^{-2k -2 B^2 \lambda_k} \int_{\tau_0}^{\tau}  (\tau - \ol{\tau})^{-1 - \frac{1}{2} \sqrt{(n-9)(n-1)}+ \frac{1}{2} 	+ \frac{n+1}{4} + \frac{1}{4} \sqrt{(n-9)(n-1)} - B^2 \lambda_k + \frac{1}{2}} \\
		& \quad \cdot exp \left( - C \frac{e^{2 \beta \ol{\tau}} }{\tau - \ol{\tau}} \right)
		\int_0^\infty e^{-\frac{1}{8B^2} u^2} u^{-2B^2 \lambda_k +1 + \frac{n+1}{2} + \frac{1}{2} \sqrt{(n-9)(n-1)}} d u d \ol{\tau} \\
		& \qquad (\text{where } u = \ol{\gamma}/\sqrt{\tau - \ol{\tau}} )\\
		& \lesssim_{p,q,k} e^{B^2 \lambda_k \tau} \gamma^{-2k -2 B^2 \lambda_k} \\
		& \quad \cdot \int_{\tau_0}^{\tau}  (\tau - \ol{\tau})^{-1 - \frac{1}{2} \sqrt{(n-9)(n-1)}+ \frac{1}{2} + \frac{n+1}{4} + \frac{1}{4} \sqrt{(n-9)(n-1)} - B^2 \lambda_k + \frac{1}{2}}
		exp \left( - C e^{2 \beta \ol{\tau}}  \right) d \ol{\tau} \\
		&=  e^{B^2 \lambda_k \tau} \gamma^{-2k -2 B^2 \lambda_k} \int_{\tau_0}^{\tau}  (\tau - \ol{\tau})^{k + \frac{1}{2}}
		exp \left( - C e^{2 \beta \ol{\tau}}  \right) d \ol{\tau} \\
		&\lesssim_k  e^{B^2 \lambda_k \tau} \gamma^{-2k -2 B^2 \lambda_k} e^{-C e^{2 \beta \tau_0} } \\
	\end{aligned} \end{equation*}
	Taking $\tau_0 \gg 1$, the estimate follows.
\end{proof}

\begin{lem} \label{shortTimeEstCommZ}
	For any $\nu \in (0,1)$ and $\Gamma > 0$, there exists $\tau_0 \gg 1$ sufficiently large (depending on $p,q,k, M, \beta, \Gamma, \nu$) such that
	\begin{gather*}
		\left| \int_{\tau_0}^{\tau} e^{B^2 \cl{D}_Z(\tau - \ol{\tau})} \chi_{[ M e^{\beta \ol{\tau}}, M e^{\beta \ol{\tau}} +1 ]}(\ol{\gamma}) \ol{\gamma}^{-2B^2 \lambda_k + 1} e^{B^2 \lambda_k \ol{\tau}} d \ol{\tau}\right|(\gamma) \\
		\le \nu e^{B^2 \lambda_k \tau} \left( \gamma^{-2k - 2B^2 \lambda_k}  + \gamma^{-2B^2 \lambda_k} \right) 
	\end{gather*}
	for all $ 0 \le \tau - \tau_0 \le 1$ and  $\Upsilon_Z e^{-\alpha \tau} \le \gamma \le \Gamma$.
\end{lem}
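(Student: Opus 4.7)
The plan is to mimic the proof of Lemma \ref{shortTimeEstCommU} almost line-for-line, replacing proposition \ref{kernelU} with proposition \ref{kernelZ}. The two essential ingredients remain the same. First, the geometric observation: if $\tau_0 \gg 1$ is sufficiently large depending on $M$, $\beta$, $\Gamma$, then for $\gamma \le \Gamma$, $\bar{\gamma} \ge M e^{\beta \bar{\tau}} \ge M e^{\beta \tau_0}$, and $0 \le \tau - \bar{\tau} \le 1$, one has $\left( e^{-(\tau - \bar{\tau})/2} \gamma - \bar{\gamma} \right)^2 \gtrsim e^{2 \beta \bar{\tau}}$. Second, the kernel $H_Z$ contains the Gaussian factor $\exp\!\left( -\tfrac{1}{4B^2} \tfrac{\left( e^{-(\tau-\bar\tau)/2}\gamma - \bar\gamma \right)^2}{1 - e^{-(\tau-\bar\tau)}} \right)$, so we may split this in half and retain one half as an exponentially small prefactor $\exp\!\left( -\tfrac{C e^{2\beta \bar{\tau}}}{\tau - \bar{\tau}} \right) \le \exp\!\left( -C e^{2\beta \tau_0} \right)$, leaving the other half to perform the $d\bar{\gamma}$ integration.

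Concretely, I would first use proposition \ref{kernelZ} to write the quantity to estimate as
\begin{equation*}
\gamma^2 \int_{\tau_0}^{\tau} (\tau - \bar\tau)^{-1 - \frac{n+1}{2}} \int_{M e^{\beta \bar\tau}}^{M e^{\beta \bar\tau}+1} \bar\gamma^{-2B^2 \lambda_k + 1} H_Z\, \bar\gamma^{n+2} e^{B^2 \lambda_k \bar\tau}\, d\bar\gamma\, d\bar\tau.
\end{equation*}
After extracting the factor $\exp(-C e^{2\beta \tau_0})$, I would substitute $u = \bar\gamma / \sqrt{\tau - \bar\tau}$ exactly as in Lemma \ref{shortTimeEstCommU}. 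Since the remaining $du$ integral is bounded uniformly (the Gaussian tail dominates any polynomial factor) and the $d\bar\tau$ integral produces at most a power of $(\tau - \bar\tau)$ which is integrable near $\bar\tau = \tau$ once the geometric damping factor has tamed any singularity, the whole integral is bounded by $C_{p,q,k}\, e^{B^2 \lambda_k \tau}\, \gamma^2\, \exp(-C e^{2\beta\tau_0})$.

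Finally, I would convert the unwanted prefactor $\gamma^2$ into the target template using $\gamma \le \Gamma$:
\begin{equation*}
\gamma^2 \le (1 + \Gamma^{2k + 2B^2 \lambda_k + 2}) \bigl( \gamma^{-2k - 2B^2 \lambda_k} + \gamma^{-2B^2 \lambda_k} \bigr),
\end{equation*}
which is permissible because $\gamma^2 \le \Gamma^{2 + 2B^2 \lambda_k}\gamma^{-2B^2 \lambda_k}$ on the relevant range. Taking $\tau_0 \gg 1$ sufficiently large depending on $p,q,k,M,\beta,\Gamma,\nu$ makes $\exp(-Ce^{2\beta \tau_0})$ absorb the $\Gamma$-dependent constant as well as $\nu$.

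The only real obstacle is bookkeeping: verifying that after the $u$-substitution the cumulative exponent of $(\tau - \bar\tau)$ that comes from the $(\tau - \bar\tau)^{-1-(n+1)/2}$ prefactor of $H_Z$, from the powers of $\bar\gamma$ converted into powers of $\sqrt{\tau - \bar\tau}$, and from the $du$ Jacobian, combines with the damping $\exp(-C e^{2\beta \bar\tau}/(\tau - \bar\tau))$ to give something integrable on $[\tau_0, \tau]$. But the retained Gaussian ensures this, just as in the $\mathcal{D}_U$ case; indeed, once the damping has been isolated, the remaining integrand over $\bar\tau \in [\tau_0, \tau_0 + 1]$ is bounded by a constant $C(p,q,k)$, so the whole estimate reduces to the observation that $\exp(-C e^{2\beta \tau_0}) \to 0$ as $\tau_0 \to \infty$. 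I expect the bound on the quantity in question to take the form $C_{p,q,k,\Gamma}\, M^{-2} e^{-2\beta \tau_0} e^{-C e^{2\beta \tau_0}}\, e^{B^2 \lambda_k \tau}\, \bigl( \gamma^{-2k - 2B^2 \lambda_k} + \gamma^{-2B^2 \lambda_k} \bigr)$, which is $\le \nu$ times the right-hand side of the lemma once $\tau_0$ is large.
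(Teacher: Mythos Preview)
Your proposal is correct and follows essentially the same route as the paper: apply proposition \ref{kernelZ}, use the separation $\bar\gamma \ge M e^{\beta\bar\tau} \gg \Gamma \ge \gamma$ to split the Gaussian in $H_Z$ and extract the factor $\exp(-C e^{2\beta\tau_0})$, substitute $u = \bar\gamma/\sqrt{\tau-\bar\tau}$ to see the remaining integral is $O_{p,q,k}(1)$, and then convert $\gamma^2$ into the target template using $\gamma \le \Gamma$. The only cosmetic difference is that the paper handles the last step by a case split on the sign of $2+2B^2\lambda_k$ (when $-2B^2\lambda_k \ge 2$ one has $\gamma^2 \lesssim \gamma^{-2k-2B^2\lambda_k}+\gamma^{-2B^2\lambda_k}$ directly, and only when $-2B^2\lambda_k < 2$ is the bound $\gamma \le \Gamma$ actually needed), and the paper's final prefactor is simply $e^{-C e^{2\beta\tau_0}}$ rather than the $M^{-2}e^{-2\beta\tau_0}e^{-C e^{2\beta\tau_0}}$ you anticipate---the extra $M^{-2}e^{-2\beta\tau_0}$ belongs to the lemma that \emph{invokes} this one, not to this lemma itself.
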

\begin{proof}
	The proof is similar to the proof of the previous lemma.
	\begin{equation*} \begin{aligned}
		&\quad  \left| \int_{\tau_0}^{\tau} e^{B^2 \cl{D}_Z(\tau - \ol{\tau})} \chi_{[ M e^{\beta \ol{\tau}}, M e^{\beta \ol{\tau}} +1 ]}(\ol{\gamma}) \ol{\gamma}^{-2B^2 \lambda_k + 1} e^{B^2 \lambda_k \ol{\tau}} d \ol{\tau}\right| \\
		& \lesssim_{p,q,k}  e^{B^2 \lambda \tau} \gamma^2 \int_{\tau_0}^\tau ( \tau - \ol{\tau})^{-1 - \frac{n+1}{2} }
		\int_{M e^{\beta \ol{\tau} }}^\infty H_Z \ol{\gamma}^{n - 2B^2 \lambda + 1} d \ol{\gamma} d \ol{\tau} \\
		&\le  e^{B^2 \lambda \tau} \gamma^2 \int_{\tau_0}^\tau ( \tau - \ol{\tau})^{- 1 - \frac{n+1}{2} } exp \left[ -C \frac{ e^{2 \beta \ol{\tau}}}{\tau - \ol{\tau}} \right]
		\int_{M e^{\beta \ol{\tau}}}^\infty  exp \left[ -C \frac{ \ol{\gamma}^2}{\tau - \ol{\tau}} \right] \ol{\gamma}^{n -2B^2 \lambda + 1} d \ol{\gamma} d \ol{\tau}\\
		&\le  e^{B^2 \lambda \tau} \gamma^2 e^{- C e^{2 \beta \tau_0} }	 \int_{\tau_0}^\tau ( \tau - \ol{\tau} )^{-B^2 \lambda - \frac{1}{2} }
		\int_{M e^{\beta \ol{\tau}}/ \sqrt{ \tau - \ol{\tau}}}^\infty exp [ - C u^2] u^{n-2B^2 \lambda + 1} du d \ol{\tau} \\
		& \qquad  \left(\text{where } u = \ol{\gamma} / \sqrt{ \tau - \ol{\tau}} \right) \\
		& \lesssim_{p,q,k}  e^{B^2 \lambda \tau} \gamma^2 e^{- C e^{2 \beta \tau_0} }
	\end{aligned} \end{equation*}
	If $2 \le -2B^2 \lambda$ then 
		$$e^{B^2 \lambda \tau} \gamma^2 e^{- C e^{2 \beta \tau_0} } \lesssim_{p,q,k}  e^{- C e^{2 \beta \tau_0} }  e^{B^2 \lambda \tau} \left( \gamma^{-2k -2B^2 \lambda_k} + \gamma^{-2B^2 \lambda_k} \right)$$
	If $2 > -2B^2 \lambda$ then
		$$e^{B^2 \lambda \tau} \gamma^2 e^{- C e^{2 \beta \tau_0} } \le e^{- C e^{2 \beta \tau_0} } \Gamma^{2 + 2B^2 \lambda} e^{B^2 \lambda_k \tau} \gamma^{-2B^2 \lambda} $$
	In either case, the estimate follows from taking $\tau_0 \gg 1$ sufficiently large.
\end{proof}

%

\section{Long-Time Estimates} \label{LongTimeEsts}

For $\Gamma > 0$ arbitrary, let $\Sigma_{\Upsilon, \Gamma}$ denote the spacetime region
		$$\Sigma_{\Upsilon, \Gamma} \doteqdot \{ (\gamma, \tau) : \quad \Upsilon e^{- \alpha_k \tau} \le \gamma \le \Gamma, \quad \tau_0 + 1 \le \tau \le \tau_1 \}$$
	
	In this section, it is assumed that
		$${P}_{\tau_0, \tau_1} ( \underline{p}, \underline{q}) = 0,$$
	that
		$$( \tl{Z}_{\ul{p}, \ul{q}}, \tl{U}_{\ul{p}, \ul{q}} ) \in \mathcal{P}[ \ul{D}, l, \kappa, \ul{\eta}, \Upsilon_{U,Z}, M, \alpha_k, \beta, \lambda_k ],$$
	and that there exists $\delta > 0$ such that
	\begin{gather*}
		| \underline{p} | \le C_{p,q,k, \ul{D}, \Upsilon_U, \Upsilon_Z, \ul{\Upsilon}} e^{B_\infty^2 \lambda_k \tau_0 - \delta \tau_0}		\\
		  | \underline{q} | \le
		  C_{p,q,k, \ul{D}, \Upsilon_U, \Upsilon_Z, \ul{\Upsilon}} e^{B_\infty^2 \lambda_k \tau_0 - \delta \tau_0}
		  + C_{p,q,k} \eta_1^U e^{B^2 \lambda_k \tau_0}\\
	\end{gather*}
	Such assumptions are justified by the results of sections \ref{PointwiseEst} and \ref{CoeffEst}.
	
The goal of this section is to prove the bound from the previous section without the short-time assumption and only in $\Sigma_{\Upsilon, \Gamma}$, with the understanding that the barriers from lemmas \ref{parabOuterBarriersU} - \ref{parabOuterInteriorEstZ+} will be used to control the solution for $\gamma > \Gamma$.
Note that $\tl{U}(\gamma, \tau) = \grave{U}( \gamma, \tau)$ for all $(\gamma, \tau) \in \Sigma_{\Upsilon, \gamma}$ if $\tau_0 \gg 1$ is sufficiently large depending on $M, \beta, \Gamma$.

We begin with a specific way to write the solution.
Adjacent to each term, we indicate the lemma in which that term is estimated.
\begin{lem}
	For ${P}_{\tau_0, \tau_1} (\ul{p}, \ul{q}) = 0$ and $\tau \in [\tau_0, \tau_1]$,
	\begin{equation*} \begin{aligned}
		&\grave{U}_{\ul{p}, \ul{q}}(\tau) -e^{B^2 \lambda_k  \tau} \tl{U}_{\lambda_k} \\ =
		& e^{B^2 \lambda_k \tau} \left\langle \check{\grave{U}}_{\ul{p}, \ul{q}} (\tau_0) - \tl{U}_{\lambda_k} , \tl{U}_{\lambda_k} \right\rangle_{n, \frac{1}{2B^2}} \tl{U}_{\lambda_k} 
		&& (\ref{domModeInitU})\\
		& + \sum_{j = k+1}^\infty e^{B^2 \lambda_j (\tau - \tau_0)} e^{B^2 \lambda_k \tau_0} \left\langle \check{\grave{U}}_{\ul{p}, \ul{q}}(\tau_0) , \tl{U}_{\lambda_j} \right\rangle_{n, \frac{1}{2B^2}} \tl{U}_{\lambda_j} 
		&& (\ref{fastModeInitU}) \\
		& + \sum_{j = k }^\infty \tl{U}_{\lambda_j} \int_{\tau_0}^\tau e^{B^2 \lambda_j (\tau - \ol{\tau})} \left \langle \grave{\chi}  Err_U (\tl{Z}, \tl{U} ) (\ol{\tau}), \tl{U}_{\lambda_j} \right \rangle_{n, \frac{1}{2B^2}} d \ol{\tau} 
		&& (\ref{fastModeErrU})\\
		& + \sum_{j = k }^\infty \tl{U}_{\lambda_j} \int_{\tau_0}^\tau e^{B^2 \lambda_j (\tau - \ol{\tau})} \left \langle B^2 [ \grave{\chi}, \mathcal{D}_U ] \tl{U} + \grave{\chi}_\tau \tl{U}, \tl{U}_{\lambda_j} \right \rangle_{n, \frac{1}{2B^2}} d \ol{\tau}
		&& (\ref{fastModeCommU}) \\
		& - \sum_{j = 0}^k \tl{U}_{\lambda_j} \int_{\tau}^{\tau_1} e^{B^2 \lambda_j (\tau - \ol{\tau})} \left \langle \grave{\chi}  Err_U (\tl{Z}, \tl{U} )(\ol{\tau}), \tl{U}_{\lambda_j} \right \rangle_{n, \frac{1}{2B^2}} d \ol{\tau} 
		&& (\ref{slowModeErrU}) \\
		& - \sum_{j = 0}^k \tl{U}_{\lambda_j} \int_{\tau}^{\tau_1} e^{B^2 \lambda_j (\tau - \ol{\tau})} \left \langle B^2 [ \grave{\chi}, \mathcal{D}_U ] \tl{U} + \grave{\chi}_\tau \tl{U}(\ol{\tau}), \tl{U}_{\lambda_j} \right \rangle_{n, \frac{1}{2B^2}} d \ol{\tau} 
		&& (\ref{slowModeCommU})\\
	\end{aligned} \end{equation*}
\end{lem}
\begin{proof}
	$P_{\tau_0, \tau_1}( \ul{p}, \ul{q} ) = 0$ and the variation of constants formula imply that
	\begin{equation*}\begin{aligned}
		&\left \langle \grave{U}(\tau_0) , \tl{U}_{\lambda_j} \right \rangle_{n, \frac{1}{2B^2}}  \\
		=& - \int_{\tau_0}^{\tau_1} e^{B^2 \lambda_j ( \tau_0 - \ol{\tau} ) } \left \langle \grave{\chi}  Err_U (\tl{Z}, \tl{U} ) + B^2 [ \grave{\chi}, \mathcal{D}_U ] \tl{U} + \grave{\chi}_\tau \tl{U} ( \ol{\tau}) , \tl{U}_{\lambda_j} \right \rangle_{n, \frac{1}{2B^2}} d \ol{\tau}
	\end{aligned} \end{equation*}
	Using this formula in the variation of constants formula yields the result.
\end{proof}
		
Similarly, we have the following formula for $\grave{Z}$
\begin{lem}
	For ${P}_{\tau_0, \tau_1} (\ul{p}, \ul{q}) = 0$ and $\tau \in [\tau_0, \tau_1]$,
	\begin{equation*} \begin{aligned}
		& \grave{Z}_{\ul{p}, \ul{q}} (\tau) - e^{B^2 \lambda_k \tau} \tl{Z}_{\lambda_k} \\
		= & \sum_{j = K+1}^\infty e^{B^2 h_j (\tau - \tau_0)} e^{B^2 \lambda_k \tau_0} \left \langle \check{\grave{Z}}(\tau_0) -  \tl{Z}_{\lambda_k}, \tl{Z}_{h_j} \right \rangle_{n-2, \frac{1}{2B^2}} \tl{Z}_{h_j} 
		&& (\ref{fastModeInitZ}) \\
		&+ \sum_{j = K+1}^\infty \tl{Z}_{h_j} \int_{\tau_0}^\tau e^{B^2 h_j(\tau - \ol{\tau})} \left \langle \grave{\chi} B^2 \cl{N} \tl{U} - B^2 \cl{N} e^{B^2 \lambda_k \ol{\tau} } \tl{U}_{\lambda_k}, \tl{Z}_{h_j} \right \rangle_{n-2, \frac{1}{2B^2}} d \ol{\tau} 
		&& (\ref{fastModeN})\\
		&+ \sum_{j = K+1}^\infty \tl{Z}_{h_j} \int_{\tau_0}^\tau e^{B^2 h_j(\tau - \ol{\tau})} \left \langle \grave{\chi}  Err_Z (\tl{Z}, \tl{U} ) (\ol{\tau}), \tl{Z}_{h_j} \right \rangle_{n-2, \frac{1}{2B^2}} d \ol{\tau} 
		&& (\ref{fastModeErrZ})\\
		&+ \sum_{j = K+1}^\infty \tl{Z}_{h_j} \int_{\tau_0}^\tau e^{B^2 h_j(\tau - \ol{\tau})} \left \langle B^2 [ \grave{\chi}, \mathcal{D}_Z ] \tl{Z} + \grave{\chi}_\tau \tl{Z}(\ol{\tau}), \tl{Z}_{h_j} \right \rangle_{n-2, \frac{1}{2B^2}} d \ol{\tau} 
		&& (\ref{fastModeCommZ})\\
		& - \sum_{j = 0}^K \tl{Z}_{h_j} \int_{\tau}^{\tau_1} e^{B^2 h_j (\tau - \ol{\tau} ) } \left \langle \grave{\chi} B^2 \cl{N} \tl{U} - B^2 \cl{N} e^{B^2 \lambda_k \ol{\tau} } \tl{U}_{\lambda_k} , \tl{Z}_{h_j} \right \rangle_{n-2, \frac{1}{2B^2}} d \ol{\tau} 
		&& (\ref{slowModeN})\\
		& - \sum_{j = 0}^K \tl{Z}_{h_j} \int_{\tau}^{\tau_1} e^{B^2 h_j (\tau - \ol{\tau} ) } \left \langle \grave{\chi}  Err_Z (\tl{Z}, \tl{U} )( \ol{\tau}) , \tl{Z}_{h_j} \right \rangle_{n-2, \frac{1}{2B^2}} d \ol{\tau} 
		&& (\ref{slowModeErrZ})\\
		& - \sum_{j = 0}^K \tl{Z}_{h_j} \int_{\tau}^{\tau_1} e^{B^2 h_j (\tau - \ol{\tau} ) } \left \langle B^2 [ \grave{\chi}, \mathcal{D}_Z ] \tl{Z} + \grave{\chi}_\tau \tl{Z}( \ol{\tau}) , \tl{Z}_{h_j} \right \rangle_{n-2, \frac{1}{2B^2}} d \ol{\tau} 
		&& (\ref{slowModeCommZ})\\
	\end{aligned} \end{equation*} 
\end{lem}
\begin{proof}
	The proof is identical to that of the previous lemma.
\end{proof}

\begin{lem} \label{slowModeErrU}
	For any $\nu \in (0, 1)$, there exists 
	$\tau_0 \gg 1$ depending on $p,q,k, \Upsilon_U, M, \beta, \ul{D}, \nu$ such that
		$$\left| \sum_{j = 0}^{k -1} \tl{U}_{\lambda_j} \int_{\tau}^{\tau_1} e^{B^2 \lambda_j (\tau - \ol{\tau})} \langle \grave{\chi} Err_U(\gamma, \ol{\tau}), \tl{U}_{\lambda_j} \rangle_{n, \frac{1}{2B^2}} d \ol{\tau} \right| \le \nu \left(\gamma^{-2k - 2B^2 \lambda_k } + \gamma^{- 2B^2 \lambda_k } \right) e^{B^2 \lambda_k \tau}$$
\end{lem}
\begin{proof}
	By finiteness of the sum, it suffices to prove the claim for any $j < k$.
	\begin{equation*} \begin{aligned}
		& \quad \left|  \tl{U}_{\lambda_j} (\gamma) \int_{\tau}^{\tau_1} e^{B^2 \lambda_j (\tau - \ol{\tau})} \langle \grave{\chi} Err_U(\gamma, \ol{\tau} ), \tl{U}_{\lambda_j} \rangle_{n, \frac{1}{2B^2}} d \ol{\tau} \right|  \\
		&\le C_j (\gamma^{-2k - 2B^2 \lambda_k} + \gamma^{-2B^2\lambda_j} ) \\
		& \quad \cdot \left| \int_{\tau}^{\tau_1} e^{B^2 \lambda_j (\tau - \ol{\tau})} \langle \grave{\chi} Err, \tl{U}_{\lambda_j} \rangle_{n, \frac{1}{2B^2}} d \ol{\tau} \right| 
		&&  	\text{(proposition \ref{eigfuncUAsymps})} \\
		&\le  C_j (\gamma^{-2k - 2B^2 \lambda_k} + \gamma^{-2B^2 \lambda_k} )\\
		& \quad \cdot  \left| \int_{\tau}^{\tau_1} e^{B^2 \lambda_j (\tau - \ol{\tau})} \langle  \grave{\chi} Err, \tl{U}_{\lambda_j} \rangle_{n, \frac{1}{2B^2}} d \ol{\tau} \right| 
		&& (\lambda_k < \lambda_j) \\
		&\le C_j (\gamma^{-2k - 2B^2 \lambda_k} + \gamma^{- 2B^2 \lambda_k} )  \\
		& \quad \cdot \int_{\tau}^{\tau_1} e^{B^2 \lambda_j (\tau - \ol{\tau})} \cancelto{1}{\| \grave{\chi} \|_{\infty} } \| Err\|_{H^{-1}_{n,b}} \| \tl{U}_{\lambda_j} \|_{H^1_{n,\frac{1}{2B^2 }}} d \ol{\tau} \\
		& \lesssim_{p,q,k, \Upsilon_U, \ul{D} }  (\gamma^{-2k - 2B^2 \lambda_k} + \gamma^{- 2B^2 \lambda_k} )  \\
		& \quad \cdot \int_{\tau}^{\tau_1} e^{B^2 \lambda_j (\tau - \ol{\tau})} e^{B^2 \lambda_k \ol{\tau} } e^{- \delta \ol{\tau} } d \ol{\tau} \| \tl{U}_{\lambda_j} \|_{H^1_{n,\frac{1}{2B^2}}} 
		 &&(\ref{summH^-1EstErr}) \\
		&\lesssim_{p,q,k} (\gamma^{-2k - 2B^2 \lambda_k} + \gamma^{-2B^2 \lambda_k} ) \| \tl{U}_{\lambda_j} \|_{H^1_{n, \frac{1}{2B^2}}}  e^{- \delta \tau_0} e^{B^2 \lambda_k \tau}
	\end{aligned} \end{equation*}
	By taking $\tau_0 \gg 1$, the claim follows.
\end{proof}

\begin{lem} \label{slowModeErrZ}
	For any $\nu \in (0,1)$, there exists 
	$\tau_0 \gg 1$ sufficiently large depending on $p,q,k, \Upsilon_Z, M , \beta, \ul{D}, \nu$ such that 
		$$\left| \sum_{j = 0}^{K} \tl{Z}_{h_j} \int_{\tau}^{\tau_1} e^{B^2 h_j (\tau - \ol{\tau} )} \langle \grave{\chi} Err_Z(\ol{\tau}), \tl{Z}_{h_j} \rangle_{n-2, \frac{1}{2B^2}} d \ol{\tau} \right| \le \nu e^{B^2 \lambda_k \tau} ( \gamma^{-2k - 2B^2 \lambda_k} + \gamma^{ - 2B^2 \lambda_k } )$$
\end{lem}
\begin{proof}
	Observe that proposition \ref{eigfuncZAsymps} implies that
		$$\tl{Z}_{h_j} \le C_j (\gamma^2 + \gamma^{- 2B^2 h_j } )$$
	and $\lambda_k \le h_j$ for all $0 \le j \le K$.
	With these facts in hand the proof follows by similar logic as in the proof of the above lemma \ref{slowModeErrU}.
\end{proof}

\begin{lem} \label{slowModeCommU}
	For any $\nu \in (0, 1)$, 
	there exists $\tau_0 \gg 1$ depending on $p,q,k, M, \beta, \ul{D}, \nu$ such that
	\begin{gather*}
		 \left| \sum_{j = 0}^k \tl{U}_{\lambda_j} \int_{\tau}^{\tau_1} e^{B^2 \lambda_j (\tau - \ol{\tau})} \left \langle B^2 [ \grave{\chi}, \mathcal{D}_U ] \tl{U} + \grave{\chi}_\tau \tl{U}(\ol{\tau}), \tl{U}_{\lambda_j} \right \rangle_{n, \frac{1}{2B^2}} d \ol{\tau} \right| \\
		 \le \nu \left(\gamma^{-2k - 2B^2  \lambda_k} + \gamma^{- 2B^2 \lambda_k } \right) e^{B^2 \lambda_k \tau}
	\end{gather*}
\end{lem}
\begin{proof}
	By finiteness of the sum, it suffices to prove the claim for any $j \le k$.
	\begin{equation*} \begin{aligned}
		& \quad  \left|  \tl{U}_{\lambda_j} \int_{\tau}^{\tau_1} e^{B^2 \lambda_j (\tau - \ol{\tau})} \left \langle B^2 [ \grave{\chi}, \mathcal{D}_U ] \tl{U} + \grave{\chi}_\tau \tl{U}(\ol{\tau}), \tl{U}_{\lambda_j} \right \rangle_{n, \frac{1}{2B^2}} d \ol{\tau} \right|  \\
		&\lesssim_k  ( \gamma^{-2k -2B^2 \lambda_k}  + \gamma^{-2B^2 \lambda_j} ) \\
		& \quad \cdot \left| \int_{\tau}^{\tau_1} e^{B^2 \lambda_j (\tau - \ol{\tau})} \left \langle B^2 [ \grave{\chi}, \mathcal{D}_U ] \tl{U} + \grave{\chi}_\tau \tl{U}(\ol{\tau}), \tl{U}_{\lambda_j} \right \rangle_{n, \frac{1}{2B^2}} d \ol{\tau} \right| 		
		&& (\text{proposition } \ref{eigfuncUAsymps})\\
		&\lesssim_{ k}  ( \gamma^{-2k -2B^2 \lambda_k}  + \gamma^{-2B^2 \lambda_k} ) \\
		& \quad \cdot  \left| \int_{\tau}^{\tau_1} e^{B^2 \lambda_j (\tau - \ol{\tau})} \left \langle B^2 [ \grave{\chi}, \mathcal{D}_U ] \tl{U} + \grave{\chi}_\tau \tl{U}(\ol{\tau}), \tl{U}_{\lambda_j} \right \rangle_{n, \frac{1}{2B^2}} d \ol{\tau} \right| 		
		&& (\lambda_k \le \lambda_j)\\
		& \le  ( \gamma^{-2k -2B^2 \lambda}  + \gamma^{-2B^2 \lambda_k} ) \\
		& \quad \cdot  \int_{\tau}^{\tau_1} e^{B^2 \lambda_j (\tau - \ol{\tau})} \| B^2 [ \grave{\chi}, \mathcal{D}_U ] \tl{U} + \grave{\chi}_\tau \tl{U}(\ol{\tau}) \|_{H^{-1}_{n, \frac{1}{2B^2}}}
		\| \tl{U}_{\lambda_j} \|_{H^1_{n , \frac{1}{2B^2}}} d \ol{\tau} \\
		&\lesssim_{p,q,k, \ul{D}}   ( \gamma^{-2k -2B^2 \lambda}  + \gamma^{-2B^2 \lambda_k} ) \\
		& \quad \cdot \int_{\tau}^{\tau_1} e^{B^2 \lambda_j (\tau - \ol{\tau})} e^{B^2 \lambda_k \ol{\tau} - \delta \ol{\tau} }
		 d \ol{\tau} \| \tl{U}_{\lambda_j} \|_{H^1_{n, \frac{1}{2B^2}}}
		 && (\ref{outerH^-1EstErr})\\
		 &\lesssim_{p,q,k}  ( \gamma^{-2k -2B^2 \lambda}  + \gamma^{-2B^2 \lambda_k} ) e^{B^2 \lambda_k \tau} e^{- \delta \tau_0} \\
	\end{aligned} \end{equation*}
	By taking $\tau_0 \gg 1$, the claim follows.
\end{proof}

\begin{lem} \label{slowModeCommZ}
	For any $\nu \in (0, 1)$, there exists  
	$\tau_0 \gg 1$ depending on $p,q,k, M , \beta, \ul{D}, \nu$ such that
	\begin{gather*}
		 \left| \sum_{j = 0}^K \tl{Z}_{h_j} \int_{\tau}^{\tau_1} e^{B^2 h_j (\tau - \ol{\tau} ) } \left \langle B^2 [ \grave{\chi}, \mathcal{D}_Z ] \tl{Z} + \grave{\chi}_\tau \tl{Z}( \ol{\tau}) , \tl{Z}_{h_j} \right \rangle_{n-2, \frac{1}{2B^2}} d \ol{\tau} \right| \\
		 \le \nu \left(\gamma^{-2k - 2B^2  \lambda_k} + \gamma^{- 2B^2 \lambda_k } \right) e^{B^2 \lambda_k \tau}
	\end{gather*}
\end{lem}
\begin{proof}
	The proof is identical to the proof of \ref{slowModeCommU} above using $\lambda_k \le h_j$ for $0 \le j \le K$ and \ref{outerH^-1EstErr}.
\end{proof}

\begin{lem} \label{slowModeN}
	For some $\epsilon(p,q,k) > 0$,
	\begin{gather*}
	\left| \sum_{j = 0}^K \tl{Z}_{h_j} \int_{\tau}^{\tau_1} e^{B^2 h_j (\tau - \ol{\tau} ) } \left \langle \grave{\chi} B^2 \cl{N} \tl{U} - B^2 \cl{N} e^{B^2 \lambda_k \ol{\tau} } \tl{U}_{\lambda_k} , \tl{Z}_{h_j} \right \rangle_{n-2, \frac{1}{2B^2}} d \ol{\tau} \right| \\
	\le ( C_{p,q,k, \Upsilon_U, \ul{D}} e^{- \epsilon \tau_0} + C_{p,q,k, \ul{D}} \eta_1^U ) e^{B^2 \lambda_k \tau} ( \gamma^{-2k -2B^2 \lambda_k} + \gamma^{-2B^2 \lambda_k} )
	\end{gather*}
\end{lem}
\begin{proof}
	Note that 
	lemma \ref{summH^-1EstErr}
	implies that there exists $\epsilon(p,q,k) > 0$ such that 
		$$\| \grave{\chi} B^2 \cl{N} \tl{U} - B^2 \cl{N} e^{B^2 \lambda_k \ol{\tau} } \tl{U}_{\lambda_k} \|_{H^{-1}_{n-2, \frac{1}{2B^2}}} \le e^{B^2 \lambda_k \ol{\tau}}( C_{p,q,k, \Upsilon_U, \ul{D}} e^{-\epsilon \ol{\tau}} + C_{p,q,k, \ul{D}}\eta_1^U)$$
	The rest of the proof then follows by similar logic as the above proofs.
\end{proof}

\begin{lem} \label{domModeInitU}
	For any $\nu \in (0,1)$, there exists $\tau_0 \gg 1$ sufficiently large depending on $p,q,k, \ul{\Upsilon}, M, \ol{\beta}, \nu$ such that
		$$\left| e^{B^2 \lambda_k \tau} \left\langle \check{\grave{U}}_{\ul{p}, \ul{q}} (\tau_0) - \tl{U}_{\lambda_k} , \tl{U}_{\lambda_k} \right\rangle_{n, \frac{1}{2B^2}} \tl{U}_{\lambda_k} \right| \le \nu e^{B^2 \lambda_k \tau} ( \gamma^{-2k -2B^2 \lambda_k} + \gamma^{-2B^2 \lambda_k} )$$
\end{lem}
\begin{proof}
	\begin{equation*} \begin{aligned}
		& \quad \left| e^{B^2 \lambda_k \tau} \left\langle \check{\grave{U}}_{\ul{p}, \ul{q}} (\tau_0) - \tl{U}_{\lambda_k} , \tl{U}_{\lambda_k} \right\rangle_{n, \frac{1}{2B^2}} \tl{U}_{\lambda_k} \right| \\
		& \lesssim_k  e^{B^2 \lambda_k \tau} ( \gamma^{-2k -2B^2 \lambda_k} + \gamma^{-2B^2 \lambda_k} ) \| \check{\grave{U}}_{\ul{p}, \ul{q}} (\tau_0) - \tl{U}_{\lambda_k}  \|_{n, \frac{1}{2B^2}} \\
		& \lesssim_{p,q,k, \ul{\Upsilon}}  e^{-\epsilon \tau_0} e^{B^2 \lambda_k \tau} ( \gamma^{-2k -2B^2 \lambda_k} + \gamma^{-2B^2 \lambda_k} )
		&& (\ref{L^2EstInitU})\\
	\end{aligned} \end{equation*}
	Taking $\tau_0 \gg 1$ sufficiently large completes the proof.
\end{proof}

\begin{lem} \label{fastModeInitU}
	For any $\nu \in (0,1)$ and $\Gamma > 0$, there exists $\tau_0 \gg 1$ sufficiently large depending on $p,q,k, \ul{\Upsilon}, M, \ol{\beta}, \Gamma, \nu$ such that 
		$$\left| \sum_{j = k+1}^\infty e^{B^2 \lambda_j (\tau - \tau_0)} \tl{U}_{\lambda_j} e^{B^2 \lambda_k \tau_0} \langle \check{\grave{U}}_{\ul{p}, \ul{q}} (\tau_0), \tl{U}_{\lambda_j} \rangle_{n, \frac{1}{2B^2}} \right| \le \nu e^{B^2 \lambda_k \tau} 
		\gamma^{-2k - 2B^2 \lambda_k}$$
		$$\text{for all } (\gamma, \tau) \in \Sigma_\Gamma$$
\end{lem}
\begin{proof}
	For this proof, all the arbitrary constants $C$ and implicit constants that appear will be uniform in $j$.
	First, note that for $j \ge k+1$
	\begin{equation*} \begin{aligned}
		e^{B^2 \lambda_j (\tau - \tau_0)} e^{B^2 \lambda_k \tau_0} 
		&= e^{B^2 (\lambda_j - \lambda_{k+1} ) (\tau - \tau_0)} e^{B^2 (\lambda_k - \lambda_{k+1}) \tau_0} e^{B^2 \lambda_{k+1} \tau} \\
		&\le e^{B^2 (\lambda_j - \lambda_{k+1} )} e^{B^2 (\lambda_k - \lambda_{k+1} ) \tau_0} e^{B^2 \lambda_{k+1} \tau} 
		&& ( \lambda_j \le \lambda_{k+1}) \\
		&= e^{B^2 (\lambda_j - \lambda_{k+1} )} e^{B^2 (\lambda_{k+1} - \lambda_k)(\tau - \tau_0)} e^{B^2 \lambda_k \tau} \\
		&\le e^{B^2 (\lambda_j - \lambda_{k+1} )} e^{B^2 \lambda_k \tau}  && (\lambda_{k+1} - \lambda_k < 0)\\
		& \lesssim_{p,q,k} e^{- B^2 j } e^{B^2 \lambda_k \tau}
	\end{aligned} \end{equation*}

	It follows that
	\begin{equation*} \begin{aligned}
		& \quad  \left| \sum_{j = k+1}^\infty e^{B^2 \lambda_j (\tau - \tau_0)} \tl{U}_{\lambda_j} e^{B^2 \lambda_k \tau_0} \langle \check{\grave{U}}_{\ul{p}, \ul{q}} (\tau_0), \tl{U}_{\lambda_j} \rangle_{n, \frac{1}{2B^2}} \right| \\
		&\lesssim_{p,q,k}  e^{B^2 \lambda_k \tau} \sum_{j = k+1}^\infty e^{-B^2 j } \left| \langle \check{\grave{U}}_{\ul{p}, \ul{q}}(\tau_0), \tl{U}_{\lambda_j} \rangle_{n, \frac{1}{2B^2}} \right|  \left| \tl{U}_{\lambda_j} (\gamma) \right| \\
		& \lesssim_{p,q,k, \ul{\Upsilon}}  e^{- \epsilon \tau_0} e^{B^2 \lambda_k \tau} \sum_{j = k+1}^\infty e^{-B^2 j }  \left| \tl{U}_{\lambda_j} (\gamma) \right| && (\ref{L^2EstInitU})\\
		& \lesssim_{p,q,k} e^{- \epsilon \tau_0} e^{B^2 \lambda_k \tau} \gamma^{-2k - 2B^2 \lambda_k }\\
		& \quad \cdot  \sum_{j=k+1}^\infty e^{-B^2 j}  \left( \frac{j!}{\Gamma(j + \frac{1}{2} \sqrt{(n-9)(n-1)} +1)} \right)^{1/2} \\
		& \quad \cdot \left| L_j^{\left(\frac{1}{2} \sqrt{(n-9)(n-1)} \right)} \left( \frac{ \gamma^2}{4B^2} \right) \right| 
		&& (\text{proposition } \ref{eigConstU})\\
		& \lesssim_{p,q, \Gamma}  e^{- \epsilon \tau_0} e^{B^2 \lambda_k \tau} \gamma^{-2k - 2B^2 \lambda_k } \sum_{j=k+1}^\infty e^{-B^2 j}  j^{C_{p,q,\Gamma}}
		&& (\text{lemma } \ref{laguerreGrowthJ})\\
		& \lesssim_{p,q,\Gamma}  e^{- \epsilon \tau_0} e^{B^2 \lambda_k \tau} \gamma^{-2k - 2B^2 \lambda_k}
	\end{aligned} \end{equation*}
	Taking $\tau_0 \gg 1$ sufficiently large completes the proof.
\end{proof}

\begin{lem} \label{fastModeInitZ}
	For $\nu \in (0,1)$ and $\Gamma > 0$, there exists $\tau_0 \gg 1$ sufficiently large depending on $p,q,k, \ul{\Upsilon}, M, \ol{\beta}, \Gamma, \nu$ such that
		$$\left| \sum_{j = K+1}^\infty e^{B^2 h_j ( \tau - \tau_0)} e^{B^2 \lambda_k \tau_0} \left \langle \check{ \grave{Z}}_{\ul{p}, \ul{q}}(\tau_0) - \tl{Z}_{\lambda_k}, \tl{Z}_{h_j} \right \rangle_{n-2, \frac{1}{2B^2}} \tl{Z}_{h_j} \right| $$
		$$\le \nu e^{B^2 \lambda_k \tau} ( \gamma^{-2k -2B^2 \lambda_k} + \gamma^{-2B^2 \lambda_k} )$$
\end{lem}
\begin{proof}
	By similar logic as in the previous proof,
		$$e^{B^2 h_j(\tau - \tau_0)} e^{B^2 \lambda_k \tau_0} \lesssim_{p,q,k} e^{-B^2 j} e^{B^2 \lambda_k \tau} 		\qquad \text{for all } j \ge K+1$$
	\ref{L^2EstInitZ} then implies
	\begin{equation*} \begin{aligned}
		& \quad \left| \sum_{j = K+1}^\infty e^{B^2 h_j ( \tau - \tau_0)} e^{B^2 \lambda_k \tau_0} \left \langle \check{ \grave{Z}}_{\ul{p}, \ul{q}} (\tau_0) - \tl{Z}_{\lambda_k}, \tl{Z}_{h_j} \right \rangle_{n-2, \frac{1}{2B^2}} \tl{Z}_{h_j} \right| \\
		&\lesssim_{p,q,k, \ul{\Upsilon}}  e^{B^2 \lambda_k \tau} e^{-\epsilon \tau_0} \sum_{j = K+1}^\infty e^{-B^2 j} | \tl{Z}_{h_j} | \\
		& \lesssim_{p,q}  e^{B^2 \lambda_k \tau} e^{-\epsilon \tau_0} \gamma^2\\
		& \quad \cdot  \sum_{j = K+1}^\infty e^{-B^2 j} \left( \frac{j!}{\Gamma( j + \frac{n+1}{2} + 1)} \right)^{1/2}
		\left| L_j^{ \left( \frac{n+1}{2} \right)} \left( \frac{\gamma^2}{4 B^2 } \right) \right| \\
		& \lesssim_{p,q,\Gamma}  e^{B^2 \lambda_k \tau} e^{-\epsilon \tau_0} \gamma^2 \sum_{j = K+1}^\infty e^{-B^2 j} j^{C_{p,q,\Gamma}}
		&& (\text{lemma } \ref{laguerreGrowthJ})\\
		& \lesssim_{p,q,\Gamma}  e^{B^2 \lambda_k \tau} e^{-\epsilon \tau_0} \gamma^2 \\
		& \lesssim_{p,q,k,\Gamma}  e^{B^2 \lambda_k \tau} e^{-\epsilon \tau_0} ( \gamma^{-2k -2B^2 \lambda_k} + \gamma^{-2B^2 \lambda_k} )
		&& (  (\gamma, \tau) \in \Sigma_{\Upsilon, \Gamma} ) \\
	\end{aligned} \end{equation*}
	Taking $\tau_0 \gg 1$ sufficiently large completes the proof.
\end{proof}

\begin{lem} \label{fastModeErrU}
	For any $\nu \in (0,1)$ and $\Gamma > 0$, there exists 
	$\Upsilon_U \gg 1$ sufficiently large depending on $p,q,k, l, \kappa, \ul{D}, \nu$,
	and $\tau_0 \gg 1$ sufficiently large depending on $p,q,k, \Upsilon_U, M, \beta, \ul{D}, \nu$
	such that
		$$\left| \sum_{j = k }^\infty \tl{U}_{\lambda_j} \int_{\tau_0}^\tau e^{B^2 \lambda_j (\tau - \ol{\tau})} \left \langle \grave{\chi}  Err_U (\tl{Z}, \tl{U} ) (\ol{\tau}), \tl{U}_{\lambda_j} \right \rangle_{n, \frac{1}{2B^2}} d \ol{\tau}  \right|$$
		$$\le \nu e^{B^2 \lambda_k \tau} ( \gamma^{-2k -2B^2 \lambda_k} + \gamma^{-2 B^2 \lambda_k} )$$
	for all $(\gamma, \tau) \in \Sigma_{\Upsilon_U, \Gamma}$.
\end{lem}
\begin{proof}
	Begin by splitting the integral as
	\begin{equation*} \begin{aligned}
		&\sum_{j = k }^\infty \tl{U}_{\lambda_j} \int_{\tau_0}^\tau e^{B^2 \lambda_j (\tau - \ol{\tau})} \left \langle \grave{\chi}  Err_U (\tl{Z}, \tl{U} ) (\ol{\tau}), \tl{U}_{\lambda_j} \right \rangle_{n, \frac{1}{2B^2}} d \ol{\tau} \\
		= & \sum_{j = k}^\infty \tl{U}_{\lambda_j} \left( \int_{\tau_0}^{\tau - 1} e^{B^2 \lambda_j (\tau - \ol{\tau})} \left \langle \grave{\chi}  Err_U (\tl{Z}, \tl{U} ) (\ol{\tau}), \tl{U}_{\lambda_j} \right \rangle_{n, \frac{1}{2B^2}} d \ol{\tau} \right. \\
		& \qquad + \left. \int_{\tau - 1}^\tau  e^{B^2 \lambda_j (\tau - \ol{\tau})} \left \langle \grave{\chi}  Err_U (\tl{Z}, \tl{U} ) (\ol{\tau}), \tl{U}_{\lambda_j} \right \rangle_{n, \frac{1}{2B^2}} d \ol{\tau}\right) \\
	\end{aligned} \end{equation*}
	using $\tau \ge \tau_0 + 1$.
	
	The second term can be estimated as follows
	\begin{equation*} \begin{aligned}
		& \quad \left|  \sum_{j = k}^\infty \tl{U}_{\lambda_j} \int_{\tau - 1}^\tau e^{B^2 \lambda_j (\tau - \ol{\tau})} \langle \grave{\chi} Err_U, \tl{U}_{\lambda_j} \rangle_{n, \frac{1}{2B^2}} d \ol{\tau}  \right| \\
		&=\left|  \int_{\tau - 1}^\tau e^{B^2 \cl{D}_U (\tau - \ol{\tau} )} \grave{\chi} Err_U(\gamma, \ol{\tau}) d \ol{\tau} - \sum_{j = 0}^{k-1} \tl{U}_{\lambda_j} \int_{\tau - 1}^\tau e^{B^2 \lambda_j (\tau - \ol{\tau} )} \langle \grave{\chi} Err_U, \tl{U}_{\lambda_j} \rangle_{n, \frac{1}{2B^2}} d \ol{\tau} \right| \\
		&\le  \left|  \int_{\tau - 1}^\tau e^{B^2 \cl{D}_U (\tau - \ol{\tau} )} \grave{\chi} Err_U(\gamma, \ol{\tau}) d \ol{\tau} \right| \\
		& \quad +  \sum_{j = 0}^{k-1} | \tl{U}_{\lambda_j} | \| \tl{U}_{\lambda_j} \|_{H^1_{n, \frac{1}{2B^2}}}  \int_{\tau - 1}^\tau e^{B^2 \lambda_j ( \tau - \ol{\tau}) } \| \grave{\chi} Err_U( \ol{\tau} ) \|_{H^{-1}_{n, \frac{1}{2B^2}}} d \ol{\tau} \\
		&\lesssim_{p,q,k}  \left|  \int_{\tau - 1}^\tau e^{B^2 \cl{D}_U (\tau - \ol{\tau} )} \grave{\chi} Err_U(\gamma, \ol{\tau}) d \ol{\tau} \right| \\
		&\quad  +  \left( \gamma^{-2k -2B^2 \lambda_k} + \gamma^{-2B^2 \lambda_k} \right) \int_{\tau - 1}^\tau  \| \grave{\chi} Err_U( \ol{\tau} ) \|_{H^{-1}_{n, \frac{1}{2B^2}}} d \ol{\tau} \\
		& \le \frac{1}{4} \nu e^{B^2 \lambda \tau} ( \gamma^{-2k -2B^2 \lambda_k} + \gamma^{-2B^2\lambda_k}) 
		&& (\ref{summShortTimeEstErrU} )\\
		& \quad + C_{p,q,k, \ul{D}, \Upsilon_U}  \left( \gamma^{-2k -2B^2 \lambda_k} + \gamma^{-2B^2 \lambda_k} \right) 
		\int_{\tau - 1}^\tau e^{B^2 \lambda \ol{\tau} - \delta \ol{\tau} } d \ol{\tau} 
		&& (\ref{summH^-1EstErr})\\
		& \le \frac{1}{2}  \nu e^{B^2 \lambda \tau} ( \gamma^{-2k -2B^2 \lambda_k} + \gamma^{-2B^2\lambda_k}) 
	\end{aligned} \end{equation*}
	if $\Upsilon_U \gg 1$ is sufficiently large depending on $p,q,k, l, \kappa, \ul{D}, \nu$,
	and $\tau_0 \gg 1$ is sufficiently large depending on $p,q,k, \Upsilon_U, M, \beta, \ul{D}, \nu$

	
	For the other integral, begin by observing that
	\begin{equation*} \begin{aligned}
		&\quad \left| \sum_{j = k}^\infty \tl{U}_{\lambda_j} \int_{\tau_0}^{\tau-1} e^{B^2 \lambda_j (\tau - \ol{\tau})} \langle \grave{\chi} Err_U , \tl{U}_{\lambda_j} \rangle_{n, \frac{1}{2B^2}} d \ol{\tau} \right| \\
		&\le  \sum_{j = k}^\infty | \tl{U}_{\lambda_j} | \int_{\tau_0}^{\tau-1} e^{B^2 (\lambda_j -\lambda_k) (\tau - \ol{\tau})} e^{B^2 \lambda_k (\tau - \ol{\tau}) } \left| \langle \grave{\chi}  Err_U , \tl{U}_{\lambda_j} \rangle_{n, \frac{1}{2B^2}} \right| d \ol{\tau}\\
		& \le \left( \sum_{j = k}^\infty | \tl{U}_{\lambda_j} | e^{B^2 (\lambda_j - \lambda_k)} \| \tl{U}_{\lambda_j} \|_{H^1_{n, \frac{1}{2B^2}}}  \right) \int_{\tau_0}^{\tau-1} e^{B^2 \lambda_k (\tau - \ol{\tau} )} \| Err_U \|_{H^{-1}_{n, \frac{1}{2B^2}}} d \ol{\tau} \\
		& \lesssim_{p,q}  \left( \sum_{j = k}^\infty | \tl{U}_{\lambda_j} | e^{B^2 (\lambda_j - \lambda_k)} \sqrt{j}  \right) \int_{\tau_0}^{\tau-1} e^{B^2 \lambda_k (\tau - \ol{\tau} )} \| Err_U \|_{H^{-1}_{n, \frac{1}{2B^2}}} d \ol{\tau} 
		&& ( \ref{eigfuncUH^1GrowthJ})\\
		& \lesssim_{p,q,k, \ul{D}, \Upsilon_U}   \left( \sum_{j = k}^\infty | \tl{U}_{\lambda_j} | e^{-B^2 (j - k)} \sqrt{j}  \right) \int_{\tau_0}^{\tau-1} e^{B^2 \lambda_k (\tau - \ol{\tau} )} e^{B^2 \lambda_k \ol{\tau} - \delta \ol{\tau} } d \ol{\tau} 
		&& (\ref{summH^-1EstErr})\\
		& \lesssim_{p,q, \Gamma}  \gamma^{-2k -2B^2 \lambda_k} \left( \sum_{j = k}^\infty  e^{-B^2 (j - k)} j^{C_{p,q, \Gamma}}  \right) \int_{\tau_0}^{\tau-1} e^{B^2 \lambda_k (\tau - \ol{\tau} )} e^{B^2 \lambda_k \ol{\tau} - \delta \ol{\tau} } d \ol{\tau} 
		&& ( \ref{laguerreGrowthJ} ) \\
		& \lesssim_{p,q,k, \Gamma}  e^{B^2 \lambda_k \tau} \gamma^{-2k -2B^2 \lambda_k} \int_{\tau_0}^{\tau - 1} e^{- \delta \ol{\tau} } d \ol{\tau} \\
	\end{aligned} \end{equation*} 
	The estimate then follows from observing that the integral $$\int_{\tau_0}^{\tau - 1} e^{- \delta \ol{\tau} } d \ol{\tau} $$ can be made arbitrarily small by taking $\tau_0 \gg 1$ sufficiently large.
	
%
%
%
\end{proof}

\begin{lem} \label{fastModeCommU}
	For any $\nu \in (0,1)$ and $\Gamma > 0$, 
	there exists $\tau_0 \gg 1$ sufficiently large depending on $p,q,k, \ul{D}, M, \beta, \Gamma, \nu$ such that
		$$\left| \sum_{j = k }^\infty \tl{U}_{\lambda_j} \int_{\tau_0}^\tau e^{B^2 \lambda_j (\tau - \ol{\tau})} \left \langle B^2 [ \grave{\chi}, \mathcal{D}_U ] \tl{U} + \grave{\chi}_\tau \tl{U}, \tl{U}_{\lambda_j} \right \rangle_{n, \frac{1}{2B^2}} d \ol{\tau} \right|$$
		$$ \le \nu e^{B^2 \lambda_k \tau} ( \gamma^{-2k -2B^2 \lambda_k} + \gamma^{-2 B^2 \lambda_k} )$$
	for all $(\gamma, \tau) \in \Sigma_{\Upsilon_U, \Gamma}$.
\end{lem}
\begin{proof}
	We follow the strategy as in the proof of the previous lemma.
	Begin by splitting the integral
	\begin{equation*} \begin{aligned}
		 & \sum_{j = k }^\infty \tl{U}_{\lambda_j} \int_{\tau_0}^\tau e^{B^2 \lambda_j (\tau - \ol{\tau})} \left \langle B^2 [ \grave{\chi}, \mathcal{D}_U ] \tl{U} + \grave{\chi}_\tau \tl{U}, \tl{U}_{\lambda_j} \right \rangle_{n, \frac{1}{2B^2}} d \ol{\tau}  \\
		\le & \sum_{j = k}^\infty \tl{U}_{\lambda_j} \left( \int_{\tau_0}^{\tau-1} e^{B^2 \lambda_j (\tau - \ol{\tau})} \left \langle B^2 [ \grave{\chi}, \mathcal{D}_U ] \tl{U} + \grave{\chi}_\tau \tl{U}, \tl{U}_{\lambda_j} \right \rangle_{n, \frac{1}{2B^2}} d \ol{\tau} \right. \\
		& + \left. \int_{\tau-1}^\tau e^{B^2 \lambda_j (\tau - \ol{\tau})} \left \langle B^2 [ \grave{\chi}, \mathcal{D}_U ] \tl{U} + \grave{\chi}_\tau \tl{U}, \tl{U}_{\lambda_j} \right \rangle_{n, \frac{1}{2B^2}} d \ol{\tau}	\right)
	\end{aligned} \end{equation*}
	
	The second term can be estimated as follows
	\begin{equation*} \begin{aligned}
		&\quad  \left|  \sum_{j = k}^\infty \tl{U}_{\lambda_j}  \int_{\tau-1}^\tau e^{B^2 \lambda_j (\tau - \ol{\tau})} \left \langle B^2 [ \grave{\chi}, \mathcal{D}_U ] \tl{U} + \grave{\chi}_\tau \tl{U}, \tl{U}_{\lambda_j} \right \rangle_{n, \frac{1}{2B^2}} d \ol{\tau}	\right| \\
		&= \left| \int_{\tau-1}^\tau e^{B^2 \cl{D}_U (\tau - \ol{\tau})} \left(  B^2 [ \grave{\chi}, \mathcal{D}_U ] \tl{U} + \grave{\chi}_\tau \tl{U}  \right) d \ol{\tau} \right.\\
		& \quad \left.- \sum_{j = 0}^{k-1} \tl{U}_{\lambda_j} \int_{\tau-1}^{\tau} e^{B^2 \lambda_j (\tau - \ol{\tau})} \left \langle B^2 [ \grave{\chi}, \mathcal{D}_U ] \tl{U} + \grave{\chi}_\tau \tl{U}, \tl{U}_{\lambda_j} \right \rangle_{n, \frac{1}{2B^2}} d \ol{\tau}	\right| \\
		& \le \left| \int_{\tau-1}^\tau e^{B^2 \cl{D}_U (\tau - \ol{\tau})} \left(  B^2 [ \grave{\chi}, \mathcal{D}_U ] \tl{U} + \grave{\chi}_\tau \tl{U}  \right) d \ol{\tau} \right| \\
		& \quad +  \sum_{j=0}^{k-1} | \tl{U}_{\lambda_j} | \| \tl{U}_{\lambda_j} \|_{H^1_{n , \frac{1}{2B^2}}} \int_{\tau-1}^\tau e^{B^2 \lambda_j (\tau - \ol{\tau} ) } \left \| B^2 [ \grave{\chi}, \mathcal{D}_U ] \tl{U} + \grave{\chi}_\tau \tl{U}  \right \|_{H^{-1}_{n , \frac{1}{2B^2}} } d \ol{\tau} \\
		&\lesssim_{p,q,k}   \left| \int_{\tau-1}^\tau e^{B^2 \cl{D}_U (\tau - \ol{\tau})} \left(  B^2 [ \grave{\chi}, \mathcal{D}_U ] \tl{U} + \grave{\chi}_\tau \tl{U}  \right) d \ol{\tau} \right| \\
		& \quad +  (\gamma^{-2k-2B^2 \lambda_k} + \gamma^{-2B^2 \lambda_k} )  \int_{\tau-1}^\tau  \left \| B^2 [ \grave{\chi}, \mathcal{D}_U ] \tl{U} + \grave{\chi}_\tau \tl{U}  \right \|_{H^{-1}_{n , \frac{1}{2B^2}} } d \ol{\tau} \\
		& \le \frac{1}{4} \nu e^{B^2 \lambda_k \tau} \gamma^{-2k-2B^2 \lambda} 
		&& (\ref{shortTimeEstCommU})\\
		& \quad + C_{p,q,k, \ul{D}} (\gamma^{-2k-2B^2 \lambda_k} + \gamma^{-2B^2 \lambda_k} )  \int_{\tau-1}^\tau e^{B^2 \lambda_k \ol{\tau} - \delta \ol{\tau} } d \ol{\tau} 
		&& (\ref{outerH^-1EstErr}) \\
		& \le \frac{1}{2} \nu e^{B^2 \lambda_k \tau}\left(  \gamma^{-2k-2B^2 \lambda} + \gamma^{-2B^2\lambda} \right)
	\end{aligned} \end{equation*}
	if $\tau_0 \gg 1 $ is sufficiently large depending on $p,q,k, M, \beta, \Gamma, \ul{D}, \nu$.
	
	For the other integral, observe
	\begin{equation*} \begin{aligned}
		 & \quad \left| \sum_{j = k}^\infty \tl{U}_{\lambda_j}  \int_{\tau_0}^{\tau-1} e^{B^2 \lambda_j (\tau - \ol{\tau})} \left \langle B^2 [ \grave{\chi}, \mathcal{D}_U ] \tl{U} + \grave{\chi}_\tau \tl{U}, \tl{U}_{\lambda_j} \right \rangle_{n, \frac{1}{2B^2}} d \ol{\tau} \right|\\
		 &\le  \sum_{j=k}^\infty | \tl{U}_{\lambda_j} |  \int_{\tau_0}^{\tau-1} 
		 e^{B^2 (\lambda_j - \lambda_k)(\tau - \ol{\tau})} e^{B^2 \lambda_k ( \tau - \ol{\tau}) }\\
		 & \qquad \cdot \left| \left \langle B^2 [ \grave{\chi}, \mathcal{D}_U ] \tl{U} + \grave{\chi}_\tau \tl{U}, \tl{U}_{\lambda_j} \right \rangle_{n, \frac{1}{2B^2}} \right|  d\ol{\tau} \\
		 & \le  \left( \sum_{j=k}^\infty  | \tl{U}_{\lambda_j} | e^{B^2 (\lambda_j - \lambda_k)} \| \tl{U}_{\lambda_j} \|_{H^1_{n, \frac{1}{2B^2}} } \right) \\
		& \quad \cdot  \int_{\tau_0}^{\tau - 1} e^{B^2 \lambda_k ( \tau - \ol{\tau}) }
		  \left \| B^2 [ \grave{\chi}, \mathcal{D}_U ] \tl{U} + \grave{\chi}_\tau \tl{U} \right \|_{H^{-1}_{n, \frac{1}{2B^2}}  } d\ol{\tau} \\
		  & \lesssim_{p,q}  \left( \sum_{j=k}^\infty  | \tl{U}_{\lambda_j} | e^{B^2 (\lambda_j - \lambda_k)} \sqrt{j} \right) \\
		& \quad	\cdot \int_{\tau_0}^{\tau - 1} e^{B^2 \lambda_k ( \tau - \ol{\tau}) }
		  \left \| B^2 [ \grave{\chi}, \mathcal{D}_U ] \tl{U} + \grave{\chi}_\tau \tl{U} \right \|_{H^{-1}_{n, \frac{1}{2B^2}}  } d\ol{\tau} 
		  && (\ref{eigfuncUH^1GrowthJ})\\
		  & \lesssim_{p,q,k, \ul{D}, \Gamma}   \gamma^{-2k -2B^2 \lambda_k} \left( \sum_{j=k}^\infty   e^{- B^2 (j - k)} j^{C_{p,q,\Gamma}} \right) \\
		& \quad \cdot \int_{\tau_0}^{\tau - 1} e^{B^2 \lambda_k ( \tau - \ol{\tau}) } e^{B^2 \lambda_k \ol{\tau} - \delta \ol{\tau}} d\ol{\tau} 
		 && (\ref{outerH^-1EstErr}) \\
		 & \lesssim_{p,q,k, \Gamma}  e^{B^2 \lambda_k \tau} \gamma^{-2k -2B^2 \lambda_k} 
		 \int_{\tau_0}^{\tau - 1}  e^{ - \delta \ol{\tau}} d\ol{\tau} 
	\end{aligned} \end{equation*}
	The estimate then follows by observing that 
		$$\int_{\tau_0}^{\tau - 1}  e^{ - \delta \ol{\tau}} d\ol{\tau}  $$
	can be made arbitrarily small by taking $\tau_0 \gg 1 $ sufficiently large.
\end{proof}

\begin{lem} \label{fastModeErrZ}
	For any $\nu \in (0,1)$ and $\Gamma > 0$, there exists 
	$\Upsilon_Z \gg 1$ sufficiently large depending on $p,q,k, \ul{D}, l, \nu$ and
	$\tau_0 \gg 1$ sufficiently large depending on $p,q,k, M, \beta, \ul{D}, \Upsilon_Z, \nu$
	such that
	\begin{gather*}
		\left| \sum_{j = K+1}^\infty \tl{Z}_{h_j} \int_{\tau_0}^\tau e^{B^2 h_j(\tau - \ol{\tau})} \left \langle \grave{\chi}  Err_Z (\tl{Z}, \tl{U} ) (\ol{\tau}), \tl{Z}_{h_j} \right \rangle_{n-2, \frac{1}{2B^2}} d \ol{\tau} \right| \\
		\le \nu e^{B^2 \lambda_k \tau} ( \gamma^{-2k -2B^2 \lambda_k} + \gamma^{-2 B^2 \lambda_k} )
	\end{gather*}
	for all $(\gamma, \tau) \in \Sigma_{\Upsilon_Z, \Gamma}$.
\end{lem}
\begin{proof}
	As for the $U$ equation, split the integral as
	\begin{equation*} \begin{aligned}
		&\sum_{j = K+1}^\infty \tl{Z}_{h_j} \int_{\tau_0}^\tau e^{B^2 h_j (\tau - \ol{\tau})} \langle \grave{\chi} Err_Z (\ol{\tau} ), \tl{Z}_{h_j} \rangle_{n-2, \frac{1}{2B^2}} d \ol{\tau} \\
		=& \sum_{j = K+1}^\infty \tl{Z}_{h_j} \left( \int_{\tau_0}^{\tau -1 } \cdots \quad+ \int_{\tau - 1 }^\tau  \cdots \right)
	\end{aligned} \end{equation*}
	The second term in the sum can be estimated as follows
	\begin{equation*} \begin{aligned}
		&\quad \left|  \int_{\tau -1}^\tau e^{B^2 \cl{D}_Z (\tau - \ol{\tau}) } Err_Z (\ol{\tau}) d \ol{\tau} - \sum_{j =0}^K  \tl{Z}_{h_j} \int_{\tau - 1}^\tau e^{B^2 h_j (\tau - \ol{\tau}) } \langle  Err_Z (\ol{\tau} ), \tl{Z}_{h_j} \rangle_{n-2, \frac{1}{2B^2}} d \ol{\tau} \right|\\
		&\le  \left| \int_{\tau -1}^\tau e^{B^2 \cl{D}_Z (\tau - \ol{\tau}) } Err_Z (\ol{\tau}) d \ol{\tau} \right| \\
		& \quad +  \sum_{j =0}^K  | \tl{Z}_{h_j} | \| \tl{Z}_{h_j} \|_{H^1_{n-2, \frac{1}{2B^2}}} \int_{\tau - 1}^\tau  e^{B^2 h_j (\tau - \ol{\tau}) }  \|  Err_Z ( \ol{\tau} ) \|_{H^{-1}_{n-2, \frac{1}{2B^2}}} d \ol{\tau} \\
		& \le  \left| \int_{\tau -1}^\tau e^{B^2 \cl{D}_Z (\tau - \ol{\tau}) } Err_Z (\ol{\tau}) d \ol{\tau} \right| 
		 + C_{p,q,k, \ul{D}, \Upsilon_Z, \Gamma}  \gamma^2  \int_{\tau -1}^\tau e^{B^2 \lambda_k \ol{\tau}- \delta \ol{\tau}}  d \ol{\tau} 
		 && (\ref{summH^-1EstErr})\\
		&\le \frac{1}{4} \nu e^{B^2 \lambda_k \tau} \left( \gamma^{-2k-2B^2 \lambda_k} + \gamma^{-2B^2 \lambda_k} \right) 
		 +C_{p,q,k, \ul{D}, \Upsilon_Z, \Gamma}  \gamma^2  \int_{\tau -1}^\tau e^{B^2 \lambda_k \ol{\tau}- \delta \ol{\tau}}  d \ol{\tau} 
		 && (\ref{summShortTimeEstErrZ})\\
		 &\le \frac{1}{2} \nu e^{B^2 \lambda_k \tau} \left( \gamma^{-2k-2B^2 \lambda_k} + \gamma^{-2B^2 \lambda_k} \right) 
	\end{aligned} \end{equation*}
	if $\Upsilon_Z \gg 1$ depending on $p,q,k, \ul{D}, l, \nu$ and 
	$\tau_0 \gg 1$ depending on $p,q,k, \Upsilon_Z, M, \beta, \ul{D}, \nu$.
	
	We estimate the other integral as follows:
	\begin{equation*} \begin{aligned}
		& \quad \left| \sum_{j = K+1}^{\infty}  \tl{Z}_{h_j}  \int_{\tau_0}^{\tau - 1} e^{B^2 h_j (\tau - \ol{\tau} )} \langle Err_Z (\ol{\tau} ), \tl{Z}_{h_j} \rangle_{n-2, \frac{1}{2B^2}} d \ol{\tau} \right| \\
		& \le  \sum_{j = K+1}^{\infty}  |\tl{Z}_{h_j}| \| \tl{Z}_{h_j} \|_{H^1_{n-2, \frac{1}{2B^2}}}  \\
		& \quad \cdot \int_{\tau_0}^{\tau - 1} e^{B^2 (h_j-\lambda_k) (\tau - \ol{\tau} )} e^{B^2 \lambda_k (\tau - \ol{\tau})} \| Err_Z (\ol{\tau} ) \|_{H^{-1}_{n-2, \frac{1}{2B^2}}} d \ol{\tau}\\
		& \le \sum_{j = K+1}^{\infty}  |\tl{Z}_{h_j}| \| \tl{Z}_{h_j} \|_{H^1_{n-2, \frac{1}{2B^2}}}  e^{B^2 (h_j-\lambda_k)}	\\
		& \quad \cdot \int_{\tau_0}^{\tau - 1}  e^{B^2 \lambda_k (\tau - \ol{\tau})} \| Err_Z (\ol{\tau} ) \|_{H^{-1}_{n-2, \frac{1}{2B^2}}} d \ol{\tau}\\
		& \lesssim_{p,q,k,\Gamma}  \gamma^2 \left( \sum_{j = K+1}^\infty  j^{ C_{p,q,\Gamma} }  \| \tl{Z}_{h_j} \|_{H^1_{n-2, \frac{1}{2B^2}}}  e^{B^2 (h_j-\lambda_k)} \right) \\
		& \quad \cdot \int_{\tau_0}^{\tau - 1}  e^{B^2 \lambda_k (\tau - \ol{\tau})} \| Err_Z (\ol{\tau} ) \|_{H^{-1}_{n-2, \frac{1}{2B^2}}} d \ol{\tau}
		&& ( \ref{laguerreGrowthJ}) \\
		& \lesssim_{p,q,k}  \gamma^2 \left( \sum_{j = K+1}^\infty  j^{ C}  \sqrt{j}  e^{-B^2 j} \right) \\
		& \quad \cdot \int_{\tau_0}^{\tau - 1}  e^{B^2 \lambda_k (\tau - \ol{\tau})} \| Err_Z (\ol{\tau} ) \|_{H^{-1}_{n-2, \frac{1}{2B^2}}} d \ol{\tau}
		&& (  \ref{eigfuncZH^1GrowthJ} )\\
		& \lesssim_{p,q,k, \ul{D}, \Upsilon_Z}  \gamma^2 \left( \sum_{j = K+1}^\infty  j^{ C }  \sqrt{j}  e^{-B^2 j} \right) 
		\int_{\tau_0}^{\tau - 1}  e^{B^2 \lambda_k (\tau - \ol{\tau})} e^{B^2 \lambda_k \ol{\tau} - \delta \ol{\tau} } d \ol{\tau}
		&& (\ref{summH^-1EstErr})\\
		& \le  e^{B^2 \lambda_k \tau} \gamma^2 \left( \sum_{j = K+1}^\infty  j^{ C }  \sqrt{j}  e^{-B^2 j} \right) 
		\int_{\tau_0}^{\tau - 1}  e^{ - \delta \ol{\tau} } d \ol{\tau}\\
		& \lesssim_{p,q,k}  e^{B^2 \lambda_k \tau} ( \gamma^{-2k -2B^2 \lambda_k} + \gamma^{-2B^2 \lambda} ) \int_{\tau_0}^{\tau - 1}  e^{ - \delta \ol{\tau} } d \ol{\tau}\\
	\end{aligned} \end{equation*}
	The estimate then follows from taking $\tau_0 \gg 1$ sufficiently large.
	
\end{proof}

\begin{lem} \label{fastModeCommZ}
	For any $\nu \in (0,1)$ and $\Gamma > 0$, there exists 
	$\tau_0 \gg 1$ sufficiently large depending on $p,q,k, M, \beta, \Gamma, \ul{D}, \nu$ such that
	\begin{gather*}
		\left| \sum_{j = K+1}^\infty \tl{Z}_{h_j} \int_{\tau_0}^\tau e^{B^2 h_j(\tau - \ol{\tau})} \left \langle B^2 [ \grave{\chi}, \mathcal{D}_Z ] \tl{Z} + \grave{\chi}_\tau \tl{Z}(\ol{\tau}), \tl{Z}_{h_j} \right \rangle_{n-2, \frac{1}{2B^2}} d \ol{\tau} \right| \\
		\le \nu e^{B^2 \lambda_k \tau} ( \gamma^{-2k -2B^2 \lambda_k} + \gamma^{-2 B^2 \lambda_k} )
	\end{gather*}
	for all $(\gamma, \tau) \in \Sigma_{\Upsilon_Z, \Gamma}$.
\end{lem}
\begin{proof}
	The proof is similar to that of the previous lemma.
	Begin by splitting the integral
	\begin{equation*} \begin{aligned}
		&  \sum_{j = K+1}^\infty \tl{Z}_{h_j} \int_{\tau_0}^\tau e^{B^2 h_j(\tau - \ol{\tau})} \left \langle B^2 [ \grave{\chi}, \mathcal{D}_Z ] \tl{Z} + \grave{\chi}_\tau \tl{Z}(\ol{\tau}), \tl{Z}_{h_j} \right \rangle_{n-2, \frac{1}{2B^2}} d \ol{\tau} \\
		=& \sum_{j = K+1}^\infty \tl{Z}_{h_j} \left( \int_{\tau_0}^{\tau - 1} \cdots \quad + \int_{\tau -1}^\tau \cdots \right)
	\end{aligned} \end{equation*}
	
	The second term can be estimated as follows
	\begin{equation*} \begin{aligned}
		& \quad  \left| \sum_{j = K+1}^\infty \tl{Z}_{h_j} \int_{\tau-1}^\tau e^{B^2 h_j(\tau - \ol{\tau})} \left \langle B^2 [ \grave{\chi}, \mathcal{D}_Z ] \tl{Z} + \grave{\chi}_\tau \tl{Z}(\ol{\tau}), \tl{Z}_{h_j} \right \rangle_{n-2, \frac{1}{2B^2}} d \ol{\tau} \right| \\
		& = \left| \int_{\tau-1}^\tau e^{B^2 \cl{D}_Z ( \tau - \ol{\tau})}
		\left( 		B^2 [ \grave{\chi}, \mathcal{D}_Z ] \tl{Z} + \grave{\chi}_\tau \tl{Z}(\ol{\tau})	\right) \right. \\
		& \quad \left. - \sum_{j =0}^K \tl{Z}_{h_j} \int_{\tau-1}^\tau e^{B^2 h_j(\tau - \ol{\tau})} \left \langle B^2 [ \grave{\chi}, \mathcal{D}_Z ] \tl{Z} + \grave{\chi}_\tau \tl{Z}(\ol{\tau}), \tl{Z}_{h_j} \right \rangle_{n-2, \frac{1}{2B^2}} d \ol{\tau} \right| \\
		& \le  \left| \int_{\tau-1}^\tau e^{B^2 \cl{D}_Z ( \tau - \ol{\tau})}
		\left( 		B^2 [ \grave{\chi}, \mathcal{D}_Z ] \tl{Z} + \grave{\chi}_\tau \tl{Z}(\ol{\tau})	\right) \right| \\
		& \quad + \sum_{j =0}^K |\tl{Z}_{h_j}|  \| \tl{Z}_{h_j} \|_{H^1_{n-2, \frac{1}{2B^2}}}
		\int_{\tau-1}^\tau e^{B^2 h_j(\tau - \ol{\tau})} \left \| B^2 [ \grave{\chi}, \mathcal{D}_Z ] \tl{Z} + \grave{\chi}_\tau \tl{Z}(\ol{\tau})   \right \|_{H^{-1}_{n-2, \frac{1}{2B^2}}} d \ol{\tau} \\
		& \le  \frac{1}{4} \nu e^{B^2 \lambda_k \tau} \left( \gamma^{-2k-2B^2 \lambda} + \gamma^{-2B^2 \lambda_k} \right) 
		&& (\ref{shortTimeEstCommZ}) \\
		& \quad + C_{p,q,k, \ul{D}, \Gamma}  \gamma^{-2k-2B^2 \lambda_k}   
		\int_{\tau-1}^\tau e^{B^2 \lambda_k \ol{\tau} - \epsilon \ol{\tau}}  d \ol{\tau} 
		&&(\ref{summH^-1EstErr})\\
		& \le \frac{1}{2} \nu e^{B^2 \lambda_k \tau} \left( \gamma^{-2k-2B^2 \lambda} + \gamma^{-2B^2 \lambda_k} \right) 
	\end{aligned} \end{equation*}
	if $\tau_0 \gg 1$ is sufficiently large depending on $p,q,k, M, \beta, \Gamma, \ul{D}, \nu$.
	
	For the other integral, observe
	\begin{equation*} \begin{aligned}
		& \quad \left| \sum_{j = K+1}^\infty \tl{Z}_{h_j} \int_{\tau_0}^{\tau-1} e^{B^2 h_j(\tau - \ol{\tau})} \left \langle B^2 [ \grave{\chi}, \mathcal{D}_Z ] \tl{Z} + \grave{\chi}_\tau \tl{Z}(\ol{\tau}), \tl{Z}_{h_j} \right \rangle_{n-2, \frac{1}{2B^2}} d \ol{\tau} \right| \\
		& \le  \sum_{j=K+1}^\infty | \tl{Z}_{h_j} | \| \tl{Z}_{h_j} \|_{H^1_{n-2, \frac{1}{2B^2}}} \\
		& \quad \cdot \int_{\tau_0}^{\tau-1}  e^{B^2 (h_j - \lambda_k)( \tau - \ol{\tau} )} e^{B^2 \lambda_k ( \tau - \ol{\tau} )}
		\| 	B^2 [ \grave{\chi}, \mathcal{D}_Z ] \tl{Z} + \grave{\chi}_\tau \tl{Z}	\|_{H^{-1}_{n-2, \frac{1}{2B^2}}} d \ol{\tau} \\
		& \lesssim_{p,q,k,\Gamma} \gamma^2 \left( \sum_{j=K+1}^\infty j^{C} e^{-B^2 j} \right) \\
		& \quad  \cdot \int_{\tau_0}^{\tau-1} e^{B^2 \lambda_k (\tau - \ol{\tau} )} 
		\| 	B^2 [ \grave{\chi}, \mathcal{D}_Z ] \tl{Z} + \grave{\chi}_\tau \tl{Z}	\|_{H^{-1}_{n-2, \frac{1}{2B^2}}} d \ol{\tau} 
		&& (\ref{laguerreGrowthJ} \text{ and } \ref{eigfuncZH^1GrowthJ})\\
		& \lesssim_{p,q,k, \ul{D}}  \gamma^2
		\int_{\tau_0}^{\tau-1} e^{B^2 \lambda_k (\tau - \ol{\tau} )} e^{B^2 \lambda_k \ol{\tau} - \epsilon \ol{\tau} } d \ol{\tau} 
		&& (\ref{summH^-1EstErr})\\
		& \lesssim_{p,q,k, \Gamma} e^{B^2 \lambda_k \tau} \left( \gamma^{-2k - 2B^2 \lambda_k} + \gamma^{-2B^2 \lambda_k} \right)
		\int_{\tau_0}^{\tau-1} e^{- \epsilon \ol{\tau}} d \ol{\tau} 
	\end{aligned} \end{equation*}
	The estimate then follows from taking $\tau_0 \gg 1$ sufficiently large.
\end{proof}

\begin{lem} \label{fastModeN}
	For any $\nu \in (0,1)$ and $\Gamma > 0$, there exists 
	$\Upsilon_Z \gg 1$ is sufficiently large depending on $p,q, k, l, \Upsilon_U, \ul{D}, \nu$
	and $\tau_0 \gg 1$ sufficiently large depending on $p,q,k, M, \beta, \Upsilon_U, \Upsilon_Z, \ul{D}, \Gamma, \nu$ such that
	\begin{gather*}
		\left| \sum_{j = K+1}^\infty \tl{Z}_{h_j} \int_{\tau_0}^\tau e^{B^2 h_j(\tau - \ol{\tau})} \left \langle \grave{\chi} B^2 \cl{N} \tl{U} - B^2 \cl{N} e^{B^2 \lambda_k \ol{\tau} } \tl{U}_{\lambda_k}, \tl{Z}_{h_j} \right \rangle_{n-2, \frac{1}{2B^2}} d \ol{\tau}\right| \\
		\le \left( \nu + C_{p,q, k, \ul{D}, \Gamma} \eta_1^U \right)  e^{B^2 \lambda_k \tau} ( \gamma^{-2k -2B^2 \lambda_k} + \gamma^{-2 B^2 \lambda_k} )
	\end{gather*}
	for all $(\gamma, \tau) \in \Sigma_{\Upsilon_Z, \Gamma}$.
\end{lem}
\begin{proof}
	As above, begin by splitting the integral
	\begin{equation*} \begin{aligned}
		& \sum_{j = K+1}^\infty \tl{Z}_{h_j} \int_{\tau_0}^\tau e^{B^2 h_j (\tau - \ol{\tau})} \left \langle \grave{\chi} B^2 \cl{N} \tl{U} - B^2 \cl{N} e^{B^2 \lambda_k \ol{\tau} } \tl{U}_{\lambda_k}, \tl{Z}_{h_j} \right \rangle_{n-2, \frac{1}{2B^2}} d \ol{\tau} \\
		=& 
		\sum_{j = K+1}^\infty \tl{Z}_{h_j}  \int_{\tau_0}^{\tau-1} e^{B^2 h_j (\tau - \ol{\tau})} \left \langle \grave{\chi} B^2 \cl{N} \tl{U} - B^2 \cl{N} e^{B^2 \lambda_k \ol{\tau} } \tl{U}_{\lambda_k}, \tl{Z}_{h_j} \right \rangle_{n-2, \frac{1}{2B^2}} d \ol{\tau}\\
		&+ \sum_{j = K+1}^\infty \tl{Z}_{h_j} \int_{\tau-1}^\tau e^{B^2 h_j (\tau - \ol{\tau})} \left \langle \grave{\chi} B^2 \cl{N} \tl{U} - B^2 \cl{N} e^{B^2 \lambda_k \ol{\tau} } \tl{U}_{\lambda_k}, \tl{Z}_{h_j} \right \rangle_{n-2, \frac{1}{2B^2}} d \ol{\tau}
	\end{aligned} 	\end{equation*}
	
	The second term can be estimated as follows:
	\begin{equation*} \begin{aligned}
		&\left| \sum_{j = K+1}^\infty \tl{Z}_{h_j} \int_{\tau-1}^\tau e^{B^2 h_j (\tau - \ol{\tau})} \left \langle \grave{\chi} B^2 \cl{N} \tl{U} - B^2 \cl{N} e^{B^2 \lambda_k \ol{\tau} } \tl{U}_{\lambda_k}, \tl{Z}_{h_j} \right \rangle_{n-2, \frac{1}{2B^2}} \right| \\
		=& \left| \int_{\tau-1}^\tau e^{B^2 \cl{D}_Z (\tau - \ol{\tau})} \left(  \grave{\chi} B^2 \cl{N} \tl{U} - B^2 \cl{N} e^{B^2 \lambda_k \ol{\tau} } \tl{U}_{\lambda_k} \right)  d \ol{\tau} \right. \\
		&\left. - \sum_{j = 0}^{K} \tl{Z}_{h_j} \int_{\tau-1}^\tau e^{B^2 h_j (\tau - \ol{\tau})} \left \langle \grave{\chi} B^2 \cl{N} \tl{U} - B^2 \cl{N} e^{B^2 \lambda_k \ol{\tau} } \tl{U}_{\lambda_k}, \tl{Z}_{h_j} \right \rangle_{n-2, \frac{1}{2B^2}} d \ol{\tau} \right|\\
		\le& \left| \int_{\tau-1}^\tau e^{B^2 \cl{D}_Z (\tau - \ol{\tau})} \left( \grave{\chi} B^2 \cl{N} \tl{U} - B^2 \cl{N} e^{B^2 \lambda_k \ol{\tau} } \tl{U}_{\lambda_k}	\right) d \ol{\tau} \right| \\
		& + \sum_{j = 0}^{K} | \tl{Z}_{h_j} | \| \tl{Z}_{h_j} \|_{H^1_{n-2, \frac{1}{2B^2}}} \int_{\tau-1}^\tau e^{B^2 h_j (\tau - \ol{\tau})} \left \| \grave{\chi} B^2 \cl{N} \tl{U} - B^2 \cl{N} e^{B^2 \lambda_k \ol{\tau} } \tl{U}_{\lambda_k} \right \|_{H^{-1}_{n-2, \frac{1}{2B^2}}} d \ol{\tau} \\
		\le & \left| \int_{\tau-1}^\tau e^{B^2 \cl{D}_Z (\tau - \ol{\tau})} \left( \grave{\chi} B^2 \cl{N} \tl{U} - B^2 \cl{N} e^{B^2 \lambda_k \ol{\tau} } \tl{U}_{\lambda_k}	\right) d \ol{\tau} \right| \\
		& + \left( \gamma^{2} + \gamma^{2K+2} \right) 
		\int_{\tau - 1}^{\tau} C_{p,q,k, \Upsilon_U, \ul{D}} e^{B^2 \lambda_k \ol{\tau} - \delta \ol{\tau}} d \ol{\tau}\\
		& + \left( \gamma^{2} + \gamma^{2K+2} \right) 
		\int_{\tau - 1}^{\tau} C_{p,q,k, \ul{D}} \eta_1^U e^{B^2 \lambda_k \ol{\tau} } d \ol{\tau}
		&& (\ref{summH^-1EstErr})\\
		\le & \frac{1}{4}  \nu e^{B^2 \lambda \tau} \left( \gamma^{-2k -2B^2 \lambda} + \gamma^{-2B^2 \lambda} \right) \\
		& + C_{p,q} \eta_1^U e^{B^2 \lambda \tau} \left( \gamma^{-2k -2B^2 \lambda} + \gamma^{-2B^2 \lambda} \right) 	
		&& (\ref{summShortTimeEstN}) \\
		& + C_{p,q,k, \Upsilon_U, \ul{D}, \Gamma} \gamma^{-2k-2B^2 \lambda_k} e^{B^2 \lambda_k \tau} e^{- \delta \tau_0} \\
		& + C_{p,q,k,  \ul{D}, \Gamma} \eta_1^U \gamma^{-2k-2B^2 \lambda_k} e^{B^2 \lambda_k \tau} \\
		\le& \frac{1}{2} \nu  e^{B^2 \lambda \tau} \left( \gamma^{-2k-2B^2 \lambda_k } + \gamma^{-2B^2\lambda_k} \right)\\
		& + C_{p,q,k, \ul{D}, \Gamma} \eta_1^U e^{B^2 \lambda_k \tau} 	\left( \gamma^{-2k-2B^2 \lambda_k } + \gamma^{-2B^2\lambda_k} \right)\\
	\end{aligned} \end{equation*}
	if $\Upsilon_Z \gg 1$ is sufficiently large depending on $p,q, k, l, \Upsilon_U, \ul{D}, \nu$ and
	$\tau_0 \gg 1$ is sufficiently large depending on $p,q, k, M , \beta, \Upsilon_U, \Upsilon_Z$.
	
	By similar logic as in the proof of the above lemmas, the first term can be estimated as follows:
	\begin{equation*} \begin{aligned}
		& \left| \sum_{j = K+1}^\infty \tl{Z}_{h_j}  \int_{\tau_0}^{\tau-1} e^{B^2 h_j (\tau - \ol{\tau})} \left \langle \grave{\chi} B^2 \cl{N} \tl{U} - B^2 \cl{N} e^{B^2 \lambda_k \ol{\tau} } \tl{U}_{\lambda_k}, \tl{Z}_{h_j} \right \rangle_{n-2, \frac{1}{2B^2}} d \ol{\tau} \right| \\
		\le &   \sum_{j = K+1}^\infty | \tl{Z}_{h_j} | \| \tl{Z}_{h_j} \|_{H^1_{n-2, \frac{1}{2B^2}}}
		\int_{\tau_0}^{\tau-1} e^{B^2 h_j ( \tau - \ol{\tau} )} \| \grave{\chi} B^2 \cl{N} \tl{U} - B^2 \cl{N} e^{B^2 \lambda_k \ol{\tau} } \tl{U}_{\lambda_k}  \|_{H^{-1}_{n-2, \frac{1}{2B^2} } } d \ol{\tau} \\
		\le & \sum_{j = K+1}^\infty | \tl{Z}_{h_j} | \| \tl{Z}_{h_j} \|_{H^1_{n-2, \frac{1}{2B^2}}}
		C_{p,q,k, \ul{D}, \Upsilon_U} \int_{\tau_0}^{\tau-1} e^{B^2 h_j ( \tau - \ol{\tau} )}
		e^{B^2 \lambda_k \ol{\tau} - \epsilon \ol{\tau}} d \ol{\tau} \\
		& + \sum_{j = K+1}^\infty | \tl{Z}_{h_j} | \| \tl{Z}_{h_j} \|_{H^1_{n-2, \frac{1}{2B^2}}}
		C_{p,q,k, \ul{D}} \int_{\tau_0}^{\tau-1} e^{B^2 h_j ( \tau - \ol{\tau} )}
		 \eta_1^U e^{B^2 \lambda_k \ol{\tau}} d \ol{\tau}
		 && (\ref{summH^-1EstErr})\\
	\end{aligned} \end{equation*}
	The first term can be bounded by similar logic as in the previous proofs.
	We now bound the second term
	\begin{equation*} \begin{aligned}
		&\quad  \sum_{j = K+1}^\infty | \tl{Z}_{h_j} | \| \tl{Z}_{h_j} \|_{H^1_{n-2, \frac{1}{2B^2}}} \int_{\tau_0}^{\tau-1} e^{B^2 h_j ( \tau - \ol{\tau} ) } \eta_1^U e^{B^2 \lambda_k \ol{\tau} } d \ol{\tau} \\
		& \lesssim_{p,q,k,\Gamma}  \eta_1^U\sum_{j = K+1}^\infty j^{ C } \gamma^2  e^{B^2 h_j  \tau} \int_{\tau_0}^{\tau - 1} e^{B^2 ( \lambda_k - h_j) \ol{\tau} } d \ol{\tau} 
		&& (\ref{eigfuncZH^1GrowthJ} \text{ and } \ref{laguerreGrowthJ})\\
		& \le \eta_1^U \gamma^2 \sum_{j = K+1}^\infty j^{ C }   e^{B^2 h_j  \tau} \frac{1}{B^2 ( \lambda_k - h_j)} e^{B^2 ( \lambda_k - h_j) (\tau - 1) } && ( h_j < \lambda_k \text{ for } j \ge K+1) \\
		& =  \eta_1^U e^{B^2 \lambda_k \tau}  \gamma^2 \sum_{j = K+1}^\infty j^{ C }    \frac{1}{B^2 ( \lambda_k - h_j)} e^{B^2 (- \lambda_k + h_j)  } \\
		& \lesssim_{p,q,k}  \eta_1^U e^{B^2 \lambda_k \tau}  \gamma^2 \sum_{j = K+1}^\infty j^{ C }   \frac{1}{j} e^{-B^2 j  } \\
		& \lesssim  \eta_1^U e^{B^2 \lambda_k \tau}  \gamma^2 \\
		& \lesssim_{p,q,k}  \eta_1^U e^{B^2 \lambda_k \tau}  ( \gamma^{-2k -2B^2 \lambda_k} + \gamma^{-2B^2 \lambda_k} ) \\
	\end{aligned} \end{equation*} 
\end{proof}

\section{Scalar Curvature Behavior} \label{ScalarCurv}

In this section, we investigate the asymptotics of the scalar curvature $R$ at the singularity time $T$.

\subsection{Inner Region}

A formal matched asymptotic argument suggests that
	$$\sup_{x \in S^p \times S^{q+1} } | Rm |(x, t) \sim (T-t)^{-1 - 2 \alpha} \doteqdot \lambda(t)$$
and, if $(M_B = \mathbb{R}^{q+1} \times S^p, g_B)$ denotes the Ricci-flat B{\" o}hm metric (see section \ref{bohmMetric}), then
	$$( S^{p} \times S^{q+1} , \lambda(t) g(t), (  pt , \text{pole}	) ) \rightarrow ( M_B, g_B, 0 \times pt )		\qquad \text{as } t \nearrow T$$
in the $C^\infty$ pointed Cheeger-Gromov topology.	
Hence, there exist locally defined diffeomorphisms
	$$\Phi_t : M_B \dashrightarrow S^p \times S^{q+1}$$
such that the rescaled and reparametrized Ricci flow solutions should satisfy
	$$\tl{g}(t) \doteqdot \lambda(t) \Phi_t^* g(t) \xrightarrow[t \nearrow T]{C_{loc}^\infty(M_B)} g_B$$

Observe that $\tl{g}(t)$ evolves according to
\begin{gather*}
	\partial_t \tl{g} = \frac{\partial_t \lambda}{\lambda} \tl{g} + \lambda \Phi_t^* L_V g - 2 \lambda \Phi_t^* Rc[g] = \frac{\partial_t \lambda}{\lambda} \tl{g} +  L_{\Phi_t^*V} \tl{g} - 2 \lambda  Rc[ \tl{g}] \\
	\implies \frac{1}{\lambda} \partial_t \tl{g} = \frac{\partial_t \lambda }{\lambda^2} \tl{g} +  \frac{1}{\lambda} L_{\Phi_t^*V} \tl{g} - 2  Rc[ \tl{g}]
\end{gather*}
where $\partial_t \Phi_t = V \in \Gamma( \Phi_t^* T( S^p \times S^{q+1} ))$.

After reparametrizing time by setting $\tau = \tau(t)$ such that
	$$d \tau = \lambda(t) dt = (T-t)^{-1-2 \alpha} dt$$
then $\frac{ \partial_t \lambda}{\lambda^2 } = \frac{C_\alpha}{\tau}$  $(C_\alpha > 0)$ and 
this flow becomes
	$$\partial_\tau \tl{g} = \frac{C_\alpha}{\tau} \tl{g} + \frac{1}{\tau } L_{\tl{V}}  \tl{g} - 2 Rc [\tl{g} ]$$
where $ \frac{1}{\lambda} \Phi_t^* V = \frac{1}{\tau} \tl{V} \in \Gamma ( TM_B)$.

To investigate the scalar curvature behavior, first note that
	$$dVol_{\tl{g}} \to dVol_{g_B}	\qquad \text{ suggests } \qquad 		\partial_\tau d Vol_{\tl{g}(\tau)} \to 0$$
and
\begin{equation*} \begin{aligned}
	\partial_\tau dVol_{\tl{g}(\tau)} 
	=& \frac{1}{2} tr_{\tl{g}} ( \partial_\tau \tl{g} ) dVol_{\tl{g}} \\
	=& \frac{1}{2}  tr_{\tl{g}}  \left( \frac{C_\alpha}{\tau} \tl{g} + \frac{1}{\tau} L_{\tl{V}} \tl{g} - 2 Rc[\tl{g}] \right) d Vol_{\tl{g}} \\
	= & \frac{1}{2} \left( \frac{C_\alpha}{\tau} (n+1) + \frac{2}{\tau} div_{\tl{g}}( \tl{V} ) - 2 R[\tl{g}] \right)  d Vol_{\tl{g}}
\end{aligned} \end{equation*}
This suggests that 
\begin{equation*} \begin{aligned}
	 \frac{1}{\lambda} R[ \Phi_t^* g] =& R[\tl{g} ] \\
	 \approx& \frac{1}{\tau} \left( \frac{C_\alpha (n+1)}{2}  + div_{\tl{g}}( \tl{V} ) \right) \\
	\implies R[ \Phi_t^* g] \approx& \frac{\lambda}{\tau}  \left( \frac{C_\alpha (n+1)}{2}  + div_{\tl{g}}(  \tl{V} ) \right) \\
	=& \frac{1}{(T-t)}  \left( \frac{C_\alpha (n+1)}{2}  + div_{\tl{g}}(  \tl{V} ) \right) \\
	\lesssim & \frac{1}{T-t}
\end{aligned} \end{equation*}

\subsection{Parabolic-Inner Overlap}
For $e^{- \alpha_k \tau - \tau/2} \ll \psi \ll e^{-\tau/2}$, write $\psi = \Upsilon e^{- \alpha_k \tau - \tau/2}$ where $1 \ll \Upsilon \ll e^{\alpha_k \tau}$.
For such $\psi$, 
	$$\tl{U} \approx e^{B^2 \lambda_k \tau} \tl{U}_{\lambda_k}(\gamma) \approx \tl{c} e^{B^2 \lambda_k \tau} \gamma^a$$
	$$\tl{Z} \approx e^{B^2 \lambda_k \tau} \tl{Z}_{\lambda_k}(\gamma) \approx \tl{d} e^{B^2 \lambda_k \tau} \gamma^a$$
where
	$$-\tl{d}(a-2)(n+a-1) = -4p B^2 a \tl{c}$$
	$$a = -2k - 2B^2 \lambda_k = \frac{1-n}{2} + \frac{1}{2} \sqrt{ (n-9)(n-1) } < 0$$

We have
	$$\tau = -\log(T-t) 	\qquad \gamma = \psi e^{\tau/2}$$
	$$Z(\gamma, \tau) = z(\psi, t) 		\qquad u(\psi, t) + \tau/2 = U(\gamma, \tau)$$ 
Therefore, for $\Upsilon e^{-\alpha_k \tau - \frac{\tau}{2} } \ll \psi \ll e^{-\tau/2}$,
\begin{gather*}
	z(\psi, t) = Z = B^2 + \tl{Z}(\gamma, \tau) \approx B^2 + \tl{d} e^{B^2 \lambda_k \tau} \gamma^a = B^2 + \tl{d} e^{B^2 \lambda_k \tau + \frac{a}{2} \tau } \psi^a \\
	u(\psi, t) = U - \tau/2 = \log \left( \frac{A}{B} \gamma \right) - \tau/2 + \tl{U} \approx \log \left( \frac{A}{B} \psi \right) + \tl{c} e^{B^2 \lambda_k \tau} \gamma^a \\
	= \log \left( \frac{A}{B} \psi \right) + \tl{c} e^{B^2 \lambda_k \tau + \frac{a}{2} \tau} \psi^a
	\end{gather*}

Set
	$$c \doteqdot \tl{c} e^{B^2 \lambda_k \tau + \frac{a}{2} \tau}$$
	$$d \doteqdot \tl{d}e^{B^2 \lambda_k \tau + \frac{a}{2} \tau}$$	
	
When $p = q = 5$, then $a = -3$ and numerical computations reveal that
\begin{equation*} \begin{aligned}
	R \approx& \frac{20}{3 \psi^{11}} \left( -36 c^3 + 4 c^2 \psi^3 + 6 c \psi^6 + 3 ( e^{- \frac{2c}{\psi^3} } - 1 ) \psi^9 \right) \\
	=& \frac{20}{3 \psi^2} \left( -36 c^3 \psi^{-9} + 4 c^2 \psi^{-6} + 6 c \psi^{-3} + 3 ( e^{- \frac{2c}{\psi^3} } - 1 ) \right) \\
	 \approx & e^{2 \alpha_k \tau + \tau} \frac{20}{3 \Upsilon^2 } \left( -36 \tl{c}^3 \Upsilon^{-9} + 4 \tl{c}^2 \Upsilon^{-6} + 6 \tl{c} \Upsilon^{-3} + 3 ( e^{- 2 \tl{c} \Upsilon^{-3} } - 1 ) \right) \\
	\approx& e^{2 \alpha_k \tau + \tau} \frac{20}{3 \Upsilon^2 } \left( -36 \tl{c}^3 \Upsilon^{-9} + 4 \tl{c}^2 \Upsilon^{-6} + 6 \tl{c} \Upsilon^{-3} + 3 ( -2 \tl{c} \Upsilon^{-3} + 2 \tl{c}^2 \Upsilon^{-6} - \frac{4}{3} \tl{c}^3 \Upsilon^{-9} + ... ) \right) \\
	 =& e^{2 \alpha_k \tau + \tau} \frac{20}{3 \Upsilon^2 } \left( -40 \tl{c}^3 \Upsilon^{-9} + 10 \tl{c}^2 \Upsilon^{-6} \right)
\end{aligned} \end{equation*}

It follows that $R \lesssim e^\tau$ in this region if and only if $e^{2 \alpha_k \tau} \lesssim \Upsilon^8$.
This condition is consistent with $\Upsilon \ll e^{\alpha_k \tau}$.

Moreover, $R \lesssim 1$ in this region if and only if $e^{2 \alpha_k \tau + \tau } \lesssim \Upsilon^8$.
When $p = q = 5$, this condition is consistent with $\Upsilon \ll e^{\alpha \tau}$ if and only if
	$$1 < \frac{8}{9}(- \lambda_k) \iff k \ge 3$$
	
\begin{remark}
We note that the results in this subsection for the parabolic-inner overlap depend only on a numerical computation and not the formal matched asymptotic expansions of the Ricci flow solution.
\end{remark}

\subsection{Parabolic Region}
For $\gamma \sim 1$,
	$$Z( \gamma, \tau) \approx B^2 + e^{B^2 \lambda_k \tau} \tl{Z}_{\lambda_k}(\gamma) $$
	$$U (\gamma, \tau) \approx \log \left( \frac{A}{B} \gamma \right) + e^{B^2 \lambda_k \tau} \tl{U}_{\lambda_k} (\gamma)$$
and
	$$z(\psi, t) \approx B^2 +e^{B^2 \lambda_k \tau} \tl{Z}_{\lambda_k}( \psi e^{-\tau/2} )$$
	$$u(\psi, t) \approx \log \left( \frac{A}{B} \psi \right) + e^{B^2 \lambda_k \tau} \tl{U}_{\lambda_k} (\psi e^{-\tau/2} )$$
For $\gamma \sim 1$, $\tl{U}_{\lambda_k}, \tl{Z}_{\lambda_k}$ and all their derivatives are $O(1)$.\\

Recall that the sectional curvatures are given by
	\begin{equation*} \begin{aligned}
			l = \frac{1-z}{\psi^2} =& \frac{1 - \psi_s^2}{\psi^2} & j = e^{-2u} - zu_\psi^2 =& \frac{1- \phi_s^2}{\phi^2}  \\
			k = -\frac{z_\psi}{2 \psi} =& -\frac{\psi_{ss}}{\psi} & h = -z u_{\psi \psi} - z u_\psi^2 - u_\psi \frac{z_\psi}{2} =& -\frac{ \phi_{ss}}{\phi}  \\
			m = -\frac{z}{\psi} u_\psi =& -\frac{\psi_s \phi_s}{\psi \phi} \\
	\end{aligned} \end{equation*}
and the scalar curvature is given by
	$$R = 2p h + 2qk + p(p-1) j + q(q-1) l + 2pq m$$
	
Let the \textit{perturbed sectional curvatures} refer to the difference of the sectional curvatures from that of the Ricci-flat cone, e.g.
	$$\tl{l} \doteqdot l - \frac{1 - B^2}{\psi^2}$$
For $\gamma \sim 1$, the perturbed sectional curvatures are $O \left(e^{(B^2 \lambda_k +1)\tau} \right)$.
Indeed, 
	\begin{gather*}
		\tl{l} = - \frac{ \tl{z}}{\psi^2} = - \frac{ e^{B^2 \lambda_k \tau} \tl{Z}_{\lambda_k}( \psi e^{\tau/2} )}{\psi^2} = - \frac{ e^{B^2 \lambda_k \tau}}{e^{-\tau}} O(1) \sim e^{(B^2 \lambda_k +1 )\tau} \\
		\tl{k} = - \frac{ \tl{z}_\psi }{2 \psi} = - \frac{ e^{B^2 \lambda_k \tau} e^{\tau/2}  \tl{Z}_{\lambda_k}'( \psi e^{\tau/2})}{2 \psi} \sim e^{(B^2 \lambda_k +1)\tau}
	\end{gather*}
for example.
It follows that
	$$R = 2p \tl{h} + 2q \tl{k} + p(p-1) \tl{j} + q(q-1) \tl{l} + 2pq \tl{m} \ll e^{\tau} = \frac{1}{T-t}$$
which yields the type I scalar curvature bound in this region.

\begin{remark}
Observe that $R$ is bounded in this region if and only if
	$$B^2 \lambda_k + 1 \le 0$$
Recall
	$$B^2 \lambda_k + 1 = - k + \frac{n-1}{4} - \frac{1}{4} \sqrt{(n-9)(n-1)} +1$$
It can be checked, using the monotonicity in $n$ and the behavior at $n=10$, that
	\begin{equation*}
		B^2 \lambda_k + 1 = - k + \frac{n-1}{4} - \frac{1}{4} \sqrt{(n-9)(n-1)} +1 
		\left\{ \begin{array}{cc}
		> 0 & \text{if } k \le 2 \\
		<0 & \text{if } k \ge 3\\
		\end{array} \right. 
	\end{equation*}
\end{remark}

\subsection{Outer Region}
By lemma \ref{outerCurvatureBounds}, the Riemann curvature tensor is $O \left( (T-t)^{-1} \right)$ on the region defined by $\gamma \ge \Gamma$, which in particular implies the type I scalar curvature bound on this region.\\

In summary, the combination of numerical arguments and formal matched asymptotic expansions for the Ricci flow solutions $g_k(t)$ in theorem \ref{mainThmAbridged} given above suggests that the scalar curvature of these solutions blows up no faster than the type I rate, i.e.
	$$\limsup_{t \nearrow T} (T-t) \sup_{x \in M} |R|(x,t) < \infty,$$
despite the fact that the Riemann curvature tensor can blow up at rates given by arbitrarily large powers of $\frac{1}{T-t}$.
We note that these dynamics stand in marked contrast to closed Ricci flows in dimension 3 where the Hamilton-Ivey pinching estimate ~\cite{Hamilton99, Ivey93, Perelman02} implies that scalar curvature controls the Riemann and Ricci curvatures.

\appendix
\section{Analytic Facts} \label{AnalyticFacts}

\subsection{The Weighted Spaces $H^k_{a,b}$}
\begin{definition}
	For $a \in \mathbb{N}$ and $b > 0$, define the weighted spaces $L^2_{a,b}$ as
		$$L^2_{a,b} \doteqdot L^2( \mathbb{R}^{a+1}, e^{-b |x|^2/2} dx )$$
	It is readily seen that $L^2_{a,b}$ is a Hilbert space with respect to the inner product
		$$\langle u, v \rangle_{L^2_{a,b}} \doteqdot \langle u, v \rangle_{a,b} \doteqdot \int_{\mathbb{R}^{a+1}} u(x) v(x) e^{- b |x|^2/2} dx$$
	
	Similarly, denote the associated weighted Sobolev spaces by
		$$H^k_{a,b} \doteqdot H^k( \mathbb{R}^{a+1}, e^{-b |x|^2/2} dx )$$
\end{definition}

Because we will frequently consider rotationally symmetric functions, the radial coordinate will often be denoted as $r = |x|$
and it will be assumed throughout that the $L^2_{a,b}$ inner product is normalized so that
	$$\| u(r) \|_{L^2_{a,b}}^2 = \int_0^\infty u(r)^2 r^a e^{- b r^2/2} dr$$
for rotationally symmetric functions $u$.

\begin{lem} \label{MultCts}
	If $u \in H^1_{a,b}$ then
		$$\| r u \|_{L^2_{a, b}} \lesssim_{a,b} \| u \|_{H^1_{a,b}}$$		
\end{lem}
\begin{proof}
	By density, it suffices to consider $u \in C^\infty_c( \mathbb{R}^{a+1})$.
	For a constant $C$ to be determined, integrate
		$$0 \le ( C u_r - r u)^2$$
	to obtain
	\begin{equation*} \begin{aligned}
		0 \le& \int_0^\infty ( C u_r - r u)^2 r^a e^{- b r^2/2} dr \\
		=&  \int_0^\infty \left( C^2 u_r^2 + r^2 u^2 \right) r^a e^{-b r^2/2} dr -C \int_0^\infty r \partial_r (u^2) r^a e^{-b r^2/2} dr \\
		=& \int_0^\infty \left( C^2 u_r^2 + r^2 u^2 \right) r^a e^{-b r^2/2} dr + C \int_0^\infty \left[ u^2 + ru^2 \left( \frac{a}{r} - br \right) \right] r^a e^{-b r^2/2} dr \\
		\implies& (Cb - 1) \int_0^\infty r^2 u^2 r^a e^{-b r^2/2} dr \le  \int_0^\infty \left[ C^2 u_r^2  + C(a+1) u^2 \right] r^a e^{-b r^2/2} dr \\
	\end{aligned} \end{equation*}
	By taking $C = \frac{2}{b}$ and integrating over $\theta \in \mathbb{S}^a$, the result follows.
\end{proof}

\begin{lem} \label{DivCts}
	Assume $a > 1$.
	If  $u \in H^1_{a,b}$ then
		$$\left\| \frac{1}{r} u \right\|_{L^2_{a, b}} \lesssim_{a,b} \| u \|_{H^1_{a,b}}$$
\end{lem}
\begin{proof}
By density, it suffices to consider $u \in C^\infty_c( \mathbb{R}^{a+1})$.
	For a constant $C$ to be determined, integrate
		$$0 \le \left( C u_r + \frac{1}{r} u \right)^2$$
	to obtain
	\begin{equation*} \begin{aligned}
		0 \le& \int_0^\infty \left( C u_r + \frac{1}{r} u \right)^2 r^a e^{-b r^2/2} dr \\
		= & \int_0^\infty \left[ C^2 u_r^2 + \frac{1}{r^2} u^2 \right] r^a e^{-b r^2/2} dr + \int_0^\infty \frac{C}{r} \partial_r (u^2) r^a e^{-b r^2/2} dr \\
		=& \int_0^\infty \left[ C^2 u_r^2 + \frac{1}{r^2} u^2 \right] r^a e^{-b r^2/2} dr \\
		& - \int_0^\infty \left[ \left( -  \frac{C}{r^2} \right) u^2 + \frac{C}{r} u^2 \left( \frac{a}{r} - b r \right) \right] r^a e^{-br^2/2 } dr + \cancelto{0}{ \left. \frac{C}{r} u^2 r^a e^{- b r^2/2} \right|_0^\infty } \\
		\implies& ( C (a-1) -1) \int_0^\infty \frac{1}{r^2} u^2 r^a e^{-b r^2/2} dr \le \int_0^\infty \left[ C^2 u_r^2 + C b u^2 \right] r^a e^{- b r^2/2} dr\\
	\end{aligned} \end{equation*}
	By taking $C = \frac{2}{a-1}$ and integrating over $\theta \in \mathbb{S}^a$, the result follows.
\end{proof}

\subsection{Exponential Integrals}
\begin{lem} \label{asympsOfExpInt}
	Let $\kappa \in \mathbb{R}$ and $b > 0$ be constants.
	Then the function
		$$F(R) \doteqdot \int_R^\infty r^\kappa e^{ - b r^2/2} dr$$
	satisfies
		$$F(R) = \frac{1}{b} R^{\kappa-1} e^{- b R^2 /2} \big( 1 + o(1) \big) \qquad \text{as } R \nearrow +\infty$$
\end{lem}
\begin{proof}
	In the case that $\kappa = 1$, in fact $F(R) = \frac{1}{b} e^{ - bR^2/2}$.\\
	For $\kappa \ne 1$, integration by parts shows that
		$$F(R) = \int_R^\infty \frac{ r^{\kappa-1} }{-b} \partial_r \left( e^{- b r^2/2} \right) dr = \frac{1}{b} R^{\kappa-1} e^{-b R^2 /2} + \frac{1}{b} \int_R^\infty \frac{ r^{\kappa-2} }{\kappa - 1} e^{-br^2/2} dr $$
	The statement then follows by observing that
		$$\left|  \frac{1}{b} \int_R^\infty \frac{ r^{\kappa-2} }{\kappa - 1} e^{-br^2/2} dr \right| \le \frac{ F(R) }{b | \kappa -1 | R^2 }$$
\end{proof}

\subsection{Laguerre Polynomials}
\begin{lem} \label{laguerreGrowthJ}
	For any $\alpha >-1$ and $M > 0$, there exists $C = C(\alpha, M)$ such that
		$$\left( \frac{ j!}{ \Gamma (j + \alpha +1)} \right)^{1/2} \sup_{x \in (0,M] } | L_j^{(\alpha)} (x) | \le C j^C	\qquad \text{for all } j \in \mathbb{N}$$
\end{lem}
\begin{proof}
	Theorem 8.22.4 of ~\cite{S75} 
	implies that for any $\alpha > -1$
	\begin{equation*} \begin{aligned}
		L_j^{(\alpha)}(x) =& N^{-\alpha /2} \frac{ \Gamma(j + \alpha + 1)}{j!} e^{x/2} x^{-\alpha/2} J_\alpha ( 2 \sqrt{ N x} )  \\
		& +\begin{cases}
			x^{5/4 - \alpha/2} e^{x/2} O(j^{\alpha/2 - 3/4} ), & \text{if } j^{-1} \le x \le M \\
			x^{2} e^{x/2} O(j^\alpha),		&\text{if }0 < x \le j^{-1} \\
		\end{cases}
	\end{aligned} \end{equation*}
	uniformly for $x \in (0, M ]$ where
		$$N = j + (\alpha + 1)/2		\qquad \text{and} \qquad  J_\alpha(x) \text{ is the Bessel function of the first kind.}$$
	The Bessel functions $J_\alpha (z)$ satisfy the bound
		$$| J_\alpha (z) | \le C_\alpha z^\alpha$$
	and Stirling's formula yields that
		$$\frac{ \Gamma ( j + \alpha +1)}{j!}  \sim j^{\alpha/2} \qquad \text{ as } j \nearrow + \infty$$
	It therefore follows that 
	\begin{gather*}
		\left( \frac{j!}{\Gamma(j + \frac{1}{2} \sqrt{(n-9)(n-1)} +1)} \right)^{1/2} \sup_{\gamma \in (0, \Gamma]} \left| L_j^{\left(\frac{1}{2} \sqrt{(n-9)(n-1)} \right)} \left( \frac{ \gamma^2}{4B^2} \right) \right|  
	\end{gather*}
	can be bounded by $C j^{C}$ where $C$ depends only on $\alpha$ and $M$.	
\end{proof}

\subsection{The Linearized Operators} \label{LinearizedOps}
\begin{definition}
	For $a \in \mathbb{N}$ and $b > 0$, let $\mathring{L}^2_{a,b} \subset L^2_{a,b}$
	denote the closed subspace of rotationally symmetric functions
		$$\mathring{L}^2_{a,b} \doteqdot \{ u \in L^2_{a,b} : u(x) = u( |x| ) \}$$
	The associated Sobolev subspaces $\mathring{H}^k_{a,b} \subset H^k_{a,b}$ are analogously defined as
		$$\mathring{H}^k_{a,b} \doteqdot \{ u \in H^k_{a,b} : u(x) = u(|x|) \}$$
	These subspaces are Hilbert spaces with respect to the induced norm.

\end{definition}

\subsubsection{The Operator $\cl{D}_U$}

\begin{definition}
Let $\cl{D}_U : Dom(\cl{D}_U) \to \mathring{L}^2_{n, \frac{1}{2B^2}}$ denote the unbounded linear operator given by
	$$\cl{D}_U  = \frac{\partial^2}{\partial r^2} + \left( \frac{n}{r} - \frac{ r}{2B^2} \right) \frac{\partial}{\partial_r} + \frac{2(n-1)}{r^2}$$
with domain
	$$Dom( \cl{D}_U ) = \{ \tl{U} \in \mathring{H}^1_{n,\frac{1}{2B^2}} : \cl{D}_U \tl{U} \in \mathring{L}^2_{n,\frac{1}{2B^2}} \text{ in the sense of distributions} \}$$
\end{definition}	
	
\begin{prop} \label{opU}
	Assume $n \ge 9$.
	The operator $\cl{D}_U$ can be uniquely extended to a self-adjoint operator on $\mathring{L}^2_{n,\frac{1}{2B^2}}$.
	The self-adjoint extension (also denoted by $\cl{D}_U$) satisfies the following properties:
	\begin{enumerate}
		\item $Dom( \cl{D}_U) \subset \mathring{H}^1_{n,\frac{1}{2B^2}}$
		\item there exists $C > 0$ such that 
			$$\langle \tl{U}, \cl{D}_U \tl{U} \rangle_{n, \frac{1}{2B^2}} \le C  \left\| \tl{U} \right\|_{n, \frac{1}{2B^2}}^2 		\qquad \text{for all } \tl{U} \in Dom( \cl{D}_U)$$
		\item the spectrum $\sigma( \cl{D}_U)$ consists of a countable collection of multiplicity one eigenvalues $\{ \lambda_j \}_{j \in \mathbb{N}}$, where
			$$ B^2 \lambda_j = -j + \frac{n-1}{4} - \frac{1}{4} \sqrt{(n-9)(n-1)}$$
	\end{enumerate}
\end{prop}
\begin{proof}
	Because the proof follows a standard approach in the literature of mathematical physics and is identical to that of lemma 2.3 in ~\cite{HV} and proposition 2.2 in ~\cite{M04Oct}, we do not include the details here.
	For the readers' convenience, we only show where we use the dimension restriction $n \ge 9$. 
	Namely, it follows from the proof of lemma \ref{DivCts} that for any constant $C$
	\begin{equation*} \begin{aligned}
		\langle \tl{U}, \cl{D}_U \tl{U} \rangle_{n, \frac{1}{2B^2}}  
		=& - \int_0^\infty \tl{U}_r^2 r^n e^{-r^2/(4B^2)} dr + \int_0^\infty \frac{2(n-1)}{r^2} \tl{U}^2 r^n e^{- r^2/(4B^2)} dr \\
		\le&  \left[ -\left(  \frac{n-1}{C}  - \frac{1}{C^2} \right) + 2(n-1) \right] \int_0^\infty \frac{1}{r^2} \tl{U}^2 r^n e^{- r^2/(4B^2)} dr \\
		&+ \frac{1}{2B^2 C} \int_0^\infty  \tl{U}^2 r^n e^{- r^2/(4B^2)} dr \\
	\end{aligned} \end{equation*}
	Replacing $C$ with $C^{-1}$ it follows that
	\begin{equation*} \begin{aligned}
		\langle \tl{U}, \cl{D}_U \tl{U} \rangle_{n, \frac{1}{2B^2}}  
		\le & \left( C^2 -C(n-1) + 2(n-1) \right) \left\| \frac{1}{r} \tl{U} \right\|^2_{n, \frac{1}{2B^2} } + \frac{C}{2B^2} \| \tl{U} \|^2_{n, \frac{1}{2B^2} } \\
	\end{aligned} \end{equation*}
	When $n \ge 9$, there exists $C >0$ such that $C^2 - C(n-1) +2(n-1) \le 0$.
	With this choice of $C$, the upper bound
		$$\langle \tl{U}, \cl{D}_U \tl{U} \rangle_{n, \frac{1}{2B^2}}^2 \le \frac{C}{2B^2} \left\| \tl{U} \right\|_{n, \frac{1}{2B^2}}^2 $$
	follows.
\end{proof}

\subsubsection{The Operator $\cl{D}_Z$}
\begin{definition}
Let $\cl{D}_Z : Dom(\cl{D}_Z) \to \mathring{L}^2_{n-2, \frac{1}{2B^2}}$ denote the unbounded linear operator given by
	$$\cl{D}_Z  = \frac{\partial^2}{\partial r^2} + \left( \frac{n-2}{r} - \frac{ r}{2B^2} \right) \frac{\partial}{\partial_r} - \frac{2(n-1)}{r^2}$$
with domain
	$$Dom( \cl{D}_Z ) = \{ \tl{Z} \in \mathring{H}^1_{n-2,\frac{1}{2B^2}} : \cl{D}_Z \tl{Z} \in \mathring{L}^2_{n-2,\frac{1}{2B^2}} \text{ in the sense of distributions} \}$$
\end{definition}	
	
\begin{prop} \label{opZ}
	The operator $\cl{D}_Z$ can be uniquely extended to a self-adjoint operator on $\mathring{L}^2_{n-2,\frac{1}{2B^2}}$.
	The self-adjoint extension (also denoted by $\cl{D}_Z$) satisfies the following properties:
	\begin{enumerate}
		\item $Dom( \cl{D}_Z) \subset \mathring{H}^1_{n-2,\frac{1}{2B^2}}$,
		\item there exists $C > 0$ such that 
			$$\langle \tl{Z}, \cl{D}_Z \tl{Z} \rangle_{n-2, \frac{1}{2B^2}} \le C \left\| \tl{Z} \right\|_{n-2, \frac{1}{2B^2}}	^2	\qquad \text{for all } \tl{Z} \in Dom( \cl{D}_Z)$$
		\item the spectrum $\sigma( \cl{D}_Z)$ consists of a countable collection of multiplicity one eigenvalues $\{ h_j \}_{j \in \mathbb{N}}$, where
			$$B^2 h_j = -j -1$$
	\end{enumerate}
\end{prop}

\begin{remark}
	Due to the sign of the zeroth order coefficient of the $\cl{D}_Z$ operator, no additional dimension restriction is necessary as in the case of proposition \ref{opU}.
\end{remark}

\subsection{Eigenfunctions} \label{Eigenfunctions}
The eigenfunctions of the operators $\cl{D}_U, \cl{D}_Z$ can be written explicitly in terms of generalized Laguerre polynomials.
Here, we collect several facts regarding these eigenfunctions and the generalized Laguerre polynomials that will be used throughout the paper.
The reader may consult ~\cite{S75} for additional information on generalized Laguerre polynomials.

\subsubsection{Eigenfunctions for $\cl{D}_U$}
\begin{prop} \label{eigConstU}
	For any $k \in \mathbb{Z}_{\ge 0}$, an eigenfunction $\tl{U}_{\lambda_k}$ of $\cl{D}_U$ with eigenvalue $\lambda_k$ is given by
		$$\tl{U}_{\lambda_k}(\gamma) \doteqdot c_k \gamma^{-2k-2B^2 \lambda_k} L_k^{\left( \frac{1}{2} \sqrt{(n-9)(n-1)} \right)} \left( \frac{ \gamma^2}{4B^2} \right)$$
	where $L_k^{\left( \frac{1}{2} \sqrt{(n-9)(n-1)} \right)} ( \cdot)$ is a generalized Laguerre polynomial.
	
	In particular, the $\{ \tl{U}_{\lambda_k} (\gamma) \}_{k \in \mathbb{Z}_{\ge 0}}$ are orthonormal in $L^2_{n, \frac{1}{2B^2}}$ if and only if
		$$c_k^2 = \frac{1}{2B^2} \left(\frac{1}{4B^2} \right)^\alpha \frac{ k!}{ \Gamma( k +\frac{1}{2} \sqrt{(n-9)(n-1)} + 1)}$$
	where $\Gamma( \cdot) $ is the gamma function.
\end{prop}
\begin{proof}
	After noting that the standard Kummer function $M(a,b;z)$ can be written as a generalized Laguerre polynomial when $a \in \mathbb{Z}_{\le 0}$, the proof of the first statement proceeds as in the proofs of lemma 2.3 in ~\cite{HV} and proposition 2.2 in ~\cite{M04Oct}.

	The second statement follows from the change of variables $x = \frac{ \gamma^2}{4B^2}$ and the fact that
		$$\int_0^\infty x^\alpha L_k^{(\alpha)} ( x) L_m^{(\alpha)}(x) e^{-x} dx = \frac{ \Gamma(k+ \alpha + 1 )}{k!} \delta_{km}$$
\end{proof}

\begin{prop} \label{eigfuncUAsymps}
	For $\tl{U}_{\lambda_k}$ as in proposition \ref{eigConstU}, $\tl{U}_{\lambda_k}$ has asymptotics
	\begin{equation*} \begin{aligned}
		\tl{U}_{\lambda_k}(\gamma) =& c_k {k+\alpha \choose k} \gamma^{-2k - 2B^2 \lambda_k} \big(1 + o(1) \big)  \qquad& \text{as } \gamma \searrow 0 \\
		\tl{U}_{\lambda_k}(\gamma) =& c_k \frac{1}{k!} \frac{1}{ (-4B^2)^{k}} \gamma^{-2B^2 \lambda_k}\big(1 + o(1) \big)	 \qquad& \text{as } \gamma \nearrow +\infty\\
	\end{aligned} \end{equation*}
\end{prop}

\begin{remark}
	Note that
		$$2k + 2B^2 \lambda_k = \frac{n-1}{2} - \frac{1}{2} \sqrt{(n-9)(n-1)}$$
	depends only on $n = p+q$ and is in fact independent of $k$.
	Nonetheless, we shall write many estimates with exponents $2k + 2B^2 \lambda_k$ to display the unity with proposition \ref{eigConstU}.
\end{remark}

\begin{prop} \label{eigfuncUH^1GrowthJ}
	For the orthonormal $\tl{U}_{\lambda_j}$ as in proposition \ref{eigConstU},
		$$\| \tl{U}_{\lambda_j} \|_{H^1_{n,\frac{1}{2B^2}}} \lesssim_{p,q} \sqrt{|\lambda_j|} \lesssim_{p,q} \sqrt{j}$$
\end{prop}
\begin{proof}
	This proposition follows from an explicit computation with the generalized Laguerre polynomials or by similar logic as in the proof of lemma 2.4 in ~\cite{HV}.
\end{proof}

\begin{prop} \label{PKernelU}
	For $0 \le r < 1$, the Poisson kernel for Laguerre polynomials defined by
	$$P_{\frac{1}{2} \sqrt{(n-9)(n-1)}} \left( \frac{ \gamma^2}{4 B^2} ; \frac{  \ol{\gamma}^2}{4B^2}; r \right) \doteqdot \sum_{j=0}^\infty \frac{	r^j L_j^{\frac{1}{2} \sqrt{ (n-9)(n-1)}} \left( \frac{  \gamma^2}{4B^2} \right) L_j^{\frac{1}{2} \sqrt{ (n-9)(n-1)}}	 \left( \frac{  \ol{\gamma}^2}{4B^2} \right)	}{ \Gamma( \frac{1}{2} \sqrt{ (n-9)(n-1)} + 1 ) {j + \frac{1}{2} \sqrt{ (n-9)(n-1)}\choose j}}$$
	satisfies
	\begin{equation*} \begin{aligned}
		& P_{\frac{1}{2} \sqrt{ (n-9)(n-1)} } \left( \frac{  \gamma^2}{4B^2} ; \frac{  \ol{\gamma}^2}{4B^2}; r \right) \\
		=& (1-r)^{-1} e^{ - \frac{ \gamma^2 + \ol{\gamma}^2 }{4B^2}  \frac{r}{1-r} } \left( - \frac{\gamma^2 \ol{\gamma}^2 r}{16B^4} \right)^{-\frac{1}{4} \sqrt{ (n-9)(n-1) } } J_{\frac{1}{2} \sqrt{ (n-9)(n-1) } } \left( \frac{ \gamma \ol{\gamma} \sqrt{ - r}}{2B^2(1-r)}  \right) \\
		=& (1-r)^{-1} e^{ - \frac{  \gamma^2 + \ol{\gamma}^2  }{4B^2}  \frac{r}{1-r} } \left(  \frac{ \gamma \ol{\gamma} \sqrt{r} }{4B^2 }\right)^{-\frac{1}{2} \sqrt{ (n-9)(n-1) } } I_{\frac{1}{2} \sqrt{ (n-9)(n-1) } } \left( \frac{ \gamma \ol{\gamma} \sqrt{ r}}{2B^2(1-r)}  \right)\\
	\end{aligned} \end{equation*}
	where $J_{\cdot} ( \cdot)$ is the Bessel function of the first kind and $I_{\cdot} (\cdot)$ is the modified Bessel function of the first kind.
\end{prop}
\begin{proof}
	This proposition is the content of theorem 5.1 in ~\cite{S75}.
\end{proof}

\begin{prop} \label{kernelU}
	For $\tau > \tau_0$ and $\tl{U} \in \mathring L^2_{n, \frac{1}{2B^2} }$
	\begin{equation*} \begin{aligned}
	& e^{B^2 \cl{D}_U (\tau - \tau_0)} \tl{U}(\gamma) \\
	=&  \frac{1}{2B^2} \left( \frac{1}{4B^2} \right)^{\frac{1}{2} \sqrt{(n-9)(n-1)}} \left( \gamma e^{-\frac{1}{2} (\tau - \tau_0)} \right)^{-2k-2B^2 \lambda_k} \\
	&\cdot \int_0^\infty \ol{\gamma}^{-2k-2B^2\lambda_k} P_{\frac{1}{2} \sqrt{(n-9)(n-1)}} \left( \frac{  \gamma^2}{4B^2 } ; \frac{  \ol{\gamma}^2}{4B^2}; e^{-(\tau-\tau_0)} \right) \tl{U}(\ol{\gamma}) \ol{\gamma}^n e^{-\ol{\gamma}^2/(4B^2)} d \ol{\gamma}\\
	=& \frac{1}{2B^2} \left( \gamma e^{-(\tau - \tau_0)/2} \right)^{ \frac{1-n}{2} } \frac{ exp \left[ - \frac{1}{4B^2} \frac{ e^{-(\tau - \tau_0)}  }{ 1 - e^{-(\tau - \tau_0)} } \gamma^2 \right]}{1 - e^{- (\tau - \tau_0)} } \\
	 &\cdot \int_0^\infty I_{\frac{1}{2} \sqrt{ (n-9)(n-1) } } \left( \frac{1}{2B^2} \frac{  \gamma \ol{\gamma} e^{- \frac{1}{2}(\tau - \tau_0)}}{1-e^{- (\tau - \tau_0)}}  \right) \tl{U}(\ol{\gamma}) \ol{\gamma}^{\frac{n+1}{2}} e^{ - \frac{1}{4B^2} \left(  \frac{1}{1- e^{-(\tau - \tau_0)} } \right) \ol{\gamma}^2} d \ol{\gamma}
	\end{aligned} \end{equation*} 
	Moreover, setting
	\begin{equation*} \begin{aligned} 
	& T_U(  \gamma, \ol{\gamma} ; \tau - \tau_0) \\
	\doteqdot &exp \left[ - \frac{1}{4B^2} \frac{ e^{-(\tau - \tau_0)} \gamma^2 + \ol{\gamma}^2}{ 1 - e^{-(\tau - \tau_0)} } \right] I_{\frac{1}{2} \sqrt{ (n-9)(n-1) } } \left( \frac{1}{2B^2} \frac{  \gamma \ol{\gamma} e^{- \frac{1}{2}(\tau - \tau_0)}}{1-e^{- (\tau - \tau_0)}}  \right) \gamma^{- \frac{1}{2} \sqrt{ (n-9)(n-1) } } 
	\end{aligned} \end{equation*}
	and
	\begin{equation*} \begin{aligned}
	&H_U( \gamma , \ol{\gamma} ; \tau - \tau_0) \\
	\doteqdot & exp \left[ - \frac{1}{4B^2} \frac{ (e^{- (\tau - \tau_0)/2} \gamma - \ol{\gamma} )^2 }{ 1 - e^{-(\tau - \tau_0)} } \right]  \left( 1 + \frac{1}{2B^2} \frac{ e^{- (\tau -\tau_0)/2} \gamma \ol{\gamma}  }{1 - e^{-(\tau - \tau_0)} } \right)^{- \frac{1}{2} - \frac{1}{2} \sqrt{(n-9)(n-1) } }
	\end{aligned} \end{equation*}
	then
		$$e^{B^2 \cl{D}_U(\tau - \tau_0) } \tl{U} (\gamma) = \frac{1}{2B^2} \gamma^{-2k - 2B^2 \lambda_k } \frac{ e^{\frac{n-1}{4}(\tau - \tau_0) } }{1 - e^{-(\tau - \tau_0)} } \int_0^\infty T_U(\gamma, \ol{\gamma} ; \tau - \tau_0) \tl{U}(\ol{\gamma} ) \ol{\gamma}^{ \frac{n+1}{2} } d \ol{\gamma}$$
	and for $\tau_0 < \tau  <\tau_0 + 1$
	\begin{equation*} \begin{aligned}
		 |e^{ B^2 \cl{D}_U(\tau - \tau_0) } \tl{U} (\gamma) | 
		\lesssim_{p,q} & \gamma^{-2k - 2B^2\lambda_k} ( \tau - \tau_0)^{-1-\frac{1}{2} \sqrt{ (n-9)(n-1) } } \\
		& \cdot \int_0^\infty | \tl{U}(\ol{\gamma} , \tau_0 ) | H_U(\gamma, \ol{\gamma} ; \tau - \tau_0) \ol{\gamma}^{\frac{n+1}{2} + \frac{1}{2} \sqrt{ (n-9)(n-1) } } d \ol{\gamma}
	\end{aligned} \end{equation*}
\end{prop}

\begin{proof}
	The first two equalities follow from
		$$e^{B^2 \cl{D}_U (\tau - \tau_0)} \tl{U} = \sum_{l = 0}^\infty e^{B^2 \lambda_l (\tau - \tau_0)} \langle \tl{U}, \tl{U}_{\lambda_l} \rangle_{ n, \frac{1}{2B^2}} \tl{U}_{\lambda_l}$$
	and propositions \ref{eigConstU} and \ref{PKernelU}.
	Rewriting the second equality in terms of $T_U$ immediately yields the next statement.
	
	Finally, note that the asymptotics of modified Bessel functions imply that 
		$$\left| I_{ \frac{1}{2} \sqrt{ (n-9)(n-1)} } (x) \right| \lesssim_{n} \frac{ x^{ \frac{1}{2} \sqrt{ (n-9)(n-1)}} e^{x} }{( 1 + x)^{ \frac{1}{2} \sqrt{ (n-9)(n-1)} + \frac{1}{2} } } \qquad (\text{for all } x > 0)$$
	From this estimate and the fact that $1 - e^{-x}$ is locally Lipschitz in $x$, the final statement of the proposition follows.
\end{proof}

\begin{lem} \label{maximalFuncBoundU}
		The action of the semigroup $e^{ B^2 \cl{D}_U(\tau- \tau_0)}$ on $\tl{U} \in \mathring L^2_{n, \frac{1}{2B^2} }$ can be estimated by
			$$\left| e^{ B^2 \cl{D}_U (\tau - \tau_0)} \tl{U} (\gamma) \right| \lesssim_{p,q} \left( \gamma e^{- \frac{1}{2} (\tau - \tau_0)} \right)^{ -2k-2B^2\lambda_k} (\cl{M}_U \tl{U})(\gamma)$$
		where $\cl{M}_U \tl{U}$ is the maximal function
		\begin{equation} \label{maximalFuncU}
			\cl{M}_U \tl{U} (\gamma) \doteqdot \sup_{I \ni \gamma} \frac{ \int_I | \tl{U}(\ol{\gamma})| \ol{\gamma}^{ 2k +2B^2 \lambda_k }  \ol{\gamma}^{1 + \sqrt{ (n-9)(n-1)} } e^{- b \ol{\gamma}^2/2} d \ol{\gamma}}{ \int_I \ol{\gamma}^{1 + \sqrt{ (n-9)(n-1)} } e^{- b \ol{\gamma}^2/2} d \ol{\gamma}}
		\end{equation}
		where the supremum is taken over all intervals $I \subset \mathbb{R}_{>0}$ containing $\gamma$.\\
		
		Additionally, if $| \tl{U}(\ol{\gamma}) | \ol{\gamma}^{  2k+ 2B^2 \lambda_k }$ is nonincreasing (nondecreasing), the the supremum in the definition of the maximal function is obtained by the interval $I = [ 0, \gamma ]$ ($I = [\gamma, \infty)$).
\end{lem}

\begin{proof}
	By proposition \ref{kernelU} and nonnegativity of the Poisson kernel 
		$$P_{\frac{1}{2} \sqrt{(n-9)(n-1)}} \left( \frac{ \gamma^2}{4 B^2} ; \frac{  \ol{\gamma}^2}{4B^2};  e^{-(\tau - \tau_0)} \right) \ge 0$$
	it follows that 
	\begin{equation*} \begin{aligned}
		& \left| e^{ B^2 \cl{D}_U (\tau - \tau_0)} \tl{U} (\gamma) \right| \\
		\lesssim_{p,q} & \left( \gamma e^{-\frac{1}{2} (\tau - \tau_0)} \right)^{-2k-2B^2 \lambda_k} \\
		&\cdot \int_0^\infty \ol{\gamma}^{-2k-2B^2\lambda_k} P_{\frac{1}{2} \sqrt{(n-9)(n-1)}} \left( \frac{  \gamma^2}{4B^2 } ; \frac{  \ol{\gamma}^2}{4B^2}; e^{-(\tau-\tau_0)} \right) | \tl{U}(\ol{\gamma}) |\ol{\gamma}^n e^{-b \ol{\gamma}^2/2} d \ol{\gamma}\\
		=& \left( \gamma e^{-\frac{1}{2} (\tau - \tau_0)} \right)^{-2k-2B^2 \lambda_k} \\
		&\cdot \int_0^\infty  P_{\frac{1}{2} \sqrt{(n-9)(n-1)}} \left( \frac{  \gamma^2}{4B^2 } ; \frac{  \ol{\gamma}^2}{4B^2}; e^{-(\tau-\tau_0)} \right) \\
		& \qquad \cdot | \tl{U}(\ol{\gamma}) | \ol{\gamma}^{2k + 2B^2 \lambda_k} \ol{\gamma}^{1 + \sqrt{(n-9)(n-1)} } e^{-b \ol{\gamma}^2/2} d \ol{\gamma}\\
	\end{aligned} \end{equation*}	
	By applying the change of variables,
		$$x = \frac{\ol{\gamma}^2}{4B^2} \qquad y = \frac{\gamma^2}{4B^2} \qquad f(x) = | \tl{U}( \ol{\gamma}) | \ol{\gamma}^{2k + 2B^2 \lambda_k}$$
	we obtain
	\begin{equation*} \begin{aligned}
		& \left| e^{ B^2 \cl{D}_U (\tau - \tau_0)} \tl{U} (\gamma) \right| \\
		\lesssim_{p,q} & ( \gamma e^{-\frac{1}{2} (\tau - \tau_0)} )^{-2k-2B^2 \lambda_k} \\
		&\cdot \int_0^\infty  P_{\frac{1}{2} \sqrt{(n-9)(n-1)}} \left( y ; x; e^{-(\tau-\tau_0)} \right) f(x) x^{\frac{1}{2} \sqrt{(n-9)(n-1)} } e^{-x} d x\\
	\end{aligned} \end{equation*}
	A result of Muckenhoupt ~\cite{M69} then yields that the integral can be bounded by
	$$\sup_{I \ni y} \frac{ \int_{I} f(x) x^{\frac{1}{2} \sqrt{(n-9)(n-1)} } e^{-x} d x }{\int_{I} x^{\frac{1}{2} \sqrt{(n-9)(n-1)} } e^{-x} d x} = \sup_{I \ni \gamma} \frac{ \int_{I} | \tl{U}( \ol{\gamma}) | \ol{\gamma}^{2k + 2B^2 \lambda_k} \ol{\gamma}^{1+ \sqrt{(n-9)(n-1)} } e^{-b \ol{\gamma}^2/2} d \ol{\gamma} }{\int_{I} \ol{\gamma}^{1+ \sqrt{(n-9)(n-1)} } e^{-b \ol{\gamma}^2/2} d\ol{\gamma}}$$
	This completes the proof of the first statement of the lemma.
			
	The second statement in the lemma follows from writing $I = [\gamma_0, \gamma_1]$, differentiating 
		$$ \frac{ \int_{\gamma_0}^{\gamma_1} | \tl{U}(\ol{\gamma})| \ol{\gamma}^{ 2k +2B^2 \lambda_k }  \ol{\gamma}^{1 + \sqrt{ (n-9)(n-1)} } e^{- b \ol{\gamma}^2/2} d \ol{\gamma}}{ \int_{\gamma_0}^{\gamma_1} \ol{\gamma}^{1 + \sqrt{ (n-9)(n-1)} } e^{- b \ol{\gamma}^2/2} d \ol{\gamma}}$$
	with respect to $\gamma_0$ and $\gamma_1$, and using the monotonicity of $ | \tl{U}(\ol{\gamma})| \ol{\gamma}^{ 2k +2B^2 \lambda_k }  $ to deduce that the above quotient of integrals is monotonic in $\gamma_0$ and $\gamma_1$.
\end{proof}

\subsubsection{Eigenfunctions for $\cl{D}_Z$}
Many of the proofs in this subsection are identical to those in the previous subsection and so we omit the proofs here.

\begin{prop} \label{eigConstZ}
	For any $j \in \mathbb{Z}_{\ge 0}$, an eigenfunction $\tl{Z}_{h_j}$ of $\cl{D}_Z$ with eigenvalue $h_j$ is given by
		$$\tl{Z}_{h_j} (\gamma) \doteqdot d_j \gamma^2 L_j^{\left( \frac{n+1}{2} \right)} \left( \frac{\gamma^2}{4 B^2} \right)$$
		
	In particular, the $\{ \tl{Z}_{h_j} \}_{j \in \mathbb{Z}_{\ge 0} }$ are orthonormal in $L^2_{n-2, \frac{1}{2B^2}}$ if and only if
		$$d_j^2 \doteqdot \frac{1}{B} \left( \frac{1}{4B^2} \right)^{\frac{n+2}{2}} \frac{j !}{\Gamma( j + \frac{n+1}{2} + 1)}$$
\end{prop}

\begin{prop} \label{eigfuncZAsymps}
	For $\tl{Z}_{h_j}$ as in proposition \ref{eigConstZ}, $\tl{Z}_{h_j}$ has asymptotics
	\begin{equation*} \begin{aligned}
		\tl{Z}_{h_j} =&  d_j {j + \frac{n+1}{2} \choose j} \gamma^2 \big(1 + o(1) \big) &	\qquad & \text{as } \gamma \searrow 0 \\
		\tl{Z}_{h_j} =&  d_j (-1)^j \frac{1}{j!} \left( \frac{1}{4B^2} \right)^j \gamma^{2j + 2} \big( 1 + o(1) \big) &		& \text{as } \gamma \nearrow \infty \\
	\end{aligned} \end{equation*}
\end{prop}

\begin{prop} \label{ZlambdaAsymps}
	$\tl{Z}_{\lambda_k}$ has asymptotics 
		$$\tl{Z}_{\lambda_k}(\gamma) \approx c_k' \gamma^{-2k - 2B^2 \lambda_k} \qquad \gamma \searrow 0$$
		$$\tl{Z}_{\lambda_k}(\gamma) \approx c_k'' \gamma^{-2B^2 \lambda_k - 2} \qquad \gamma \nearrow +\infty$$
	where
		$$c_k' = \frac{ 4pB^2 (-2k - 2B^2 \lambda_k) }{ (-2k - 2B^2 \lambda_k -2)(n-1-2k - 2B^2 \lambda_k)} c_k {k+\alpha \choose k}$$
		$$c_k'' = \frac{ 8p B^6 \lambda_k}{2B^2 \lambda_k + 1} c_k \frac{1}{k!} \frac{1}{ (-4B^2)^{k}}$$
\end{prop}

\begin{prop} \label{eigfuncZH^1GrowthJ}
	For the orthonormal $\tl{Z}_{h_j}$ as in proposition \ref{eigConstZ},
		$$\| \tl{Z}_{h_j} \|_{H^1_{n-2,\frac{1}{2B^2}}} \lesssim_{p,q} \sqrt{j}$$
\end{prop}

\begin{prop} \label{PKernelZ}
	For $0 \le r < 1$, the Poisson kernel
	$$P_{\frac{n+1}{2} } \left( \frac{ \gamma^2}{4 B^2} ; \frac{  \ol{\gamma}^2}{4B^2}; r \right) \doteqdot \sum_{j=0}^\infty \frac{	r^j L_j^{\frac{n+1}{2} } \left( \frac{  \gamma^2}{4B^2} \right) L_j^{\frac{n+1}{2} }	 \left( \frac{  \ol{\gamma}^2}{4B^2} \right)	}{ \Gamma( \frac{n+1}{2}  + 1 ) {j + \frac{n+1}{2} \choose j}}$$
	satisfies
	$$P_{\frac{n+1}{2}  } \left( \frac{  \gamma^2}{4B^2} ; \frac{  \ol{\gamma}^2}{4B^2}; r \right) =	 (1-r)^{-1} e^{ - \frac{ \gamma^2 + \ol{\gamma}^2 }{4B^2}  \frac{r}{1-r} } \left( - \frac{\gamma^2 \ol{\gamma}^2 r}{16B^4} \right)^{-\frac{n+1}{4}  } J_{\frac{n+1}{2}  } \left( \frac{ \gamma \ol{\gamma} \sqrt{ - r}}{2B^2(1-r)}  \right)$$
	$$= (1-r)^{-1} e^{ - \frac{  \gamma^2 + \ol{\gamma}^2  }{4B^2}  \frac{r}{1-r} } \left(  \frac{ \gamma \ol{\gamma} \sqrt{r} }{4B^2 }\right)^{-\frac{n+1}{2}  } I_{\frac{n+1}{2} } \left( \frac{ \gamma \ol{\gamma} \sqrt{ r}}{2B^2(1-r)}  \right)$$
	where $J_{\cdot} ( \cdot)$ is the Bessel function of the first kind and $I_{\cdot} (\cdot)$ is the modified Bessel function of the first kind.
\end{prop}

\begin{prop} \label{kernelZ}
	For $\tau > \tau_0$ and $\tl{Z} \in \mathring L^2_{n-2, \frac{1}{2B^2} }$
	\begin{equation*} \begin{aligned}
	&e^{B^2 \cl{D}_Z(\tau - \tau_0)} \tl{Z}(\gamma)  \\
	= & \frac{1}{2B^2} \left( \frac{1}{4B^2} \right)^{\frac{n+1}{2} } \left( \gamma e^{ - \frac{1}{2} (\tau - \tau_0)} \right)^2 \\
	& \cdot \int_0^\infty \ol{\gamma}^2 P_{ \frac{n+1}{2} } \left( \frac{ \gamma^2}{4B^2} ; \frac{  \ol{\gamma}^2}{4B^2}; e^{- (\tau - \tau_0)} \right) \tl{Z} ( \ol{\gamma} ) \ol{\gamma}^{n-2} e^{- \ol{\gamma}^2/(4B^2)} d\ol{\gamma} \\
	=& \frac{1}{2B^2} \left( \gamma e^{- \frac{1}{2}(\tau - \tau_0)} \right)^{ \frac{3-n}{2} } 	\frac{ exp \left[ -\frac{1}{4B^2} \frac{e^{- (\tau- \tau_0)} }{ 1 - e^{ - (\tau-\tau_0)}} \gamma^2 \right] }{1 - e^{- (\tau - \tau_0)} } \\
	& \cdot \int_0^\infty I_{ \frac{n+1}{2} }\left( \frac{1}{2B^2}\frac{  \gamma \ol{\gamma} e^{- \frac{1}{2}(\tau - \tau_0)}}{1-e^{- (\tau - \tau_0)}}  \right) \tl{Z}(\ol{\gamma}) \ol{\gamma}^{ \frac{n-1}{2} } 		e^{ - \frac{1}{4B^2} \left( \frac{ 1 }{ 1 - e^{-(\tau- \tau_0)} } \right) \ol{\gamma}^2 } d \ol{\gamma}
	\end{aligned} \end{equation*} 
	
	Moreover, if 
		$$T_Z(  \gamma, \ol{\gamma}, ; \tau - \tau_0) \doteqdot exp \left[ - \frac{1}{4B^2} \frac{ e^{-(\tau - \tau_0)} \gamma^2 + \ol{\gamma}^2}{ 1 - e^{-(\tau - \tau_0)} } \right] I_{ \frac{n+1}{2} } \left( \frac{1}{2B^2} \frac{  \gamma \ol{\gamma} e^{- \frac{1}{2}(\tau - \tau_0)}}{1-e^{- (\tau - \tau_0)}}  \right) \gamma^{- \frac{n+1}{2} } $$

	and
		$$H_Z( \gamma , \ol{\gamma} ; \tau - \tau_0) \doteqdot exp \left[ - \frac{1}{4B^2} \frac{ (e^{- (\tau - \tau_0)/2} \gamma - \ol{\gamma} )^2 }{ 1 - e^{-(\tau - \tau_0)} } \right]  \left( 1 + \frac{1}{2B^2} \frac{ e^{- (\tau -\tau_0)/2} \gamma \ol{\gamma}  }{1 - e^{-(\tau - \tau_0)} } \right)^{- \frac{1}{2} - \frac{n+1}{2}}$$
	then
		$$e^{B^2 \cl{D}_Z ( \tau - \tau_0)} \tl{Z} (\gamma) = \frac{1}{2B^2} \gamma^2 		\frac{ e^{ \frac{n-3}{4} (\tau - \tau_0)} }{ 1 - e^{- (\tau - \tau_0)} } 		\int_0^\infty T_Z (\gamma, \ol{\gamma}; \tau - \tau_0 ) \tl{Z}( \ol{\gamma} ) \ol{\gamma}^{ \frac{n-1}{2} } d \ol{\gamma}$$
	and for $\tau_0 < \tau < \tau_0 +1$
		$$\left| e^{B^2 \cl{D}_Z ( \tau - \tau_0)} \tl{Z} (\gamma) \right| \lesssim_{p,q} 		\gamma^2 (\tau - \tau_0)^{-1 - \frac{n+1}{2} } \int_0^\infty H_Z (\gamma, \ol{\gamma}; \tau - \tau_0) | \tl{Z}(\ol{\gamma}) | \ol{\gamma}^n d \ol{\gamma}$$
		\end{prop}

\begin{lem} \label{maximalFuncBoundZ}
		The action of the semigroup $e^{(\tau - \tau_0)B^2 D_Z}$ on $\tl{Z} \in \mathring L^2_{n-2, \frac{1}{2B^2}}$ can be estimated by
			$$\left| e^{B^2 D_Z (\tau - \tau_0)} \tl{Z}(\gamma) \right| \lesssim_{p,q} \gamma^2 e^{-(\tau - \tau_0)} ( \cl{M}_Z \tl{Z})(\gamma)$$
		where $\cl{M}_Z$ is the maximal function
		\begin{equation} \label{maximalFuncZ}
			(\cl{M}_Z \tl{Z} ) (\gamma) \doteqdot \sup_{I \ni \gamma} \frac{ \int_I | \tl{Z} (\ol{\gamma})| \ol{\gamma}^{-2} \ol{\gamma}^{n+2} e^{- b \ol{\gamma}^2/2} d \ol{\gamma} }{ \int_I \ol{\gamma}^{n+2} e^{- b \ol{\gamma}^2/2} d\ol{\gamma} }
		\end{equation}
			
		Also, if $| \tl{Z}(\ol{\gamma})| \ol{\gamma}^{-2}$ is nonincreasing (nondecreasing) then the supremum in the definition of the maximal function is achieved on $I = [0, \gamma]$ ( $I = [\gamma, \infty)$).
\end{lem}

\newpage
\section{Constants} \label{AppendixOfConstants}

	For reference, we list the constants used throughout the paper.
	This list is ordered such that the each constant depends only on the constants above it.
	
	\begin{itemize}
	\item	$p,q \in \mathbb{N}$ denote the dimensions of the spheres $M = S^p \times S^{q+1}$.
	We restrict our attention to $q \ge 10$ and $p \ge 2$.
	
	\item $n = p+q$
	
	\item $ A = \sqrt{ \frac{p-1}{n-1} }$ and $ B = \sqrt{ \frac{q-1}{n-1} }$ are the constants for which the cone metric
		$$g_{RFC} = ds^2 + A^2 s^2 g_{S^p} + B^2 s^2 g_{S^q}$$
		is Ricci-flat.
	
	\item	$k \in \mathbb{N}$ is the parameter that controls the choice of eigenfunction that dominates the behavior of the solution at the parabolic scale.
	The corresponding eigenvalue $\lambda_k$ satisfies
		$$B^2 \lambda_k = - k + \frac{ n-1}{4} - \frac{1}{4} \sqrt{(n-9)(n-1)}$$
	We reserve ``$k$" for eigenvalues such that $B^2 \lambda_k < 0$ and use ``$j$" for general eigenvalues $\lambda_j$ without a sign restriction.

\begin{remark}
		$$\frac{ n-1}{4} - \frac{1}{4} \sqrt{(n-9)(n-1)} \searrow 1 	\qquad \text{ as } n \nearrow \infty$$
	In other words, $$\frac{ n-1}{4} - \frac{1}{4} \sqrt{(n-9)(n-1)} = 1 + \epsilon_n$$
	Consequently, $B^2 \lambda_k < 0$ implies $k \ge 2$ in which case
		$$B^2 \lambda_k \le -1 + \epsilon_n< 0$$
	Also,
		$$n \ge 10 \implies \frac{ n-1}{4} - \frac{1}{4} \sqrt{(n-9)(n-1)} \le \frac{3}{2}$$
		$$n \ge 10, k \ge 2 \implies B^2 \lambda_k \le - \frac{1}{2}$$
\end{remark}

\begin{remark}
	$$2k + 2B^2 \lambda_k = \frac{n-1}{2} - \frac{1}{2} \sqrt{ (n-9)(n-1)} > 0$$
	depends only $n$.
	In particular, it is independent of $k$.
\end{remark}
	
	\item $\alpha = \alpha_{p,q,k}$ controls the scale where we transition from the ``inner region" to the ``parabolic region" and is defined by
	
	$$\alpha = \alpha_{p,q,k} = \frac{ - B^2 \lambda_k}{2k + 2 \lambda_k B^2 } = \frac{-2B^2 \lambda_k }{n-1 - \sqrt{(n-9)(n-1)}} > 0$$
	
	\item $\ol{\beta} \in \left( 0, \frac{1}{2} \right)$ is used in the assignment of initial data as a term controlling the transition from the ``parabolic region" to the ``outer region."
	
	\item $\beta \in \left( 0 , \max \big( \beta_k, \ol{\beta} \big) \right)$ in general where 
		$$ \beta_k \doteqdot \frac{1}{2} \left( \frac{ 1}{ 1 + \frac{1}{-2B^2 \lambda_k} } \right) \in \left( 0, \frac{1}{2} \right)$$
	 On occasion we allow for $\beta \in \left( 0 , \frac{1}{2} \right)$ more generally.
		
	\item $M > 0 $ is an uniform constant. $M e^{\beta \tau}$ controls the scale where we transition from the ``parabolic region" to the ``outer region."
	
	\item $\ul{\eta} =( \eta_0^U, \eta_1^U, \eta_2^U, \eta_0^Z, \eta_1^Z, \eta_2^Z) \ll 1$ is sufficiently small depending on $p,q,k$.
	It controls the closeness, in a weighted $C^2$ sense, of the profile functions $(\tl{Z}, \tl{U})(\gamma, \tau)$ to the eigenmode solutions $\left( e^{B^2 \lambda_k \tau} \tl{Z}_{\lambda_k}(\gamma), e^{B^2 \lambda_k \tau} \tl{U}_{\lambda_k}(\gamma) \right)$ in the ``parabolic region."
	
	\item $\Gamma \gg 1$ is sufficiently large depending on $p,q,k , \ul{\eta}$.
	
	\item $a,b,c, \kappa, \epsilon$ are parameters used for barriers in the inner-parabolic overlap region. 
	Their values depend on $p,q,k$ and are described in remark \ref{constantsToUse}.
	
	\item $l \in ( 0, k + B^2 \lambda_k )$ depends on $p,q,k,a,b,c, \epsilon$.
	
	\item $\ul{\Upsilon} \gg 1$ is sufficiently large depending on $p,q,k, \ul{\eta}, \Gamma, a,b,c, \kappa, \epsilon, l$.
	
	\item $\Upsilon_{U} \gg 1$ is sufficiently large depending on $p,q,k, \ul{\eta}, \Gamma, \ul{\Upsilon} ,a,b,c, \kappa, \epsilon, l$.
	
	\item $\Upsilon_{Z} \gg 1$ is sufficiently large depending on $p,q,k, \ul{\eta}, \Gamma, \ul{\Upsilon}, \Upsilon_U , a,b,c, \kappa, \epsilon, l$.
	$\ul{\Upsilon}, \Upsilon_U, $ and $\Upsilon_Z$ control the transition from the ``inner region" to the ``parabolic region."
	
	\item $\tau = - \log( T-t)$ denotes a reparametrization of the ``time remaining" to the singularity time $T$.
	$\tau \ge \tau_0 \gg 1$ is sufficiently large depending on $p,q,k, \ul{\eta}, \Gamma, \ul{\Upsilon}, \Upsilon_U, \Upsilon_Z, M, \ol{\beta}, \beta, a,b,c, \kappa, \epsilon, l$.
	
	\end{itemize}
	
	\begin{remark}
		To simplify some of the statements throughout the paper, 
		we assume that any large constant has a universal lower bound and analogously that any small constant has a universal upper bound.
	\end{remark}
	
	\begin{remark}
		Throughout $``A \lesssim B"$ means that there exists a constant $C$ such that $A \le C B$.
		$``A \lesssim_{a,b} B"$ indicates that the constant $C = C(a,b)$ depends on $a,b$.
		$``A \sim B"$ means that $A \lesssim B \lesssim A$.
	\end{remark}

\bibliographystyle{alpha}
\bibliography{Type2ConeSingBib}

\end{document}